\newcommand{\mathsym}[1]{{}}
\newcommand{\unicode}[1]{{}}
\newtheorem{thm}{Theorem}[section]
\newtheorem{lem}[thm]{Lemma}
\newtheorem{cor}[thm]{Corollary}
\newtheorem{prop}[thm]{Proposition}
\newtheorem{defn}[thm]{Definition}
\newtheorem{remark}[thm]{Remark}
\def\Dio{\mathrm{Dioph}}
\def\la{\langle}
\def\ra{\rangle}
\def\Re{{\,\rm Re\,}}
\def\Im{{\,\rm Im\,}}
\def\eps{{\varepsilon}}
\def\les{\lesssim}
\def\calS{\mathcal S}
\def\beeq{\begin{equation}}
\def\eneq{\end{equation}}
\def\Q{\mathbb{Q}}
\def\R{{\mathbb R}}
\def\C{{\mathbb C}}
\def\Pbb{\mathbb{P}}
\def\Exp{\mathbb{E}}
\def\tilde{\widetilde}
\def\calD{\mathcal{D}}
\def\nn{\nonumber}
\def\trace{\mathrm{trace}}
\def\Z{\mathbb{Z}}
\def\Exp{\mathbb{E}}
\def\rank{\mathrm{rank}}
\def\calE{\mathcal{E}}
\def\calN{\mathcal{N}}
\def\calG{\mathcal{G}}
\def\calF{\mathcal{F}} 
\newcommand{\EQ}[1]{\begin{equation}  \begin{split} #1 \end{split} \end{equation} }
\def\dist{\mathrm{dist}}
\def\tor{\mathbb{T}}
\def\spec{\mathrm{spec}}
\def\one{\mathbbm{1}}
\def\calB{\mathcal{B}}
\def\cB{\calB}
\def\const{\mathrm{const}}
\def\lspan{\mathrm{span}}
\def\bad{\calB}
\def\Car{{\rm Car}}
\def\calA{\mathcal{A}}
\newcommand{\bbC}{\mathbb{C}}
\newcommand{\bbD}{\mathbb{D}}
\def\Compl{\bbC}
\newcommand{\bbZ}{\mathbb{Z}}
\newcommand{\bbR}{\mathbb{R}}
\newcommand\Span[1]{\left\langle #1 \right\rangle}
\newcommand{\BMO}{{\rm{BMO}}}
\renewcommand{\emptyset}{\O}
\providecommand{\abs}[1]{\lvert #1 \rvert}
\providecommand{\,}{\thinspace}
\providecommand{\oP}[1]{\lvert\lvert #1 \rvert\rvert}
\def\diam{\mathrm{diam}}
\numberwithin{equation}{section}
\begin{document}

\title[Anderson localization, multiscale techniques:  I]{An introduction to multiscale techniques in the theory of Anderson localization. Part I. }

\author[W. Schlag]{Wilhelm Schlag}
\address{Department of Mathematics \\ Yale University \\ New Haven, CT 06511, USA}
\email{wilhelm.schlag@yale.edu}

\thanks{
The author was partially supported by NSF grant DMS-1902691. He thanks Adam Black, Yakir Forman, and Tom VandenBoom for their assistance during the preparation of these notes, and Gong Chen and David Damanik for their comments on a preliminary version. The author is indebted to Massimilano Berti at SISSA, Trieste, Italy, for inviting him to teach a mini-course at SISSA in March of 2020 on Anderson localization. Due to the COVID-19 pandemic 
these lectures did not materialize, but they still provided the impetus for the writing of these notes. 
}

\begin{abstract}
        These lectures present some basic ideas and techniques in the spectral analysis of lattice Schr\"odinger operators with disordered potentials. In contrast to the classical Anderson tight binding model, the randomness is also allowed to possess only finitely many degrees of freedom. This refers to dynamically defined potentials, i.e., those given by evaluating a function along an orbit of some ergodic transformation (or of several commuting such transformations on higher-dimensional lattices).  Classical localization theorems by Fr\"ohlich--Spencer for large disorders are presented, both for random potentials in all dimensions, as well as even quasi-periodic ones on the line. After providing the needed background on subharmonic functions, we then discuss the Bourgain-Goldstein theorem on localization for quasiperiodic Schr\"odinger cocycles assuming positive Lyapunov exponents.  
\end{abstract}

\maketitle 

\tableofcontents

\section{Introduction}

In the 1950s Phil Anderson studied random operators of the form 
\[
 H = \Delta_{\Z^d} + \lambda V
\]
where $\Delta_{\Z^d}$ is the discrete Laplacian on the $d$-dimensional lattice and $V:\Z^d \to\R$ a random field with i.i.d.\ components, and a real parameter $\lambda$. His pioneering work suggested by physical arguments that for large $\lambda$, with probability~$1$, a typical realization of the random operator $H$ exhibits exponentially decaying eigenfunctions which form a basis of $\ell^2(\Z^d)$. This is referred to as {\em Anderson localizaton} (AL). It is in stark contrast to periodic $V$ for which the spectrum is absolutely continuous (a.c.) with a distorted Fourier basis of Bloch-Floquet waves, see~\cite{Kuch, MW}. Furthermore, and most importantly, Anderson found a phase transition in dimensions three and higher, leading to the a.s.\ presence of a.c.\ spectrum for small $\lambda$. This famous {\em extended states problem} is still not understood. 

On the other hand, a large mathematical literature now exists dealing with Anderson localization and its ramifications (density of states, Poisson behavior of eigenvalues). This introduction is not meant as a broad introduction to this field, for which we refer the reader to the recent textbook~\cite{AW}, as well as the more classical treaties~\cite{BouLa, FigP, CarLac} and the forthcoming  texts~\cite{DF1, DF2}.  
Our focus here is with the body of techniques commonly referred to as {\em multiscale}. They are all based on some form of induction on scales, and are reminiscent of KAM arguments. 

This approach is effective  both in random models, as well as those  with {\em deterministic} potentials, which refers to $V(n)$ being fixed by a finite number of parameters. For example, Harper's model on $\Z$ is given by $V(n)= \cos(2\pi(n\omega+ x))$ with irrational $\omega$ and $x\in\R/\Z$. The only stochastic parameter is this choice of~$x$. The Harper operator, which is also known as almost Mathieu  operator, as well as more general quasi-periodic operators, exhibit a rich and subtle spectral theory, see for example the survey~\cite{JM}. 

Bourgain's book~\cite{Bou} contains a wealth of material on a wide class of stochastic Schr\"odinger operators with deterministic potentials. An important basic assumption in that book is the analyticity of the generating function, i.e., if $V(n)=F(T^n x)$ for some ergodic transformation $T$ on a torus, then $F$ is assumed to be analytic or a trigonometric polynomial. The analyticity allows for the use of subharmonic functions. These are relevant for {\em large deviation theorems}, which in turn hinge on some Cartan type lower bound for subharmonic functions. This first part of the notes can be seen as a companion to Bourgain's book~\cite{Bou} but only up to Chapter~12. The plan for the second part of this introduction is to  focus  on the matrix-valued Cartan theorem of~\cite{BouGS2}, and the higher-dimensional theory as in~\cite{Bou2}, with applications. This will then hopefully serve to make   Chapters~14 through~19 of~\cite{Bou} more accessible.

\section{Polynomially bounded Fourier basis} 

In this section we establish the following widely known fact concerning the Fourier transform associated with a Schr\"odinger operator. It is a particular case of a more general theory, see the text~\cite{Ber} and the survey~\cite{Sim}. Results of this type go by the name of {\em Shnol theorem}. We follow the argument in~\cite{Far}. Throughout, the discrete Laplacian on $\Z^d$ is defined as the sum over nearest neighbors, i.e., 
\EQ{\label{eq:Lap}
(\Delta f)(x) = \sum_{\pm} \sum_{j=1}^d  f(x\pm e_j)\quad\forall\; x\in \Z^d
}
where $e_j$ are the standard coordinate vectors. If $\calF:\ell^2(\Z^d)\to L^2(\tor^d)$ denotes the Fourier transform, then 
\EQ{\label{eq:LapF}
(\calF\circ \Delta\circ \calF^{-1} f)(\theta) =  m(\theta)f(\theta)=2\sum_{j=1}^d \cos(2\pi \theta_j) f(\theta)
}
and the spectrum satisfies $\spec(\Delta)=[-2d,2d]$. The Laplacian~\eqref{eq:Lap} differs from the more customary $-\tilde \Delta = \nabla^*\nabla$ where $(\nabla f)(x)=\{f(x+e_j)-f(x)\}_{j=1}^d$, by a diagonal term: $ -\tilde \Delta = -\Delta +2d$. 

\begin{thm}\label{thm:Ber}
Consider $H=\Delta+V$ as a bounded operator on $\ell^2(\Z^d)$, with $V\in\ell^\infty(\Z^d)$ real-valued and acting by multiplication. Fix $\sigma>\frac{d}{2}$. Then for almost every $E\in\R$ with respect to the spectral measure\footnote{I.e., up to a set of measure $0$ relative to the spectral measure of $H$.} of~$H$ there exists $\psi:\Z^d\to\R$ not identically vanishing with $H\psi= E\psi$ and $|\psi(n)|\le C(d,\sigma,E) \langle n\rangle^\sigma$ for all $n\in\Z^d$. 
\end{thm}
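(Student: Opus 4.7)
\emph{Plan.} The approach is to exhibit the polynomially bounded eigenfunctions as fibers of the direct integral decomposition of $\ell^2(\Z^d)$ furnished by the spectral theorem for $H$. Fix an auxiliary $\tilde\sigma \in (d/2, \sigma)$ and the weight $w(n) := \langle n\rangle^{-\tilde\sigma}$, so that $\sum_n w(n)^2 < \infty$. For $n,m \in \Z^d$ denote by $\mu_{n,m}(A) := \la \delta_n, \one_A(H)\delta_m\ra$ the matrix spectral measure of $H$, and set
\[
\mu \; := \; \sum_{n \in \Z^d} w(n)^2 \,\mu_{n,n},
\]
a finite Borel measure on $\R$. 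Because $\{\delta_n\}_{n \in \Z^d}$ is total in $\ell^2(\Z^d)$ and every $w(n) > 0$, the measure $\mu$ lies in the maximal spectral type of $H$, so ``$\mu$-a.e.\ $E$'' is equivalent to ``spectrally a.e.\ $E$''.

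By the Radon--Nikodym theorem, $\rho_{n,m}(E) := d\mu_{n,m}/d\mu(E)$ exists for $\mu$-a.e.\ $E$, and by countability of $\Z^d \times \Z^d$ one may fix a single set of full $\mu$-measure on which the infinite matrix $(\rho_{n,m}(E))$ is pointwise positive semi-definite, inherited from positivity of the operator-valued measure $(\mu_{n,m})$. Factor this kernel via a GNS construction as $\rho_{n,m}(E) = \la \psi_n(E), \psi_m(E)\ra_{\calK_E}$ in a Hilbert space $\calK_E$. The spectral-calculus identity $\mu_{m,Hn}(A) = \int_A E\, d\mu_{m,n}$, combined with~\eqref{eq:Lap} expanding $H\delta_n$, passes to the $\mu$-densities and yields
\[
\sum_{\pm}\sum_{j=1}^d \psi_{n\pm e_j}(E) \;+\; V(n)\,\psi_n(E) \;=\; E\,\psi_n(E)
\]
for all $n \in \Z^d$ on this common $\mu$-full set, so $\psi(\cdot, E)$ is a $\calK_E$-valued solution of $H\psi = E\psi$.

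By construction one has $\sum_n w(n)^2 \|\psi_n(E)\|_{\calK_E}^2 = 1$ for $\mu$-a.e.\ $E$, hence termwise $\|\psi_n(E)\|_{\calK_E} \le w(n)^{-1} = \langle n\rangle^{\tilde\sigma} \le \langle n\rangle^{\sigma}$, and moreover the sequence $\{\psi_n(E)\}_n$ is not identically zero. Pick any unit vector $e(E) \in \calK_E$ not orthogonal to $\lspan\{\psi_n(E) : n \in \Z^d\}$ and set $\psi(n) := \Re \la \psi_n(E), e(E)\ra_{\calK_E}$, replacing $\Re$ by $\Im$ if the former happens to vanish identically in $n$; since $H$ has real entries both parts solve the eigenvalue equation and at least one is nontrivial. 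Then $\psi \not\equiv 0$, $H\psi = E\psi$, and $|\psi(n)| \le \|\psi_n(E)\|_{\calK_E} \le \langle n\rangle^\sigma$ with constant depending only on $d$, $\sigma$, and $E$. \textbf{The main obstacle} is the vector-to-scalar reduction: one must arrange for the positive semi-definiteness of $(\rho_{n,m}(E))$, the fiberwise eigenfunction equation, and the normalization to all hold on a \emph{single} $\mu$-full set, and then measurably pick $e(E)$ on that set. Countability of $\Z^d$ is what makes this possible and is the pivotal structural input; without it the ``for all $n$'' in the eigenvalue equation would only be true on an $n$-dependent full-measure set and the construction would collapse.
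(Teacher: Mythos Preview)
Your proof is correct and takes a genuinely different route from the paper's. The paper first establishes a Combes--Thomas exponential decay bound on the resolvent kernel $(H-z)^{-1}(x,y)$ for $z\notin\spec(H)$, then uses this to show that $(H-z)^{-1}:\ell^2_\sigma(\Z^d)\to\ell^2(\Z^d)$ is Hilbert--Schmidt; composing with the spectral unitary $U:\ell^2(\Z^d)\to L^2(X,\mu)$, the Hilbert--Schmidt kernel $K(x,n)$ furnishes the eigenfunctions via $\psi_x(n)=(\phi(x)-z)\overline{K(x,n)}$. Your approach bypasses the resolvent and Combes--Thomas entirely: you work directly with the Radon--Nikodym densities $\rho_{n,m}=d\mu_{n,m}/d\mu$ of the matrix spectral measure, factor the positive semi-definite kernel by GNS, and read off the fiberwise eigenvalue equation from the identity $\int_A E\,d\mu_{m,n}=\mu_{m,H\delta_n}(A)$ together with the finite-range structure of~$H$. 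What your route buys is that it uses strictly less about $H$---no off-diagonal resolvent decay, only that $H\delta_n$ is a finite linear combination of basis vectors, so the eigenvalue equation at each site is a finite identity verifiable on a single $\mu$-full set. The paper's route, by contrast, makes the role of the weight $\langle n\rangle^{-\sigma}$ as a Hilbert--Schmidt criterion more explicit and extends directly to any self-adjoint kinetic term with exponential off-diagonal decay. Two minor remarks: the auxiliary $\tilde\sigma<\sigma$ is not actually used anywhere and you may take $w(n)=\langle n\rangle^{-\sigma}$ directly; and the measurable selection of $e(E)$ you flag as an obstacle is unnecessary for the theorem as stated, since the conclusion is only ``for a.e.\ $E$ there exists~$\psi$'' with no measurability in~$E$ required.
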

\begin{proof}
Take any $z\in \R\setminus \Sigma$, where $\Sigma$ is the spectrum of $H$ (we take $z\in\R$ for simplicity). 
By the  Combes-Thomas estimate the kernel of the Green function $(H-z)^{-1}(x,y)$ has exponential decay: there exist positive constants $C,\beta$ so that 
\EQ{\label{eq:CT}
|(H-z)^{-1}(x,y)|\le C \exp(-\beta|x-y|)\qquad\forall\; x,y\in\Z^d
}
To see this, let $(M_{\beta,j} f)(n)= e^{\beta n_j} f(n)$ and compute $$M_{\beta,j}^{-1} \circ (H-z) \circ M_{\beta,j} = H-z +  S_{\beta,j}$$ with $\|S_{\beta,j}\|_{2\to2} \le C|\beta|$ uniformly in $|\beta|\le 1$. Hence, 
$$M_{\beta,j}^{-1} \circ (H-z)^{-1} \circ M_{\beta,j} = (H-z)^{-1}\circ(I +  S_{\beta,j}(H-z)^{-1})^{-1}$$
where the inverse on the right exists by a Neumann series as long as in the operator norm $$\|S_{\beta,j}(H-z)^{-1}\|<1$$ which holds if $|\beta|\dist(z,\Sigma)\le c$, some small constant. In particular, 
\[
|\langle \delta_y, (H-z)^{-1} \delta_x\rangle|= |(H-z)^{-1}(x,y)|\le Ce^{-\beta(x_j-y_j)}
\]
Since the sign of $\beta$ and the choice of $j$ are arbitrary, \eqref{eq:CT} follows. 

Let $w_\sigma(x):=\la x\ra^{-\sigma}$ on $\Z^d$. Fixing any $\sigma>\frac{d}{2}$ so that $\langle x\rangle^{-\sigma}\in\ell^2(\Z^d)$, the Combes-Thomas bound~\eqref{eq:CT} implies that  $(H-z)^{-1}(x,y) w_\sigma(y) \in \ell^2(\Z^d\times\Z^d)$ whence
\[
(H-z)^{-1}: \ell^2_\sigma(\Z^d) \to \ell^2(\Z^d)
\]
is a Hilbert-Schmidt operator. Here $ \ell^2_\sigma(\Z^d):= w_{-\sigma} \ell^2(\Z^d)$. By the spectral theorem there exists a unitary $U:\ell^2(\Z^d)\to L^2(X,\mu)$ where $\mu$ is a $\sigma$-finite measure, and $\phi\in L^\infty(X)$ real-valued, with $UHf=\phi\, Uf$ for all $f\in\ell^2(\Z^d)$. The $\mu$-essential range of $\phi$ equals~$\Sigma$. The composition 
\[
T:= U(H-z)^{-1}=(\phi-z)^{-1}U : \ell^2_\sigma(\Z^d) \to  L^2(X,\mu)
\]
is Hilbert-Schmidt, whence by the standard kernel representation of such operators, for every $n\in\Z^d$ there exists $K(\cdot,n)\in L^2(X,\mu)$ with 
\[
\int\limits_X \sum_{n\in\Z^d} |K(x,n)|^2 \, w_\sigma^2(n)\, \mu(dx) <\infty
\]
and 
\[
Tf(x) = \sum_{n\in\Z^d} K(x,n) f(n) \qquad\forall \; f\in \ell^2_\sigma(\Z^d) 
\]
The series converges in $ L^2(X,\mu)$.  Define $\psi_x(n):=(\phi(x)-z)\overline{K(x,n)}$.  Then  for all $f\in \ell^2_\sigma(\Z^d)$,  and $ \mu-\text{a.e.}$ $x$, 
\EQ{\label{eq:U psix}
(Uf)(x) &= (\phi(x)-z)(Tf)(x) =  \sum_{n\in\Z^d} (\phi(x)-z) K(x,n) f(n) = \langle \psi_x,f\rangle_{\ell^2(\Z^d)} 
}
By the preceding $\psi_x\in (\ell^2_\sigma(\Z^d))^* = \ell^2_{-\sigma}(\Z^d)$ for $\mu$-a.e.~$x$.  Next, we claim that a.e.\ in~$x$ and in the point-wise sense on $\Z^d$
\EQ{\label{eq:claim} 
H\psi_x = \phi(x)\psi_x
}
as well as $\psi_x\not\equiv0$. 
Take $f$ on the lattice with finite support. Then $Hf$  has finite support and by~\eqref{eq:U psix}
\EQ{\nn
\langle H \psi_x,f\rangle=  \langle  \psi_x, Hf\rangle = (UHf)(x) & = \phi(x) (Uf)(x)  
= \langle \phi(x) \psi_x,f\rangle
}
all scalar products in $\ell^2(\Z^d)$, and for $\mu$-a.e.~$x$. It follows that $H \psi_x = \phi(x)\psi_x$ whence~\eqref{eq:claim}.  Now suppose $\psi_x\equiv0$ for all $x\in\calS\subset X$, $\mu(\calS)>0$. Then for all $f\in\ell^2(\Z^d)$, this implies that  $Uf=0$ $\mu$-a.e.\ on~$\calS$, and 
\[
0=\langle Uf,\chi_{\calS}\rangle_{L^2(\mu)} = \langle f, U^*\chi_{\calS}\rangle_{\ell^2(\Z^d)} 
\]
But this means that $U^*\chi_\calS=0$ which contradicts $\|U^*\chi_\calS\|_2^2 =\mu(\calS)>0$.  To summarize, there is $\calN\subset X$ with $\mu(\calN)=0$ so that for all $E\in \calG:=\phi(X\setminus \calN)$ the equation $H\psi=E\psi$ has a nonzero solution $\psi\in\ell^2_{-\sigma}(\Z^d)$. We claim that $\calE(\R\setminus\calG)=0$ where $\calE$ is the spectral resolution of~$H$, i.e., a projection-valued Borel measure with $H=\int\lambda\, \calE(d\lambda)$. But for any Borel set $B\subset\R$, $U\calE(B)\, U^* g = \chi_{\phi^{-1}(B)}g$ on all $g\in L^2(X,\mu)$. Since $\phi^{-1} (\R\setminus \calG) \subset \calN$ is a $\mu$-nullset, we conclude that $U\calE(\R\setminus \calG)\, U^*=0$ whence $\calE(\R\setminus \calG)=0$. So $H\psi=E\psi$ has nonzero solution $\psi\in\ell^2_{-\sigma}(\Z^d)$ {\em spectrally a.e.}
\end{proof}

This proof applies much more generally and the discrete Laplacian is used only sparingly. For example, it can be replaced with a self-adjoint T\"oplitz operator with exponential off-diagonal decay. 

\section{The Anderson model and localization}
\label{sec:AL FS}

Let 
\EQ{\label{eq:Anderson}
H_\omega=\Delta+V_\omega
} on $\ell^2(\Z^d)$ where $V_\omega$ is a diagonal operator given by i.i.d.~random variables at each lattice site $n\in\Z^d$. The single site distribution refers to the law of 
$V_\omega(0)$ which we assume to be a.s.~bounded. Then $H_\omega$ is a.s.\ a bounded operator. 
The notation is based on an underlying probability space $(\Omega,\Pbb)$, with $\omega\in\Omega$. 
If $d=1$ we may consider a more general model with a potential generated by any ergodic dynamical system. Thus, let  $T:X\to X$ be measure preserving, invertible,  and ergodic on~$X$ relative to the probability  measure~$\nu$. Then set $V_x(n):=f(T^n x)$ where $f:X\to \R$ is measurable and $\nu$-essentially bounded, and define $H_x:=\Delta + V_x$ in $\ell^2(\Z)$. For this model we shall now demonstrate that the spectrum of $H_x$ is deterministic by ergodicity. 

\begin{prop}
\label{prop:spec determ} 
There exists a compact set $\Sigma\subset\R$ so that $\spec(H_x)=\Sigma$ for $\nu$-almost every $x\in X$. 
\end{prop}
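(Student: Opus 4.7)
The plan is the classical ergodic theory argument due (essentially) to Pastur. The first step is to establish a unitary equivalence that moves the dynamics from the configuration space $X$ onto the lattice. Let $S:\ell^2(\Z)\to\ell^2(\Z)$ be the shift $(Sg)(n)=g(n+1)$; since $V_{Tx}(n)=f(T^{n+1}x)=V_x(n+1)=(S^{-1}V_x S)(n)$ and $\Delta$ commutes with $S$, one obtains $H_{Tx}=S^{-1}H_x S$. In particular $\spec(H_x)=\spec(H_{Tx})$ \emph{pointwise} in $x$, so the set-valued map $x\mapsto \spec(H_x)$ is strictly $T$-invariant.

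Next I would exploit ergodicity through a countable collection of test events. Let $\calI$ be the countable family of open intervals in $\R$ with rational endpoints and, for each $I\in\calI$, set
\[
A_I:=\{x\in X:\spec(H_x)\cap I\ne\emptyset\}.
\]
By the previous step each $A_I$ is $T$-invariant, so once we know it is measurable, ergodicity forces $\nu(A_I)\in\{0,1\}$. Split $\calI=\calI_1\sqcup\calI_0$ according to this dichotomy and set
\[
\Sigma:=\R\setminus\bigcup_{I\in\calI_0} I.
\]
This set is closed by construction and bounded because $\|H_x\|\le 2d+\|f\|_\infty$ uniformly in $x$, hence compact. Discarding the null set $\calN:=\bigcup_{I\in\calI_1}(X\setminus A_I)\cup\bigcup_{I\in\calI_0}A_I$, one checks for every $x\notin \calN$ that $\spec(H_x)=\Sigma$: if $E\in\spec(H_x)$ and $I\in\calI$ contains $E$ then $x\in A_I$, so $I\in\calI_1$, so $E\notin\bigcup_{\calI_0}I$; conversely if $E\in\Sigma$, every rational open neighborhood of $E$ belongs to $\calI_1$ and hence meets $\spec(H_x)$, and closedness of the spectrum gives $E\in\spec(H_x)$.

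The main technical obstacle is verifying that $A_I$ is indeed measurable. The clean way is to observe that $\spec(H_x)\cap I\ne\emptyset$ iff the spectral projection $\one_I(H_x)$ is nonzero, which in turn holds iff $\|\one_I(H_x)\delta_n\|>0$ for some $n\in\Z$. So it suffices to show that for each fixed $n$ the function $x\mapsto\|\one_I(H_x)\delta_n\|$ is measurable. Since $x\mapsto V_x(m)=f(T^m x)$ is measurable for every $m$, the map $x\mapsto H_x$ is measurable into the bounded operators with the strong operator topology; approximating $\one_I$ in a pointwise-bounded fashion by continuous functions (e.g., trapezoidal cutoffs) and using the bounded Borel functional calculus on the uniformly bounded family $\{H_x\}$ transfers this measurability to $x\mapsto\one_I(H_x)\delta_n$, and taking norms yields measurability of $A_I$. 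Once this is in place, the argument above gives the proposition with $\Sigma$ as defined.
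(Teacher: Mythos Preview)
Your proof is correct and follows essentially the same approach as the paper: establish the conjugacy $H_{Tx}=S^{-1}H_xS$, test the spectrum against the countable family of rational open intervals, invoke ergodicity for the $0$--$1$ dichotomy, and define $\Sigma$ as the complement of the union of the ``null'' intervals. The paper phrases the test events via $\rank(\calE_x(I))>0$ rather than $\spec(H_x)\cap I\ne\emptyset$, but for open $I$ these are equivalent, and your version is arguably more explicit in checking both inclusions $\spec(H_x)\subset\Sigma$ and $\Sigma\subset\spec(H_x)$ on the good set, as well as in addressing the measurability of $A_I$ (which the paper takes for granted).
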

\begin{proof}
One has the conjugation 
\EQ{\label{eq:conj}
H_{Tx}= U^{-1}\circ H_x \circ U
} with the right-shift $U:\ell^2(\Z)\to \ell^2(\Z)$ and so also $\calE_{Tx}= U^{-1}\circ \calE_x \circ U$ with the spectral resolution $\calE_x$ of $H_x$. Recall that $H_x = \int\lambda\, \calE_x(d\lambda)$ in a suitable sense.  For any Borel set $B\subset\R$, 
\[
E(B) := \{ x\in X\:|\: \rank(\calE_x(B))>0\} 
\]
is an invariant set, i.e., $E(B)=T^{-1}(E(B))$ whence $\nu(E(B))=0$ or $\nu(E(B))=1$. Define 
\[
\Sigma=\R\setminus \bigcup_{a<b } \{ (a,b) \:|\: \nu(E((a,b))=0\}
\]
where the union is over rational $a,b$. If any interval $I\subset \R$ intersects $\Sigma$, then $\rank(\calE_x(I))>0$ for $\nu$-a.e.\ $x\in X$. Hence also $I\cap \spec(H_x)\ne \emptyset$ for a.e.~$x$. 
\end{proof}

A finer description of the set $\Sigma$ can be obtained by a similar argument. The subscripts $ac, sc, pp$ stand for, respectively, absolutely continuous, singular continuous, and pure point. 

\begin{prop}
\label{prop:spec parts determ} 
There exist  compact subsets $\Sigma_{ac}$, $\Sigma_{sc}$, and $\Sigma_{pp}$ of $\Sigma$ such that $\Sigma= \Sigma_{ac}\cup \Sigma_{sc}\cup\Sigma_{pp}$ (not necessarily disjoint) such that 
for any Borel set $B$ with $B\cap \Sigma_{ac}\ne\emptyset$ the following holds: for a.e.~$x\in X$ there exists $f\in\ell^2(\Z)$ so that $\mu(A):=\langle \calE_x( A\cap B)f,f\rangle$ defined on Borel sets $A$ is an absolutely continuous probability measure.  Analogous statements hold for the singular continuous, and pure point (atomic) parts. 
\end{prop}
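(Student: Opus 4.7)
The plan is to adapt the argument of Proposition~\ref{prop:spec determ} by running it separately for each component of the Lebesgue decomposition of the spectral measure. For each $x\in X$, I would write $\ell^2(\Z)=\calH_x^{ac}\oplus\calH_x^{sc}\oplus\calH_x^{pp}$ for the decomposition into the $H_x$-invariant subspaces on which the spectral measure is, respectively, absolutely continuous, singular continuous, and purely atomic, and let $\calE_x^{ac},\calE_x^{sc},\calE_x^{pp}$ denote the corresponding projection-valued measures, so that $\calE_x=\calE_x^{ac}+\calE_x^{sc}+\calE_x^{pp}$.

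The first step is to establish covariance of this decomposition under the conjugation~\eqref{eq:conj}. Since the measure-theoretic type of spectral measures is preserved under unitary equivalence, $U^{-1}$ restricts to a unitary isomorphism $\calH_x^{*}\to \calH_{Tx}^{*}$ for each $*\in\{ac,sc,pp\}$, and hence
\[
\calE_{Tx}^{*} = U^{-1}\circ \calE_{x}^{*}\circ U
\]
for $\nu$-a.e.~$x$. With this in hand I would mimic the proof of Proposition~\ref{prop:spec determ}: for each $*$ and each Borel set $B\subset\R$, the set $E_*(B):=\{x\in X : \rank(\calE_x^*(B))>0\}$ is $T$-invariant, and ergodicity forces $\nu(E_*(B))\in\{0,1\}$. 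Define
\[
\Sigma_* := \R\setminus\bigcup\bigl\{(a,b)\,:\, a<b \text{ rational},\ \nu(E_*((a,b)))=0\bigr\},
\]
which is closed and contained in $\Sigma$, hence compact because $H_x$ is uniformly bounded. The additivity $\calE_x=\calE_x^{ac}+\calE_x^{sc}+\calE_x^{pp}$ implies that for a rational open interval $I$, $\rank\calE_x(I)>0$ if and only if $\rank\calE_x^*(I)>0$ for at least one $*$; this yields $\Sigma=\Sigma_{ac}\cup\Sigma_{sc}\cup\Sigma_{pp}$, with possible overlaps as noted in the statement.

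For the last assertion, suppose $B$ is a Borel set with $B\cap\Sigma_{ac}\ne\emptyset$; the statement only has content when $B$ contains a rational open interval $I$ meeting $\Sigma_{ac}$, since a single point of $\Sigma_{ac}$ cannot carry an absolutely continuous probability measure. Choosing such $I\subset B$, one has $\nu(E_{ac}(I))=1$ by construction, so for $\nu$-a.e.~$x$ there is a unit vector $f\in\mathrm{Range}(\calE_x^{ac}(I))\subset \calH_x^{ac}$. Because $f$ lies in the a.c.\ subspace and is $\calE_x^{ac}(I)$-invariant, $\calE_x(A\cap B)f=\calE_x^{ac}(A\cap I)f$, whence $\mu(A)=\langle \calE_x(A\cap B)f,f\rangle=\langle \calE_x^{ac}(A\cap I)f,f\rangle$ is an absolutely continuous probability measure. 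The singular continuous and pure point cases are verbatim the same, with the corresponding projection replacing $\calE_x^{ac}$.

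The main conceptual obstacle is the covariance of the Lebesgue decomposition under~\eqref{eq:conj}; this amounts to the general fact that the type of a spectral measure is a unitary invariant, but one must handle the pure point part (via equivariance of eigenvalues and their eigenspaces) and the continuous parts (via the characterization through cyclic decompositions) in a consistent way. Once covariance is established, everything else is a direct transcription of the proof of Proposition~\ref{prop:spec determ}.
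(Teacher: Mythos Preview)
Your approach is essentially the same as the paper's. The paper defines $\Sigma_{ac}$ by removing rational intervals $(a,b)$ on which, $\nu$-a.s., no $f$ has a spectral measure with an absolutely continuous component; this is exactly your condition $\rank(\calE_x^{ac}((a,b)))=0$ rephrased, since $\rank(\calE_x^{ac}(I))>0$ iff some $f$ has nontrivial a.c.\ spectral measure on~$I$. Both arguments then invoke $T$-invariance of the resulting set of~$x$ and ergodicity to get a $0$--$1$ law, and both finish by projecting onto the a.c.\ subspace to produce the required~$f$. Your framing via the explicit decomposition $\ell^2(\Z)=\calH_x^{ac}\oplus\calH_x^{sc}\oplus\calH_x^{pp}$ and the covariance $\calE_{Tx}^{*}=U^{-1}\calE_x^{*}U$ is slightly more structured, but the substance is identical.
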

\begin{proof}
We define, with the union being over rationals, 
\EQ{\nn 
\Sigma_{ac} &=\R\setminus \bigcup_{a<b } \{ (a,b) \:|\: \forall \; f\in\ell^2(\Z), \; A\mapsto \langle \calE_x((a,b)\cap A)f,f\rangle  \\
& \qquad \nu-a.s. \text{\ \ has no absolutely continuous component}\}
}
where the latter property refers to the Lebesgue decomposition. We adopt the convention that the $0$ measure has no absolutely continuous component  (as well as no singular component). 
By ergodicity and the conjugacy of $H_x$ and $H_{Tx}$, respectively, by the shift, the set 
\EQ{\nn 
Y(a,b)&:= \{x\in X\:|\:  \exists \; f\in\ell^2(\Z), \;  A\mapsto \langle \calE_x((a,b)\cap A)f,f\rangle  \\
& \qquad \text{\ \ has an absolutely continuous component}\}
}
is $T$-invariant and thus $\nu(Y(a,b))=0$ or~$\nu(Y(a,b))=1$. Hence
\EQ{\nn 
\Sigma_{ac} &=\R\setminus \bigcup_{a<b } \{ (a,b) \:|\:  a,b\in\Q,\; \nu(Y(a,b))=0  \}
}
Now suppose $B\cap \Sigma_{ac}\ne\emptyset$. Without loss of generality, $B\subset \Sigma_{ac}$. If $B\cap (a,b)\ne\emptyset$ with $a,b\in\Q$, then $\nu(Y(a,b))=1$. 
Thus $\nu$-a.s., $A\mapsto \langle \calE_x((a,b)\cap A)f,f\rangle$ is absolutely continuous for some $f$. We used here that we may pass from the existence of an absolutely continuous component to
purely absolutely continuous by projecting $f$ on the a.c.~subspace of $H_x$. The claim of having a probability measure is obtained by normalization. The proofs for the singular parts is identical. 
\end{proof}

These arguments make no use of the Laplacian and therefore apply to the diagonal operator given by multiplication by the potential $V_x$. In that case the eigenvalues are $\{ V_x(n)=f(T^n x)\:|\: n\in\Z\}$ and the closure of this set is deterministic and equals $\Sigma_{pp}$. Moreover,  $\Sigma_{ac}=\Sigma_{sc}=\emptyset$. 

Propositions~\ref{prop:spec determ}  and~\ref{prop:spec parts determ} apply as stated to the random model $H_\omega$ from above, as the reader is invited to explore. In fact,  on $\ell^2(\Z^d)$ we may consider $d$ measure preserving, invertible, commuting  transformations  $T_j:X\to X$ with the following ergodicity property: if $A\subset X$ is invariant under {\em all} $T_j$, then $\nu(A)=0$ or $\nu(A)=1$.  Then the previous two propositions apply to the operator $H_x := \Delta_{\Z^d} + V_x$ with $V_x(\bar n)=f(T_1^{n_1}\circ T_2^{n_2} \circ \cdots \circ T_d^{n_d} x)$ for any $\bar n=(n_1,\ldots,n_d)\in\Z^d$ with essentially the same proofs. See~\cite{Kir,FigP} for a systematic development of the spectral theory of ergodic families of operators. 

For the random model, which is the original Anderson model, we can now explicitly compute the almost sure spectrum $\Sigma$ in Proposition~\ref{prop:spec determ}. Recall that we are assuming bounded support of the single site distribution. 

\begin{prop}
\label{prop:Sigma}
For $H_\omega$ as defined in \eqref{eq:Anderson} satisfies 
\[
\Sigma = [-2d,2d]+ K
\]
where $K$ is the essential support of the single site distribution $V_\omega(0)$. 
\end{prop}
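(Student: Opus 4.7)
The plan is to establish the two inclusions $\Sigma \subseteq [-2d,2d] + K$ and $[-2d,2d]+K \subseteq \Sigma$ separately. I work throughout on the full-measure event $\Omega_0 := \{V_\omega(n) \in K\ \forall\, n \in \Z^d\}$, which is of full measure by a countable union bound from the definition of the essential support.

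\emph{Upper bound.} Fix $E\in\R$ with $a := \dist(E,\,[-2d,2d]+K) > 0$. On $\Omega_0$, since $V_\omega(n)\in K$ one has $[-2d,2d]+V_\omega(n) \subseteq [-2d,2d]+K$, so $\dist(E-V_\omega(n),[-2d,2d]) \geq a$ and hence $|V_\omega(n)-E|\geq 2d+a$ for every $n$. Writing $U$ for the multiplication operator by $V_\omega - E$, this yields $\|U\psi\|_2 \geq (2d+a)\|\psi\|_2$, whence by $\|\Delta\|=2d$ and the triangle inequality,
\[
\|(H_\omega-E)\psi\|_2 \geq \|U\psi\|_2 - \|\Delta\psi\|_2 \geq a\|\psi\|_2.
\]
Thus $H_\omega - E$ is self-adjoint and bounded below, hence invertible, and $E\notin\Sigma$.

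\emph{Lower bound.} Fix $E_0 \in [-2d,2d]$ and $v_0 \in K$. Since $m(\tor^d) = [-2d,2d]$, pick $\theta_0$ with $m(\theta_0)=E_0$ and consider the truncated plane wave $\psi_N(n) = N^{-d/2}e^{2\pi i\theta_0\cdot n}\chi_{B_N}(n)$ on the cube $B_N = \{0,\ldots,N-1\}^d$; a boundary-term estimate gives $\|\psi_N\|_2=1$ and $\|(\Delta-E_0)\psi_N\|_2 = O(N^{-1/2})$. Because $v_0 \in K$, $p_\epsilon := \Pbb(|V_\omega(0)-v_0|<\epsilon) > 0$ for every $\epsilon>0$, and the events
\[
A_j := \bigl\{|V_\omega(n)-v_0|<\epsilon\ \forall\, n\in B_N+jN\bigr\},\qquad j\in\Z^d,
\]
are independent with $\Pbb(A_j)=p_\epsilon^{N^d}>0$ because the translated cubes are pairwise disjoint. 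The second Borel--Cantelli lemma then produces a.s.\ infinitely many $j$ with $A_j$, and translating $\psi_N$ to any such cube yields a unit vector $\tilde\psi$ with $\|(H_\omega-E_0-v_0)\tilde\psi\|_2 \leq \|(\Delta-E_0)\psi_N\|_2 + \epsilon$, arbitrarily small on taking $N$ large and $\epsilon$ small. Hence $E_0+v_0$ lies in the approximate point spectrum of the self-adjoint operator $H_\omega$, so in $\spec(H_\omega) = \Sigma$ a.s. Intersecting the a.s.\ events over a countable dense subset of $[-2d,2d]\times K$ and invoking the closedness of $\Sigma$ gives $[-2d,2d]+K \subseteq \Sigma$.

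The main obstacle is the Borel--Cantelli step in the Weyl-sequence construction: disjointness of the test cubes is essential for invoking independence of the i.i.d.\ potential and thus for obtaining divergence of $\sum_j \Pbb(A_j)$. The remaining ingredients are a short bounded-below estimate for the upper inclusion and a standard boundary calculation for the free Laplacian on the lower side.
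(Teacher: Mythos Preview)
Your proof is correct and follows essentially the same approach as the paper: a Weyl-sequence construction with truncated plane waves on cubes where the potential is nearly constant for the lower inclusion, and a diagonal-dominance argument for the upper inclusion. The only cosmetic differences are that the paper phrases the upper bound via a Neumann series for $(I+(V_\omega-\lambda)^{-1}\Delta)^{-1}$ rather than your direct lower bound $\|(H_\omega-E)\psi\|\ge a\|\psi\|$, and the paper leaves the Borel--Cantelli step implicit while you spell it out; also, the paper does not need your countable-dense-subset step since it has already established (Proposition~\ref{prop:spec determ}) that $\Sigma$ is deterministic, so showing $E_0+v_0\in\spec(H_\omega)$ almost surely for each fixed pair immediately gives $E_0+v_0\in\Sigma$.
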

\begin{proof}
By definition, $K=\R\setminus \bigcup \{I \:|\: \mu(I)=0\}$ where $I$ is an interval with rational endpoints.  If $\lambda_0\in [-2d,2d]$, then by~\eqref{eq:LapF} there exists $\alpha\in \tor^d$ with $m(\alpha)=\lambda_0$. Thus, $\Delta e_\alpha = \lambda_0 e_\alpha$ where $e_\alpha(n)=e^{2\pi i\alpha\cdot n}$ for all $n\in\Z^d$. The following holds almost surely: given $L\ge1$, $\eps>0$, and $\lambda_1\in K$, there exists a cube $\Lambda\subset \Z^d$ of side length $L$ such that $\| V - \lambda_1\|_{\ell^\infty(\Lambda)}\le\epsilon$. Then with $\lambda=\lambda_0+\lambda_1$, 
\EQ{\nn 
(H-\lambda) \chi_{\Lambda} e_\alpha = ( V-\lambda_1)\chi_{\Lambda} e_\alpha + g 
}
with $\|g\|_2^2\lesssim |\partial\Lambda| \lesssim L^{d-1}$. Here $\partial\Lambda$ is defined as those $x\in\Lambda$ which have a nearest neighbor in~$\Z^d\setminus\Lambda$, and $|\cdot|$ is the cardinality (or volume).  Hence, with the normalized function $\varphi = \chi_{\Lambda} e_\alpha |\Lambda|^{-\frac12}$ 
\[
\| (H-\lambda) \varphi \|_2 \lesssim \eps   + L^{-\frac12} 
\]
which implies that almost surely, 
\[
\sup_{\eps>0} \| (H-\lambda-i\eps)^{-1} \| =\infty
\]
and thus $\lambda\in \spec(H)$. This shows that $[-2d,2d]+ K\subset \Sigma$.  

Conversely, suppose $\lambda \in \R\setminus ([-2d,2d]+K)$. By compactness of the sum set there exists $\delta>0$ so that almost surely 
\[
\inf_{n\in\Z^d} |V_\omega(n) - \lambda| \ge 2d+\delta
\]
Thus, a.s.\ the resolvent 
\[
(H-\lambda)^{-1}= \big(I + (V_\omega-\lambda)^{-1}\Delta\big)^{-1}(V_\omega-\lambda)^{-1}
\]
exists as a bounded operator on $\ell^2(\Z^d)$. 
\end{proof}

For any cube  $\Lambda\subset\Z^d$ we denote by $P_\Lambda$ the projection onto all {\em states}, i.e., $f\in \ell^2(\Z^d)$ supported in~$\Lambda$. Thus, $P_\Lambda f = \one_{\Lambda} f$ for  any $f\in \ell^2(\Z^d)$. By $H_\Lambda := P_\Lambda H P_\Lambda$ we denote the restriction of~$H$ as in~\eqref{eq:Anderson} to the cube~$\Lambda$ with Dirichlet boundary conditions. Note that the randomness of $H$ is understood and not indicated in the notation, say by an index~$\omega$. 

It is natural to ask about the probability that any given number $E\in\R$ comes close to the spectrum of~$H_\Lambda$. In other words, what is 
\EQ{
\label{eq:Wegner}
\Pbb(\{ \dist(E,\spec(H_\Lambda))<\eps\}) = \Pbb(\{ \| (H_\Lambda -E)^{-1}\|>\eps^{-1}\})  \,?
}
The diagonal operator given by the random potential $V$ alone satisfies
\EQ{\label{eq:Weg0} 
\Pbb(\{ \dist(E,\spec(P_\Lambda VP_\Lambda))<\eps\}) &\le  \Exp \; \#\{ n\in\Lambda \:|\: V(n) \in (E-\eps,E+\eps) \} \\
&\le |\Lambda| \mu((E-\eps,E+\eps)) \le 2 \eps  |\Lambda| \big \| \frac{d\mu}{dx} \big\|_\infty
}
where $\mu$ is the law of $V(0)$. 
A classical fact concerning the random Schr\"odinger operator is that~\eqref{eq:Wegner} permits essentially the same bound as~\eqref{eq:Weg0}. 
This is known as  {\em Wegner's estimate}, see~\cite{Wegner}. 

\begin{prop}
\label{prop:wegner}
Assume the single site distribution of the random operator \eqref{eq:Anderson} satisfies $\mu'=\frac{d\mu}{dy}\in L^\infty(\R)$. Then for all $E\in\R$, 
\EQ{\label{eq:wegner}
\Pbb(\{ \dist(E,\spec(H_\Lambda))<\eps\}) \le 4\eps \big\| \mu' \big\|_{\infty} |\Lambda|
}
for all cubes $\Lambda\subset\Z^d$ and $\eps>0$. 
\end{prop}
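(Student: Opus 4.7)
The plan is to control the expected number of eigenvalues of $H_\Lambda$ lying in the interval $I:=(E-\eps,E+\eps)$ and then apply Markov's inequality. Setting $\calN(I):=\trace(\chi_I(H_\Lambda))$, the number of eigenvalues of $H_\Lambda$ in $I$, we have $\{\dist(E,\spec(H_\Lambda))<\eps\}=\{\calN(I)\ge 1\}$, so
$$\Pbb\bigl(\dist(E,\spec(H_\Lambda))<\eps\bigr)\;\le\;\Exp[\calN(I)]\;=\;\sum_{n\in\Lambda}\Exp\langle\delta_n,\chi_I(H_\Lambda)\delta_n\rangle,$$
where I have expanded the trace in the canonical basis $\{\delta_n\}_{n\in\Lambda}$.

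For each fixed $n\in\Lambda$ I would condition on the values $\{V(m)\}_{m\ne n}$ and view $H_\Lambda$ as the rank-one family $A(v):=A(0)+v P_n$, where $P_n=|\delta_n\rangle\langle\delta_n|$ and $v=V(n)$. The crucial input is the spectral averaging identity
$$\int_\R\langle\delta_n,\chi_I(A(v))\delta_n\rangle\,dv\;=\;|I|,$$
which I would derive from the rank-one resolvent formula $G_v(z):=\langle\delta_n,(A(v)-z)^{-1}\delta_n\rangle=F(z)/(1+vF(z))$ with $F:=G_0$. A short real-variable computation gives $\int_\R \Im G_v(\lambda+i\eta)\,dv=\pi$ uniformly in $\lambda\in\R$ and $\eta>0$; Fubini together with the Poisson approximation of $\chi_I$ as $\eta\downarrow 0$ then yields the displayed identity. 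Since the density of $V(n)$ is bounded by $\|\mu'\|_\infty$,
$$\Exp\bigl[\langle\delta_n,\chi_I(H_\Lambda)\delta_n\rangle\bigm|\{V(m)\}_{m\ne n}\bigr]\;=\;\int\langle\delta_n,\chi_I(A(v))\delta_n\rangle\mu'(v)\,dv\;\le\;2\eps\|\mu'\|_\infty.$$

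Removing the conditioning and summing over $n\in\Lambda$ then yields $\Exp[\calN(I)]\le 2\eps\|\mu'\|_\infty|\Lambda|$, which is in fact slightly stronger than~\eqref{eq:wegner}. The only substantive step is the rank-one spectral averaging identity; everything else is linearity of expectation and Markov's inequality. I do not anticipate a genuine obstacle, since the integral $\int_\R|1+vF(z)|^{-2}\,dv$ can be evaluated explicitly using $\Im F(z)>0$ for $\Im z>0$, and the passage from Herglotz boundary values to $\mu_v(I)$ via Stieltjes inversion is routine.
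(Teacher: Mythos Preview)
Your argument is correct and is essentially the paper's second proof: both bound the probability by the expected trace $\Exp\,\trace\,\one_{I}(H_\Lambda)$, expand in the basis $\{\delta_n\}$, condition on $\{V(m)\}_{m\ne n}$, and integrate the diagonal matrix element over the single site variable via the rank-one resolvent formula. The only cosmetic difference is that you invoke the exact spectral averaging identity $\int_\R \langle\delta_n,\chi_I(A(v))\delta_n\rangle\,dv=|I|$ directly (yielding constant~$2$), whereas the paper first dominates $\one_I$ by $2\eps\,\Im(\cdot-(E+i\eps))^{-1}$ and then integrates the Poisson kernel (yielding $2\pi$); the paper also records a first, independent proof via the smoothed integrated density of states and eigenvalue interlacing, which you do not touch but is not needed for the stated bound.
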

\begin{proof}
We will present two proofs. For the first we follow Wegner's original argument~\cite{Wegner}. Denote by $N_\Lambda(x)$ the integrated density of states for the random operator $H_\Lambda$. To wit,  if $E_\Lambda^1 \le E_\Lambda^2 \le \ldots \le E_\Lambda^m$, $m=|\Lambda|$,  denote the eigenvalues of $H_\Lambda$ with multiplicity, then 
\[
N_\Lambda(x) = \#\, \{ 1\le j\le m\:|\: E_\Lambda^j \le x\} 
\]
Let $\varphi\ge0$ be a smooth bump function on $\R$ supported in $[-1,1]$, and set $\varphi_\eps(x)=\eps^{-1} \varphi(x/\eps)$. Normalize so that $\int_\R\varphi(x)\, dx=1$. Then   
with $F_{\Lambda,\eps} = N_\Lambda\ast \varphi_\eps$ one has 
\[
N_\Lambda(E+\eps)-N_\Lambda(E-\eps)\le F_{\Lambda,\eps} (E+2\eps)- F_{\Lambda,\eps} (E- 2\eps) = \int_{E-2\eps}^{E+2\eps} F_{\Lambda,\eps}'(x)\, dx
\]
Since $N_\Lambda$ is a monotone increasing step-function, we have $F_{\Lambda,\eps}'\ge0$. We may interpret $N_\Lambda(x) = N_\Lambda(V_\Lambda,x)$, indicating the dependence of $N_\Lambda$ on all the potential values in~$\Lambda$. Then $N_\Lambda(x+h) = N_\Lambda(V_\Lambda-h,x)$ whence 
\[
N_\Lambda'(x) = -\sum_{j\in\Lambda}  \frac{\partial N_\Lambda}{\partial v_j}(x),\qquad  F_{\Lambda,\eps}'(x)= -\sum_{j\in\Lambda}  \frac{\partial F_{\Lambda,\eps}}{\partial v_j}(x)
\]
as identities between distributional derivatives, respectively smooth functions. Note that $\frac{\partial F_{\Lambda,\eps}}{\partial v_j}\le0$ for each $j$. Indeed, $N_\Lambda$ is decreasing in each $v_j$ separately by the min-max characterization of the eigenvalues of a symmetric matrix. More generally, min-max shows that if $A\ge B$ for any two symmetric matrices, then the eigenvalues $\lambda_1\ge\lambda_2\ge\ldots$ of $A$ dominate those of $B$, denoted by $\mu_1\ge\mu_2\ge\ldots$ which  means that $\lambda_k \ge \mu_k$ for all $k$.  

Thus, with $[-L,L]$ containing the support of $\mu$, 
\EQ{\label{eq:P E}
\Pbb(\{ \dist(E,\spec(H_\Lambda))<\eps\})  &\le  - \int_{E-2\eps}^{E+2\eps}  \sum_{j\in\Lambda} \Exp  \frac{\partial F_{\Lambda,\eps}}{\partial v_j}(x)\, dx \\
&\le   \int_{E-2\eps}^{E+2\eps}  \sum_{j\in\Lambda} \Exp'_j \int_{-L}^L   -\frac{\partial F_{\Lambda,\eps}}{\partial v_j}(x)  \mu'(v_j) \, dv_j \, dx 
}
where $\Exp_j'$ refers to the expectation relative to $\{v_k\}_{k\in\Lambda\setminus\{j\}}$. Further, using the positivity of the integrand,  
\EQ{\label{eq:Weg 1}
\int_{-L}^L   -\frac{\partial F_{\Lambda,\eps}}{\partial v_j}(x)  \mu'(v_j) \, dv_j &\le \|\mu'\|_\infty \int_{-L}^L   -\frac{\partial F_{\Lambda,\eps}}{\partial v_j}(x)   \, dv_j \\
& = \|\mu'\|_\infty\big( F_{\Lambda,\eps}(v_j=-L) - F_{\Lambda,\eps}(v_j=L))\\
& = \|\mu'\|_\infty \int_\R (N_\Lambda (v_j=-L,x) - N_{\Lambda}(v_j=L,x))\varphi_\eps(x)\, dx  \le \|\mu'\|_\infty
}
For the final estimate we use that passing from $v_j=-L$ to $v_j=L$ in $H_\Lambda$ constitutes a rank-$1$ perturbation which implies by min-max that the eigenvalues of $H_\Lambda(v_j=-L)$ and $H_\Lambda(v_j=L)$ interlace. This in turn guarantees that 
\[
N_\Lambda (v_j=-L,x) - N_{\Lambda}(v_j=L,x) \le 1 \qquad \forall\; x\in\R
\]
and thus~\eqref{eq:Weg 1}. Combining~\eqref{eq:P E} with \eqref{eq:Weg 1} implies~\eqref{eq:wegner}. 

For the second proof, we estimate 
\EQ{\label{eq:Weg 3} 
\Pbb(\{ \dist(E,\spec(H_\Lambda))<\eps\})  &\le \Exp \trace \one_{[E-\eps,E+\eps]}(H_\Lambda) \le 2\eps \Exp\: \trace \Im (H_\Lambda - (E+i \eps))^{-1} \\
&\le 2\eps \, \Exp \sum_{n\in \Lambda} \Im\,\langle  (H_\Lambda - (E+i \eps))^{-1}\delta_n,\delta_n\rangle 
}
where we used that 
\[
\one_{[E-\eps,E+\eps]}(x)  \le \frac{2\eps^2}{\eps^2+(x-E)^2}  = \Im \frac{2\eps}{x-(E+i\eps)}
\]
Next, we establish a fundamental relation on the resolvents of rank-$1$ perturbations. Let $A$ be any self-adjoint operator on a Hilbert space and $\varphi$ a unit vector, $\lambda$ a real scalar. From the resolvent identity, for any complex $z$ with $\Im z>0$, 
\EQ{\nn 
(A + \lambda \varphi\otimes\varphi -z)^{-1} - (A  -z)^{-1} &= -\lambda (A  -z)^{-1} (\varphi\otimes \varphi)  (A + \lambda \varphi\otimes\varphi -z)^{-1} \\
\langle (A + \lambda \varphi\otimes\varphi -z)^{-1}\varphi,\varphi\rangle - \langle (A  -z)^{-1}\varphi,\varphi\rangle &= - \lambda \langle (A + \lambda \varphi\otimes\varphi -z)^{-1}\varphi,\varphi\rangle \langle (A  -z)^{-1}\varphi,\varphi\rangle\\
\langle (A + \lambda \varphi\otimes\varphi -z)^{-1}\varphi,\varphi\rangle &=  \big[\lambda + \langle (A  -z)^{-1}\varphi,\varphi\rangle^{-1}\big]^{-1}
}
Note that $\Im \langle (A  -z)^{-1}\varphi,\varphi\rangle^{-1} \ne 0$ by $\Im z\ne0$. Applying this to $$H_\Lambda = H_\Lambda^{(n)} + V_\omega(n) \delta_n\otimes \delta_n, \qquad n\in\Lambda$$ where $H_\Lambda^{(n)}$ is the operator with the potential at lattice site $n$ set equal to~$0$, yields 
\[
\langle  (H_\Lambda - (E+i \eps))^{-1}\delta_n,\delta_n\rangle = \big[   V_\omega(n) + \langle (H_\Lambda^{(n)} - (E+i \eps))^{-1}\delta_n,\delta_n\rangle^{-1}\big]^{-1}
\]
Writing 
\[
\langle (H_\Lambda^{(n)} - (E+i \eps))^{-1}\delta_n,\delta_n\rangle^{-1} =- t - is,\qquad s>0
\]
the random variables $t,s$ only depend on the random lattice sites in $\Lambda\setminus\{n\}$. Consequently, the inner product in the final expression of~\eqref{eq:Weg 3} is  bounded by 
\EQ{\nn 
\Exp \Im \langle  (H_\Lambda - (E+i \eps))^{-1}\delta_n,\delta_n\rangle   &\le \Exp_n'\int \frac{ s \, \mu(dx)}{(x-t)^2 + s^2} \le \pi \|\mu'\|_\infty
}
which in combination with~\eqref{eq:Weg 3} yields 
\[
\Pbb(\{ \dist(E,\spec(H_\Lambda))<\eps\})  \le 2\pi \eps \|\mu'\|_\infty |\Lambda|
\]
This is slightly worse than the previous proof but the precise constant is irrelevant. 
\end{proof}

The assumption of bounded density $\mu'$ can be relaxed, but some amount of regularity of the single-site  distribution is needed. Indeed, the {\em mobility of the eigenvalues}
under the randomness expressed by Wegner's estimate is reduced to the mobility of the potential at each site. The heuristic notion of ``mobility" refers to the movement of the eigenvalues as a result of the movement of the potential. Both arguments presented above hinge on this step. See, however, an alternative approach by Stollmann~\cite{Stoll}.  

{\em Anderson localization} refers to the following statement. 

\begin{thm}
\label{thm:AL}
Let $H = \eps^2\Delta_{\Z^d} + V_\omega$ where $V_\omega$ is random i.i.d.\ potential with single site distribution $\mu$ of compact support and of bounded density with $\|\mu'\|_\infty\le1$. Then there exists $\eps_0=\eps_0(d)>0$ so that for all $0<\eps\le \eps_0$, almost surely $\ell^2(\Z^d)$ has  an orthonormal basis of exponentially decaying eigenfunctions of the random operator $H$. In particular, the spectrum is a.s.~pure point and thus $\Sigma_{sc}=\Sigma_{ac}=\emptyset$. 
\end{thm}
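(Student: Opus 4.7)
The plan is to combine Theorem~\ref{thm:Ber} with a Fr\"ohlich--Spencer multiscale induction on the finite-volume Green's functions. Theorem~\ref{thm:Ber} reduces the problem to showing that, $\Pbb$-a.s., every polynomially bounded generalized eigenfunction $\psi$ of $H$ in fact decays exponentially: the spectral measure is then supported on energies admitting genuine $\ell^2$ eigenfunctions, which yields pure point spectrum, and a standard orthonormalization/density argument promotes these into an orthonormal basis of exponentially decaying eigenfunctions of $H$.

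The engine is a scale induction producing, for a sequence $L_0 \ll L_1 \ll L_2 \ll \cdots$ with $L_{k+1}$ a fixed power of $L_k$, the following statement: there exist $\gamma>0$ and $p>d$ such that for every cube $\Lambda_k\subset \Z^d$ of side $L_k$ and every $E\in\R$,
\EQ{\nn
\Pbb\bigl(\{ \Lambda_k \text{ is } (E,\gamma)\text{-good}\}\bigr) \;\ge\; 1 - L_k^{-p},
}
where $(E,\gamma)$-good means $H_{\Lambda_k}-E$ is invertible with kernel bound $|G_{\Lambda_k}(E)(x,y)| \le e^{-\gamma|x-y|}$ whenever $|x-y| \gtrsim L_k/10$. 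The base case at scale $L_0$ uses the small parameter: since $\|\eps^2 \Delta\|\le 2d\eps^2$, a Neumann series around the diagonal part $V_\omega$ gives the required decay for $G_{\Lambda_0}(E)$ once $E$ avoids every $V_\omega(n)$ by more than $O(\eps^2)$, and Proposition~\ref{prop:wegner} controls the probability of failure by $O(L_0^d \eps^2 \|\mu'\|_\infty)$, which is absorbed into $L_0^{-p}$ by first fixing $L_0$ large and then taking $\eps$ sufficiently small.

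The inductive step is the crux. Tile $\Lambda_{k+1}$ by overlapping sub-cubes of side $L_k$ and exclude two bad events: (i) too many $(E,\gamma)$-bad sub-cubes occur inside $\Lambda_{k+1}$, and (ii) $\|(H_{\Lambda_{k+1}}-E)^{-1}\|$ is super-polynomially large. Event (ii) has probability $\lesssim L_{k+1}^{-p}$ directly from Proposition~\ref{prop:wegner} (with a polynomial tolerance $\eps = L_{k+1}^{-q}$), while (i) has a comparable bound by the inductive hypothesis, the independence of disjoint sub-cubes, and a Chebyshev tail estimate on the number of bad sub-cubes. On the complement, one iterates the geometric resolvent identity
\EQ{\nn
G_{\Lambda_{k+1}}(x,y) \;=\; G_{\Lambda_k(x)}(x,y)\,\one_{\{y\in\Lambda_k(x)\}} \;-\; \sum_{(u,v)\in\partial\Lambda_k(x)} G_{\Lambda_k(x)}(x,u)\,G_{\Lambda_{k+1}}(v,y),
}
starting from a good sub-cube containing $x$. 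Each application either lands in the sub-cube of $y$ (producing the desired decay directly) or steps to a neighboring good sub-cube at the cost of a factor $e^{-\gamma L_k}$. Because bad sub-cubes are sparse, every path from $x$ to $y$ picks up $\gtrsim |x-y|/L_k$ such factors, which combined with the a priori bound from (ii) yields $|G_{\Lambda_{k+1}}(E)(x,y)| \le e^{-\gamma_{k+1}|x-y|}$ with $\gamma_{k+1}$ only slightly smaller than $\gamma_k$.

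With this in hand, Borel--Cantelli over the countable family of cubes centered at lattice points, combined with Fubini over energy and a Wegner-thin set of resonant $E$'s, gives for $\Pbb$-a.e.~$\omega$ the deterministic statement that for spectrally a.e.~$E$ the Green's functions of all sufficiently large cubes around every lattice point decay exponentially at rate $\gamma_\infty>0$. Applying the Poisson-type formula $\psi(x) = -\sum_{(u,v)\in\partial\Lambda} G_\Lambda(E)(x,u)\psi(v)$ to a polynomially bounded $\psi$ furnished by Theorem~\ref{thm:Ber}, with $\Lambda$ a large good cube around $x$, then forces $|\psi(x)|$ to decay exponentially in $|x|$. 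The main obstacle I anticipate is the simultaneous control across scales: the decay rate $\gamma_k$ must be shown not to collapse to zero, which forces a delicate balance between the scale ratio $L_{k+1}/L_k$, the Wegner exponent $p$, and the combinatorial cost of paths in $\Lambda_{k+1}$; a related subtlety is the \emph{double resonance} issue, addressed by upgrading the inductive hypothesis through a two-volume Wegner estimate ruling out near-coincidences between eigenvalues of $H_{\Lambda_k}$ and $H_{\Lambda_{k+1}}$, so that the probabilistic defects remain summable across all scales simultaneously.
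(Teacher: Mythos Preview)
Your overall architecture matches the paper's: Shnol's theorem plus Fr\"ohlich--Spencer multiscale, Neumann series at the initial scale, Wegner for the a priori norm bound, iterated resolvent identity for the inductive step, and a Poisson-formula endgame. The decay-rate bookkeeping you flag (keeping $\gamma_k$ bounded away from zero while the combinatorics and probabilities remain summable) is handled in the paper by Lemma~\ref{lem:ind step} and~\eqref{eq:Pk}, essentially as you describe.

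There is, however, a genuine gap in your passage from the fixed-$E$ multiscale estimate to the almost-sure conclusion. You write ``Borel--Cantelli \ldots\ combined with Fubini over energy and a Wegner-thin set of resonant $E$'s gives for $\Pbb$-a.e.~$\omega$ the deterministic statement that for spectrally a.e.~$E$ all large cubes are good.'' This Fubini step does not work: the spectral measure of $H_\omega$ is itself random and may well be singular with respect to Lebesgue (indeed, that is exactly what you are trying to prove), so a Lebesgue-a.e.\ statement in $E$ says nothing about spectrally a.e.~$E$. One cannot trade the $\Pbb$-nullset for an energy nullset this way.

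The paper's route around this is precisely the double-resonance elimination, but not as you describe it. It is \emph{not} a two-volume Wegner estimate between $H_{\Lambda_k}$ and $H_{\Lambda_{k+1}}$; rather, one proves an estimate that is \emph{uniform in $E$} on the event that two \emph{disjoint} $L_k$-boxes are simultaneously singular for the same energy (Proposition~\ref{prop:NR reg} and Corollary~\ref{cor:fix E}). The mechanism is to replace the free variable $E$ by a random eigenvalue of one box and then apply Wegner to the other box conditionally on the first; independence of disjoint boxes is what makes this possible. Once this $E$-free event is controlled, the endgame is also different from yours: for a generalized eigenfunction with $\psi(x_0)\ne0$ one first shows that $\Lambda_{L_k}(x_0)$ \emph{must} be resonant for all large $k$ (else the Poisson formula kills $\psi(x_0)$), and then the absence of double resonances forces every box at scale $L_k$ in an annulus around $x_0$ to be regular, yielding the exponential decay. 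The energy is never integrated out; it is eliminated by pinning it to the spectrum of a resonant box.
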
 

In stark contrast to this result, periodic potentials exhibit a Fourier basis of Bloch-Floquet solutions with $\Sigma_{pp}=\Sigma_{sc}=\emptyset$. 
 Thus their spectral measures are absolutely continuous. This, as well as  $V_\omega=\const.$ shows that Theorem~\ref{thm:AL} requires the removal of a zero probability event. A wide open problem is to prove $\Sigma_{ac}\ne\emptyset$ for large $\eps$ in dimensions $d\ge3$. This is known as Anderson's {\em extended states} conjecture.

There are two main techniques known to prove Theorem~\ref{thm:AL}: Fr\"ohlich-Spencer~\cite{FS} {\em multiscale analysis} on the one hand, and the Aizenman-Molchanov~\cite{AiMo} {\em fractional moment method} on the other hand.  We will sketch the former and refer to~\cite{Dirk} for an introduction to the latter. A streamlined rendition of the induction-on-scales method of~\cite{FS} can be found in~\cite{vDK}, which also does not require the use of the Simon-Wolff criterion~\cite{STom}, as earlier multi-scale proofs of Theorem~\ref{thm:AL} had done. Germinet and Klein have obtained significant refinements of the multi-scale argument in a series of papers, see for example~\cite{GerK}. 

Returning briefly to the Wegner estimate, we remark that the physically important example of Bernoulli potentials taking discrete values completely falls outside the range of Proposition~\ref{prop:wegner}. 
See~\cite{DSm} for a recent advance on this case in two dimensions and on localization for Anderson Bernoulli. The mobility of the eigenvalues of $H_\Lambda$ if $V=\pm1$ derives from the interaction between eigenfunctions and is  more delicate. On the other hand, localization in the one-dimensional Bernoulli model is a classical result by Carmona, Klein and Martinelli~\cite{CKM}. While these authors rely on the original multi-scale methods of Fr\"ohlich and Spencer, this is avoided in the recent papers \cite{B7}, \cite{GorK}, and~\cite{JitZ}. The arguments there use the large deviation theorems and the   methods of Bourgain, Goldstein~\cite{BouG}, see the final section of these notes. 

Before getting in to the details, some basic ideas and motivation. Suppose $H$ has an $\ell^2$-complete sequence of exponentially decaying normalized eigenfunctions $\{\phi_j\}_{j\in\Z}$ with eigenvalues~$E_j$, both random. Restrict $H$ to a large cube $\Lambda$ and write (heuristically) 
\[
(H_\Lambda - (E+i\eps))^{-1} \approx \sum_j \frac{\phi_j\otimes\phi_j}{E_j-(E+i\eps)}
\]
where the sum extends over all eigenfunctions ``supported" in the box $\Lambda$. It should be intuitively clear what this means.  Then $|(\phi_j\otimes\phi_j)(x,y)|\les \exp(-\gamma|x-y|)$ with $\gamma>0$ for those $j$, for which either $x$ or~$y$ are in the support of~$\phi_j$. All the others make much smaller contributions which we can essentially ignore. In conclusion, if 
\[
\|(H_\Lambda - (E+i0))^{-1}\|\le K\text{\ \ then\ \ } |(H_\Lambda - (E+i0))^{-1}(x,y)|\le K  \exp(-\gamma|x-y|)\quad\forall\; x,y\in\Lambda
\]
The condition here is precisely what Wegner's estimate controls, and a cube which exhibits both the separation from the spectrum and the exponential off-diagonal decay will be called {\em regular} for energy~$E$. A substantial effort below is to show that cubes are regular for a given energy with high probability. However, this is insufficient to prove localization and one needs to consider two disjoint cubes and understand the probability that they are both singular for {\em any energy}. The essential feature of this idea is to control the probability of an event uniformly in all energies, rather than for a fixed energy. The latter can never imply an a.s.~statement about the spectrum since we cannot take the union of a bad event over an uncountable family of energies. More importantly, excluding the event that two boxes are in resonance simultaneously (which refers that they are both singular at the same $E$) will precisely allow us to show that a.s.\ tunneling cannot occur over long distances leading to exponentially localized eigenfunctions. 

We shall now prove Theorem~\ref{thm:AL} by induction on scales. We will need to allow rectangles as regions of finite volume rather than just cubes. Thus, define 
a {\em box} centered at $x$ of scale $L$ to be any rectangle of the form
\EQ{\label{eq:box}
\Lambda_L(x) =\big\{ y\in\Z^d \:|\:  - m_j \le  y_j-x_j \le M_j \quad \forall\; 1\le j \le d \big\} 
}
where  $m_j\ge0$, $M_j\ge0$ and $\max(m_j,M_j)=L$ for each $j$. If $m_j=M_j=L$ for each $j$, then we have standard cube which we denote by~$Q_L(x)$. 
These rectangles arise as intersections of  cubes $Q_L(x)\cap Q_{\tilde L}(y)$ if $x\in Q_{\tilde L}(y)$ with $\tilde L\ge L$, see Figure~\ref{fig:Qcap}. Wegner's estimate applies unchanged to boxes. Note that for a given integer $L\ge1$ and $x\in\Z^d$ there are $B(L)=(2L+1)^d$ boxes~$\Lambda_L(x)$.  The following {\em deterministic lemma} allows us to bound the Green function at an initial scale which will be specified later. 

\begin{lem}
\label{lem:step0}
Suppose $4d\eps\le \delta$ and $0<\eps\le\frac12$. Let $\Lambda$ be any box as in~\eqref{eq:box} and assume 
 $$\dist(\spec(H_{\Lambda}),E)\ge\delta.$$ Then 
 \EQ{\label{eq:G step0}
|G_\Lambda(E)(x,y)| \le 4\delta^{-1} \eps^{|x-y|} \quad\forall \; x,y\in \Lambda. 
}
Here $|x|=\max_j |x_j|$ and $G_\Lambda(E)=(H_\Lambda-E)^{-1}$ is the Green function on $\Lambda$ with energy~$E$. 
\end{lem}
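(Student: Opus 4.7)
The plan is a Combes--Thomas style exponential conjugation argument, essentially the same one that appears in the proof of Theorem~\ref{thm:Ber}, but now exploiting the small prefactor $\eps^2$ sitting in front of $\Delta$. The spectral-gap hypothesis $\dist(\spec(H_\Lambda), E) \ge \delta$ immediately yields the a priori bound $\|G_\Lambda(E)\|_{2\to 2} \le \delta^{-1}$; the new twist compared to the generic Combes--Thomas estimate is that we are now allowed to take the conjugation parameter to be as large as $\log(1/\eps)$, rather than merely of order $\delta$.

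Concretely, for each coordinate $j\in\{1,\dots,d\}$ and real $\beta$, introduce the diagonal multiplier $(M_{\beta,j} f)(y) = e^{\beta y_j} f(y)$. Since $V_\Lambda$ is diagonal, it commutes with $M_{\beta,j}$, and a direct computation on the shift components of the Laplacian gives
\[
M_{\beta,j}^{-1}(H_\Lambda - E) M_{\beta,j} = (H_\Lambda - E) + \eps^2 S_{\beta,j},
\]
where $S_{\beta,j} = (e^{\beta} - 1) T_j^{+} + (e^{-\beta} - 1) T_j^{-}$ and $T_j^{\pm}$ are the shifts in the $\pm e_j$ directions restricted to $\Lambda$ with Dirichlet boundary condition. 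Since each $T_j^\pm$ is a contraction, $\|\eps^2 S_{\beta,j}\|_{2\to 2} \le \eps^2 \bigl(|e^{\beta} - 1| + |e^{-\beta} - 1|\bigr)$.

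Setting $|\beta| = \log(1/\eps)$ yields $\|\eps^2 S_{\beta,j}\| \le \eps - \eps^3 \le \eps$, and combining with the a priori bound and the hypothesis $4d\eps \le \delta$ produces $\|G_\Lambda(E)\,\eps^2 S_{\beta,j}\|_{2\to 2} \le \eps/\delta \le 1/(4d) \le 1/4$. A Neumann series then gives
\[
M_{\beta,j}^{-1} G_\Lambda(E) M_{\beta,j} = \bigl(I + G_\Lambda(E)\, \eps^2 S_{\beta,j}\bigr)^{-1} G_\Lambda(E),\qquad \bigl\| M_{\beta,j}^{-1} G_\Lambda(E) M_{\beta,j} \bigr\|_{2\to 2} \le \tfrac{4}{3}\delta^{-1}.
\]
Reading off the $(x,y)$ matrix element, $\langle \delta_x, M_{\beta,j}^{-1} G_\Lambda(E) M_{\beta,j} \delta_y\rangle = e^{\beta(y_j - x_j)} G_\Lambda(E)(x,y)$. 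Choosing the sign of $\beta$ opposite to that of $y_j - x_j$ converts this into $|G_\Lambda(E)(x,y)| \le \tfrac{4}{3}\delta^{-1}\, \eps^{|y_j - x_j|}$, and finally selecting $j$ so that $|y_j - x_j| = \max_k |y_k - x_k| = |x-y|$ gives the asserted bound, with room to spare in the constant.

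I do not foresee a real obstacle; everything reduces to an a priori spectral-norm estimate combined with a convergent Neumann series. The one moderately delicate point is checking that the restriction of the shifts $T_j^\pm$ to $\Lambda$ with Dirichlet conditions preserves the bound $\|T_j^\pm\|_{2\to 2} \le 1$, so that the estimate on $\|S_{\beta,j}\|$ survives the passage from $\Z^d$ to the box.
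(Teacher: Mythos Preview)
Your proof is correct and takes a genuinely different route from the paper. The paper's argument first uses min-max to deduce from $\dist(\spec(H_\Lambda),E)\ge\delta$ the \emph{pointwise} bound $|V(x)-E|\ge\delta/2$ for all $x\in\Lambda$, and then expands $G_\Lambda(E)=(I+\eps^2(V_\Lambda-E)^{-1}\Delta)^{-1}(V_\Lambda-E)^{-1}$ as a Neumann series in the hopping; the key point is that $[(V_\Lambda-E)^{-1}\Delta]^\ell(x,y)=0$ for $\ell<|x-y|_1$, so only terms of order $\ell\ge|x-y|$ contribute and each is bounded by $(4d\eps^2\delta^{-1})^\ell\le\eps^\ell$. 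Your Combes--Thomas conjugation instead uses the spectral gap only through the operator bound $\|G_\Lambda(E)\|\le\delta^{-1}$, never needing the pointwise potential estimate. The paper's approach is slightly more direct and exploits the nearest-neighbor structure of $\Delta$ explicitly; yours is more robust (it would survive exponentially decaying long-range hopping) and is exactly the mechanism already used in the proof of Theorem~\ref{thm:Ber}.

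One small slip: in your last step you want the sign of $\beta$ to be the \emph{same} as that of $y_j-x_j$, not opposite, so that $e^{\beta(y_j-x_j)}=e^{|\beta|\,|y_j-x_j|}=\eps^{-|y_j-x_j|}$ and hence $|G_\Lambda(E)(x,y)|\le\tfrac{4}{3}\delta^{-1}\eps^{|y_j-x_j|}$.
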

\begin{proof}
By min-max, $|V(x)-E|\ge \delta-2d\eps^2\ge \delta/2$ for all $x\in\Lambda$. Then 
\EQ{\nn 
G_\Lambda(E)(x,y) &= (I + \eps^2(V_\Lambda-E)^{-1}\Delta)^{-1} (V_\Lambda-E)^{-1}(x,y) \\
& = \sum_{\ell=0}^\infty (-1)^\ell\eps^{2\ell}[(V_\Lambda-E)^{-1}\Delta]^{\ell}(x,y) (V_\Lambda-E)^{-1}(x)
}
Recall that $(V_\Lambda-E)^{-1}$ is a diagonal operator. Using that $\| (V_\Lambda-E)^{-1}\Delta\|\le 2\delta^{-1}\|\Delta\|\le 4d \delta^{-1}$, 
we have $|\eps^{2\ell}[(V_\Lambda-E)^{-1}\Delta]^{\ell}(x,y)|\le (4d \eps^2\delta^{-1})^{\ell}\le \eps^\ell$ where it suffices to consider $\ell\ge |x-y|_1\ge |x-y|$ with 
 $|x|_1=\sum_{j=1}^d |x_j|$ (otherwise this term vanishes). Summing up the geometric series using that $\eps\le\frac12$ proves~\eqref{eq:G step0}. 
\end{proof}

In terms of random operators one has \eqref{eq:G step0} with high probability. 

\begin{cor}
\label{cor:step0}
Suppose $4d\eps\le \delta$ and $0<\eps\le\frac12$. Then \eqref{eq:G step0} holds up to probability at most $4\delta|\Lambda|$. 
\end{cor}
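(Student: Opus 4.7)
The plan is to combine Wegner's estimate (Proposition~\ref{prop:wegner}) with the deterministic bound of Lemma~\ref{lem:step0} in the most direct way possible. The deterministic lemma already delivers the pointwise exponential bound~\eqref{eq:G step0} on the event $\{\dist(\spec(H_\Lambda),E)\ge\delta\}$, so all that remains is to estimate the probability of the complementary event.

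First, I would note that the context is that of Theorem~\ref{thm:AL}, so the single-site density satisfies $\|\mu'\|_\infty\le 1$ and the Hamiltonian is $H=\eps^2\Delta+V_\omega$. Wegner's estimate as stated in Proposition~\ref{prop:wegner} was proved for $H=\Delta+V$, but the argument carries over verbatim to $\eps^2\Delta+V$: both proofs rely only on the fact that $H_\Lambda$ is self-adjoint and depends on the random values $v_j$ through a rank-one perturbation with unit vector $\delta_j$, and on the monotonicity/interlacing consequences of min-max. Neither property is affected by multiplying $\Delta$ by a deterministic constant. Thus
\[
\Pbb\bigl(\dist(E,\spec(H_\Lambda))<\delta\bigr)\le 4\delta\,\|\mu'\|_\infty\,|\Lambda|\le 4\delta\,|\Lambda|.
\]

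On the complementary event, the hypotheses of Lemma~\ref{lem:step0} are satisfied (since we also assume $4d\eps\le\delta$ and $0<\eps\le\tfrac12$), so~\eqref{eq:G step0} holds pointwise on $\Lambda$. Combining the two gives the claim.

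The proof is essentially a one-liner, so there is no real obstacle; the only small point to verify is that Wegner's argument applies to $\eps^2\Delta+V_\omega$ (not just $\Delta+V_\omega$) and that the hypothesis $\|\mu'\|_\infty\le 1$ carried over from Theorem~\ref{thm:AL} is in force. Both are immediate.
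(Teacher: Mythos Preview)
Your proof is correct and takes essentially the same approach as the paper, whose proof consists of the two words ``Apply Wegner.'' Your additional remarks about the applicability of Wegner's estimate to $\eps^2\Delta+V_\omega$ and the standing assumption $\|\mu'\|_\infty\le 1$ simply spell out details the paper leaves implicit.
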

\begin{proof}
Apply Wegner. 
\end{proof}

\begin{figure}[ht]
\tikzset{every picture/.style={line width=0.75pt}} 
\begin{tikzpicture}[x=0.48pt,y=0.48pt,yscale=-1,xscale=1]
\draw  [line width=1.5]  (48,18.2) -- (272.51,18.2) -- (272.51,242.71) -- (48,242.71) -- cycle ;
\draw  [fill={rgb, 255:red, 74; green, 144; blue, 226 }  ,fill opacity=1 ] (155.38,130.46) .. controls (155.38,127.76) and (157.56,125.58) .. (160.26,125.58) .. controls (162.95,125.58) and (165.14,127.76) .. (165.14,130.46) .. controls (165.14,133.15) and (162.95,135.34) .. (160.26,135.34) .. controls (157.56,135.34) and (155.38,133.15) .. (155.38,130.46) -- cycle ;
\draw  [line width=1.5]  (118,68.2) -- (342.51,68.2) -- (342.51,292.71) -- (118,292.71) -- cycle ;
\draw  [fill={rgb, 255:red, 74; green, 144; blue, 226 }  ,fill opacity=1 ] (225.38,180.46) .. controls (225.38,177.76) and (227.56,175.58) .. (230.26,175.58) .. controls (232.95,175.58) and (235.14,177.76) .. (235.14,180.46) .. controls (235.14,183.15) and (232.95,185.34) .. (230.26,185.34) .. controls (227.56,185.34) and (225.38,183.15) .. (225.38,180.46) -- cycle ;
\draw  [color={rgb, 255:red, 208; green, 2; blue, 27 }  ,draw opacity=1 ][line width=2.25]  (118,68.2) -- (272.51,68.2) -- (272.51,242.71) -- (118,242.71) -- cycle ;

\draw (181,127) node  [font=\large]  {$y$};
\draw (252,177) node  [font=\large]  {$x$};
\draw (85,41) node  [font=\large]  {$Q_{L}( y)$};
\draw (309,87) node  [font=\large]  {$Q_{L}( x)$};
\draw (214,86) node  [font=\large]  {$\Lambda _{L}( x)$};

\end{tikzpicture}
\caption{A box arising as intersection of cubes}
\label{fig:Qcap}
\end{figure}
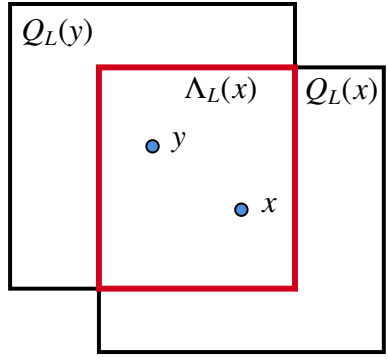
The following lemma demonstrates how to obtain exponential decay of the Green function at a large scale box if all boxes contained in it of a much smaller scale have this property, {\em with possibly one exception}. The latter is needed in order to be able to {\it square the probabilities} of a having a bad small box inside a bigger one  as we pass to the next scale. We will use a resolvent expansion, obtained by iterating the resolvent identity: let $\Lambda'\subset\Lambda$ be boxes, and let $A=H_\Lambda$, $B=H_{\Lambda'}\oplus H_{\Lambda\setminus\Lambda'}$ viewed as operators on~$\ell^2(\Lambda)$. Then, 
\EQ{\label{eq:resid}
(H_\Lambda-E)^{-1} &= (H_{\Lambda'}\oplus H_{\Lambda\setminus\Lambda'} -E)^{-1} - \eps^2 (H_\Lambda-E)^{-1}\Gamma_{\Lambda,\Lambda'} (H_{\Lambda'}\oplus H_{\Lambda\setminus\Lambda'} -E)^{-1}\\
G_\Lambda(E)(x,y) &= G_{\Lambda'}(E)(x,y)\one_{[y\in\Lambda']} - \eps^2 \!\!\!\!\sum_{(w',w)\in\partial\Lambda'} G_{\Lambda}(E)(w,y) G_{\Lambda'}(x,w')
}
for all  $x\in\Lambda'$ and $y\in\Lambda$. Here $\partial\Lambda'=\{(w',w)\:|\: w'\in\Lambda',\,w\in\Lambda\setminus\Lambda',\,|w-w'|=1\}$ is the {\em relative boundary} of $\Lambda'$ inside of~$\Lambda$,  
and $\Gamma_{\Lambda,\Lambda'}=\one_{\partial\Lambda'}$. 

\begin{lem}
\label{lem:ind step}
Let $\Lambda$ be a box at scale $L_1\ge 100L_0$ and assume $\dist(\spec(H_\Lambda),E)\ge\delta_1$ with $0<\delta_1\le 1$. Let $\Lambda'_*\subset\Lambda$ be some box at scale $L_0\ge1$ and assume that all boxes $\Lambda'\subset \Lambda\setminus\Lambda'_*$ at scale $L_0$ satisfy 
\EQ{\label{eq:G L0}
|G_{\Lambda'}(E)(x,y)|\le 4\delta_0^{-1} \eps^{\gamma_0|x-y|}\quad\forall\; x,y\in \Lambda',\; |x-y|\ge L_0/2
}
Suppose $8d (2L_0+1)^{d-1}  \eps^{2+\gamma_0 L_0}\le\delta_0\le1$.  Then
\EQ{\label{eq:G L1}
|G_{\Lambda}(E)(x,y)|\le \delta_1^{-1} \eps^{\gamma_1|x-y|}\quad\forall\; x,y\in \Lambda,\; |x-y|\ge L_1/2
}
provided
\EQ{\label{eq:gamma1}
\Big[ \gamma_0-\gamma_1\Big(1-\frac{1}{L_0+1}-\frac{8L_0}{L_1}\Big)^{-1}+\frac{2}{L_0}\Big] \log\frac1\eps \ge L_0^{-1}\log( 8d  (2L_0+1)^{d-1} \delta_0^{-1} )
} 
\end{lem}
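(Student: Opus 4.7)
The plan is to iterate the resolvent identity \eqref{eq:resid} along a discrete path in $\Lambda$ from $x$ toward $y$, selecting at each step a fresh subscale box $\Lambda'_k$ of scale $L_0$ around the current point, and detouring around the single exceptional box $\Lambda'_*$ only when unavoidable.

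Fix $x, y \in \Lambda$ with $|x - y| \ge L_1/2$. I would inductively construct a sequence $x = x_0, x_1, \ldots, x_N \in \Lambda$ with $|x_{k+1} - x_k| \le L_0 + 1$. At step $k$, select a box $\Lambda'_k$ of scale $L_0$ in the sense of \eqref{eq:box} with $x_k \in \Lambda'_k \subset \Lambda \setminus \Lambda'_*$ and $y \notin \Lambda'_k$; the asymmetric form allowed by \eqref{eq:box} (compare Figure~\ref{fig:Qcap}) provides the flexibility to shift $\Lambda'_k$ away from $\Lambda'_*$ whenever $x_k \notin \Lambda'_*$, while keeping the relevant boundary faces at $|\cdot|$-distance $L_0$ from $x_k$. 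Since $y \notin \Lambda'_k$, applying \eqref{eq:resid}, the hypothesis \eqref{eq:G L0} (with $|x_k - w'| \ge L_0/2$), and $|\partial \Lambda'_k| \le 2d(2L_0+1)^{d-1}$ yields
\[
|G_\Lambda(E)(x_k, y)| \le A \max_{w} |G_\Lambda(E)(w, y)|, \qquad A := 8d(2L_0+1)^{d-1} \delta_0^{-1} \eps^{2+\gamma_0 L_0} \le 1,
\]
where $w$ ranges over points with $|w - x_k| \le L_0 + 1$ and $w \in \Lambda \setminus \Lambda'_k$. Pick $x_{k+1}$ as a maximizer.

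The iteration stops when the path terminates near $y$ (or near $\Lambda'_*$ if $y$ lies inside it), at which point the a priori bound $|G_\Lambda(E)(x_N, y)| \le \delta_1^{-1}$ gives $|G_\Lambda(E)(x, y)| \le A^N \delta_1^{-1}$. Each step advances by at most $L_0 + 1$, and a single detour around $\Lambda'_*$ (of diameter $\le 2L_0$) adds at most $\sim 4L_0$ to the path length, so $N \ge (|x - y| - 4L_0)/(L_0 + 1)$; combined with $|x - y| \ge L_1/2$ this yields
\[
\frac{L_0 N}{|x-y|} \ge \Bigl(1 - \frac{1}{L_0+1}\Bigr)\Bigl(1 - \frac{8L_0}{L_1}\Bigr) \ge 1 - \frac{1}{L_0+1} - \frac{8L_0}{L_1}.
\]
Taking logarithms in $|G_\Lambda(E)(x, y)| \le A^N \delta_1^{-1}$ and imposing $\le \delta_1^{-1} \eps^{\gamma_1 |x-y|}$ reduces, after dividing by $|x-y| \log(1/\eps)$ and inserting the above lower bound on $L_0 N/|x-y|$, to exactly \eqref{eq:gamma1}.

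The principal technical obstacle is the geometric bookkeeping of the detour: at every step, an admissible asymmetric box $\Lambda'_k$ disjoint from $\Lambda'_*$ must be found whose used boundary points are at $|\cdot|$-distance $\ge L_0/2$ from $x_k$, and the total extra path length from rerouting around $\Lambda'_*$ must be pinned to the stated $\sim 4L_0$, yielding precisely the $8L_0/L_1$ correction. Once this is arranged, the resolvent identity, the input of \eqref{eq:G L0}, and the logarithmic rearrangement are entirely routine.
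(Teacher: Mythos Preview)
There is a genuine gap in the one-sided ``detour'' strategy. The resolvent iteration you describe does not produce a controllable path: at each step $x_{k+1}$ is the \emph{maximizer} over all boundary pairs $(w',w)\in\partial\Lambda'_k$, so you cannot steer the sequence around~$\Lambda'_*$. In particular, $x_{k+1}$ may land arbitrarily close to $\Lambda'_*$, and once $\dist(x_k,\Lambda'_*)<L_0$ no admissible box exists. Indeed, any box $\Lambda'_k$ of scale $L_0$ containing $x_k$ and disjoint from $\Lambda'_*$ must then have at least one face at $|\cdot|$-distance $<L_0/2$ from~$x_k$; that face lies in the \emph{interior} of $\Lambda$ and therefore contributes to the relative boundary in~\eqref{eq:resid}, so the hypothesis~\eqref{eq:G L0} cannot be invoked at those $w'$. (The asymmetric boxes $Q_{L_0}(\cdot)\cap\Lambda$ in the paper are harmless precisely because their short faces sit on $\partial\Lambda$ and hence are absent from the relative boundary.) In dimension $d=1$ the failure is transparent: if $\Lambda'_*$ lies between $x$ and $y$ there is no way to go around an interval, and your iteration halts after only $\sim\dist(x,\Lambda'_*)/(L_0+1)$ steps, not the claimed $(|x-y|-4L_0)/(L_0+1)$. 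Your factor $A$ also encodes $\eps^{\gamma_0 L_0}$, which presumes $|x_k-w'|=L_0$ for every contributing $w'$; this holds only for centered cubes intersected with~$\Lambda$, contradicting the simultaneous use of genuinely asymmetric boxes near~$\Lambda'_*$.

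The paper's remedy is to expand from \emph{both} endpoints: iterate the resolvent identity around $x$ with cubes $Q_{L_0}(w_j)\cap\Lambda$ for $s$ steps and around $y$ for $t$ steps, stopping each side the moment the next cube would meet~$\Lambda'_*$, and bound the single remaining factor $|G_\Lambda(w_s,\tilde w_t)|$ by the trivial $\delta_1^{-1}$. Since $|x-y|\ge L_1/2\ge 50L_0$ and $\diam\Lambda'_*\le 2L_0$, one obtains $s+t>\frac{|x-y|-3L_0}{L_0+1}-1$, which is exactly your claimed lower bound on~$N$ and leads to~\eqref{eq:gamma1} after the same logarithmic rearrangement. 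The missing idea, then, is not geometric bookkeeping but the two-sided expansion itself.
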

\begin{proof}
Pick $x,y\in \Lambda$ with $|x-y|\ge L_1/2$ and set $\Lambda_x = Q_{L_0}(x)\cap\Lambda$. If $\Lambda_x\cap \Lambda'_*\ne\emptyset$, then we do not expand around $x$ and instead expand around $y$ since $L_1\ge 100L_0$ implies that $\Lambda_y\cap \Lambda'_* =\emptyset$.  

\begin{figure}[ht]
\tikzset{every picture/.style={line width=0.75pt}} 

\begin{tikzpicture}[x=0.5pt,y=0.5pt,yscale=-1,xscale=1]

\draw  [line width=1.5]  (60,11.48) -- (480.51,11.48) -- (480.51,529.48) -- (60,529.48) -- cycle ;
\draw  [fill={rgb, 255:red, 65; green, 117; blue, 5 }  ,fill opacity=1 ] (86,60.98) .. controls (86,58.49) and (88.01,56.48) .. (90.5,56.48) .. controls (92.98,56.48) and (95,58.49) .. (95,60.98) .. controls (95,63.46) and (92.98,65.48) .. (90.5,65.48) .. controls (88.01,65.48) and (86,63.46) .. (86,60.98) -- cycle ;
\draw   (60.46,11.52) -- (139.95,11.44) -- (140.05,109.34) -- (60.57,109.42) -- cycle ;
\draw   (111,41.48) -- (208.57,41.48) -- (208.57,139.05) -- (111,139.05) -- cycle ;
\draw   (361,472.48) -- (458.9,472.48) -- (458.9,528.42) -- (361,528.42) -- cycle ;
\draw   (407,347.42) -- (479.9,347.42) -- (479.9,444.42) -- (407,444.42) -- cycle ;

\draw  [fill={rgb, 255:red, 74; green, 144; blue, 226 }  ,fill opacity=1 ] (419,472.98) .. controls (419,470.49) and (421.01,468.48) .. (423.5,468.48) .. controls (425.98,468.48) and (428,470.49) .. (428,472.98) .. controls (428,475.46) and (425.98,477.48) .. (423.5,477.48) .. controls (421.01,477.48) and (419,475.46) .. (419,472.98) -- cycle ;
\draw  [fill={rgb, 255:red, 74; green, 144; blue, 226 }  ,fill opacity=1 ] (155.28,90.26) .. controls (155.28,87.77) and (157.3,85.76) .. (159.78,85.76) .. controls (162.27,85.76) and (164.29,87.77) .. (164.29,90.26) .. controls (164.29,92.75) and (162.27,94.76) .. (159.78,94.76) .. controls (157.3,94.76) and (155.28,92.75) .. (155.28,90.26) -- cycle ;
\draw  [fill={rgb, 255:red, 74; green, 144; blue, 226 }  ,fill opacity=1 ] (136.28,90.26) .. controls (136.28,87.77) and (138.3,85.76) .. (140.78,85.76) .. controls (143.27,85.76) and (145.28,87.77) .. (145.28,90.26) .. controls (145.28,92.75) and (143.27,94.76) .. (140.78,94.76) .. controls (138.3,94.76) and (136.28,92.75) .. (136.28,90.26) -- cycle ;
\draw  [fill={rgb, 255:red, 65; green, 117; blue, 5 }  ,fill opacity=1 ] (405.28,521.26) .. controls (405.28,518.77) and (407.3,516.76) .. (409.78,516.76) .. controls (412.27,516.76) and (414.29,518.77) .. (414.29,521.26) .. controls (414.29,523.75) and (412.27,525.76) .. (409.78,525.76) .. controls (407.3,525.76) and (405.28,523.75) .. (405.28,521.26) -- cycle ;

\draw  [fill={rgb, 255:red, 208; green, 2; blue, 27 }  ,fill opacity=1 ] (248,162.48) -- (345.57,162.48) -- (345.57,260.05) -- (248,260.05) -- cycle ;
\draw   (374,410.92) -- (471.57,410.92) -- (471.57,508.49) -- (374,508.49) -- cycle ;
\draw   (367.5,281.91) -- (465.07,281.91) -- (465.07,379.48) -- (367.5,379.48) -- cycle ;
\draw  [fill={rgb, 255:red, 74; green, 144; blue, 226 }  ,fill opacity=1 ] (418.28,459.7) .. controls (418.28,457.22) and (420.3,455.2) .. (422.78,455.2) .. controls (425.27,455.2) and (427.29,457.22) .. (427.29,459.7) .. controls (427.29,462.19) and (425.27,464.2) .. (422.78,464.2) .. controls (420.3,464.2) and (418.28,462.19) .. (418.28,459.7) -- cycle ;
\draw  [fill={rgb, 255:red, 74; green, 144; blue, 226 }  ,fill opacity=1 ] (451.28,396.2) .. controls (451.28,393.71) and (453.3,391.7) .. (455.78,391.7) .. controls (458.27,391.7) and (460.29,393.71) .. (460.29,396.2) .. controls (460.29,398.69) and (458.27,400.7) .. (455.78,400.7) .. controls (453.3,400.7) and (451.28,398.69) .. (451.28,396.2) -- cycle ;
\draw  [fill={rgb, 255:red, 74; green, 144; blue, 226 }  ,fill opacity=1 ] (412.28,346.48) .. controls (412.28,344) and (414.3,341.98) .. (416.78,341.98) .. controls (419.27,341.98) and (421.29,344) .. (421.29,346.48) .. controls (421.29,348.97) and (419.27,350.98) .. (416.78,350.98) .. controls (414.3,350.98) and (412.28,348.97) .. (412.28,346.48) -- cycle ;
\draw  [fill={rgb, 255:red, 74; green, 144; blue, 226 }  ,fill opacity=1 ] (411.78,330.7) .. controls (411.78,328.21) and (413.8,326.2) .. (416.29,326.2) .. controls (418.77,326.2) and (420.79,328.21) .. (420.79,330.7) .. controls (420.79,333.19) and (418.77,335.2) .. (416.29,335.2) .. controls (413.8,335.2) and (411.78,333.19) .. (411.78,330.7) -- cycle ;
\draw  [fill={rgb, 255:red, 74; green, 144; blue, 226 }  ,fill opacity=1 ] (452.28,411.48) .. controls (452.28,409) and (454.3,406.98) .. (456.78,406.98) .. controls (459.27,406.98) and (461.29,409) .. (461.29,411.48) .. controls (461.29,413.97) and (459.27,415.98) .. (456.78,415.98) .. controls (454.3,415.98) and (452.28,413.97) .. (452.28,411.48) -- cycle ;
\draw  [fill={rgb, 255:red, 74; green, 144; blue, 226 }  ,fill opacity=1 ] (363,281.91) .. controls (363,279.43) and (365.02,277.41) .. (367.5,277.41) .. controls (369.99,277.41) and (372,279.43) .. (372,281.91) .. controls (372,284.4) and (369.99,286.42) .. (367.5,286.42) .. controls (365.02,286.42) and (363,284.4) .. (363,281.91) -- cycle ;
\draw  [fill={rgb, 255:red, 74; green, 144; blue, 226 }  ,fill opacity=1 ] (347.28,282.26) .. controls (347.28,279.77) and (349.3,277.76) .. (351.78,277.76) .. controls (354.27,277.76) and (356.29,279.77) .. (356.29,282.26) .. controls (356.29,284.75) and (354.27,286.76) .. (351.78,286.76) .. controls (349.3,286.76) and (347.28,284.75) .. (347.28,282.26) -- cycle ;
\draw  [fill={rgb, 255:red, 74; green, 144; blue, 226 }  ,fill opacity=1 ] (203.28,119.26) .. controls (203.28,116.77) and (205.3,114.76) .. (207.78,114.76) .. controls (210.27,114.76) and (212.29,116.77) .. (212.29,119.26) .. controls (212.29,121.75) and (210.27,123.76) .. (207.78,123.76) .. controls (205.3,123.76) and (203.28,121.75) .. (203.28,119.26) -- cycle ;
\draw  [fill={rgb, 255:red, 74; green, 144; blue, 226 }  ,fill opacity=1 ] (219.28,119.26) .. controls (219.28,116.77) and (221.3,114.76) .. (223.78,114.76) .. controls (226.27,114.76) and (228.29,116.77) .. (228.29,119.26) .. controls (228.29,121.75) and (226.27,123.76) .. (223.78,123.76) .. controls (221.3,123.76) and (219.28,121.75) .. (219.28,119.26) -- cycle ;

\draw (509,28.48) node  [font=\Large]  {$\Lambda $};
\draw (75,55.28) node  [font=\large]  {$x$};
\draw (90, 26) node   {$\Lambda'(x)$};
\draw (396,519) node  [font=\large]  {$y$};
\draw (330, 515) node   {$\Lambda'(y)$};
\draw (367,179.64) node  [font=\Large]  {$\Lambda '_{*}$};
\draw (240,63.64) node    {$\Lambda '( w_{1})$};
\draw (178,90.64) node    {$w_{1}$};
\draw (124,90) node    {$w'_{1}$};
\draw (422,492) node    {$\tilde w_1'$};
\draw (405,458.03) node    {$\tilde w_1$};
\draw (440,426.03) node    {$\tilde w_2'$};
\draw (436.45,395.92) node    {$\tilde w_2$};
\draw (421,365.03) node    {$\tilde w_3'$};
\draw (416,315.03) node    {$\tilde w_3$};
\draw (340,435.31) node    {$\Lambda '(\tilde{w}_{1})$};
\draw (335,309.31) node    {$\Lambda '(\tilde{w}_3)$};
\draw (375,394.31) node    {$\Lambda '(\tilde{w}_{2})$};
\draw (190,117.25) node    {$w'_{2}$};
\draw (244,119) node    {$w_{2}$};
\draw (383,296.25) node    {$\tilde w_4'$};
\draw (334,280.25) node    {$\tilde w_4$};
\end{tikzpicture}
\caption{One term in the expansion \eqref{eq:G iter} with $s=2, t=4$} 
\end{figure}
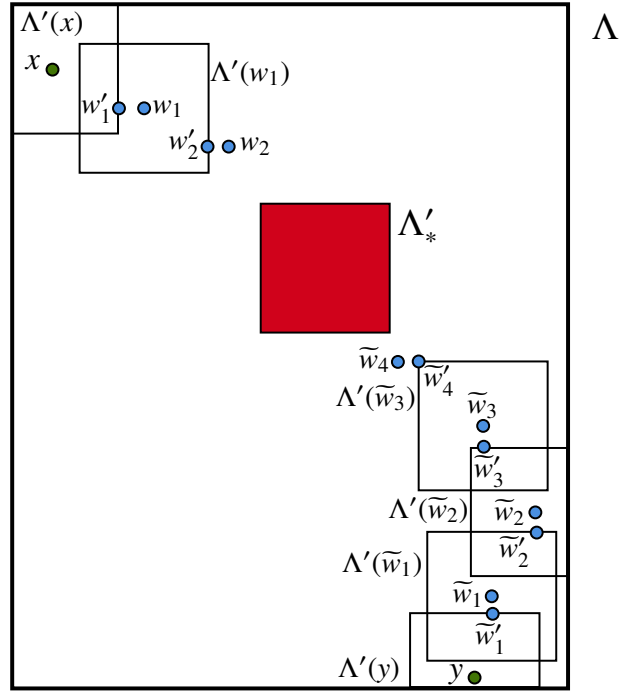

Iterating~\eqref{eq:resid} leads to an expression of the form, with $w_0=x$, $\tilde w_0=y$, 
\EQ{\label{eq:G iter}
& G_{\Lambda}(E)(x,y) = (-\eps^2)^{s+t} \!\!\! \!\!\!\sum_{(w'_1,w_1)\in\partial\Lambda'(x)}\sum_{(w'_2,w_2)\in\partial\Lambda'(w_1)}...\!\!\!\sum_{(w'_s,w_s)\in\partial\Lambda'(w_{s-1})} \prod_{j=1}^s G_{\Lambda'(w_{j-1})}(w_{j-1},w'_{j}) 
\\
& \sum_{(\tilde w'_1,\tilde w_1)\in\partial\Lambda'(y)}\sum_{(\tilde w'_2,\tilde w_2)\in\partial \Lambda'(\tilde w_1)}...\!\!\!\sum_{(\tilde w'_t,\tilde w_t)\in\partial\Lambda'(\tilde w_{t-1})} G_{\Lambda}(w_s,\tilde w_t)  \prod_{k=1}^t G_{  \Lambda' (\tilde w_{k-1}) }(\tilde w_{k-1},\tilde w'_k) 
}
with all Green function on the right-hand side being at energy $E$. 	Here $s\ge0$ and $t\ge0$ are the maximal number of steps we can take from $x$, respectively, $y$ with any boxes of size $L_0$ centered at points distance~$1$ from the boundary of a previous box, before they might intersect~$ \Lambda'_*$.  All boxes here are of the form $Q_{L_0}(w_j)\cap \Lambda=\Lambda'(w_j)$. In particular, if $(y',y)\in \partial \Lambda'(w_j)$, then $|y'-w_j|=L_0$.  We claim  that $s+t\ge1$ is the {\em minimal positive integer} with 
\EQ{\nn 
|x-y| < (t+s)(L_0+1) + 4L_0+1
}
Indeed, if $|x-y| \ge (t+s)(L_0+1) + 4L_0+1$, then $$|x-y|+1 - [(t+s)(L_0+1) + 2L_0+1] \ge 2L_0+1$$
which implies that we could go either one more step in the $x$, resp.~$y$,  expansion without intersecting~$\Lambda'_*$.  Thus, 
\EQ{\label{eq:st}
\xi-1< s+t\le \xi,\qquad \xi= \frac{|x-y|-3L_0}{L_0+1} 
}
To estimate~\eqref{eq:G iter}, use $|G_{\Lambda}(w_s,\tilde w_t)|\le \|G_\Lambda\|\le  \delta_1^{-1}$ and $|G_{\Lambda'(w_{j-1})}(w_{j-1},w'_{j})|\le 4\delta_0^{-1} \eps^{\gamma_0 L_0}$ and the same for all of the Green functions over the smaller boxes.  The number of pairs in the boundary satisfy $|\partial \Lambda'| \le 2d(2L_0+1)^{d-1}$ whence 
\EQ{
\label{eq:GEbd0}
| G_{\Lambda}(E)(x,y) | &\le \delta_1^{-1}\, (8d\eps^2 (2L_0+1)^{d-1} \delta_0^{-1} \eps^{\gamma_0 L_0}\big)^{s+t} \\
&\le \delta_1^{-1}\, (8d\eps^2 (2L_0+1)^{d-1} \delta_0^{-1} \eps^{\gamma_0 L_0}\big)^{\frac{|x-y|}{L_0+1}-4}
}
using that the parenthesis is a number in $(0,1]$. Note that $\frac{|x-y|}{L_0+1}-4\ge \frac{46L_0-4}{L_0+1}\ge 21$. 
We need to ensure that for all $x,y\in\Lambda$, $|x-y|\ge L_1/2$ we have 
\[
(8d\eps^2 (2L_0+1)^{d-1} \delta_0^{-1} \eps^{\gamma_0 L_0}\big)^{\frac{|x-y|}{L_0+1}-4} \le \eps^{\gamma_1|x-y|}
\]
which then implies \eqref{eq:G L1} via~\eqref{eq:GEbd0}. Taking logarithms, this reduces to 
\[
\Big[\gamma_0-\gamma_1\Big(1-\frac{1}{L_0+1}-\frac{4L_0}{|x-y|}\Big)^{-1}+\frac{2}{L_0}\Big] \log\frac1\eps \ge L_0^{-1}\log( 8d  (2L_0+1)^{d-1} \delta_0^{-1} )
\]
The worst case here is $|x-y|= L_1/2$ which gives~\eqref{eq:gamma1}. 
\end{proof}

\begin{defn}\label{def:Reg}
Fix any $x_0\in \Z^d$. Then we define an $L$-box $\Lambda_{L}(x_0)$ to be $(\gamma,E)$-regular if  it exhibits
\begin{itemize}
\item {\em non-resonance at energy}~$E$: $\dist(\spec(H_{\Lambda_L(x_0)}),E)\ge \delta(L)=\exp(-L^\beta)$
\item {\em exponential Green function decay}: $|G_{\Lambda_L(x_0)}(E)(x,y)|\le 4\delta(L)^{-1} \eps^{\gamma|x-y|}\;$ $\forall\; x,y\in \Lambda_L(x_0)$ with $|x-y|\ge L/2$
\end{itemize}
Here  $E\in\R$ is arbitrary, $\gamma>0$ will be specified below, depending on the scale, and $\beta\in(0,1)$ will be a  fixed constant.  A box is $(\gamma,E)$-singular if it is not $(\gamma,E)$-regular. 
\end{defn}

At the initial scale of the induction, by Corollary~\ref{cor:step0}
\EQ{\label{eq:P0}
\sup_{E\in\R} \Pbb(\{ \exists\; \Lambda_{L_0}(x_0) \text{\ \ which is\ \ }(1,E)\text{-singular}\}) &\le 4B(L_0) |Q_{L_0}(x_0)| \delta(L_0) =:p_0
}
The existence inside the set refers to all possible boxes of the initial scale~$L_0=L_0(d,\beta)\ge 100$ centered at $x_0$ of which there are   $B(L_0)=(2(L_0+1))^d$, while $|Q_{L_0}(x_0)|=(2L_0+1)^d$ is the volume of the largest $L_0$-box.  Thus we have 
\[
p_0 = 4(2(L_0+1))^d(2L_0+1)^d \exp(-L_0^\beta)
\]
where $\beta$ is just chosen here to so that $\exp(-L_0^\beta)=\delta$ and will in fact be in $(0,1)$. 
Corollary~\ref{cor:step0} requires that, where $\delta_0:=\delta(L_0)$, 
\EQ{\label{eq:eps small}
4d\eps\le \delta_0. 
}
Set $L_1=\lceil L_0^\alpha\rceil$ where $\alpha>1$ will also be specified later. By Lemma~\ref{lem:ind step}, 
\begin{equation}
\label{eq:P1}
\left\{ \begin{array}{rl}
\sup_{E\in\R}\!\!\!\!\!\! &\;\Pbb(\{ \exists\; \Lambda_{L_1}(x_0) \text{\ \ which is\ \ }(\gamma_1,E)\text{-singular}\}) \le p_1 \\
p_1 &:= 4B(L_1) |Q_{L_1}(x_0)|  \delta_1 + |Q_{L_1}(x_0)|^2 p_0^2 \\
\gamma_1 &:= \big(1-\frac{1}{L_0+1}-\frac{8L_0}{L_1}\big)(1-L_0^{\beta-1})
\end{array}
\right.
\end{equation}
In fact, $p_1$ is the sum of two contributions. On the one hand, Wegner's estimate gives, with $\delta_1=\exp(-L_1^\beta)$, 
\[
\Pbb(\{ \exists\; \Lambda_{L_1}(x_0) \text{\ \ with\ \ } \dist(\spec(H_{\Lambda_{L_1}(x_0)}),E)\le \delta_1\})  \le4B(L_1) |Q_{L_1}(x_0)|  \delta_1 
\]
which is the first term on the right-hand side of $p_1$. It controls the probability that one of the boxes $\Lambda_{L_1}(x_0) $ is resonant at energy $E$ with resonance width~$\delta_1$. The other term bounds
\[
\Pbb(\{ \exists\,\text{\ two disjoint\ }(1,E)\text{-singular\ } L_0\text{\ boxes in\ } Q_{L_1}(x_0)\})\le  |Q_{L_1}(x_0)|^2 p_0^2
\]
where the factor $|Q_{L_1}(x_0)|^2=(2L_1+1)^{2d}$ is a result of selecting the centers of the $L_0$ boxes in~$Q_{L_1}(x_0)$. 
Assuming $L_0^{\beta(\alpha-1)}\ge 2$, we have $\delta_1\le p_0^2$ and thus $p_1\le 5 B(L_1)^2 p_0^2$. 
Finally, setting $\gamma_0=1$ and $\gamma_1$ as above in~\eqref{eq:gamma1} yields
\[
(1 + {2}{L_0}^{-\beta}) \log\frac1\eps \ge 1+L_0^{-\beta} ( \log( 8d)  +(d-1)\log(2L_0+1) )
\]
In view of \eqref{eq:eps small} this holds for $L_0(d,\beta)$ sufficiently large, proving \eqref{eq:P1}. 

Inductively, define $L_{k+1}=\lceil L_k^\alpha\rceil$. In analogy with~\eqref{eq:P1} one has with $\delta_k=\exp(-L_k^\beta)$, 
\begin{equation}
\label{eq:Pk}
\left\{ \begin{array}{rl}
\sup_{E\in\R}\!\!\!\!\!\! &\;\Pbb(\{ \exists\; \Lambda_{L_k}(x_0) \text{\ \ which is\ \ }(\gamma_k,E)\text{-singular}\}) \le p_k \\
p_k &:= 4B(L_k) |Q_{L_k}(x_0)|  \delta_k + |Q_{L_k}(x_0)|^2 p_{k-1}^2 \\
\gamma_k &:= \gamma_{k-1}\big(1-\frac{1}{L_{k-1}+1}-\frac{8L_{k-1}}{L_k}\big)(1-L_{k-1}^{\beta-1})
\end{array}
\right.
\end{equation}
One has $p_k\le B(L_k)^2(4\delta_k + p_{k-1}^2)\le (2(L_k+1))^{2d} (4\delta_k + p_{k-1}^2)$. On the one hand,  of $L_0$ is large enough, then
\[
\prod_{k=1}^\infty \Big(1-\frac{1}{L_{k-1}+1}-\frac{8L_{k-1}}{L_k}\Big)(1-L_{k-1}^{\beta-1})\ge \prod_{k=1}^\infty  (1-{9}{L_{k-1}^{1-\alpha}} )(1-L_{k-1}^{\beta-1}) >0 \]
since $L_k\ge (L_0)^{\alpha^k}$ and $\sum_{j\ge0} \big(L_{j}^{1-\alpha} + L_{j}^{\beta-1}\big)=o(1)$ as $L_0\to\infty$, 
whence $\inf_{k\ge1} \gamma_k>0$ (approaches~$1$ for large $L_0$). 
On the other hand, we claim that
$
\sum_{k=0}^\infty p_k <\infty. 
$
Indeed, from \eqref{eq:Pk}, 
\EQ{\label{eq:pkbd}
L_{k+1}^m p_k &\le 4L_{k+1}^m\, B(L_k)^2  \delta_k + [(L_k^\alpha+1)^{\frac{m}{2}} (2L_k+1)^d p_{k-1}]^2 \\
&\le 4L_{k+1}^m\, (2(L_k+1))^{2d}  \delta_k+ (L_k^m p_{k-1})^2
}
where the second line holds provided $\alpha m/2 + d < m$ which requires $\alpha <2$, and for $L_0$ large enough.  We conclude from \eqref{eq:pkbd} that $L_{k+1}^m p_k\le 1$ if $L_0$ is large. Moreover, due to 
\[
\sum_k 4L_{k+1}^m\, (2(L_k+1))^{2d}  \delta_k<\infty \text{\ \ also\ \ } \sum_k L_{k+1}^m p_k <\infty
\]
which is stronger than the claim.  From the preceding analysis, the parameters need to be in the ranges  $1<\alpha<2$ and $0<\beta<1$.
To summarize, we have obtained this result. 

\begin{prop}
\label{prop:fix E}
Fix $1<\alpha<2$ and $0<\beta<1$. For $L_0=L_0(d,\alpha,\beta)$ large enough, define scales $L_{k+1}=\lceil L_k^\alpha\rceil$ for $k\ge0$. Then for arbitrary $x_0\in \Z^d$ and $E\in\R$, 
\EQ{\label{eq:pk}
\Pbb(\{ \text{all boxes\  } \Lambda_{L_k}(x_0) \text{\  are \ }(\gamma_k,E)\text{-regular}\}) \ge 1- p_k
}
with $0<p_k\le L_{k+1}^{-m}\le L_{0}^{-m \alpha^{k+1} }$ for all $k\ge0$. Here $m>\frac{2d}{2-\alpha} $ and $L_0(d,\alpha,\beta,m)$ is sufficiently large. 
The $p_k$  depend neither on $x_0$ nor on $E$, and $\gamma_k\ge\frac12$ for all $k$. 
\end{prop}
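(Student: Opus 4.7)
The plan is a straightforward induction on scales $k\ge 0$, assembling the ingredients already in place. The base case $k=0$ is handled by Corollary~\ref{cor:step0} together with a union bound over the $B(L_0)$ possible rectangles $\Lambda_{L_0}(x_0)$: this yields $p_0 \le 4B(L_0)|Q_{L_0}(x_0)|\delta_0$ as in~\eqref{eq:P0}, provided the smallness condition~\eqref{eq:eps small} on $\eps$ holds at the initial scale.

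For the inductive step, assume~\eqref{eq:pk} at scale $L_k$ (in fact, assume the stronger, uniform-in-$x_0$ form, which is what gives independence across disjoint sub-boxes). I would bound the probability that some $L_{k+1}$-box centered at $x_0$ fails to be $(\gamma_{k+1},E)$-regular by a sum of two contributions. First, Wegner's estimate applied to each of the $B(L_{k+1})$ possible boxes at resolution $\delta_{k+1}$ contributes at most $4B(L_{k+1})|Q_{L_{k+1}}(x_0)|\delta_{k+1}$. Second, the event that some $L_{k+1}$-box contains two disjoint $(\gamma_k,E)$-singular $L_k$-sub-boxes: by independence of the potentials on disjoint sub-boxes and a union bound over the at most $|Q_{L_{k+1}}(x_0)|^2$ pairs of centers, this contributes at most $|Q_{L_{k+1}}(x_0)|^2 p_k^2$. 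If neither event occurs, Lemma~\ref{lem:ind step} applies to each $L_{k+1}$-box with the exceptional sub-box $\Lambda'_\ast$ taken to be the (at most one) $(\gamma_k,E)$-singular $L_k$-sub-box inside it, producing the Green function bound at scale $L_{k+1}$ with exponent $\gamma_{k+1}$; this is exactly the recursive scheme~\eqref{eq:Pk}.

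Two quantitative conditions must be verified at each step. The smallness hypothesis $8d(2L_k+1)^{d-1}\eps^{2+\gamma_k L_k}\le \delta_k$ of Lemma~\ref{lem:ind step} holds once $L_0$ is large, because $\gamma_k\ge 1/2$, $\eps\le\delta_0/(4d)$, and $\delta_k=\exp(-L_k^\beta)$ decays only stretched-exponentially. The condition~\eqref{eq:gamma1} that defines $\gamma_{k+1}$ reduces, after isolating negligible prefactors, to asking that $L_k^{-\beta}\log(8d(2L_k+1)^{d-1}/\delta_k)$ be absorbed into the gap between $\gamma_k$ and $\gamma_{k+1}(1-O(L_k^{1-\alpha}))^{-1}$; again, this holds for $L_0$ large. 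The infinite product
\[
\gamma_\infty := \prod_{j\ge 0}\Big(1-\tfrac{1}{L_j+1}-\tfrac{8L_j}{L_{j+1}}\Big)\big(1-L_j^{\beta-1}\big)
\]
converges to a positive number arbitrarily close to $1$ as $L_0\to\infty$, since $\sum_{j\ge 0}(L_j^{1-\alpha}+L_j^{\beta-1})=o(1)$ owing to the doubly-geometric growth $L_j\ge L_0^{\alpha^j}$. Hence $\gamma_k\ge 1/2$ for every $k$ once $L_0$ is sufficiently large.

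The quantitative heart of the argument, and where I expect the main obstacle to lie, is the recursion $p_{k+1}\le B(L_{k+1})^2\big(4\delta_{k+1} + p_k^2\big)$ combined with the claim $p_k\le L_{k+1}^{-m}$. The natural approach is to multiply through by $L_{k+2}^m$ to obtain the reinforced inequality~\eqref{eq:pkbd}. The squaring on the right-hand side permits absorption into the inductive bound precisely when $\alpha m/2+d<m$, i.e.\ $m>\frac{2d}{2-\alpha}$, which is the standing hypothesis; this is the point where the restriction $\alpha<2$ becomes essential. Under this condition, induction delivers $L_{k+1}^m p_k\le 1$ for all $k\ge 0$ provided $L_0$ is taken large enough that $\sum_{k\ge 0} 4L_{k+2}^m B(L_{k+1})^2\delta_{k+1}$ is summable and small, which in turn uses that $\delta_{k+1}=\exp(-L_{k+1}^\beta)$ beats any polynomial prefactor. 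Since $L_{k+1}\le 2L_0^{\alpha^{k+1}}$, the bound $p_k\le L_{k+1}^{-m}\le L_0^{-m\alpha^{k+1}}$ follows, completing the proposition.
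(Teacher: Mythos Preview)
Your proposal is correct and follows essentially the same route as the paper: the base case via Corollary~\ref{cor:step0}, the inductive step splitting into a Wegner contribution plus a squared-probability contribution for two disjoint singular sub-boxes, the deterministic Lemma~\ref{lem:ind step} to propagate regularity, the convergent product for $\gamma_k$, and the reinforced recursion~\eqref{eq:pkbd} under the constraint $m>\tfrac{2d}{2-\alpha}$. One cosmetic slip: the final inequality $L_{k+1}^{-m}\le L_0^{-m\alpha^{k+1}}$ requires the \emph{lower} bound $L_{k+1}\ge L_0^{\alpha^{k+1}}$ (immediate from $L_{k+1}\ge L_k^\alpha$), not the upper bound you quoted.
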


\begin{remark}\label{rem:4}
We shall use below that \eqref{eq:pk} holds as stated for $k\ge1$ if we weaken the non-resonance condition in Definition~\ref{def:Reg} to the following one: 
$\dist(\spec(H_{\Lambda_{L_k}(x_0)}),E)\ge \delta(L_k)/4$. This is due to some room built into Lemma~\ref{lem:ind step}, cf. the factor $4\delta_0^{-1}$ in~\eqref{eq:G L0} 
which improves to $\delta_1^{-1}$ in~\eqref{eq:G L1}. This allows us to replace $\delta(L_k)$ in the resonance width with $\delta(L_k)/4$. 
\end{remark}

An essential feature in the derivation of this result  is {\em stability in the energy}. This means that we can obtain~\eqref{eq:pk} {\em uniformly in an energy interval}
of length half of the resonance width. 

\begin{cor}
\label{cor:fix E}
Under the assumptions of the previous proposition the following holds:  for arbitrary $x_0\in \Z^d$ and $E_*\in\R$, 
\EQ{\label{eq:pk*}
\Pbb(\{ \text{all boxes\  } \Lambda_{L_k}(x_0) \text{\  are \ }(\gamma_k,E)\text{-regular for all\ } E\in [E_*-\delta_k/2, E_*+\delta_k/2] \}) \ge 1- p_k
}
for all $k\ge1$ and the same $p_k$ as above. 
\end{cor}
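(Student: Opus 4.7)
My plan is to reprove Proposition~\ref{prop:fix E} by induction on scales, but with every probabilistic event upgraded to its uniform version over the interval $I_k := [E_* - \delta_k/2, E_* + \delta_k/2]$, and to use the slack built into Lemma~\ref{lem:ind step} (cf.\ Remark~\ref{rem:4}) to absorb the $\delta_k/2$ energy wiggle at each scale. The single most useful observation is that for any box $\Lambda$ and any $\eta > 0$ one has the tautology
\[
\{\exists\,E\in I_k:\dist(\spec(H_\Lambda),E)<\eta\}=\{\dist(\spec(H_\Lambda),E_*)<\eta+\delta_k/2\},
\]
so Wegner's estimate applied at $E_*$ with enlarged resonance width $\eta+\delta_k/2$ controls the event uniformly over $E\in I_k$ at only a constant loss.

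The inductive statement I carry is: with probability at least $1-p_j$, every box $\Lambda_{L_j}(x_0)$ is $(\gamma_j,E)$-regular in the weakened sense of Remark~\ref{rem:4} (non-resonance at level $\delta_j/4$) simultaneously for every $E\in I_j$. The base case at scale $L_0$ follows from Lemma~\ref{lem:step0} applied pointwise in $E\in I_0$, together with the Wegner-over-interval bound above taken at $\eta=\delta_0/4$. For the inductive step at scale $j$, the bad event decomposes as in the proof of \eqref{eq:Pk} into (a) a uniform-over-$I_j$ resonance event at scale $L_j$, handled directly by the observation above, and (b) the event that some pair of disjoint singular $L_{j-1}$-subboxes inside $Q_{L_j}(x_0)$ shares a common bad energy $E\in I_j$.

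For (b), I fix disjoint $L_{j-1}$-boxes $\Lambda_1,\Lambda_2\subset Q_{L_j}(x_0)$. Since the potentials on them are independent,
\[
\Pbb(\exists\,E\in I_j:\Lambda_1,\Lambda_2\text{ both singular at }E)\le \Pbb(\Lambda_1\text{ sing.\ at some }E\in I_j)\cdot\Pbb(\Lambda_2\text{ sing.\ at some }E\in I_j),
\]
and each factor is at most $p_{j-1}$ by the inductive hypothesis applied at $x_0'=x_i$ and the same $E_*$, using $I_j\subset I_{j-1}$. Summing over pairs of centers in $Q_{L_j}(x_0)$ reproduces the $|Q_{L_j}|^2 p_{j-1}^2$ term, hence the same $p_j$ up to absolute constants. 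On the complementary good event, for every fixed $E\in I_j$ there is at most one singular $L_{j-1}$-subbox at $E$; take this as $\Lambda'_*(E)$ and apply Lemma~\ref{lem:ind step} pointwise in $E$ to obtain exponential decay of $G_\Lambda(E)$ at scale $L_j$. The $E$-dependence of $\Lambda'_*(E)$ is harmless since Lemma~\ref{lem:ind step} is purely deterministic.

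To upgrade from the weakened regularity carried through the induction to the $(\gamma_k,E)$-regularity of Definition~\ref{def:Reg} at the top scale $k$, I sharpen the final Wegner step so that $\dist(\spec(H_{\Lambda_{L_k}(x_0)}),E_*)\ge 3\delta_k/2$, which forces $\dist(\spec(H_{\Lambda_{L_k}(x_0)}),E)\ge \delta_k$ for every $E\in I_k$; this costs only a constant multiple of the existing Wegner contribution and is absorbed into $p_k$. The main subtlety I expect to wrestle with is the interplay of the Remark~\ref{rem:4} slack, which permits the $\delta_k/2$ energy wiggle at intermediate scales without worsening $p_j$, and the $E$-dependence of the exceptional subbox $\Lambda'_*(E)$, which is resolved by the independence of potentials across disjoint boxes.
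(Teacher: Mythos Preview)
Your approach is essentially identical to the paper's: the same induction on scales, the same tautology reducing the uniform-in-$E$ resonance event at scale $L_k$ to a single Wegner estimate at $E_*$ with enlarged width, the same widening $I_k\subset I_{k-1}$ so that the inductive hypothesis controls each factor in the two-disjoint-singular-subboxes event via independence, and the same reliance on the slack of Remark~\ref{rem:4} to run Lemma~\ref{lem:ind step} pointwise in~$E$. You are if anything more explicit than the paper, which leaves the base case to the reader and concludes the inductive step with ``the proof proceeds exactly as before''; your final upgrade from weakened to full regularity at the top scale (Wegner at width $3\delta_k/2$) cleanly addresses a point the paper leaves implicit.
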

\begin{proof}
We leave the base case $k=1$ to the reader. 
The inductive step $k-1\to k$ with $k\ge2$, consists of the inequality  (dropping $x_0$ for simplicity)
\EQ{\nn 
& \Pbb(\{ \exists\,  \Lambda_{L_k} \text{\  which is \ }(\gamma_k,E)\text{-singular for some\ } E\in [E_*-\delta_k/2, E_*+\delta_k/2] \}) \\ 
&\le  \Pbb(\{ \exists\,  \Lambda_{L_k} \text{\  with \ } \dist(H_{\Lambda_{L_k}},E)\le \delta_k/2\text{\ for some\ } E\in [E_*-\delta_k/2, E_*+\delta_k/2]\}) \\
&\quad +  \Pbb(\{ \exists\,  \Lambda_{L_k} \text{\  with \ } \dist(H_{\Lambda_{L_k}},E)\ge  \delta_k/2\text{\ for some\ } E\in [E_*-\delta_k/2, E_*+\delta_k/2]\}) \\
&\qquad  \text{\  which is \ }(\gamma_k,E)\text{-singular for the same\ } E  \})\}) \\
&\le  \Pbb(\{ \exists\,  \Lambda_{L_k} \text{\  with \ } \dist(H_{\Lambda_{L_k}},E_*)\le \delta_k \}) \\
&\quad +  \Pbb(\{ \exists\,  \Lambda_{L_k} \text{\  which contains two disjoint \ }L_{k-1}\text{-boxes which are both}\\
&\qquad\qquad (\gamma_{k-1},E)\text{-singular for the same\ } E\in [E_*-\delta_{k-1}/2, E_*+\delta_{k-1}/2] \})\})
}
The final two lines here follow from Lemma~\ref{lem:ind step}, see also Remark~\ref{rem:4}. Note how we widened the $E$-interval in the last line, which makes it clear how
to use the inductive assumption. The proof proceeds exactly as before. 
\end{proof}

This result cannot by itself establish localization, since it only controls the resonance of $H_\Lambda$ with a given energy~$E$ on a {\em single box}~$\Lambda$. Localization requires excluding simultaneous resonances on several disjoint boxes. This in turn allows us to {\em eliminate the energy}~$E$ from these events, and thus estimate them uniformly over all energies.   It suffices to carry out this process on two disjoint boxes, in other words, to show that {\em double resonances} are highly unlikely. The following natural result contains the elimination of energies and absence of double resonances in its proof, but not in the statement. Note, however, that the event of low probability described in the following proposition is uniform in all energies. 

\begin{prop}
\label{prop:NR reg}
Under the assumptions of the previous proposition,  for all $k\ge1$, 
\EQ{\label{eq:double res}
\Pbb(\{ \text{for some\ }E\text{\ a box\ }\Lambda_{L_k}(x_0) \text{\ is nonresonant at\ }E\text{\ but\ }(\gamma_k,E)\text{-singular}\})\le q_k
}
where for any $b>1$ and all $k$,  $q_k\le L_{k+1}^{-b}$ provided $L_0$ is large (and thus $\eps$ is small) enough.  Here {\em nonresonant} is as in Definition~\ref{def:Reg} but with $\delta_k/2$. 
\end{prop}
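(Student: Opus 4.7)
The proof is by induction on $k$. The key structural step, used in the contrapositive, is Lemma~\ref{lem:ind step} together with Remark~\ref{rem:4}: if some box $\Lambda_{L_k}(x_0)$ is nonresonant at~$E$ (with width $\delta_k/2$) and yet $(\gamma_k,E)$-singular, then $Q_{L_k}(x_0)$ must contain two \emph{disjoint} $L_{k-1}$-sub-boxes $\Lambda'$, $\Lambda''$ that are simultaneously $(\gamma_{k-1},E)$-singular at the same~$E$. Since the number of such pairs of sub-boxes (together with the $B(L_k)$ choices of rectangle centered at $x_0$ at scale $L_k$) is $\lesssim L_k^{2d}$, the task reduces to bounding, for each fixed disjoint pair,
$$
\pi(\Lambda',\Lambda''):=\Pbb\bigl(\exists\,E:\ \Lambda'\text{ and }\Lambda''\text{ are both }(\gamma_{k-1},E)\text{-singular at }E\bigr).
$$

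The crux is eliminating the uncountable quantifier ``$\exists E$''. Since $\Lambda'\cap\Lambda''=\emptyset$, the potentials on them are independent. I would condition on $\Lambda'$ and split by whether $\Lambda'$ itself realizes the scale-$(k-1)$ bad event of the proposition. If it does, the contribution is at most $q_{k-1}$ directly by the inductive hypothesis. If it does not, then $\Lambda'$ is $(\gamma_{k-1},E)$-singular only at energies within $\delta_{k-1}/2$ of $\spec(H_{\Lambda'})$, so $E$ must lie in the union of at most $|\Lambda'|\le L_{k-1}^d$ intervals $I_i$ of width~$\delta_{k-1}$ about the eigenvalues $E_i'$ (deterministic once $\Lambda'$ is fixed). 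For each such~$I_i$, Corollary~\ref{cor:fix E} applied to the \emph{independent} box $\Lambda''$ with $E_*=E_i'$ gives
$$
\Pbb\bigl(\exists\,E\in I_i:\ \Lambda''\text{ is }(\gamma_{k-1},E)\text{-singular}\bigr)\le p_{k-1},
$$
so a union bound over the $|\Lambda'|$ intervals yields $\pi(\Lambda',\Lambda'')\le q_{k-1}+L_{k-1}^d\,p_{k-1}$, and summing over pairs gives the recursion
$$
q_k\ \le\ C\,L_k^{2d}\bigl(q_{k-1}+L_{k-1}^d\,p_{k-1}\bigr).
$$

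For the base case $k=1$, Lemma~\ref{lem:step0} is purely deterministic — at scale $L_0$, non-resonance forces Green-function decay — so the scale-$0$ analog of the bad event is empty, and $q_1$ comes only from a double-Wegner term of the form $L_1^{2d}\,L_0^{2d}\,\delta_0$, which is stretched-exponentially small in~$L_0$. For the inductive step, feeding in $p_{k-1}\le L_k^{-m}$ from Proposition~\ref{prop:fix E} together with the inductive hypothesis $q_{k-1}\le L_k^{-b}$, the recursion closes to $q_k\le L_{k+1}^{-b}$ provided the parameters satisfy $b(\alpha-1)\ge 2d$ and $m>2d+d/\alpha+\alpha b$, both achievable by choosing $L_0$ large and $\alpha$ close enough to~$2$. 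I expect the main obstacle to be precisely the energy-elimination step: because $E\in\R$ is uncountable, a naive union bound fails, and the whole argument hinges on the case split on whether $\Lambda'\in B_{k-1}$ to confine the ``singular'' $E$'s into the $O(L_{k-1}^d)$ eigenvalue intervals of~$\Lambda'$, after which the uniform-in-interval regularity of Corollary~\ref{cor:fix E} for the independent box $\Lambda''$ disposes of the ``$\exists E$''.
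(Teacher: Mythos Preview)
Your overall architecture is right and matches the paper: reduce via Lemma~\ref{lem:ind step} to two disjoint $(\gamma_{k-1},E)$-singular sub-boxes, eliminate $E$ by pinning it to eigenvalues of one box, and invoke Corollary~\ref{cor:fix E} on the other. The base case $k=1$ is also fine.

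The gap is in your Case~A. Bounding $\pi(\Lambda',\Lambda'')$ by $q_{k-1}$ when $\Lambda'$ realizes the scale-$(k{-}1)$ bad event throws away all information about $\Lambda''$, and the resulting recursion
\[
q_k \lesssim L_k^{2d}\bigl(q_{k-1}+L_{k-1}^d\,p_{k-1}\bigr)
\]
cannot close. Indeed, from $q_{k-1}\le L_k^{-b}$ the $q$-term gives $q_k\lesssim L_k^{2d-b}$, and requiring this to be $\le L_{k+1}^{-b}\approx L_k^{-\alpha b}$ forces $2d\le b(1-\alpha)$, which is impossible since $\alpha>1$. Your stated condition ``$b(\alpha-1)\ge 2d$'' has the wrong sign.

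The paper's fix is to split symmetrically in the two sub-boxes rather than conditioning on $\Lambda'$ alone: either at least one of $\Lambda',\Lambda''$ is $E$-resonant (then that box's eigenvalues pin down $E$ and Corollary~\ref{cor:fix E} applied to the \emph{other}, independent box gives a $p_{k-1}$-type contribution), or \emph{both} are $E$-nonresonant yet $(\gamma_{k-1},E)$-singular. In the latter case each box separately lies in the scale-$(k{-}1)$ bad event (for some, possibly different, energies), and by disjointness/independence this costs $q_{k-1}^2$. The recursion becomes
\[
q_k \lesssim L_k^{5d}\,p_{k-1} + L_k^{3d}\,q_{k-1}^2,
\]
and the quadratic term closes: $L_k^{3d}q_{k-1}^2\le L_k^{3d-2b}\le L_k^{-\alpha b}$ once $b(2-\alpha)>3d$, which is achievable since $\alpha<2$. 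In short, you must square $q_{k-1}$ via independence; a linear dependence on $q_{k-1}$ is swamped by the combinatorial prefactor.
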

\begin{proof}
 $E$-{\em nres} stands for nonresonant at energy $E$,  $E$-{\em res} for resonant at~$E$, and {\em sing} for singular,
Let $\Lambda_1=\Lambda_{L_1}(x_0)$ be $E$-nres, i.e., $\dist(\spec(H_{\Lambda_1}),E)\ge \delta_1/2$ but $(\gamma_1,E)$-sing. 
By Lemmas~\ref{lem:ind step} and~\ref{lem:step0} there can be at most one resonant $L_0$-box inside of $\Lambda_{L_1}(x_0)$ (here but only here we measure resonance with $\delta_0$ and not $\delta_0/2$). 
Hence
\EQ{\nn 
&  \Pbb(\{ \Lambda_{L_1}(x_0) \text{\ is \ }E\text{-nres  but\ }(\gamma_1,E)\text{-sing}\}) \\
&\le \Pbb(\{ \Lambda_{L_1}(x_0) \text{\ contains two disjoint\ }L_0\text{-boxes, both\ }E\text{-res}\}) \\
&\le \sum_{\Lambda_{L_1}(x_0)} \;\;\sum_{\Lambda_{L_0}'\subset \Lambda_{L_1}(x_0)} \;\;\sum_{\tilde \Lambda_{L_0}'\subset \Lambda_{L_1}(x_0)\setminus \Lambda_{L_0}'}  
 \!\!\!\!\!\Pbb(\{ \dist(\spec(H_{\tilde \Lambda_{L_0}'}),E_j)\le 2\delta_0 \text{\ for some\ }E_j\in\spec(H_{\Lambda_{L_0}'})\})  \\
&\le 8B(L_1) B(L_0)^2 |Q_{L_1}|^2 |Q_{L_0}|^2 \delta_0 =: q_1
}
In the third line the energy is eliminated by $\delta_0$-closeness of $E$ to  some eigenvalue $E_j$ of $H_{\Lambda_{L_0}'}$, and the fourth line is Wegner's estimate. The sum over $\Lambda_{L_1}(x_0)$ expresses the existence of some $L_1$-box with the stated property. 
At scale $L_k$, $k\ge2$,   and suppressing $x_0$ for simplicity, 
\EQ{\nn 
&  \Pbb(\{ \text{for some\ }E\text{\ a box\ }\Lambda_{L_k} \text{\ is\ }E\text{-nres  but\ }(\gamma_k,E)\text{-sing}\}) \\
&\le \Pbb(\{ \text{for some\ }E\text{\ a box\ }\Lambda_{L_k}  \text{\ is\ }E\text{-nres, contains two disjoint\ }L_{k-1}\text{-boxes, both\ } (\gamma_{k-1},E)\text{-sing}\}) \\
&\le \Pbb(\{ \text{for some\ }E\text{\ a box\ }\Lambda_{L_k} \text{\  contains two disjoint\ }L_{k-1}\text{-boxes, one\ }E\text{-res,  the other\ }(\gamma_{k-1},E)\text{-sing}\}) \\ 
&+ \Pbb(\{ \text{for some\ }E\text{\ a box\ }\Lambda_{L_k} \text{\  contains two disjoint\ }L_{k-1}\text{-boxes,  both\ }E\text{-nres,  but\ } (\gamma_{k-1},E)\text{-sing}\}) \\
}
In analogy with $k=1$ we bound the third line by 
\EQ{\nn 
&\le \sum_{\Lambda_{L_k}} \;\;\sum_{\Lambda_{L_{k-1}}'\subset \Lambda_{L_k}} \; \sum_{ y_0\in  \Lambda_{L_k} }
  \Pbb(\{\text{some box\ }\tilde \Lambda_{L_{k-1}}'(y_0) \subset \Lambda_{L_k}\setminus \Lambda'_{L_{k-1}}\text{\ is\ }(\gamma_{k-1},E)\text{-sing} \\
 &\hspace{5cm} \text{\ with}\,|E-E_j|\le \delta_{k-1}/2, E_j\in\spec(H_{\Lambda_{L_{k-1}}'})\})  \\
 &\le B(L_k)B(L_{k-1})|Q_{L_k}|^2 |Q_{L_{k-1}}|p_{k-1}
}
where the final estimate is given by Corollary~\ref{cor:fix E} with $E_*=E_j$. Note that while $E_j$ is random, these variables are independent from $H_{\tilde \Lambda_{L_{k-1}}'(y_0)}$. Hence we may first condition on the random variables in~$H_{\Lambda_{L_{k-1}}'}$. 
The fourth line above is bounded by the inductive assumption and independence, and so it is 
$
\le B(L_k)|Q_{L_k}|^2 q_{k-1}^2
$.
In summary, by Proposition~\ref{prop:fix E}, 
\EQ{\label{eq:qk}
q_k &\le B(L_k)B(L_{k-1})|Q_{L_k}|^2 |Q_{L_{k-1}}|p_{k-1} + B(L_k)|Q_{L_k}|^2 q_{k-1}^2 \\
&\le (2(L_k+1))^{5d} L_{k}^{-m} + (2(L_k+1))^{3d} q_{k-1}^2
}
and we conclude as for~\eqref{eq:pkbd} that $q_k\le L_{k+1}^{-b}$ for all $b$ provided $L_0$ is taken large enough depending on~$b$. 
\end{proof}

\begin{proof}[Proof of Theorem~\ref{thm:AL}]
Lets $\calB_k(x_0)$ be the event in \eqref{eq:double res}. We remove the $0$-probability event $$\calB=\cup_{x_0\in\Z^d}\limsup_{k\to\infty} \calB_k(x_0).$$ Considering a realization of the random operator $H$ off of this event, for spectrally almost every energy $E$ relative to this operator we can find a nontrivial generalized eigenfunction $H\psi=E\psi$ which is at most polynomially growing, say $|\psi(n)|\le C(\sigma, \psi) |n|^{\sigma}$ with $\sigma>\frac{d}{2}$ and all $n\in\Z^d$, $n\ne0$.   Let $\psi(x_0)\ne0$. 
 Suppose $\Lambda_{L_k}(x_0)$ is $E$-nonresonant for infinitely many $k$. Then by Proposition~\ref{prop:NR reg},  for those $k$, 
 \[
\max_{(y',y)\in\partial\Lambda_{L_k}(x_0)}  |G_{\Lambda_{L_k}(x_0)}(E)(x_0,y')| \le \eps^{\frac12 L_{k}}
 \]
 Then $[(H_{\Lambda_{L_k}(x_0)}-E)\psi](y') = -\sum_{(y',y)\in  \partial\Lambda_{L_k}(x_0)} \psi(y)$, 
 \EQ{\label{eq:Poisson}
 |\psi(x_0)| &\le \sum_{(y',y)\in \partial\Lambda_{L_k}(x_0)} |G_{\Lambda_{L_k}(x_0)}(E)(x_0,y')| |\psi(y)| \le C(\sigma, \psi) \sum_{(y',y)\in \partial\Lambda_{L_k}(x_0)} \eps^{\frac12 L_{k}} L_k^\sigma  
 }
 which is impossible for infinitely many $k$. Hence for all $k\ge k_0(\psi)$, $\Lambda_{L_k}(x_0)$ is $E$-resonant. We now remove another $0$-probability event, namely {\em double resonances } between disjoint boxes which are not too far from each other. To be specific, as above we conclude that, a.s.\ for every~$x_0$ and all but finitely many $k$, 
 \EQ{
 \label{eq:zeroP}
 \forall\, E\in\R\text{\ if \ }\Lambda_{L_k}(x_0)\text{\ is\  }E\text{-res then\ }\forall\;2L_k< |y_0-x_0| \le 100 L_{k+1}, \; 
 \Lambda_{L_k}(y_0)\text{\ is\ }(\gamma_k,E)\text{-reg}
 }
 Indeed, the resonance condition ensures that $E$ is $\delta_k/2$-close to one of the (random) eigenvalues of $H_{\Lambda_{L_k}(x_0)}$, and Corollary~\ref{cor:fix E} bounds the probability that one of the  boxes $\Lambda_{L_k}(y_0)$  is $(\gamma_k,E)$-regular by $L_{k+1}^{-m}$ where $m>2d$, say. Hence we can sum this up over all $y_0$ in a $100L_{k+1}$-box and apply Borel-Cantelli as before. Consequently, {\em all boxes} $\Lambda_{L_k}(y_0)$ are regular as stated in \eqref{eq:zeroP}. By a resolvent expansion as in the proof of Lemma~\ref{lem:ind step}, the reader will easily verify that all Green functions $G_{\Lambda_L(y_0)}(E)(x,y)$ have exponential decay if $\Lambda_L(y_0)\subset \Lambda_{100 L_{k+1}}(x_0)\setminus  \Lambda_{2 L_{k}}(x_0)$ where we take $L_{k+1}\le L\le 50L_{k+1}$.  By an estimate as in \eqref{eq:Poisson} one now concludes exponential decay of~$\psi$. 
 \end{proof}

\section{The one-dimensional quasi-periodic model}
\label{sec:FSW}

\subsection{The Fr\"ohlich-Spencer-Wittwer theorem: even potentials}

In this section we will provide a fairly complete proof sketch of the following result due to Fr\"ohlich, Spencer, and Wittwer~\cite{FSW}. The dynamics (rotation) $T_\omega\theta=\theta+\omega\mod\,1$ takes place on the torus $\tor=\R/\Z$, and all ``randomness" sits in a single parameter, namely $\theta\in\tor$.  The one-dimensional {\em random} model is treated by completely different techniques, starting from F\"urstenberg's classical theorem on positive Lyapunov exponents for random $SL(2,\R)$ cocycles, cf.~\cite{Viana} for a comprehensive exposition of this fundamental result as well as Lyapunov exponents in general. See  the recent papers~\cite{B7}, \cite{GorK}, and~\cite{JitZ} for streamlined elegant treatments of the $1$-dimensional random Anderson model, including the Bernoulli case. For quasi-periodic (and other highly correlated) cocycles, F\"urstenberg's global theorem does not apply, and other techniques must be used. The proof of the following result will in fact be perturbative.

\begin{thm}
\label{thm:FSW}
Let $v\in C^2(\tor)$ be even, with exactly two nondegenerate critical points. Define
\EQ{\label{eq:qp op}
H_{\eps}(\theta) &= \eps^2 \Delta_{\Z} + V_\theta, \quad V_\theta(n) = v(T_\omega^n\theta) \;\; \forall\; n\in\Z
}
where $\omega\in\tor$ is Diophantine, viz.~$\| n\omega\|\ge c_0\, n^{-2}$ for all $n\ge1$ with some $c_0>0$. There exists $\eps_0(c_0,v)$ such that for all $0<\eps\le \eps_0$ the operators $H_{\theta,\eps}$ exhibit Anderson localization for a.e.~$\theta\in\tor$. 
\end{thm}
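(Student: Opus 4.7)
The plan is to adapt the multiscale scheme of Section~\ref{sec:AL FS} to the quasi-periodic setting, with the single parameter $\theta\in\tor$ replacing the random seed and Lebesgue measure on $\tor$ replacing the product measure on $\Omega$. At $\eps=0$ the operator $H_\eps(\theta)$ is diagonal with eigenvalues $v(\theta+n\omega)$, so Lemma~\ref{lem:step0} yields a Green-function bound on any box $\Lambda$ at energy $E$ whenever $\min_{n\in\Lambda}|v(\theta+n\omega)-E|\ge\delta$. The first step is an initial-scale estimate at some $L_0=L_0(c_0,v)$: the Diophantine condition together with $v\in C^2$ ensures that for $\theta$ outside a set of small measure the orbit $\{\theta+n\omega\}_{|n|\le L_0}$ stays away from the two critical points of $v$, so that $H_{\Lambda_{L_0}(x_0)}(\theta)$ is $(1,E)$-regular uniformly in $x_0$.

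Next I would establish a quantitative replacement for Proposition~\ref{prop:wegner}. Because $v\in C^2(\tor)$ has exactly two \emph{nondegenerate} critical points, the sublevel set $\{\theta\in\tor:|v(\theta)-E|<\delta\}$ has Lebesgue measure at most $C\sqrt{\delta}$ for every $E\in\R$, the square root arising near the critical values. Taking a union over $n\in\Lambda$ and using min-max to compare $\spec(H_\Lambda(\theta))$ with the diagonal part $\{v(\theta+n\omega):n\in\Lambda\}$ up to an error $O(\eps^2)$, one obtains
\[
\big|\{\theta\in\tor:\dist(E,\spec(H_\Lambda(\theta)))<\delta\}\big|\le C|\Lambda|\sqrt{\delta}
\]
uniformly in $E$ provided $\delta\gg\eps^2$. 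This is the quasi-periodic Wegner bound and, with the same proof, it is uniform in energy windows of size $\sim\delta$, giving the analog of Corollary~\ref{cor:fix E}.

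The deterministic induction step of Lemma~\ref{lem:ind step} carries over unchanged. What must be re-analyzed is the measure of bad $\theta$ at each scale. The nonresonance contribution is controlled by the Wegner analog above. The second contribution, which in the random case was $|\Lambda|^2p_{k-1}^2$ via independence, is far more delicate: because $H_{\Lambda_{L_k}(x_2)}(\theta)$ is conjugate to $H_{\Lambda_{L_k}(x_1)}(\theta+(x_2-x_1)\omega)$, the events that two disjoint boxes are $(\gamma_k,E)$-singular at a common $E$ are not independent, and the squared-probability strategy of Proposition~\ref{prop:fix E} is not available. Bounding such double resonances without independence is the main obstacle.

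This is where the evenness of $v$ enters decisively. Parametrize the Dirichlet eigenvalues of the centered box $H_{[-L,L]}(\theta)$ as $E_j(\theta)$; from the commutation $H_{[-L,L]}(\theta)=JH_{[-L,L]}(-\theta)J$ with $J:\delta_n\mapsto\delta_{-n}$ one has $E_j(-\theta)=E_j(\theta)$, so $\theta\in\{0,\tfrac12\}$ are the only candidate points where $\tfrac{dE_j}{d\theta}$ is forced to vanish. Outside a small neighborhood of these two symmetric points, first-order perturbation theory combined with the inductive Green-function decay shows that the eigenfunction at $E_j$ is concentrated at some anchor site $n_j$, so $\tfrac{dE_j}{d\theta}(\theta)\approx v'(\theta+n_j\omega)$ is bounded below away from the two critical points of $v$. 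The equation $E_j(\theta+x_1\omega)=E_k(\theta+x_2\omega)$ for a double resonance then becomes a transverse equation in $\theta$ whose solution set has measure $\lesssim \delta_k^{1/2}\|(x_2-x_1)\omega\|^{-1}$, and the Diophantine bound on $\omega$ turns this into an acceptable scale-$L_k$ estimate. Inside the symmetric neighborhoods one exploits evenness directly: the resonant subproblem reduces to a $2\times 2$ block between symmetric partners $n_j$ and $-n_j$ which is block-diagonalized by projection onto even/odd parity sectors, so the resonance stays confined to a bounded pair and cannot propagate to distance comparable with $L_k$. With these ingredients the induction on scales closes in the manner of Section~\ref{sec:AL FS}, Borel--Cantelli removes a measure-zero exceptional set of $\theta$, and Theorem~\ref{thm:Ber} produces the exponentially decaying basis of eigenfunctions for a.e.\ $\theta\in\tor$.
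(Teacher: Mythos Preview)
Your proposal has a genuine gap at the two places where you depart from Section~\ref{sec:AL FS}, and these are precisely the heart of the proof.

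First, your Wegner substitute breaks down after the initial scale. You compare $\spec(H_\Lambda(\theta))$ to the diagonal $\{v(\theta+n\omega)\}$ by min-max ``up to $O(\eps^2)$'' and require $\delta\gg\eps^2$. But the multiscale scheme needs resonance widths $\delta_k=\eps^{\ell_k^{2/3}}$ which are astronomically smaller than $\eps^2$; at those scales the eigenvalue curves $E_j(\theta)$ of $H_\Lambda(\theta)$ are no longer $O(\eps^2)$-close to the bare potential and can develop very flat stretches (see Figure~\ref{fig:crossing}). A sublevel estimate on $v$ alone gives no control on $|\{\theta:\dist(E,\spec(H_\Lambda(\theta)))<\delta_k\}|$ for such $\delta_k$. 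The paper does \emph{not} prove or use a Wegner estimate here; instead it tracks specific Rellich branches $E^i_n(\theta)$ and establishes a Morse-type nondegeneracy \eqref{eq:Enondeg}, \eqref{eq:E'' big} for them inductively.

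Second, your double-resonance argument is the crux and is incomplete. The dangerous case is exactly when both $E_j(\theta+x_1\omega)$ and $E_k(\theta+x_2\omega)$ have small slope simultaneously, so your transversality claim $E_j'\approx v'(\theta+n_j\omega)\ne0$ fails precisely there. Your ``$2\times2$ block in even/odd parity'' sketch does not establish the quantitative spectral gap needed: one must show the gap between the resolved pair exceeds the next resonance width $\delta_{n+1}$ (Lemma~\ref{lem:sep}, and \eqref{eq:EtilE sep}) and that the new critical point is nondegenerate with a large second derivative \eqref{eq:E'' big} obtained from second-order perturbation theory. Without this, the Morse structure is lost at the next scale and the induction does not close. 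The paper's mechanism is also different in how $\theta$ is eliminated: rather than bounding a bad-$\theta$ set at each scale uniformly in $E$, one works at fixed $(\theta_*,E_*)$, proves $m(c^i_n,c^j_n)^2\lesssim\delta_n$ via the evenness symmetry (Lemma~\ref{lem:step 1} and \eqref{eq:mn}), and only at the very end removes the null set of $\theta$ for which the reflection branch $\|2\theta+(c^i_n+c^j_n)\omega\|\lesssim\delta_n^{1/2}$ occurs infinitely often, cf.~\eqref{eq:reflec}. Your outline does not contain the function $m$, the simple/double resonance dichotomy, the separation Lemma~\ref{lem:sep}, or the inductive Morse bound, and without these ingredients the argument cannot be completed.
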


The evenness assumption allows for substantial simplifications as we shall see. Note that it entails that $V$ is symmetric about~$\frac12$. Theorem~\ref{thm:FSW} cannot hold for all $\theta$, see~\cite{JS}. 
As in the previous section, we shall drop the index~$\eps$ and simply write $H(\theta)$ for~\eqref{eq:qp op}, and $H_\Lambda(\theta)$ for its finite volume version. It is important to keep track of~$\theta$ so we include it in the notation (while in the random case we could drop the $\omega$, the variable in the probability space). 
\begin{figure}[ht]
\tikzset{every picture/.style={line width=0.75pt}} 

\begin{tikzpicture}[x=0.55pt,y=0.55pt,yscale=-1,xscale=1]

\draw    (40.91,1.2) -- (40.91,301.2) ;
\draw    (40.91,151.2) -- (468.41,149.66) ;
\draw [line width=2.25]    (41.91,51.2) .. controls (198.91,49.2) and (141.91,280.2) .. (250.91,280.2) ;
\draw [line width=2.25]    (250.91,280.2) .. controls (358.91,280.53) and (310.91,50.53) .. (467.91,50.53) ;
\draw    (467.91,-0.47) -- (468.91,299.78) ;
\draw [color={rgb, 255:red, 208; green, 2; blue, 27 }  ,draw opacity=1 ][line width=1.5]  [dash pattern={on 5.63pt off 4.5pt}]  (39.91,80.87) -- (468.91,79.87) ;
\draw [color={rgb, 255:red, 208; green, 2; blue, 27 }  ,draw opacity=1 ][line width=1.5]  [dash pattern={on 5.63pt off 4.5pt}]  (40.91,98.87) -- (468.91,99.87) ;
\draw [line width=1.5]  [dash pattern={on 1.69pt off 2.76pt}]  (119,81) -- (117.91,149.87) ;
\draw [line width=1.5]  [dash pattern={on 1.69pt off 2.76pt}]  (134.91,98.87) -- (134.91,150.87) ;
\draw [line width=1.5]  [dash pattern={on 1.69pt off 2.76pt}]  (389.77,78.17) -- (389.5,149) ;
\draw [line width=1.5]  [dash pattern={on 1.69pt off 2.76pt}]  (373,99) -- (372.77,149.17) ;
\draw [color={rgb, 255:red, 208; green, 2; blue, 27 }  ,draw opacity=1 ][line width=3]    (117.91,149.87) -- (136.63,149.78) ;
\draw [color={rgb, 255:red, 208; green, 2; blue, 27 }  ,draw opacity=1 ][line width=3]    (372.77,149.17) -- (389.77,149.17) ;

\draw (27,152) node   [align=left] {0};
\draw (481,151) node   [align=left] {1};
\draw (78,37) node  [rotate=-359.32,xslant=0.04]  {$V( \theta )$};
\draw (251,169) node    {$\frac{1}{2}$};
\draw (128,172) node  [font=\large]  {$J_{1}$};
\draw (383,172) node  [font=\large]  {$J_{2}$};
\draw (501,107) node  [rotate=-359.83]  {$E_{\star } -\delta _{0}$};
\draw (502,74) node    {$E_{\star } +\delta _{0}$};

\end{tikzpicture}
\caption{The potential and energy strip at the initial step}\label{fig:V}
\end{figure}
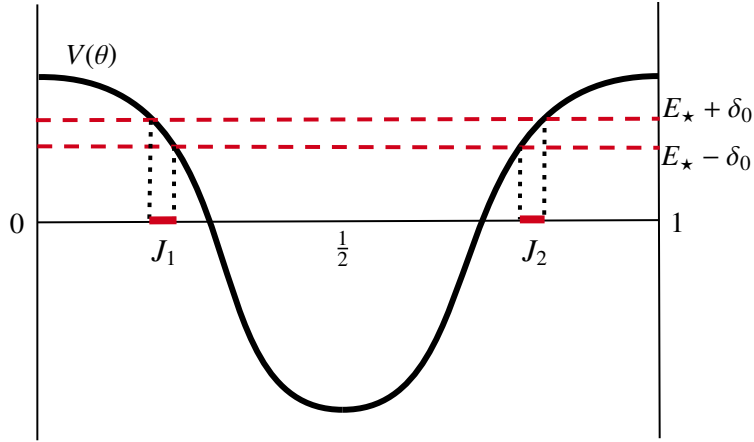
Fix $\theta_*\in\tor$ and $E_*\in\R$. 
The singular sites relative to $\theta_*, E_*$ are defined as 
\EQ{\label{eq:S0}
\calS_0=\calS_0(\theta_*,E_*) &:= \{ n\in\Z\:|\: |V(\theta_*+n\omega)-E_*|\le \delta_0\}  \\
& = \{ n\in\Z\:|\: T^n\theta_* \in V^{-1}([E_*-\delta_0,E_*+\delta_0]) \}
}
Figure~\ref{fig:V} shows one scenario for which $V^{-1}([E_*-\delta_0,E_*+\delta_0])=J_1\cup J_2$ with disjoint intervals. There might be just one interval or the set could be empty. By our assumption of $V$ Morse, $\max_{i=1,2}|J_i|\le C_0(v)\delta_0^{\frac12}$ for all cases. We choose the {\em resonance width} $\delta_0=A_0\eps$ with a large constant $A_0$. We investigate the structure of~$\calS_0$ by means of the example $V(\theta)=\cos(2\pi \theta)$. If $k,\ell \in\calS_0$ are distinct, then 
\[
|\sin(\pi(k-\ell)\omega) \sin(\pi(2\theta_*+(k+\ell)\omega))|\le \delta_0
\]
which implies for small $\delta_0$ that 
\EQ{\nn 
m(k,\ell):=\min( \| (k-\ell)\omega\|, \| 2\theta_*+(k+\ell)\omega\|)\le 2\sqrt{\delta_0}
}
The first alternative here, viz.~$ \| (k-\ell)\omega\|\le 2\sqrt{\delta_0}$ occurs precisely if both $T^k_\omega\theta_*$ and $T^\ell_\omega\theta_*$ fall into $J_1$, or both fall into $J_2$. The second one occurs if they fall into different intervals.  The Diophantine assumption implies that 
\[
c_0|k-\ell|^{-2}\le \| (k-\ell)\omega\|\le 2\sqrt{\delta_0},\qquad |k-\ell|\gtrsim \delta_0^{-\frac14}
\]
Henceforth $\gtrsim$ and $\lesssim$ will indicate multiplicative constants depending on $c_0,v$. On the other hand, $\| 2\theta_*+(k+\ell)\omega\|\le 2\sqrt{\delta_0}$ might occur for $\ell=k+1$ which is the case if $T_\omega(J_1)\cap J_2\ne\emptyset$. It is clear that the  function $m$ appears not just for cosine, but in fact for any $v$ as in the theorem. 

\begin{lem}
\label{lem:step 0}
Any two distinct $k,\ell\in \calS_0$ satisfy
$
m(k,\ell)^2\lesssim \delta_0, 
$
and any three distinct points $k,\ell,n\in\calS_0$ satisfy 
\[
\max(|k-\ell|, |\ell -n|)\gtrsim \delta_0^{-\frac14}
\]
\end{lem}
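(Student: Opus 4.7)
The plan is to exploit the Morse-even structure of $V$. Since $v$ is even and $1$-periodic, the two non-degenerate critical points are forced to lie at $0$ and $\tfrac12$, and $v$ is strictly monotone on $[0,\tfrac12]$. A quantitative Morse estimate near the two critical points, combined with a uniform lower bound on $|v'|$ between them, therefore shows that $V^{-1}([E_*-\delta_0,E_*+\delta_0])$ is either empty or a union of at most two intervals $J_1, J_2$ of diameter $\lesssim\sqrt{\delta_0}$, exchanged by the involution $\theta\mapsto -\theta$. When the two intervals merge into a single one, it is centered at $0$ or at $\tfrac12$, and then both alternatives in $m(k,\ell)$ become small simultaneously, so no separate treatment is needed.

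For the first assertion, take distinct $k,\ell\in\calS_0$, so that $T_\omega^k\theta_*, T_\omega^\ell\theta_*\in J_1\cup J_2$. If both land in the same interval $J_i$, the distance modulo $1$ between them is bounded by $|J_i|\lesssim\sqrt{\delta_0}$, hence $\|(k-\ell)\omega\|\lesssim\sqrt{\delta_0}$. If they land in different intervals, write $T_\omega^k\theta_*=\theta_0+p+O(\sqrt{\delta_0})$ and $T_\omega^\ell\theta_*=-\theta_0+q+O(\sqrt{\delta_0})$ for some integers $p,q$; adding these gives $\|2\theta_*+(k+\ell)\omega\|\lesssim\sqrt{\delta_0}$. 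In either case $m(k,\ell)^2\lesssim\delta_0$.

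For the second assertion, apply the first to the pairs $(k,\ell)$ and $(\ell,n)$. If the ``difference'' alternative holds for either pair, the Diophantine bound $c_0|j|^{-2}\le\|j\omega\|$ immediately yields $|k-\ell|\gtrsim\delta_0^{-1/4}$ or $|\ell-n|\gtrsim\delta_0^{-1/4}$, and we are done. Otherwise the ``sum'' alternative holds for both pairs, so $\|2\theta_*+(k+\ell)\omega\|\lesssim\sqrt{\delta_0}$ and $\|2\theta_*+(\ell+n)\omega\|\lesssim\sqrt{\delta_0}$; subtracting yields $\|(k-n)\omega\|\lesssim\sqrt{\delta_0}$, whence $|k-n|\gtrsim\delta_0^{-1/4}$ by Diophantine, and the triangle inequality $|k-n|\le 2\max(|k-\ell|,|\ell-n|)$ closes the argument.

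The only point requiring real care is the decomposition of the preimage in the first paragraph, and here the evenness of $v$ is crucial: it is what pairs the two branches of $V^{-1}(E_*)$ symmetrically about the origin and thereby ties the ``different intervals'' case to the specific combination $2\theta_*+(k+\ell)\omega$ appearing in $m$. Without evenness, one would have to replace this clean dichotomy by a more involved analysis based on a product factorization of $V(\alpha)-V(\beta)$, which is one of the obstacles that makes the non-even case of Theorem~\ref{thm:FSW} beyond the scope of this argument.
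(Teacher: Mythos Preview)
Your proof is correct and follows essentially the same line as the paper's: the paper also reduces to the geometric picture $J_2=-J_1\bmod 1$ coming from the evenness of $V$, obtains $2\theta_*+(k+\ell)\omega\in J_1-J_1$ in the ``different intervals'' case, and for the three-point assertion relies (implicitly, via the preceding cosine discussion) on the same subtraction of two ``sum'' relations combined with the Diophantine condition that you spell out explicitly.
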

\begin{proof}
The argument is essentially the same as for cosine, the trigonometric identities being replaced by the symmetry of~$V$ about~$\frac12$: if $\theta_*+k\omega\in J_1$ and $\theta_*+\ell\omega\in J_2=-J_1\mod 1$, then $2\theta_*+(k+\ell)\omega\in J_1-J_1\mod 1$ whence $\| 2\theta_*+(k+\ell)\omega\|^2 \lesssim \delta_0$. 
\end{proof}
\begin{figure}[ht]
\input{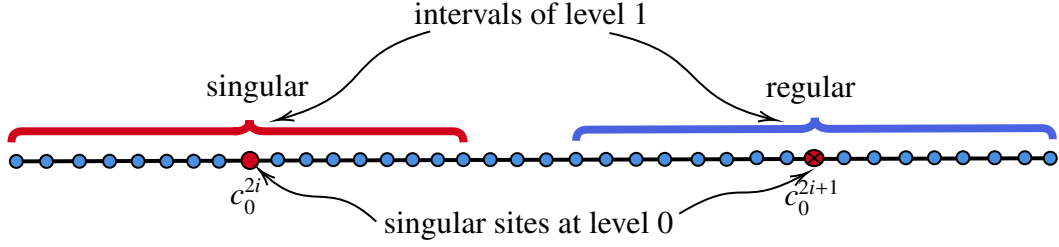}
\caption{Simple resonances at level $1$}
\label{fig:simpleres}
\end{figure} 
\begin{defn}
\label{defi:s0crit}
We label $\calS_0=\{c^i_0\}_{i=-\infty}^\infty$ in increasing order (assuming $\calS_0\ne\emptyset$). These are the singular sites (or  singular ``intervals") at level~$0$. 
Let $s_0:=\min\{ c^i_0- c^{j}_0\: |\: i> j\}$. If $s_0\ge 4|\log \eps|^2$, then we speak of a {\em simple resonance}, otherwise of a {\em double resonance}, both at level~$1$. In the latter case, we replace $\calS_0$ with  $\bar\calS_0=\calS_0\cup(\calS_0+s_0)$, which we again label as $\{c^i_0\}_{i=-\infty}^\infty$. 
In the simple resonant case, we let $I^i_1$ be an interval of length $\ell_1:=\lceil \log(1/\eps)\rceil^2$ centered at $c^{i}_1:=c^i_0$, in the double resonant case $I^i_1$ has length $\ell_1:=\lceil \log(1/\eps)\rceil^4$, centered at $c^{i}_1:=(c^{2i}_0+c^{2i+1}_0)/2\in \frac12\Z$.  By construction, all of the $I^i_1$ are pairwise disjoint, and each $c^i_0$ is contained in a unique interval at level~$1$. We classify those intervals $I^i_1$ as {\em singular} provided 
\EQ{\label{eq:delta1}
\dist(\spec(H_{I^i_1}(\theta_*)), E_*)\le \delta_1:= \eps^{\ell_1^{2/3}}
}
and $\calS_1:=\{c^i_1\:|\: I^i_1\text{\ is singular}\}$. All other intervals $I^i_1$ are called {\em regular}. 
\end{defn}

We shall see later, based on Theorem~\ref{thm:Ber}, that for spectrally a.e.\ energy $E\in\spec(H(\theta))$ the set of singular intervals, which are constructed iteratively at all levels (see below), is not empty. 
Figures~\ref{fig:simpleres}, resp.~\ref{fig:doubleres} illustrate the two cases, with the blue dots being $\Z\setminus\calS_0$. The terminology simple/double resonance is derived from the structure of the eigenfunctions at level~$1$ associated with the operators $H_{I^i_1}(\theta_*)$ and the unique (as we shall see) eigenvalue $E^i_1(\theta_*)$ satisfying~\eqref{eq:delta1}. In the simple resonance case, the eigenfunction has most (say $99\%$) of its $\ell^2$ mass at the center~$c^i_0$, whereas in the double resonant case it may have significant mass at both sites $c^{2i}_0$ and $c^{2i+1}_0$. 

Figure~\ref{fig:simpleres} depicts only one of four possibilities for the intervals at level~$1$, they might both be singular, both regular, or the order could be reversed. The red dot with $\otimes$ is supposed to indicate a return of the trajectory $T^j_\omega\theta_*$ to $J_2$, whereas the red dot on the left a return to~$J_1$, cf.~Figure~\ref{fig:V}.  While the distance between these two red dots is required to be at least $4(\log(1/\eps))^2$, the Diophantine condition forces two red dots of the same kind (associated with $J_1$, resp.~$J_2$) to be separated by at least on the order of~$\delta_0^{-\frac14}$. This is much larger than the length $\ell_1=\lceil\log(1/\eps)\rceil^2$ of the intervals $I^i_1$.

The reason for passing form $\calS_0$ to $\bar\calS_0$ lies with the self-symmetry indicated in Figure~\ref{fig:doubleres} (i.e., $c^{2i+1}_0-c^{2i}_0$ does not depend on~$i$). To see this, note that by definition of $s_0$ there exist $k_1,k_2\in\calS_0$ with $\theta_*+k_i\omega\in J_i$ with $i=1,2$ and $s_0=|k_1-k_2|\le 4(\log(1/\eps))^2$. We are again using the Diophantine condition here to ensure that we do not fall into the same interval (as a standing assumption $\eps$ needs to be small enough depending on $v$ and $c_0$ so as to guarantee this). Next, take any $k\in\calS_0$ with $\theta_*+k\omega\in J_1$ (everything modulo integers which will be henceforth understood tacitly). Then $\theta_*+(k+s_0)\omega\in \tilde J_2$, where $\tilde J_2$ has the same center as $J_2$ and twice the length. On the other hand, it might be that $\theta_*+(k+s_0)\omega\not \in J_2$, but we must still include $k+s_0$ in $\calS_0$ for the construction to work. In fact, Lemma~\ref{lem:step 0} remains valid for $\bar\calS_0$ and the defining inequality~\eqref{eq:S0} is modified only slightly, viz. $|V(\theta_*+n\omega)-E_*|\lesssim \delta_0$ for all $n\in\bar\calS_0$. 

We will establish the following analogue of Lemma~\ref{lem:step 0} at level~$1$. We emphasize again that this  statement only exists for even~$V$. 

\begin{lem}
\label{lem:step 1}
For all $c^i_1, c^j_1\in\calS_1$
one has $
m(c^i_1, c^j_1)^2\lesssim \delta_1$, 
with an absolute implied constant. 
\end{lem}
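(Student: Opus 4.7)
The plan is to exploit the evenness of $v$ together with the symmetry of the intervals $I_1^i$ about their centers $c_1^i$ to promote the singular eigenvalue at level~$1$ to an \emph{even} function on $\tor$, and then run a level-set analysis mirroring Lemma~\ref{lem:step 0}.

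Step~1: Pass to centered coordinates $m = n - c_1^i$ and set $\theta' = \theta_* + c_1^i\omega$. Because $I_1^i$ is symmetric about $c_1^i$, the operator $H_{I_1^i}(\theta_*)$ is conjugate to $\eps^2\Delta + W_{\theta'}$ on the symmetric interval $I_1^i - c_1^i$, with $W_{\theta'}(m) = v(\theta' + m\omega)$. The reflection $m\mapsto -m$ preserves the interval and sends $W_{\theta'}(m)$ to $v(\theta' - m\omega) = v(-\theta' + m\omega) = W_{-\theta'}(m)$ by evenness of $v$. Hence $\spec H_{I_1^i}(\theta_*)$ depends on $\theta'$ only through the class $\{\theta', -\theta'\}$, and the singular eigenvalue nearest $E_*$ defines a single even function $G\colon\tor\to\R$ (independent of $i$, since all $I_1^i$ have the same symmetric shape) with $E_1^i(\theta_*) = G(\theta_* + c_1^i\omega)$.

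Step~2: Identify $G$ perturbatively as a small $C^2$ deformation of a Morse function. In the simple-resonance case a Schur-complement (Feshbach) projection onto the single central site, combined with Lemma~\ref{lem:step0} applied to the non-singular complement, gives the implicit relation $E = v(\theta') + R(\theta', E)$ with $|R|$, $|\partial_{\theta'} R|$, and $|\partial^2_{\theta'}R|$ all bounded by small powers of $\eps$. In the double-resonance case one projects onto the pair $\{c_0^{2i}, c_0^{2i+1}\}$ to obtain a $2\times 2$ effective matrix whose diagonal entries are $a(\theta') \approx v(\theta' + s_0\omega/2)$ and $a(-\theta')$, with off-diagonal $t(\theta')$ satisfying $|t(-\theta')| = |t(\theta')|$ by the same reflection argument, and $|t|$ exponentially small in $s_0$. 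Either way $G$ is an even small $C^2$ perturbation of $v$ (resp.\ of the smaller eigenvalue of the $2\times 2$ block): its critical points and critical values lie $O(\eps)$-close to those of $v$, $G$ has exactly two critical points on $\tor$ at $0$ and $1/2$, and away from them $|G'|$ is bounded below by a constant depending only on $v$.

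Step~3: Run the level-set argument. Set $\alpha = \theta_* + c_1^i\omega$, $\beta = \theta_* + c_1^j\omega$, so $|G(\alpha) - E_*|, |G(\beta) - E_*| \le \delta_1$. By Step~2, the set $G^{-1}([E_*-\delta_1, E_*+\delta_1])$ lies in a disjoint union of at most two intervals exchanged by $\theta'\mapsto -\theta'$, each of diameter at most $C(v)\delta_1^{1/2}$ (the quadratic rate being saturated only near critical points of $G$, where the Morse property $|G(\theta') - G(\theta'_c)| \gtrsim (\theta' - \theta'_c)^2$ applies; away from them the diameter is even better, $\lesssim\delta_1$). If $\alpha, \beta$ lie in the same component, $\|\alpha-\beta\|^2\lesssim\delta_1$; if in opposite components, $\alpha$ and $-\beta$ lie in the same component, so $\|\alpha+\beta\|^2\lesssim\delta_1$. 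Either way $m(c_1^i,c_1^j)^2\lesssim\delta_1$.

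Main obstacle: The nontrivial content sits entirely in Step~2, namely controlling the Schur-complement residual $R$ (and in the double-resonance case the $2\times 2$ block) sharply enough in $C^2$ that $G$ genuinely inherits the nondegenerate Morse structure of $v$ uniformly in $i$, without $R$ creating spurious critical points or flattening the level sets. Once this is in place, the reflection symmetry established in Step~1 makes Step~3 a direct rerun of the level-$0$ argument from Lemma~\ref{lem:step 0}; without evenness the $\theta'\mapsto -\theta'$ invariance would be lost and only the half-bound $\|\alpha-\beta\|^2\lesssim\delta_1$ would survive.
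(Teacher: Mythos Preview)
Your symmetry observation in Step~1 is exactly right and is the heart of the matter: in centered coordinates the spectrum of $H_{I^i_1}$ depends on $\theta' = \theta_* + c^i_1\omega$ only through $\{\theta', -\theta'\}$. But the passage to a single \emph{global} even Morse function $G\colon\tor\to\R$ does not go through. The phrase ``eigenvalue nearest $E_*$'' does not define a continuous function on all of~$\tor$, and if instead you fix an index and set $G = \lambda_k$, then $\lambda_k(\theta')$ has on the order of $\ell_1$ critical points (cf.\ Figure~\ref{fig:rellich}), not two; moreover there is no reason the relevant index $k$ is the same for $c^i_1$ as for $c^j_1$. The Schur-complement reduction you sketch in Step~2 is only valid on the \emph{local} window $|\theta - \theta_*| \ll \delta_0$ (simple case) or $\lesssim \delta_0^{1/2}$ (double case), where the complement of the one or two singular sites is nonresonant; outside that window the projected-out block is no longer invertible with the stated bounds. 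So Step~3, which relies on a global two-interval level-set structure for $G$, is unsupported.

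The paper's argument stays local and bootstraps. One first uses $|V(\theta_*+c^i_0\omega) - V(\theta_*+c^j_0\omega)| \lesssim \eps^2$ together with Lemma~\ref{lem:step 0} (simple case), or the relation $\|\theta_*+c^i_1\omega\|\wedge\|\theta_*+c^i_1\omega+\tfrac12\| \lesssim \delta_0^{1/2}$ forced by the double-resonance geometry, to obtain a crude bound $m(c^i_1,c^j_1) \ll \delta_0^{1/2}$; this is precisely what places $\theta_*+c^j_1\omega$ (or its reflection) inside the local window around $\theta_*+c^i_1\omega$ where a \emph{single} Rellich branch $E(\theta)$ is well defined. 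Only then does first- and second-order perturbation give $|E'|+|E''| \gtrsim 1$ (simple) or $|E''| \gg \delta_0^{-1/2}$ at the unique critical point $\theta_s$ (double), and the singularity condition $|E(\theta_*)-E_*|\le\delta_1$ upgrades the crude bound to $m^2\lesssim\delta_1$. A further key ingredient you omit in the double-resonance case is the spectral gap: there are \emph{two} eigenvalues $E>\tilde E$ in the window, and Lemma~\ref{lem:sep} is needed to show $E-\tilde E \gg \delta_1$, both so that only one of them can satisfy~\eqref{eq:delta1} and so that the term $(\tilde E-E)^{-1}$ driving the large second derivative is under control.
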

\begin{figure}
\input{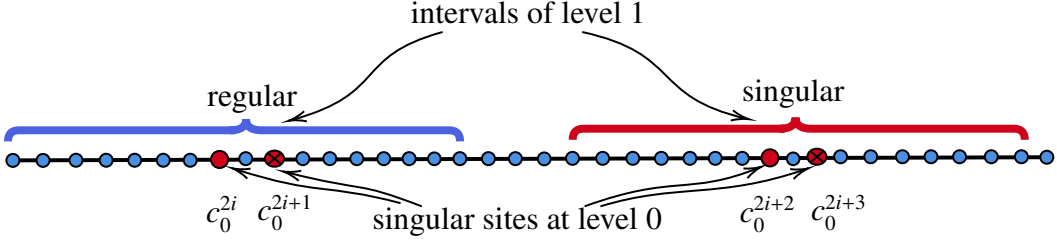}
\caption{Double resonances at level $1$}
\label{fig:doubleres}
\end{figure}
The idea is to carry out a similar argument as for Lemma~\ref{lem:step 0},  with the potential function $V$ replaced by the parametrizations of the eigenvalues of the operator $H_{I^i_1}(\theta)$ {\em localized both near} $\theta_*$ and~$E_*$. This stability hinges crucially on a {\em spectral gap} or on the {\em separation of the eigenvalues}.  The latter can be seen as a quantitative version of the simplicity of the Dirichlet spectrum of Sturm-Liouville operators, such as $H_I(\theta)$. Before discussing the details of Lemma~\ref{lem:step 1}, we exhibit the entire strategy of the proof of Theorem~\ref{thm:FSW}. 

\begin{itemize}
\item In analogy with Definition~\ref{defi:s0crit} define regular and singular intervals at level~$n\ge2$. More specifically, for $n\ge1$ set 
\[
s_n:=\min\{|c^i_n-c^j_n|\:|\: c^i_n, c^j_n\in\calS_n, \; i\ne j\} 
\]
If $s_n> 4\ell_n^2$, then we call this a simple resonance and define $c^i_{n+1}=c^i_n$ for all $i$ and $\ell_{n+1}=\ell_n^2$, otherwise for the double resonance we set $c^i_{n+1}=(c^{2i}_n+c^{2i+1}_n)/2\in \frac12\Z$, $\ell_{n+1}=\ell_n^4$,  and also augment $\calS_n$ to $\bar\calS_n$ by including the mirror image of each $I^i_n$ if it was not already included in~$\calS_n$. By mirror image we mean the reflection about $0$, which is the same as the reflection about~$1/2$ modulo~$\Z$. By construction, the $I^i_{n+1}$ are pairwise disjoint and each $c^i_n$ is contained in a unique interval at level~$n+1$. An interval $I^i_{n+1}$ centered at~$c^i_{n+1}$ is called {\em singular} if 
\EQ{\label{eq:sing n}
\dist(\spec(H_{I^i_{n+1}}(\theta_*)),E_*)\le \delta_{n+1}=\eps^{\ell_{n+1}^{2/3}}
}
and regular otherwise. Define $\calS_{n+1}$ to be the centers of the singular intervals. One can arrange for $\partial I^i_{n+1}$ for all singular not to meet any singular interval of level~$m$ with $m\le n$. 
\item  An arbitrary interval $\Lambda\subset\Z$ is called $n$-regular provided every point in $\Lambda\cap \calS_0$ is contained in a regular interval $I^j_m\subset\Lambda$ for some $m\le n$, cf.\ Figure~\ref{fig:tree}.  Note that every singular point at level~$0$ is either (i)  contained in infinitely many singular intervals $I^{j_n}_n$ for each $n\ge0$ or (ii) contained in a finite number of such intervals at successive levels followed by a regular one.  By induction on scales one proves the following crucial decay and stability property of the Green function associated with $n$-regular intervals~$\Lambda$: $|G_\Lambda(\theta,E)(x,y)|\le \eps^{\frac12|x-y|}$ for all $x,y\in\Lambda$, $|x-y|\ge \ell_n^{5/6}$, $|E-E_*|\les \delta_n$, and $|\theta-\theta_*|\les \delta_n$. 
\item One has 
\EQ{\label{eq:mn}
m(c^i_n, c^j_n)^2\les |E^i_n(\theta_*)-E^j_n(\theta_*)| \les \delta_n\qquad \forall\;c^i_n,c^j_n\in\calS_n
} for all $n\ge0$. Here $E^i_n(\theta_*)$, are the unique eigenvalues of $H_{I^i_n}(\theta_*)$ in the interval  $[E_*-c\delta_n,E_*+c\delta_n]$ with $c$ small. This hinges crucially on the  separation property of the eigenvalues, see Lemma~\ref{lem:sep}. For simple resonances, we will use first order eigenvalue perturbation theory, and  for double resonances, second order perturbation theory.  
\item Based on the estimate on $m$, we prove Theorem~\ref{thm:FSW} by double resonance elimination as in the previous section. In analogy with Theorem~\ref{thm:AL} we start with the polynomially bounded  Fourier basis provided by Theorem~\ref{thm:Ber}, find an increasing nested family of resonant intervals which are resonant at the given energy, and thus due to the  elimination of double resonances obtain exponential decay at all scale. The main departure from the proof of Theorem~\ref{thm:AL} lies with the application of Borel-Cantelli to remove a zero measure set of bad~$\theta\in\tor$. 
\end{itemize}
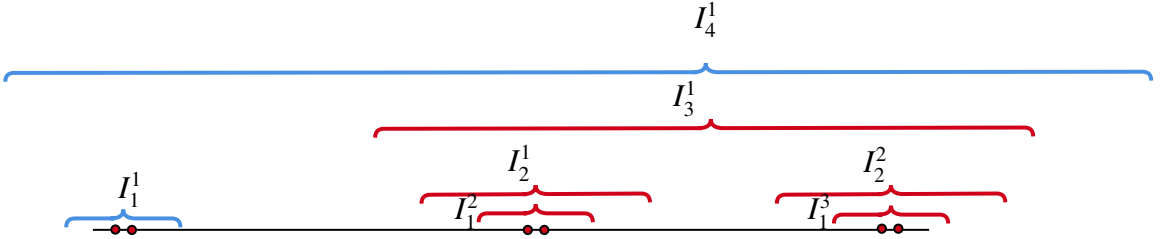
\begin{figure}[ht]
\tikzset{every picture/.style={line width=0.75pt}} 

\begin{tikzpicture}[x=0.48pt,y=0.48pt,yscale=-1,xscale=1]
\draw    (0.51,289.67) -- (659.51,289.67) ;
\draw  [fill={rgb, 255:red, 208; green, 2; blue, 27 }  ,fill opacity=1 ] (15.03,289.43) .. controls (15.03,287.64) and (16.48,286.18) .. (18.27,286.18) .. controls (20.06,286.18) and (21.51,287.64) .. (21.51,289.43) .. controls (21.51,291.22) and (20.06,292.67) .. (18.27,292.67) .. controls (16.48,292.67) and (15.03,291.22) .. (15.03,289.43) -- cycle ;
\draw  [fill={rgb, 255:red, 208; green, 2; blue, 27 }  ,fill opacity=1 ] (28.03,289.76) .. controls (28.03,287.96) and (29.48,286.51) .. (31.27,286.51) .. controls (33.06,286.51) and (34.51,287.96) .. (34.51,289.76) .. controls (34.51,291.55) and (33.06,293) .. (31.27,293) .. controls (29.48,293) and (28.03,291.55) .. (28.03,289.76) -- cycle ;
\draw  [fill={rgb, 255:red, 208; green, 2; blue, 27 }  ,fill opacity=1 ] (340.03,289.76) .. controls (340.03,287.96) and (341.48,286.51) .. (343.27,286.51) .. controls (345.06,286.51) and (346.51,287.96) .. (346.51,289.76) .. controls (346.51,291.55) and (345.06,293) .. (343.27,293) .. controls (341.48,293) and (340.03,291.55) .. (340.03,289.76) -- cycle ;
\draw  [fill={rgb, 255:red, 208; green, 2; blue, 27 }  ,fill opacity=1 ] (353.03,289.76) .. controls (353.03,287.96) and (354.48,286.51) .. (356.27,286.51) .. controls (358.06,286.51) and (359.51,287.96) .. (359.51,289.76) .. controls (359.51,291.55) and (358.06,293) .. (356.27,293) .. controls (354.48,293) and (353.03,291.55) .. (353.03,289.76) -- cycle ;
\draw  [fill={rgb, 255:red, 208; green, 2; blue, 27 }  ,fill opacity=1 ] (632.03,288.76) .. controls (632.03,286.96) and (633.48,285.51) .. (635.27,285.51) .. controls (637.06,285.51) and (638.51,286.96) .. (638.51,288.76) .. controls (638.51,290.55) and (637.06,292) .. (635.27,292) .. controls (633.48,292) and (632.03,290.55) .. (632.03,288.76) -- cycle ;
\draw  [fill={rgb, 255:red, 208; green, 2; blue, 27 }  ,fill opacity=1 ] (619.03,288.76) .. controls (619.03,286.96) and (620.48,285.51) .. (622.27,285.51) .. controls (624.06,285.51) and (625.51,286.96) .. (625.51,288.76) .. controls (625.51,290.55) and (624.06,292) .. (622.27,292) .. controls (620.48,292) and (619.03,290.55) .. (619.03,288.76) -- cycle ;
\draw  [color={rgb, 255:red, 74; green, 144; blue, 226 }  ,draw opacity=1 ][line width=1.5]  (69.51,287.67) .. controls (69.51,283) and (67.18,280.67) .. (62.51,280.67) -- (34.51,280.67) .. controls (27.84,280.67) and (24.51,278.34) .. (24.51,273.67) .. controls (24.51,278.34) and (21.18,280.67) .. (14.51,280.67)(17.51,280.67) -- (-13.49,280.67) .. controls (-18.16,280.67) and (-20.49,283) .. (-20.49,287.67) ;
\draw  [color={rgb, 255:red, 208; green, 2; blue, 27 }  ,draw opacity=1 ][line width=1.5]  (394.51,283.67) .. controls (394.51,279) and (392.18,276.67) .. (387.51,276.67) -- (359.51,276.67) .. controls (352.84,276.67) and (349.51,274.34) .. (349.51,269.67) .. controls (349.51,274.34) and (346.18,276.67) .. (339.51,276.67)(342.51,276.67) -- (311.51,276.67) .. controls (306.84,276.67) and (304.51,279) .. (304.51,283.67) ;
\draw  [color={rgb, 255:red, 208; green, 2; blue, 27 }  ,draw opacity=1 ][line width=1.5]  (674.51,284.67) .. controls (674.51,280) and (672.18,277.67) .. (667.51,277.67) -- (639.51,277.67) .. controls (632.84,277.67) and (629.51,275.34) .. (629.51,270.67) .. controls (629.51,275.34) and (626.18,277.67) .. (619.51,277.67)(622.51,277.67) -- (591.51,277.67) .. controls (586.84,277.67) and (584.51,280) .. (584.51,284.67) ;
\draw  [color={rgb, 255:red, 208; green, 2; blue, 27 }  ,draw opacity=1 ][line width=1.5]  (719.51,267.98) .. controls (719.48,263.31) and (717.14,260.99) .. (712.47,261.02) -- (639.47,261.42) .. controls (632.8,261.46) and (629.46,259.15) .. (629.44,254.48) .. controls (629.46,259.15) and (626.14,261.5) .. (619.47,261.54)(622.47,261.52) -- (546.47,261.94) .. controls (541.8,261.97) and (539.48,264.31) .. (539.51,268.98) ;
\draw  [color={rgb, 255:red, 208; green, 2; blue, 27 }  ,draw opacity=1 ][line width=1.5]  (439.51,268) .. controls (439.48,263.33) and (437.14,261.01) .. (432.47,261.04) -- (359.47,261.45) .. controls (352.8,261.48) and (349.46,259.17) .. (349.44,254.5) .. controls (349.46,259.17) and (346.14,261.52) .. (339.47,261.56)(342.47,261.54) -- (266.47,261.97) .. controls (261.8,262) and (259.48,264.34) .. (259.51,269.01) ;
\draw  [color={rgb, 255:red, 208; green, 2; blue, 27 }  ,draw opacity=1 ][line width=1.5]  (741.51,215.98) .. controls (741.5,211.31) and (739.17,208.98) .. (734.5,208.99) -- (497.52,209.45) .. controls (490.85,209.46) and (487.51,207.14) .. (487.5,202.47) .. controls (487.51,207.14) and (484.19,209.47) .. (477.52,209.48)(480.52,209.48) -- (229.99,209.96) .. controls (225.32,209.97) and (222.99,212.3) .. (223,216.97) ;
\draw  [color={rgb, 255:red, 74; green, 144; blue, 226 }  ,draw opacity=1 ][line width=1.5]  (834.51,171.98) .. controls (834.51,167.31) and (832.18,164.98) .. (827.51,164.98) -- (493.52,165.33) .. controls (486.85,165.34) and (483.52,163.01) .. (483.51,158.34) .. controls (483.52,163.01) and (480.19,165.35) .. (473.52,165.36)(476.52,165.35) -- (-61.49,165.92) .. controls (-66.16,165.92) and (-68.49,168.25) .. (-68.49,172.92) ;

\draw (30,257.98) node  [font=\large]  {$I^{1}_{1}$};
\draw (295,274.98) node  [font=\large]  {$I^{2}_{1}$};
\draw (573,274.98) node  [font=\large]  {$I^{3}_{1}$};
\draw (337,236.98) node  [font=\large]  {$I^{1}_{2}$};
\draw (617,237.98) node  [font=\large]  {$I^{2}_{2}$};
\draw (467,185.98) node  [font=\large]  {$I^{1}_{3}$};
\draw (484,123.98) node  [font=\large]  {$I^{1}_{4}$};
\end{tikzpicture}
\caption{Regular and singular intervals of $4$ levels}\label{fig:tree}
\end{figure}
We begin with the Green function decay on regular intervals (this is the analogue of the regular Green function from Definition~\ref{def:Reg}). We set $\ell_0:=\lceil \log(1/\eps)\rceil$. 

\begin{lem}
\label{lem:G I reg}
For all $n$-regular intervals~$\Lambda$, $n\ge0$,  one has $|G_\Lambda(\theta,E)(x,y)|\le \eps^{\gamma_n |x-y|}$ for all $x,y\in\Lambda$, $|x-y|\ge \ell_n^{5/6}$, $|E-E_*|\les \delta_n$, and $|\theta-\theta_*|\les \delta_n$. The $\gamma_n$ decrease, but $\gamma_n\ge\frac12$ for all $n$. 
\end{lem}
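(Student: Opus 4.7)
The plan is induction on~$n$, closely mirroring the multiscale step of Lemma~\ref{lem:ind step}, with ``boxes nonresonant at $E$'' replaced by ``regular intervals at level~$m$.'' For the base case $n=0$, being $0$-regular forces $\Lambda\cap \calS_0=\emptyset$, so $|V(\theta_*+x\omega)-E_*|>\delta_0$ on $\Lambda$. Using $v\in C^2$, the perturbations $|\theta-\theta_*|,|E-E_*|\lesssim \delta_0$ preserve a margin $\ge \delta_0/2$, and Lemma~\ref{lem:step0} yields $|G_\Lambda(\theta,E)(x,y)|\le 8\delta_0^{-1}\eps^{|x-y|}$. For $|x-y|\ge \ell_0^{5/6}$ with $\delta_0=A_0\eps$ and $\ell_0=\lceil\log(1/\eps)\rceil$ this absorbs the prefactor to give $\eps^{\gamma_0|x-y|}$ with $\gamma_0$ close to~$1$.

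For the inductive step at level~$n$, I cover $\Lambda$ by regular \emph{blocks} $W(z)\subset\Lambda$: if $z\notin\calS_0$ set $W(z)=\{z\}$; if $z\in\calS_0$ take the regular interval $I^j_{m(z)}\subset\Lambda$ with $m(z)\le n$ provided by $n$-regularity. The regularity of $W(z)$ at level $m(z)$ forces $\dist(\spec(H_{W(z)}(\theta_*)),E_*)\ge \delta_{m(z)}$, which after perturbation by $\delta_n\le\delta_{m(z)}$ in both $\theta$ and $E$ shrinks only to $\ge\delta_{m(z)}/2$; the inductive hypothesis at level $m(z)<n$ then gives $|G_{W(z)}(\theta,E)(x',y')|\le \eps^{\gamma_{m(z)}|x'-y'|}$ whenever $|x'-y'|\ge \ell_{m(z)}^{5/6}$. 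Iterating the resolvent identity~\eqref{eq:resid} exactly as in~\eqref{eq:G iter}, I march from $x$ through successive boundaries $w_j\in\partial W(w_{j-1})$ and symmetrically from $y$, closing the chain when two consecutive blocks overlap. Each hop of length $L_j$ across a level-$m(w_j)$ block contributes $\eps^2\cdot \eps^{\gamma_{m(w_j)} L_j}$; since the hop lengths aggregate to at least $|x-y|$, the product collapses to $\eps^{\gamma_n|x-y|}$ via a relation for $\gamma_n$ in terms of $\gamma_{n-1}$ analogous to~\eqref{eq:gamma1}, namely $\gamma_n\ge \gamma_{n-1}(1-O(\ell_{n-1}^{-1/6}))$. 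Because $\ell_n=\ell_{n-1}^2$ or $\ell_{n-1}^4$, the infinite product $\prod_n(1-O(\ell_{n-1}^{-1/6}))$ stays above $1/2$ for $\ell_0$ large enough (equivalently $\eps$ small enough).

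The principal obstacle is reconciling the loss $\delta_m^{-1}=\eps^{-\ell_m^{2/3}}$ attached to each sub-block Green function estimate with the gain $\eps^{\gamma_m\ell_m}$ accumulated per hop. These are compatible because $\ell_m^{2/3}\ll \gamma_m\ell_m$ for large $\ell_m$, but verifying this uniformly along every admissible chain and for blocks at arbitrarily mixed levels $m\le n$ requires the same kind of careful enumeration as in the derivation of~\eqref{eq:GEbd0}--\eqref{eq:gamma1}. A secondary point is that the threshold $|x-y|\ge \ell_n^{5/6}$ must persist at every sub-step of the expansion, so that the inductive decay is invoked only where licensed; this is guaranteed by the scale hierarchy of Definition~\ref{defi:s0crit} and its extension to higher levels, which ensures that boundaries of regular intervals at level $m$ lie at distance $\gtrsim \ell_m$ from any other singular structure, leaving enough room to execute a full hop of the required length at each stage.
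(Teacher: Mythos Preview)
Your approach—iterated resolvent expansion over a cover by regular blocks, in the spirit of Lemma~\ref{lem:ind step}—matches the paper's strategy. However, there is a genuine gap. You set up blocks $W(z)=I^j_{m(z)}$ with $m(z)\le n$, then invoke ``the inductive hypothesis at level $m(z)<n$'' for the Green function decay on each block. When $m(z)=n$, which the definition of $n$-regular explicitly permits, the inductive hypothesis is unavailable. In the extreme case $\Lambda=I^j_n$ (a regular level-$n$ interval, which is itself $n$-regular), the block covering the central singular chain is $\Lambda$ itself and your resolvent identity becomes vacuous. Decay on such $I^j_n$ must be established \emph{as part of} the level-$n$ step, not assumed. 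Your ``principal obstacle'' paragraph about a $\delta_m^{-1}$ loss per block hints that you sense this, but if the inductive hypothesis really applied to every block there would be no such loss.

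The repair is precisely what the paper singles out. One obtains decay on a regular $I^j_n$ by a Lemma~\ref{lem:ind step}--type argument: use the nonresonance bound $\|G_{I^j_n}\|\le\delta_n^{-1}$ together with the inductive level-$(n{-}1)$ decay on the complement of the (unique, or paired in the double-resonance case) singular level-$(n{-}1)$ interval inside. That uniqueness is \emph{not} a consequence of the scale relation $\ell_n\in\{\ell_{n-1}^2,\ell_{n-1}^4\}$ you cite; it requires the separation estimate~\eqref{eq:mn} at level $n{-}1$ combined with the Diophantine condition, which forces any three level-$(n{-}1)$ singular sites to span distance $\gtrsim\delta_{n-1}^{-1/4}\gg\ell_n$. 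The paper states explicitly that~\eqref{eq:mn} up to level $n{-}1$ is essential for this separation, and your sketch omits it entirely.
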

\begin{proof}
At $n=0$ the interval $\Lambda$ contains only regular lattice points, i.e., blue dots in the figures above. Then the Neumann series argument from Lemma~\ref{lem:step0} 
implies that, for $\delta_0=A_0\eps$ with $A_0$ large enough, and for all $|\theta-\theta_*|\ll \delta_0$, $|E-E_*|\ll \delta_0$ (meaning up to a small multiplicative constant), 
\[
|G_\Lambda(\theta,E)(x,y)|\le \delta_0^{-1} (2\eps^2\delta_0^{-1})^{|x-y|} \le \eps^{|x-y|-1}\quad\forall\; x,y\in\Lambda
\]
We are using that $|V(\theta_*+k\omega)-E_*|\gtrsim\delta_0$ implies that $|V(\theta+k\omega)-E|\gtrsim\delta_0$ in the specified range of parameters. 
If $\Lambda$ is $1$-regular, then let $\{ I^i_1\}_{i=i_0}^{i_1}$ be a complete list of all level $1$ intervals, in increasing order,  which cover all points in~$\Lambda\cap \calS_0$.  By construction, $I^i_1\subset\Lambda$ for all $i_0\le i\le i_1$. 
The intervals $I^i_1$ (which are all regular) do not really come off the axis in Figure~\ref{fig:1reg}, they are only depicted in  this way to indicate that they are level~$1$ intervals. The line segment is supposed to depict $\Lambda$ and it consists entirely of regular lattice points at level~$0$ apart from the red singular sites. For the double resonance case shown in Figure~\ref{fig:1reg},  one  red pairs is separated from another by   $\gtrsim\delta_0^{-\frac14}\simeq\eps^{-\frac14}$, which is  much larger than the  $I^i_1$ which are of length~$(\log\eps)^4$.  On the other hand, in the single resonant case recall that the $I^i_1$ are of length $\lceil\log(1/\eps)\rceil^2$, and the separation between the singular sites in $\calS_0$ at least $4(\log\eps)^2$ (but possibly much longer). 

These long sections consisting entirely of regular lattice points between singular pairs in the double resonance case, resp.\ singular sites in the simple resonance case,  allow us to iterate the resolvent identity similar to Lemma~\ref{lem:ind step}.   For general $n$, it is essential to use~\eqref{eq:mn} up to level $n-1$ in order to achieve this separation.  See Appendix~A in~\cite{FSW} for the details. 
\end{proof}

\begin{proof}[Proof of Theorem~\ref{thm:FSW}]
For any $\theta\in\tor$, by Theorem~\ref{thm:Ber} for spectrally a.e.~$E\in\R$ there is a generalized eigenfunction $H(\theta)\psi=E\psi$ with at most linear growth. For any such $E,\psi$ we claim that there exists $N=N(\theta,\psi)\ge1$ so that all intervals $\Lambda_n=[-2\ell_n,2\ell_n]$ are $n$-singular for $(\theta,E)$ provided $n\ge N$. If $\Lambda_n$ is $n$-regular for infinitely many $n$, then by the Poisson formula~\eqref{eq:Poisson} for any $j$ and large $n$, 
\[
 |\psi(j)| \le \sum_{(k',k)\in \partial\Lambda_n} |G_{\Lambda_{n}}(E)(j,k')| |\psi(k)| \le C\eps^{\frac12(2\ell_n-|j|)} \ell_n
\]
Taking the limit $n\to\infty$ yields $\psi\equiv0$, whence our claim. Next, we claim that 
\EQ{\label{eq:sing claim}
\Lambda_n\cap I^i_n\ne\emptyset\text{\ for some singular\ }I^i_n
} for large~$n$. 
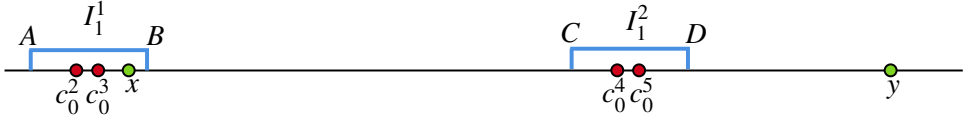
\begin{figure}[ht]
\tikzset{every picture/.style={line width=0.75pt}} 

\begin{tikzpicture}[x=0.55pt,y=0.55pt,yscale=-1,xscale=1]

\draw [line width=0.75]    (0.51,160.2) -- (659.51,160.2) ;
\draw  [fill={rgb, 255:red, 208; green, 2; blue, 27 }  ,fill opacity=1 ] (45.8,160.1) .. controls (45.8,157.84) and (47.64,156) .. (49.9,156) .. controls (52.16,156) and (54,157.84) .. (54,160.1) .. controls (54,162.36) and (52.16,164.2) .. (49.9,164.2) .. controls (47.64,164.2) and (45.8,162.36) .. (45.8,160.1) -- cycle ;
\draw  [fill={rgb, 255:red, 208; green, 2; blue, 27 }  ,fill opacity=1 ] (60.8,160.1) .. controls (60.8,157.84) and (62.64,156) .. (64.9,156) .. controls (67.16,156) and (69,157.84) .. (69,160.1) .. controls (69,162.36) and (67.16,164.2) .. (64.9,164.2) .. controls (62.64,164.2) and (60.8,162.36) .. (60.8,160.1) -- cycle ;
\draw  [fill={rgb, 255:red, 208; green, 2; blue, 27 }  ,fill opacity=1 ] (432.8,160.1) .. controls (432.8,157.84) and (434.64,156) .. (436.9,156) .. controls (439.16,156) and (441,157.84) .. (441,160.1) .. controls (441,162.36) and (439.16,164.2) .. (436.9,164.2) .. controls (434.64,164.2) and (432.8,162.36) .. (432.8,160.1) -- cycle ;
\draw  [fill={rgb, 255:red, 208; green, 2; blue, 27 }  ,fill opacity=1 ] (417.8,160.1) .. controls (417.8,157.84) and (419.64,156) .. (421.9,156) .. controls (424.16,156) and (426,157.84) .. (426,160.1) .. controls (426,162.36) and (424.16,164.2) .. (421.9,164.2) .. controls (419.64,164.2) and (417.8,162.36) .. (417.8,160.1) -- cycle ;
\draw  [fill={rgb, 255:red, 126; green, 211; blue, 33 }  ,fill opacity=1 ] (81.8,160.1) .. controls (81.8,157.84) and (83.64,156) .. (85.9,156) .. controls (88.16,156) and (90,157.84) .. (90,160.1) .. controls (90,162.36) and (88.16,164.2) .. (85.9,164.2) .. controls (83.64,164.2) and (81.8,162.36) .. (81.8,160.1) -- cycle ;
\draw  [fill={rgb, 255:red, 126; green, 211; blue, 33 }  ,fill opacity=1 ] (605.8,160.1) .. controls (605.8,157.84) and (607.64,156) .. (609.9,156) .. controls (612.16,156) and (614,157.84) .. (614,160.1) .. controls (614,162.36) and (612.16,164.2) .. (609.9,164.2) .. controls (607.64,164.2) and (605.8,162.36) .. (605.8,160.1) -- cycle ;
\draw  [color={rgb, 255:red, 74; green, 144; blue, 226 }  ,draw opacity=1 ][line width=1.5]  (17.51,146.2) -- (98.51,146.2) -- (98.51,160.2) ;
\draw [color={rgb, 255:red, 74; green, 144; blue, 226 }  ,draw opacity=1 ][line width=1.5]    (18.51,146.2) -- (18.51,160.2) ;
\draw  [color={rgb, 255:red, 74; green, 144; blue, 226 }  ,draw opacity=1 ][line width=1.5]  (389.51,145.2) -- (470.51,145.2) -- (470.51,159.2) ;
\draw [color={rgb, 255:red, 74; green, 144; blue, 226 }  ,draw opacity=1 ][line width=1.5]    (390.51,145.2) -- (390.51,159.2) ;

\draw (88,171) node    {$x$};
\draw (612,173) node    {$y$};
\draw (63,124) node    {$I^{1}_{1}$};
\draw (436,127) node    {$I^{2}_{1}$};
\draw (44,178) node    {$c^{2}_{0}$};
\draw (65,178) node    {$c^{3}_{0}$};
\draw (420,177) node    {$c^{4}_{0}$};
\draw (439,177) node    {$c^{5}_{0}$};
\draw (17,136) node    {$A$};
\draw (104,136) node    {$B$};
\draw (390,134) node    {$C$};
\draw (476,135) node    {$D$};

\end{tikzpicture}
\caption{A $1$-regular interval $\Lambda$}\label{fig:1reg}
\end{figure}
Since $\Lambda_n$ is not $n$-regular, pick some $c^j_0\in \calS_0\cap \Lambda_n$ (this cannot be empty by Lemma~\ref{lem:G I reg}). By the recursive construction of  singular intervals, see~\eqref{eq:sing n} there is the following dichotomy: either, 
there exists $1\le m\le  n$ with 
\[
c^j_0\in I^{j_1}_1 \subset \ldots \subset I^{j_m}_m 
\]
and $ I^{j_m}_m$ is regular, or $m=n$ with $I^{j_n}_n$ singular. If the first alternative occurs for every $c^j_0\in \calS_0\cap \Lambda_n$, then we may slightly enlarge $\Lambda_n$ to some $\tilde\Lambda_n \subset[-3\ell_n,3\ell_n]$ which is $n$-regular. This is again impossible for large $n$ and so~\eqref{eq:sing claim} holds.  If $\Lambda_n':=[-\ell_{n+1},\ell_{n+1}]$ contains another singular interval at level~$n$, say $I^j_n$, then by \eqref{eq:mn} one has 
$
m(c^i_n,c^j_n)^2\les \delta_n$. But $\|(c^i_n-c^j_n)\omega\|\les \delta_n^{\frac12}$ is impossible by the Diophantine condition whence
\EQ{\label{eq:reflec}
\|2\theta+(c^i_n+c^j_n)\omega\|\les \delta_n^{\frac12}
}
Given that there are at most $\les\ell_{n+1}^2$ many choices of $c^i_n,c^j_n\in \Lambda_n'$, it follows that the measure of $\theta$ as in~\eqref{eq:reflec} is $\les  \ell_{n+1}^2\delta_n^{\frac12}$. This can be summed, whence  by Borel-Cantelli there is a set $\calB$ of measure~$0$ off of which for large $n$, $\Lambda_n'$ contains a unique singular interval at level~$n$.  Furthermore, this singular interval has distance $<3 \ell_n$ from~$0$ and thus $[3\ell_n,\ell_{n+1}]$ and $[-\ell_{n+1},-3\ell_n]$ are $n$-regular, for parameters $(\theta,E)$ with $\theta\in\tor\setminus\calB$. Lemma~\ref{lem:G I reg} and~\eqref{eq:Poisson} conclude the proof. It is essential here that $\calB$ does not depend on $E$, as evidenced by~\eqref{eq:reflec}. 
\end{proof}
\begin{figure}[ht]
\hspace{-1.6cm}
\centering
\begin{subfigure}{.5\textwidth}
  \centering
  \includegraphics[width=1.2\linewidth]{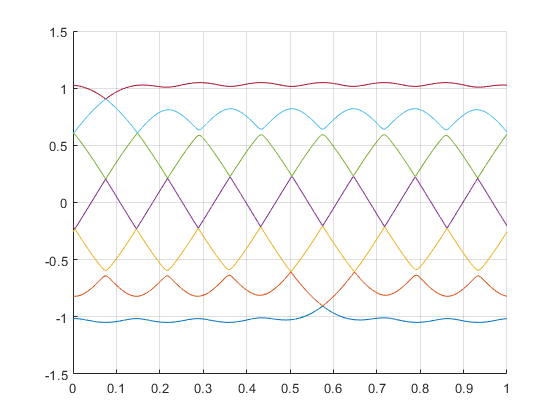}
  \caption{$\eps=0.1$}
  \label{fig:sub1}
\end{subfigure}%
\begin{subfigure}{.5\textwidth}
  \centering
  \includegraphics[width=1.2\linewidth]{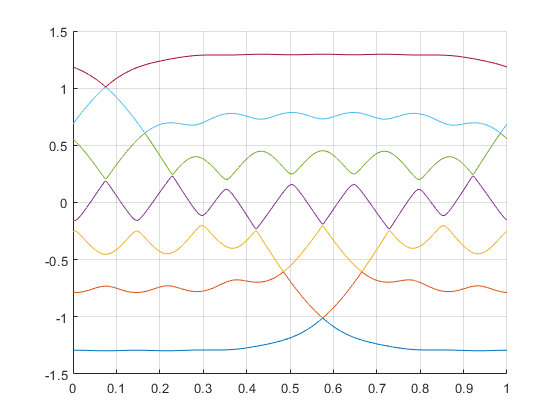}
  \caption{$\eps=0.3$}
  \label{fig:sub2}
\end{subfigure}
\caption{Eigenvalue parameterizations for the cosine potential}
\label{fig:rellich}
\end{figure}
\begin{proof} The remainder of this section is devoted to the proof of Lemma~\ref{lem:step 1}. We begin with  the easier case of  a simple resonance, i.e., $s_0\ge 4(\log\eps)^2$. Fix any $E_*\in \R$ with $[E_*-\delta_0,E_*+\delta_0]\cap [\min V,\max V]\ne\emptyset$, and some $\theta_*\in\tor$. Then $\calS_0=\{c^i_0\}_{i=-\infty}^\infty$, and every $c^i_0$ is contained in a unique level~$1$ interval  $I^i_1$, with $|I^i_1|=\ell_1=\lceil\log(1/\eps)\rceil^2$. These are pairwise disjoint by construction, and they may be regular or singular. We discard the regular ones and only consider those $c^i_1=c^i_0$ for which $I^i_1$ is singular. By the definitions, 
\EQ{\label{eq:VE*}
|V(\theta_*+k\omega)-E_*|& > \delta_0 \;\forall\; k\in I^i_1\setminus\{c^i_0\} \\
|V(\theta_*+c^i_0\omega)-E_*|& \le \delta_0
}
Let $\{E^{i,1}_j(\theta)\}_{j=1}^{\ell_1}$ be the eigenvalue parameterizations (Rellich functions) of $H_{I^i_1}(\theta)$. By min-max,  there exists $k(j,\theta)\in I^i_1$ so that
\EQ{\label{eq:VE close}
|V(\theta+k(j,\theta)\omega)-E^{i,1}_j(\theta) |& \le  2\eps^2 \quad \forall\; 1\le j\le \ell_1
}
Figure~\ref{fig:rellich} exhibits\footnote{The graphs were produced by Yakir Forman at Yale.}  numerically computed Rellich functions for $\ell_1=7$ and the cosine potential. The graphs do not cross, but some of the gaps are  too small to be visible. Subfigures (A) and (B) show how the gaps become wider with increasing $\eps$. The figure demonstrates how we need to jump between different translates $V(\theta+k\omega)$ to approximate any given Rellich graph, hence $k(j,\theta)$, which are not unique near crossing points of $V$ with its own translates by~$\omega$. 
Since $\delta_1\ll\eps^2\ll\eps$,  \eqref{eq:delta1} and \eqref{eq:VE*}, \eqref{eq:VE close} imply that for all $|\theta-\theta_*|\ll \delta_0$
\EQ{\label{eq:1close}
|V(\theta+c^i_0\omega)-E_*| &\ll \delta_0  \\
|V(\theta+k\omega) -E_*| &\gtrsim \delta_0 \quad\forall\; k \in I^i_1\setminus\{c^i_0\}
}
with implied absolute constants (depending only on $v,\omega$).  We now claim that a normalized eigenfunction $\psi(\theta)$ associated with $H_{I^i_1}(\theta)\psi(\theta)=E^{i,1}_j(\theta)\psi(\theta)$ and $k(j,\theta)=c^i_1=c^i_0$ satisfies
\EQ{\label{eq:psi loc}
\| P^\perp \psi(\theta)  \|_{\ell^2(I^i_1)} \les \eps^2 \delta_0^{-1}\ll \eps
}
where $P^\perp$ denotes the orthogonal projection onto all vectors perpendicular to~$\delta_{c^i_0}$ in $\ell^2(I^i_1)$.  Then 
\EQ{\nn
0 &= \psi_{c^i_0}(\theta)P^\perp(H_{I^i_1}(\theta)-E^{i,1}_j(\theta)) \delta_{c^i_0} + P^\perp(H_{I^i_1}(\theta)-E^{i,1}_j(\theta)) P^\perp\psi(\theta) \\
& = \eps^2 \psi_{c^i_0}(\theta) ( \delta_{c^i_0-1}+\delta_{c^i_0+1}) + P^\perp(H_{I^i_1}(\theta)-E^{i,1}_j(\theta)) P^\perp\psi(\theta)
} 
Here and below we use $\delta$ both for the resonance width and in the Dirac sense, without any danger of confusion. 
By \eqref{eq:VE close} and \eqref{eq:1close}, 
\EQ{\label{eq:Gperp}
\big\|[P^\perp(H_{I^i_1}(\theta)-E^{i,1}_j(\theta)) P^\perp]^{-1}\big\|_{\ell^2(I^i_1\setminus\{c^i_0\})}\les\delta_0^{-1}
}
 which implies~\eqref{eq:psi loc} and 
\EQ{\label{eq:VEclose}
|E^{i,1}_j(\theta) -V(\theta+c^i_0\omega)| &= | \langle H_{I^i_1}(\theta)\psi(\theta),\psi(\theta)\rangle -V(\theta+c^i_0\omega)| 
 \les \eps^2\delta_0^{-1}
}
By first order eigenvalue perturbation (Feynman formula), writing $E=E^{i,1}_j$ and $V(\theta)$ for the multiplication operator by the potential, 
\EQ{\label{eq:VE'close}
|E'(\theta) -V'(\theta+c^i_0\omega)| &= | \langle V'(\theta)\psi(\theta),\psi(\theta)\rangle -V'(\theta+c^i_0\omega)| 
 \les \eps^2\delta_0^{-1}
}
and by the second order perturbation formula, with $G^\perp$ being the resolvent on the left-hand side of~\eqref{eq:Gperp}, 
\begin{align}
|E''(\theta) -V''(\theta+c^i_0\omega)| &= | \langle V''(\theta)\psi(\theta),\psi(\theta)\rangle  -V''(\theta+c^i_0\omega) -
2 \langle\psi(\theta), V'(\theta)G^\perp(E(\theta))V'(\theta)\psi(\theta)\rangle| \nn \\ 
&   \les \eps^2\delta_0^{-1} + (\eps^2\delta_0)^{2}\delta_0^{-1} \les \eps^2\delta_0^{-1} \ll \eps \label{eq:VE''close}
\end{align}
The estimates \eqref{eq:VEclose}-\eqref{eq:VE''close} hold for $|\theta-\theta_*|\ll \delta_0$.  We conclude from \eqref{eq:VE'close}, \eqref{eq:VE''close} that 
\EQ{\label{eq:Enondeg}
\min_{|\theta-\theta_*|\ll\delta_0} \big( |E'(\theta)|+|E''(\theta)|\big) \gtrsim 1 \qquad\forall\; \theta_*\in\tor,
}
Recall that $E(\theta)=E^{i,1}_j(\theta)$ depends on $\theta_*$ and $E_*$, where the latter is chosen so that $\calS_0\ne\emptyset$. The constant in~\eqref{eq:Enondeg} is uniform in $\theta_*, E_*$. The reader is invited to  compare~\eqref{eq:Enondeg} to Figure~\ref{fig:rellich}. 
\begin{figure}[ht]
\hspace{-1.6cm}
\centering
\begin{subfigure}{.5\textwidth}
  \centering
  \includegraphics[width=.9\linewidth]{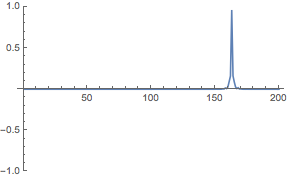}
\end{subfigure}%
\begin{subfigure}{.5\textwidth}
  \centering
  \includegraphics[width=.9\linewidth]{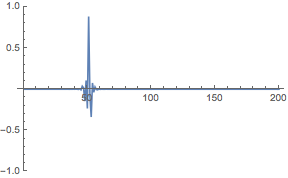}
\end{subfigure}
\begin{subfigure}{.5\textwidth}
  \centering
  \includegraphics[width=.9 \linewidth]{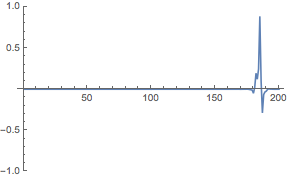}
\end{subfigure}
\caption{Numerically computed eigenfunctions, $\eps=0.3$, $\omega=\sqrt{2}$, $\theta=-17\omega/2$ }
\label{fig:Eigf1}
\end{figure}
Now suppose we have two distinct singular intervals $I^i_1$ and $I^j_1$ relative to $(\theta_*,E_*)$. Then the previous analysis applies to both Rellich functions $E(\theta),\tilde E(\theta)$ defined for $|\theta-\theta_*|\ll\delta_0$ characterized by 
\EQ{\label{eq:eval ch1}
\spec(H_{I^i_1}(\theta))\cap [E_*-\delta_0/2, E_*-\delta_0/2] &= \{E(\theta)\},\\
\spec(H_{I^j_1}(\theta))\cap [E_*-\delta_0/2, E_*-\delta_0/2] &= \{\tilde E(\theta)\}
}
By \eqref{eq:delta1} we have $|E(\theta_*)-E_*|\le \delta_1$, $|\tilde E(\theta_*)-E_*|\le \delta_1$. By \eqref{eq:VE close} we have
\[
|V(\theta_*+c^i_0\omega)-E(\theta_*)|\le 2\eps^2, \quad |V(\theta_*+c^j_0\omega)-\tilde E(\theta_*)|\le 2\eps^2
\]
whence
\[
|V(\theta_*+c^i_0\omega) - V(\theta_*+c^j_0\omega)|\le 4\eps^2+2\delta_1 \le 5\eps^2
\]
We showed in Lemma~\ref{lem:step 0} that this implies 
\EQ{\label{eq:mdelta0}
m(c^i_0,c^j_0)\les \eps\ll\delta_0.
}
 Next, we improve this estimate to to $m(c^i_0,c^j_0)\les\delta_1$.  Suppose \eqref{eq:mdelta0} means $\|(c^i_0-c^j_0)\omega\|\ll \delta_0$.  By~\eqref{eq:conj} one has 
 \EQ{\label{eq:HiHj}
 H_{I^i_1}(\theta)= H_{I^j_1}(\theta+(c^i_0-c^j_0)\omega)
 }
which, combined with \eqref{eq:eval ch1} implies that  $\tilde E(\theta)= E(\theta+(c^j_0-c^i_0)\omega)$ for all $|\theta-\theta_*|\ll\delta_0$. This finally implies that 
\[
|E(\theta_*) - E(\theta_*+(c^j_0-c^i_0)\omega)|\le 2\delta_1
\]
From \eqref{eq:Enondeg} we obtain $\| (c^j_0-c^i_0)\omega\|^2\les \delta_1$. On the other hand, if \eqref{eq:mdelta0} means $\|2\theta_*+(c^i_0+c^j_0)\omega\|\ll \delta_0$, then we have
\EQ{\label{eq:EtilE}
\tilde E(\theta)= E(2\theta_* - \theta - \theta_{**}), \quad \theta_{**} := 2\theta_*+(c^i_0+c^j_0)\omega
}
for all $|\theta-\theta_*|\ll\delta_0$. In terms of Figure~\ref{fig:V} this corresponds to $E(\theta)$ being approximated by~$V$ over $J_1$, whereas $\tilde E(\theta)$ is approximated by~$V$ over $J_2$. Setting $\theta=\theta_*$ in~\eqref{eq:EtilE}, we find that 
\[
|E(\theta_*) - E(\theta_* - \theta_{**})|\le 2\delta_1
\]
which implies $\| \theta_{**}\|^2\les\delta_1$. We have thus proved Lemma~\ref{lem:step 1} for simple resonances. 
\begin{figure}[ht]
\hspace{-1.6cm}
\centering
\begin{subfigure}{.5\textwidth}
  \centering
  \includegraphics[width=.9\linewidth]{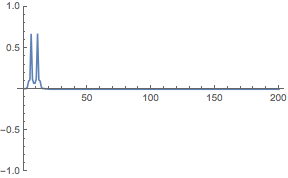}
\end{subfigure}%
\begin{subfigure}{.5\textwidth}
  \centering
  \includegraphics[width=.9\linewidth]{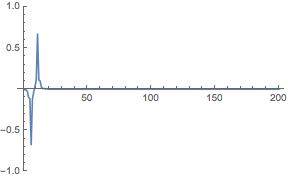}
\end{subfigure}
\begin{subfigure}{.5\textwidth}
  \centering
  \includegraphics[width=.9\linewidth]{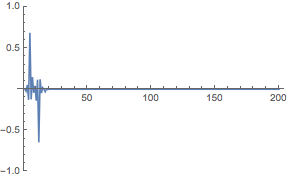}
\end{subfigure}
\caption{Numerically computed eigenfunctions, $\eps=0.3$, $\omega=\sqrt{2}$, $\theta=-17\omega/2$ }
\label{fig:Eigf2}
\end{figure}
Figures~\ref{fig:Eigf1} and~\ref{fig:Eigf2} depict eigenfunctions on finite volume  $200$ computed with \textsc{Mathematica} (the code is included in the appendix for the reader to experiment for themselves, for example by changing $\theta$ or trying rational~$\omega$). The choice of $\theta$ was made as a crossing point of $V$ with one of its translates by a multiple of~$\omega$, since double resonances occur near those points. The first eigenfunction shown in the upper left of Figures~\ref{fig:Eigf1} shows the case of a simple resonance, whereas the second and third are more complicated -- they exhibit a main peak with smaller ones due to resonances at later stages of the induction. On the other hand, Figure~\ref{fig:Eigf2} exhibits double resonances quite clearly (with the bottom eigenfunction exhibiting a more complicated structure). The reader should note the distinct distribution of the $\ell^2$-mass which is quite apparent on the $y$-axes of these figures. 

\medskip

We now prove Lemma~\ref{lem:step 1} for double resonances. Let $I^i_1$ be singular (as the red interval on the right-hand side of Figure~\ref{fig:doubleres}), centered at $c^i_1=\frac12(c^{2i}_0+c^{2i+1}_0)\in\frac12\Z$.  As a side remark, suppose that $c^i_1\in\frac12+\Z$. Then all $c^j_1\in \frac12+\Z$ due to $c^{2j+1}_0-c^{2j}_0=\const.$ for all~$j$ (since we passed to $\bar\calS_0$). At the next levels $n=2,3,\ldots,N$ we will encounter only simple resonances, and so all $c^k_n\in\frac12+\Z$ for all these~$n$. If we then encounter a double resonance at $N+1$, it implies that $c^k_{N+1}\in\frac12\Z$, and the patter repeats itself. Continuing with the main argument, one then has
\EQ{\nn 
& |V(\theta_*+c^{2i}_0\omega)-E_*|\les \delta_0,\quad |V(\theta_*+c^{2i+1}_0\omega)-E_*|\les \delta_0 \\
& |V(\theta_*+ k \omega)-E_*| \gg \delta_0^{\frac12} \quad \forall\; k\in I^i_1\setminus\{ c^{2i}_0, c^{2i+1}_0\} 
}
where the second line follows from the Diophantine condition since $I^i_1=\lceil \log(1/\eps)\rceil^4$ (one can choose a larger lower bound such as $\delta_0^{a}$ for any fixed $0<a\le\frac12$ at the expense of making $\eps$ smaller). By \eqref{eq:VE close}, 
\EQ{\label{eq:two E}
\spec(H_{I^i_1}(\theta))\cap [E_*-\delta_0^{\frac12}, E_*+\delta_0^{\frac12}] =\{ E(\theta), \tilde E(\theta)\} \quad\forall\; |\theta-\theta_*|\les \delta_0^{\frac12}
}
with $E>\tilde E$. By the same type of argument as in the simple resonant case, cf.~\eqref{eq:psi loc}, \eqref{eq:Gperp}, we see that the normalized eigenfunctions of $H_{I^i_1}(\theta)$ associated with $E$, resp.~$\tilde E$, are
\EQ{\label{eq:psitilpsi} 
\psi & = A \delta_{c^{2i}_0} + B \delta_{c^{2i+1}_0} + O(\eps^2\delta_0^{-\frac12} )\\
\tilde \psi &= -B \delta_{c^{2i}_0} + A \delta_{c^{2i+1}_0} + O(\eps^2\delta_0^{-\frac12})
}
uniformly on $ |\theta-\theta_*|\le \delta_0^{\frac12}$ with $A^2+B^2=1$. In place of~\eqref{eq:Gperp} we have 
\EQ{\label{eq:Gperp*}
\big\|[P^\perp(H_{I^i_1}(\theta)-E_*) P^\perp]^{-1}\big\|_{\ell^2(I^i_1\setminus\{c^{2i}_0,c^{2i+1}_0\})}\les\delta_0^{-\frac12}
}
where $P^\perp$ is the orthogonal projection perpendicular to $\lspan(\delta_{c^{2i}_0}, \delta_{c^{2i+1}_0})$ in $\ell^2(I^i_1)$.   The eigenvalues at level~$0$ associated with the points $c^{2i}_0$, $c^{2i+1}_0$ are,
resp., $E^{2i}_0(\theta):=V(\theta+c^{2i}_0\omega)$, $E^{2i+1}_0(\theta):=V(\theta+c^{2i+1}_0\omega)$.  This terminology is justified by the relations
\[
\langle H(\theta)\delta_{c^{2i}_0},\delta_{c^{2i}_0}\rangle = E^{2i}_0(\theta), \quad \langle H(\theta)\delta_{c^{2i+1}_0},\delta_{c^{2i+1}_0}\rangle = E^{2i+1}_0(\theta)
\]
By Lemma~\ref{lem:step 0}, $\|\theta_{**}\|\les \delta_0^{\frac12}$ with $\theta_{**}=2\theta_* + (c^{2i}_0+c^{2i+1}_0)\omega=2(\theta_*+c^i_1\omega)$.  It follows that either (a) $\|\theta_*+c^i_1\omega\|\les\delta_0^{\frac12}$ or  (b) $\|\theta_*+\frac12+c^i_1\omega\|\les\delta_0^{\frac12}$. These relations show that the unique solution  of $E^{2i}_0(\theta)=E^{2i+1}_0(\theta)$ on 
$|\theta-\theta_*|\les \delta_0^{\frac12}$ is either (a)  $\theta_s=-c^i_1\omega$  or (b) $\theta_s=\frac12-c^i_1\omega$ (henceforth, $\theta_s$ will mean either of these whichever applies). These identities are a restatement of $V$ being symmetric both (a) around $0$ and (b) around~$\frac12$.   Furthermore, one has 
\[
E_0^{2i}(\theta+\theta_s) = E_0^{2i+1}(-\theta+\theta_s)
\]
whence $\partial_\theta E_0^{2i}(\theta_s)= - \partial_\theta E_0^{2i+1}(\theta_s)$. 
\begin{figure}[ht]
\tikzset{every picture/.style={line width=0.75pt}} 

\begin{tikzpicture}[x=0.58pt,y=0.48pt,yscale=-1,xscale=1]

\draw    (2.21,439.06) -- (659.21,441.06) ;
\draw    (21.21,-1.94) -- (19.21,460.06) ;
\draw [color={rgb, 255:red, 208; green, 2; blue, 27 }  ,draw opacity=1 ][line width=2.25]    (160.29,440.14) -- (479,441.14) ;
\draw [line width=1.5]    (161.29,434.37) -- (161.29,446.37) ;
\draw [line width=1.5]    (325.29,435.37) -- (325.29,447.37) ;
\draw [line width=1.5]    (359.29,435.37) -- (359.29,447.37) ;
\draw [line width=1.5]    (479.29,435.37) -- (479.29,447.37) ;
\draw  [dash pattern={on 0.84pt off 2.51pt}]  (161.29,0.37) -- (160.29,440.14) ;
\draw  [dash pattern={on 0.84pt off 2.51pt}]  (480,1.36) -- (479,441.14) ;
\draw [color={rgb, 255:red, 65; green, 117; blue, 5 }  ,draw opacity=1 ][line width=1.5]    (160,9.36) -- (479.29,330.98) ;
\draw [color={rgb, 255:red, 65; green, 117; blue, 5 }  ,draw opacity=1 ][line width=1.5]    (479.29,88.37) -- (161.29,417.37) ;
\draw [color={rgb, 255:red, 208; green, 2; blue, 27 }  ,draw opacity=1 ][line width=1.5]    (160,2.36) -- (352.29,194.37) ;
\draw [color={rgb, 255:red, 208; green, 2; blue, 27 }  ,draw opacity=1 ][line width=1.5]    (479.29,79.98) -- (369.28,194.33) ;
\draw [color={rgb, 255:red, 208; green, 2; blue, 27 }  ,draw opacity=1 ][line width=1.5]    (349.29,232.37) -- (159,429.36) ;
\draw [color={rgb, 255:red, 208; green, 2; blue, 27 }  ,draw opacity=1 ][line width=1.5]    (366.28,232.25) -- (477.57,345.25) ;
\draw    (135,335.36) .. controls (182.81,317.55) and (179.37,353.63) .. (233.35,335.92) ;
\draw [shift={(235,335.36)}, rotate = 521.1700000000001] [color={rgb, 255:red, 0; green, 0; blue, 0 }  ][line width=0.75]    (10.93,-3.29) .. controls (6.95,-1.4) and (3.31,-0.3) .. (0,0) .. controls (3.31,0.3) and (6.95,1.4) .. (10.93,3.29)   ;
\draw    (79,33.36) .. controls (118.6,3.66) and (138.6,62.17) .. (177.81,34.24) ;
\draw [shift={(179,33.36)}, rotate = 503.13] [color={rgb, 255:red, 0; green, 0; blue, 0 }  ][line width=0.75]    (10.93,-3.29) .. controls (6.95,-1.4) and (3.31,-0.3) .. (0,0) .. controls (3.31,0.3) and (6.95,1.4) .. (10.93,3.29)   ;
\draw  [color={rgb, 255:red, 208; green, 2; blue, 27 }  ,draw opacity=1 ][line width=1.5]  (369.28,194.33) .. controls (363.63,201.87) and (357.97,201.89) .. (352.29,194.37) ;
\draw  [color={rgb, 255:red, 208; green, 2; blue, 27 }  ,draw opacity=1 ][line width=1.5]  (349.29,232.37) .. controls (354.9,224.8) and (360.57,224.76) .. (366.28,232.25) ;
\draw    (349.29,77.98) .. controls (306.94,82.9) and (315.03,102.38) .. (277.06,106.79) ;
\draw [shift={(275.29,106.98)}, rotate = 354.28999999999996] [color={rgb, 255:red, 0; green, 0; blue, 0 }  ][line width=0.75]    (10.93,-3.29) .. controls (6.95,-1.4) and (3.31,-0.3) .. (0,0) .. controls (3.31,0.3) and (6.95,1.4) .. (10.93,3.29)   ;
\draw    (293.29,381.98) .. controls (250.94,386.9) and (254.18,372.42) .. (216.06,375.81) ;
\draw [shift={(214.29,375.98)}, rotate = 354.28999999999996] [color={rgb, 255:red, 0; green, 0; blue, 0 }  ][line width=0.75]    (10.93,-3.29) .. controls (6.95,-1.4) and (3.31,-0.3) .. (0,0) .. controls (3.31,0.3) and (6.95,1.4) .. (10.93,3.29)   ;
\draw [color={rgb, 255:red, 144; green, 19; blue, 254 }  ,draw opacity=1 ][line width=1.5]  [dash pattern={on 5.63pt off 4.5pt}]  (160.29,239.98) -- (481.29,239.98) ;
\draw [color={rgb, 255:red, 144; green, 19; blue, 254 }  ,draw opacity=1 ][line width=1.5]  [dash pattern={on 5.63pt off 4.5pt}]  (160.29,233.98) -- (481.29,233.98) ;

\draw (648,456.06) node    {$\mathbb{T}$};
\draw (326,457.36) node    {$\theta _{*}$};
\draw (362,457.36) node    {$\theta _{s}$};
\draw (142,462.36) node    {$\theta _{*} -\delta _{0}^{\frac12}$};
\draw (503,461.36) node    {$\theta _{*} +\delta _{0}^{\frac12}$};
\draw (100,346.36) node    {$V( \theta+c^{2i}_0\omega )$};
\draw (75,48.36) node    {$V( \theta +c^{2i+1}_0\omega )$};
\draw (372,77.98) node    {$E( \theta )$};
\draw (312,380.98) node    {$\tilde E( \theta )$};
\draw (132,248.98) node    {$E_{*} -\delta _{1}$};
\draw (132,222.98) node    {$E_{*} +\delta _{1}$};

\end{tikzpicture}
\caption{Crossing graphs and double resonance}\label{fig:crossing}
\end{figure}
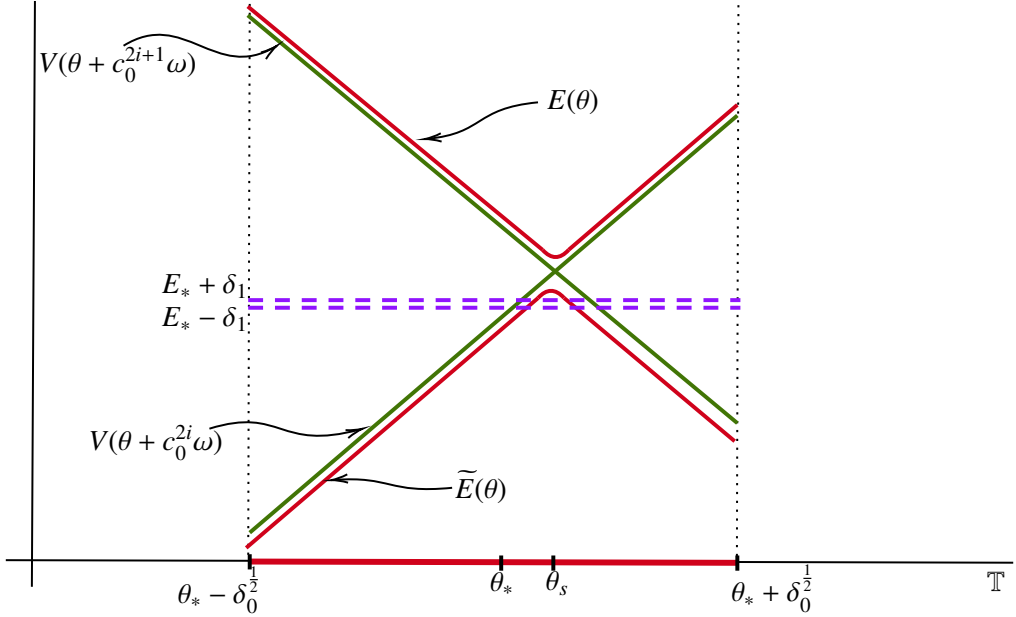
The configuration associated with a double resonance is shown in Figure~\ref{fig:crossing}. Not only do the segments of the $V$-graphs (i.e., $E^{2i}_0$ and $E^{2i+1}_0$) intersect at $\theta_s$, but $E,\tilde E$ have their critical point at $\theta_s$ within the interval $|\theta-\theta_*|\les \delta_0^{\frac12}$. Indeed, 
\[
H_{I^i_1}(\theta+\theta_s) = U H_{I^i_1}(-\theta+\theta_s) U
\]
where $U$ is the reflection on $\Z$ about $c^i_1\in\frac12\Z$. In particular, the eigenvalues are the same. In fact, using~\eqref{eq:two E} one concludes that
\EQ{\label{eq:EtilE symm}
E(\theta+\theta_s) = E(-\theta+\theta_s), \quad \tilde E(\theta+\theta_s) = \tilde E(-\theta+\theta_s)\quad \forall\; |\theta-\theta_*|\les\delta_0^{\frac12}
} 
whence $E'(\theta_s)=\tilde E(\theta_s)=0$. 

Next,  we establish the lower bound 
\EQ{\label{eq:EtilE sep}
E(\theta) - \tilde E(\theta) > (c\eps)^{5\ell_1^{1/2}} \gg \delta_1=\eps^{\ell_1^{2/3}} = \eps^{|\log \eps|^{8/3}}
}
which follows immediately from this separation lemma, see \cite[Lemma~4.1]{FSW}. This spectral gap is much larger than the resonance width~$\delta_1$.

\begin{lem}
\label{lem:sep}
Let $H_\Lambda\psi_j=E_j\psi_j$, $j=1,2$ with nontrivial $\psi_j$. If $\|\psi_j\|_{\ell^2(\Lambda_0)}\ge \frac12\|\psi_j\|_{\ell^2(\Lambda)}$ for $j=1,2$ with $\Lambda_0\subset\Lambda$ and $|\Lambda_0|\ge2$, then 
\EQ{\label{eq:E sep}
|E_1 - E_2|\ge    (c_1\eps)^{|\Lambda_0|}(\eps^{-2} + |\Lambda_0|)^{-1}
}
 with a constant $c_1=c_1(v)>0$. 
\end{lem}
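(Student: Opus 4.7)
My proof plan centers on the discrete Wronskian identity combined with a transfer-matrix comparison on $\Lambda_0$. Setting
\[
W(n) := \psi_1(n{+}1)\psi_2(n) - \psi_2(n{+}1)\psi_1(n),
\]
the two eigenvalue equations $\eps^2[\psi_j(n{+}1) + \psi_j(n{-}1)] + V(n)\psi_j(n) = E_j\psi_j(n)$ combine into the telescoping relation $\eps^2[W(n) - W(n{-}1)] = (E_1-E_2)\psi_1(n)\psi_2(n)$. Summing over $n \in \Lambda_0 = [p,q]$ gives the basic identity
\[
\eps^2\bigl[W(q) - W(p{-}1)\bigr] = (E_1-E_2)\,\langle\psi_1,\psi_2\rangle_{\Lambda_0}. \qquad (\star)
\]
Normalize $\|\psi_j\|_{\ell^2(\Lambda)} = 1$, so that the concentration hypothesis gives $\|\psi_j|_{\Lambda_0}\| \ge \tfrac12$ and hence some $n_\star \in \Lambda_0$ with $|\psi_j(n_\star)| \ge (2\sqrt{|\Lambda_0|})^{-1}$. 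The orthogonality $\langle\psi_1,\psi_2\rangle_\Lambda = 0$ expresses $\langle\psi_1,\psi_2\rangle_{\Lambda_0}$ purely in terms of boundary contributions.

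I would then argue by contradiction. Define the transfer matrix $T_n(E) := \bigl(\begin{smallmatrix}(E-V(n))/\eps^2 & -1 \\ 1 & 0\end{smallmatrix}\bigr)$, so that $\Psi_j(n{+}1) := (\psi_j(n{+}1), \psi_j(n))^\top$ evolves via $\Psi_j(n{+}1) = T_n(E_j)\Psi_j(n)$ and $T_n(E_1) - T_n(E_2) = \eps^{-2}(E_1-E_2)\operatorname{diag}(1,0)$. After rescaling $\psi_2$ so that $\Psi_2(p{-}1)$ lies on the same ray as $\Psi_1(p{-}1)$, the standard resolvent expansion for products of matrices yields
\[
\bigl\|\Psi_1(n) - c\,\Psi_2(n)\bigr\| \lesssim |E_1-E_2|\,\eps^{-2}\,|\Lambda_0|\,\bigl(\max_k \|T_k(E_1)\|\bigr)^{|\Lambda_0|}
\]
uniformly on $\Lambda_0$. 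If $|E_1-E_2|$ were strictly below the claimed lower bound, this error would be smaller than $|\psi_j(n_\star)|$, forcing $\psi_1$ and $\psi_2$ to be nearly proportional on $\Lambda_0$, which together with $\langle\psi_1,\psi_2\rangle_\Lambda = 0$ and the fact that at least three quarters of the mass of each $\psi_j$ sits on $\Lambda_0$ would produce a contradiction. Rearranging this contradiction and combining with $(\star)$ gives the advertised lower bound of the form $(c_1\eps)^{|\Lambda_0|}(\eps^{-2} + |\Lambda_0|)^{-1}$; the correction factor reflects the $L^\infty$ normalization of $\psi_j|_{\Lambda_0}$ and the size of the Wronskian endpoints.

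The principal obstacle is obtaining the correct exponent $\eps^{|\Lambda_0|}$ rather than the na\"ive $\eps^{2|\Lambda_0|}$ that comes from $\|T_n\| \le C/\eps^2$. To recover the sharper exponent one must exploit the $\mathrm{SL}(2,\R)$ structure of $T_n$: its singular values are reciprocal, so along the stable direction the per-step loss is $\eps^{-1}$ rather than $\eps^{-2}$. Equivalently, at any site $n$ where $\psi_j$ has appreciable size one has $|V(n)-E_j| \lesssim \eps$ by near-resonance, so the effective coupling $\eps^2/|V(n)-E_j|$ is $\sim \eps$ rather than $\sim\eps^2$; this is precisely the mechanism that produces the Green-function decay $|G_\Lambda(E)(x,y)|\lesssim \eps^{|x-y|}$ exploited in Lemmas~\ref{lem:step0} and~\ref{lem:G I reg}, and it is the technical heart of the argument. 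A careful bookkeeping of this stable-direction propagation across $|\Lambda_0|$ steps, with the constant $c_1 = c_1(v)$ absorbing the bounds on $V$, then closes the proof.
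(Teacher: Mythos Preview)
Your broad architecture---a Wronskian identity together with a transfer-matrix comparison on $\Lambda_0$, closed by the concentration hypothesis and orthogonality---is exactly the paper's approach. But the obstacle you flag at the end is real, and both of your proposed fixes are incorrect.

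The $SL(2,\R)$ singular-value heuristic does not work: the singular values of $T_n$ are of order $C\eps^{-2}$ and $\eps^2/C$, and there is no intermediate scale $\eps^{-1}$ attached to any direction. Nor is the near-resonance claim $|V(n)-E_j|\lesssim\eps$ on $\Lambda_0$ justified---the lemma carries no resonance hypothesis, and in the paper's application $\Lambda_0$ contains at most two resonant sites out of $|\Lambda_0|\sim(\log 1/\eps)^2$ many. Neither mechanism can recover the missing factor.

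The paper's resolution is much simpler and purely structural: normalize at the \emph{center} $n_0$ of $\Lambda_0$ by setting $\psi_j(n_0-1)^2+\psi_j(n_0)^2=1$, and extract the Wronskian there rather than at the boundary. The sign-flip $\tilde\psi_1(n):=\psi_1(n)$ for $n\ge n_0$ and $-\psi_1(n)$ for $n<n_0$, together with self-adjointness of $H_\Lambda$, gives
\[
2\eps^2\,|\vec v_1\wedge\vec v_2|=|E_1-E_2|\,\bigl|\langle\tilde\psi_1,\psi_2\rangle_\Lambda\bigr|\le|E_1-E_2|\,\|\psi_1\|_2\|\psi_2\|_2,
\]
with $\vec v_j=(\psi_j(n_0),\psi_j(n_0-1))^\top$. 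Transfer matrices now propagate only $\ell_0\approx|\Lambda_0|/2$ steps in each direction from $n_0$, producing the factor $(C\eps^{-2})^{\ell_0}=(c_1\eps)^{-|\Lambda_0|}$ directly, with no appeal to stable directions. Your choice to anchor at the boundary $p-1$ forces a full $|\Lambda_0|$-step propagation and loses precisely this factor of~$2$ in the exponent.

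A secondary issue: your step ``rescale $\psi_2$ so that $\Psi_2(p{-}1)$ lies on the same ray as $\Psi_1(p{-}1)$'' is not achievable---scalar rescaling cannot rotate $\Psi_2(p{-}1)$. The paper never attempts to align the vectors; instead it bounds the angular defect $|\vec v_1\wedge\vec v_2|$ by the displayed Wronskian identity and feeds that bound into
\[
\|\psi_1-\psi_2\|_{\ell^2(\Lambda_0)}\lesssim (C\eps^{-2})^{\ell_0}\bigl(|\vec v_1\wedge\vec v_2|+\ell_0\,|E_1-E_2|\bigr),
\]
after which orthogonality on $\Lambda$ and the concentration hypothesis $\|\psi_j\|_{\ell^2(\Lambda_0)}\ge\frac12\|\psi_j\|_{\ell^2(\Lambda)}$ force the stated lower bound on $|E_1-E_2|$.
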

\begin{proof}
Let $\Lambda_0=[n_0-\ell_0,n_0+\ell_0]$ or $\Lambda_0=[n_0-\ell_0,n_0+\ell_0-1]$. By assumption, $\ell_0\ge1$. Normalize $\psi_j(n_0-1)^2+\psi_j(n_0)^2=1$ for $j=1,2$. Setting $\tilde \psi_1(n)=\psi_1(n)$ if $n\in\Lambda$, $n\ge n_0$ and $\tilde \psi_1(n)=-\psi_1(n)$ if $n\in\Lambda$, $n<n_0$ one obtains from considering $\langle H_\Lambda\tilde \psi_1 ,\psi_2\rangle=
 \langle \tilde \psi_1 , H_\Lambda\psi_2\rangle$ that
\EQ{
2\eps^2 |\vec v_1\wedge \vec v_2|&= 2\eps^2|\psi_1(n_0)\psi_2(n_0-1) - \psi_2(n_0)\psi_1(n_0-1)| = 	 \\
&\le 2|E_1-E_2|(\| \psi_1\|_2\| \psi_2\|_2)^{\frac12}  (\| \psi_1\|_{\ell^2(\Lambda_0)} \| \psi_2\|_{\ell^2(\Lambda_0)})^{\frac12}  \\
&\le 2 (C\eps^{-2})^{\ell_0}\, |E_1-E_2|(\| \psi_1\|_2\| \psi_2\|_2)^{\frac12}
}
where $\vec v_j=\binom{\psi_j(n_0)}{\psi_j(n_0-1)}$ and $C=C(v)$. The final estimate is obtained from the transfer matrix representation of the eigenfunctions, viz.\ for $n\ge n_0+1$
\[
\binom{\psi_j(n)}{\psi_j(n-1)} = \prod_{k=n_0}^{n-1} \left[ \begin{matrix} \eps^{-2}(v_k-E_j) & -1 \\ 1 & 0\end{matrix} \right] \vec v_j
\]
and for $n\le n_0-1$
\[
\binom{\psi_j(n-1)}{\psi_j(n)} = \prod_{k=n_0-1}^{n} \left[ \begin{matrix} \eps^{-2}(v_k-E_j) & -1 \\ 1 & 0\end{matrix} \right]^{-1} \vec v_j
\]
On the one hand,  with $B=C\eps^{-2}$,  and using that $\|\vec v_1-\vec v_2\|_2=2|\vec v_1\wedge \vec v_2|$, 
\EQ{\nn
\| \psi_1 - \psi_2 \|_{\ell^2(\Lambda_0)} & \le 2B^{\ell_0}|\vec v_1\wedge \vec v_2|   + \ell_0 B^{\ell_0} |E_1-E_2| \\
&\le 2\eps^{-2} B^{\ell_0} \, |E_1-E_2|(\| \psi_1\|_2\| \psi_2\|_2)^{\frac12}  + \ell_0 B^{\ell_0} |E_1-E_2| 
}
On the other hand,  with $a_j:=\|\psi_j\|_{\ell^2(\Lambda)}$, 
\EQ{\nn
\sqrt{a_1^2+a_2^2} & \le \frac12(a_1+a_2) + \| \psi_1 - \psi_2 \|_{\ell^2(\Lambda_0)} \\
&\le  \frac12(a_1+a_2) + B^{\ell_0}(B\sqrt{a_1a_2} + \ell_0)|E_1-E_2| \\
&\le \frac12(a_1+a_2) + B^{\ell_0}(B + \ell_0)|E_1-E_2|\, \sqrt{a_1a_2}
}
If $|E_1-E_2|< \frac14 B^{-\ell_0}(B + \ell_0)^{-1}$, then 
$
\sqrt{a_1^2+a_2^2} < \frac{5}{8} (a_1+a_2)
$
which is impossible. Adjusting the constants one obtains~\eqref{eq:E sep}.  
\end{proof}

The critical points of $V$ are $\theta=0$ and $\theta=\frac12$. We claim that $\min(\|\theta_*\|, \| \theta_*-\frac12\|)\ge K\delta_0^{\frac12}$ where $K$ is any  large constant, to
be fixed below (as always, provided $\eps$ is small enough). This is immediate from the Diophantine condition due to $s_0\le 4(\log\eps)^2$, cf.~Figure~\ref{fig:V}. In particular,
$|V'(\theta)|\gg \delta_0^{\frac12}$ on the interval $|\theta-\theta_*|\les \delta_0^{\frac12}$. By first order eigenvalue perturbation and~\eqref{eq:psitilpsi}, uniformly on this interval
\[
\partial_\theta E(\theta) = \langle \psi(\theta),V'(\theta)\psi(\theta)\rangle = A^2(\theta) \partial_\theta E^{2i}_0(\theta) +   B^2(\theta) \partial_\theta E^{2i+1}_0(\theta)  + O(\eps^2\delta_0^{-\frac12})
\]
where by the preceding $|\partial_\theta E^{2i}_0(\theta)|\gg \delta_0^{\frac12}$ and $|\partial_\theta E^{2i+1}_0(\theta)|\gg\delta_0^{\frac12}$. 
Setting $\theta=\theta_s$ it follows that $A^2(\theta_s)-B^2(\theta_s)=O(\eps^2\delta_0^{-1})=O(\eps)$. Due to $A^2+B^2=1$, $|A(\theta_s)|^2 = 1/2 - O(\eps)$ and  $|B(\theta_s)|^2=1/2-O(\eps)$.  In fact, the same argument shows that $|A(\theta)|\simeq |B(\theta)|\simeq 1$ for all  $|\theta-\theta_*|\les \delta_0^{\frac12}$ with $ |\partial_\theta E(\theta)|\les\delta_0^{\frac12}$. 

Using this property we can now establish closeness of all eigenvalues. In fact, 
$H_\Lambda(\theta) \psi(\theta)  = E(\theta) \psi(\theta) $ and $H_\Lambda(\theta) \tilde\psi(\theta)  = \tilde E(\theta) \tilde \psi(\theta)$ in combination with~\eqref{eq:psitilpsi} imply that
\EQ{\label{eq:all E close}
E^{2i}_0(\theta)-E(\theta) = O(\eps^2\delta_0^{-\frac12}), \quad E^{2i+1}_0(\theta)-E(\theta) = O(\eps^2\delta_0^{-\frac12})
}
and the same for $\tilde E$.  In particular, 
\EQ{\label{eq:EtilE close}
|E(\theta)-\tilde E(\theta)|\les \eps^2\delta_0^{-\frac12}
}
for all $|\theta-\theta_*|\les \delta_0^{\frac12}$ with $ |\partial_\theta E(\theta)|\les\delta_0^{\frac12}$. 
The final step in our analysis is to establish a lower bound on $|\partial_\theta^2 E(\theta)|$ and $|\partial_\theta^2 \tilde E(\theta)|$ for those~$\theta$. This hinges on the second order perturbation formulas (suppressing $\theta$ as argument)
\EQ{\nn
\partial_\theta^2 E &= \langle \psi, V''\psi\rangle - 2 \langle V'\psi, G(E)^\perp V'\psi\rangle \\
\partial_\theta^2 \tilde E &= \langle \tilde \psi, V''\tilde \psi\rangle - 2 \langle V'\tilde \psi, G(\tilde E)^\perp V'\tilde \psi\rangle
}
on $\ell^2(I^i_1)$ with $G(E)^\perp= [P^\perp_\psi (H_{I^i_1}-E) P^\perp_\psi]^{-1}$ in $P^\perp_\psi\ell^2(I^i_1)$ and $P^\perp_\psi$ being the orthogonal projection onto the complement of $\psi$ in $\ell^2(I^i_1)$. Analogous comments apply $G(\tilde E)^\perp$ which is the resolvent orthogonal to $\tilde\psi$. We now write 
\[
\langle V'\psi, G(E)^\perp V'\psi\rangle = \frac{\langle \psi, V'\tilde\psi\rangle^2}{\tilde E-E} + \langle V'\psi, G(E)^{\perp\perp} V'\psi\rangle
\]
where $G(E)^{\perp\perp} = P^\perp_{\tilde\psi}[P^\perp_\psi (H_{I^i_1}-E) P^\perp_\psi]^{-1}P^\perp_{\tilde\psi}$. By \eqref{eq:two E}, $\|G(E)^{\perp\perp}\|\les \delta_0^{-\frac12}$. 
On the other hand, by \eqref{eq:psitilpsi} 
\[
\langle \psi, V'\tilde\psi\rangle = -2AB\partial_\theta E^{2i}_0(\theta_s) + O(\delta_0^{\frac12})
\]
whence from $|AB|\simeq1$, 
\[
|\langle \psi, V'\tilde\psi\rangle |\gg \delta_0^{\frac12}
\]
Combining this with \eqref{eq:EtilE close} we obtain 
\[
|\langle V'\psi, G(E)^\perp V'\psi\rangle | \gg  \eps^{-2} \delta_0^{\frac32} - O( \delta_0^{-\frac12})  \gg \eps^{-2} \delta_0^{\frac32} \gg 1
\]
Since $| \langle \psi, V''\psi\rangle|\les 1$, it follows that 
\EQ{\label{eq:E'' big}
|\partial_\theta^2 E(\theta)|\gg  \eps^{-2} \delta_0^{\frac32}, \quad |\partial_\theta^2 \tilde E(\theta)|\gg  \eps^{-2} \delta_0^{\frac32}
}
for all $|\theta-\theta_*|\les \delta_0^{\frac12}$ with $ |\partial_\theta E(\theta)|\les\delta_0^{\frac12}$. The exact same analysis applies to $\tilde E$. 
To summarize, these are the main points concerning double resonances at level~$1$. 
\begin{itemize}
\item $s_0\le (\log\eps)^2$ and the level-$0$ singular sites are $\calS_0=\{c^j_0\}_{j=-\infty}^\infty$ with $c^{2i+1}_0-c^{2i}_0=s_0$ for all~$i$. We have $|V(\theta+k\omega)-E_*|\les \delta_0^{\frac12}$ for $k\in \{c^{2i}_0,c^{2i+1}_0\}$, $|V(\theta+k\omega)-E_*|\gg \delta_0^{\frac12}$ for all $k\in I^i_1\setminus\{c^{2i}_0,c^{2i+1}_0\}$, both for all $|\theta-\theta_*|\les \delta_0^{\frac12}$. Here $I^i_1\subset\Z$, $|I^i_1|\simeq (\log\eps)^4$, centered at $c^i_1\in\frac12\Z$ and $\dist(\spec(H_{I^i_1}(\theta_*)),E_*)\les \delta_1=\eps^{\ell_1^{2/3}}$. 
\item $\spec(H_{I^i_1}(\theta))\cap [E_*-\delta_0^{\frac12}, E_*+\delta_0^{\frac12}]=\{E(\theta), \tilde E(\theta)\}$ (with $E>\tilde E$) for these $\theta$, with all other eigenvalues being separated from $E_*$ by~$\gg\delta_0^{\frac12}$. From the level-$0$ estimate $m(c^{2i}_0, c^{2i+1}_0)\les \delta_0$, either $\theta_s=-c^i_1\omega$ or $\theta_s=\frac12-c^i_1\omega$ satisfy $\|\theta_s-\theta_*\|\les \delta_0^{\frac12}$ and the unique critical points of  $E,\tilde E$ in this interval are at $\theta_s$. There is a spectral gap of size $E(\theta)-\tilde E(\theta)>\eps^{5\ell_1^{\frac12}}$. 
\item For every $|\theta-\theta_*|\les \delta_0^{\frac12}$ one has either {\em both} $|\partial_\theta E(\theta)|\gg \delta_0^{\frac12}$ {\em and} $|\partial_\theta \tilde E(\theta)|\gg \delta_0^{\frac12}$ (large slopes), or both $|\partial_\theta E(\theta)|\les \delta_0^{\frac12}$ and $|\partial_\theta \tilde E(\theta)|\les \delta_0^{\frac12}$ (small slopes). This follows from $|\partial_\theta E_0^{2i}(\theta_s) | \gg\delta_0^{\frac12}$,  and the first order eigenvalue perturbation formulas 
\EQ{\nn
\partial E(\theta) &= (A^2(\theta)-B^2(\theta))\partial_\theta E_0^{2i}(\theta_s) + O(\delta_0^{\frac12}) \\
\partial \tilde E(\theta) &= (-A^2(\theta)+B^2(\theta))\partial_\theta E_0^{2i}(\theta_s) + O(\delta_0^{\frac12}) 
}
for all $|\theta-\theta_*|\les\delta_0^{\frac12}$, cf.~\eqref{eq:psitilpsi}. 
\item If the small slope alternative occurs, then   $|A(\theta)|\simeq |B(\theta)|\simeq 1$ and \eqref{eq:all E close} holds for both $E$ and $\tilde E$. In particular, the spectral gap is small as in~\eqref{eq:EtilE close}, and the second derivatives are large and $\gg\delta_0^{-\frac12}$, see~\eqref{eq:E'' big}. This means that the intervals of small slopes around the critical points at $\theta_s$ are of size $\ll\delta_0$. 
\item Figure~\ref{fig:crossing} depicts the situation for a double resonance: $E$ reaches its minimum, resp.\ $\tilde E$ its maximum, at $\theta_s$. The spectral gap is the smallest at this point and the quantitative estimates above hold. In particular, this gap is much larger than $\delta_1$, whence exactly one of $E$ or $\tilde E$ achieve the resonance condition~\eqref{eq:delta1} 
at~$\theta_*$. 
\end{itemize}
To conclude the proof of Lemma~\ref{lem:step 1} we apply this description to two such level~$1$ intervals, say $I^i_1$ and $I^j_1$. 
Because of the double resonance assumption, we have
\[
\| 1/2 + c^i_1\omega\|+ \| 1/2 + c^j_1\omega\| \les\delta_0^{\frac12}
\]
which implies that $m(c^i_1,c^j_1)=\| (c^i_1-c^j_1)\omega\|\les\delta_0^{\frac12}$. 
As in the single resonance case, cf.~\eqref{eq:HiHj}, for all $|\theta-\theta_*|\les\delta_0^{\frac12}$
\[
H_{I^i_1}(\theta) = H_{I^j_1}(\theta+ (c^i_1-c^j_1)\omega ) \text{\ and\ } E^i_1(\theta) = E^j_1(\theta+ (c^i_1-c^j_1)\omega ), \; \tilde E^i_1(\theta) = 
\tilde E^j_1(\theta+ (c^i_1-c^j_1)\omega )
\]
Finally, by \eqref{eq:delta1}, either 
\[
|E^j_1(\theta_*)-E^j_1(\theta_*+ (c^i_1-c^j_1)\omega )|\les\delta_1 \text{\ or \ } |\tilde E^j_1(\theta_*)-\tilde E^j_1(\theta_*+ (c^i_1-c^j_1)\omega )|\les\delta_1
\]
By the bounds derived above on the first and second derivatives on $E^i_1$ etc.\ and elementary  calculus, we finally conclude that $ m(c^i_1,c^j_1)\les\delta_1^{\frac12}$.
Indeed, in the large slopes case, $m(c^i_1,c^j_1)=\|(c^i_1-c^j_1)\omega\|\les \delta_0^{-\frac12}\delta_1$, whereas in the small slopes case, $m(c^i_1,c^j_1)\les \delta_0^{\frac14}\delta_1^{\frac12}$. This is slightly better than 
what  Lemma~\ref{lem:step 1} claims, and we are done.  
\end{proof}

The full induction needed to establish \eqref{eq:mn} follows these exact same lines with no essentially new ideas needed. The reader can either convince themselves of this fact, or consult~\cite{FSW}. Note, however, that Lemma~5.2 in loc.\ cit.\  erroneously sets $\theta_s=-c^i_m\alpha$ forgetting the case (b) above in which $1/2$ has to be added. This is a systematic oversight in Section~5 in that paper which is rooted in a false identity at the conclusion of the proof of Lemma~5.3: $2\|\theta\|=\|2\theta\|$ for the metric on~$\tor$. 

It seems very difficult to approach quasi-periodic localization in more general settings by relying on  eigenvalue parametrization, as we did in this section. 

\subsection{The work of Forman and VandenBoom: dropping evenness of $V$}

We will now discuss the highly challenging task of implementing some version of the Fr\"ohlich-Spencer-Wittwer proof strategy   without the symmetry assumption on the potential.  This has recently been accomplished by Forman and VandenBoom, see~\cite{FV}. 
We now sketch\footnote{The remainder of this subsection was written by Forman and VandenBoom.} the proof of their result.

\begin{thm}
Let $V \in C^2(\mathbb T)$ have exactly two nondegenerate critical points. Define 
\begin{equation}
H_\varepsilon(\theta) = \varepsilon^2\Delta_{\mathbb Z} + V_\theta, \quad V_\theta(n) = V(T_\omega^n\theta)\; \forall\, n\in\mathbb Z
\end{equation}
where $\omega \in \mathbb T$ is Diophantine, viz.\ $\|n\omega\| \geq c_0n^{-2}$ for all $n\geq1$ with some $c_0>0$. There exists $\varepsilon_0(c_0,V)$ such that for all $0<\varepsilon\leq\varepsilon_0$ the operators $H_{\theta,\varepsilon}$ exhibit Anderson localization for a.e.\ $\theta\in\mathbb T$.
\end{thm}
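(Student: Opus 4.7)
The plan is to follow the multiscale framework of Fr\"ohlich--Spencer--Wittwer outlined in the previous subsection, with modifications at each level to cope with the asymmetry of $V$. The overall structure is preserved: one defines level-$0$ singular sites via $\calS_0 = \{n : |V(\theta_* + n\omega) - E_*| \leq \delta_0\}$, builds nested intervals $I^i_n$ at each scale $\ell_n$ growing super-exponentially, classifies them as regular or singular according to whether $\dist(\spec(H_{I^i_n}(\theta_*)), E_*)$ is above or below $\delta_n$, proves exponential Green's function decay on regular intervals (so Lemma~\ref{lem:G I reg} transfers essentially verbatim once the level-$n$ separation is in hand), and concludes Anderson localization via the polynomially bounded Fourier basis of Theorem~\ref{thm:Ber} combined with a Borel--Cantelli argument over $\theta\in\tor$.

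The first genuine modification appears in the structural analysis of $\calS_0$. Let $\theta_1,\theta_2$ denote the two critical points of $V$; without evenness these need not satisfy $\theta_2 = -\theta_1$ modulo integers. The preimage $V^{-1}([E_*-\delta_0, E_*+\delta_0])$ still decomposes, for $E_*$ in a generic range, as $J_1 \cup J_2$ with $J_i$ centered near $\theta_i$ and of length $\les \delta_0^{1/2}$. The key ``mobility'' function $m(k,\ell)$ from Lemma~\ref{lem:step 0} is no longer governed by the resonance $\|2\theta_* + (k+\ell)\omega\|$ coming from reflection; instead the natural replacement is
\[
\tilde m(k,\ell) := \min\big(\| (k-\ell)\omega\|,\; \| (k-\ell)\omega - (\theta_2 - \theta_1)\|,\; \| (k-\ell)\omega + (\theta_2-\theta_1)\|\big),
\]
which detects the three possible pairings: both sites in the same well, or one in each. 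The analog of Lemma~\ref{lem:step 0} then reads $\tilde m(k,\ell)^2 \les \delta_0$ for distinct $k,\ell\in\calS_0$, with triple-point separation following from the Diophantine condition as before. Concomitantly, the mirror-image augmentation passing from $\calS_0$ to $\bar\calS_0$ must be replaced by a translation-image augmentation by the integer closest to $(\theta_2-\theta_1)/\omega$ (to the nearest denominator in the continued fraction expansion).

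The heart of the matter, and what I expect to be the main obstacle, lies in the eigenvalue perturbation analysis that underlies Lemma~\ref{lem:step 1} and its inductive continuation. In FSW, the identity $E(\theta + \theta_s) = E(-\theta + \theta_s)$, forced by the reflection symmetry of $V$, precisely located the critical point of each Rellich function inside the window $|\theta - \theta_*| \les \delta_0^{1/2}$, and symmetry around $\theta_s$ was crucial for the double-resonance analysis yielding~\eqref{eq:Enondeg}, \eqref{eq:EtilE sep}, and~\eqref{eq:E'' big}. Without symmetry, the Rellich branches are not forced to have critical points in the relevant window. In the simple-resonance case, the non-degeneracy can still be obtained purely from the first- and second-order perturbation formulas~\eqref{eq:VE'close}--\eqref{eq:VE''close}, since these use only $V\in C^2$ and the spectral gap. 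For double resonances, I would replace the exact reflection $\theta\mapsto -\theta+\theta_s$ by a quantitative \emph{approximate} local symmetry arising from the Morse normal form at each critical point of $V$: within a $\delta_0^{1/2}$-neighborhood of $\theta_i$, $V$ is to leading order an even quadratic, and the two Rellich branches $E(\theta), \tilde E(\theta)$ inherit near-symmetric second-derivative data up to quantitatively controlled asymmetric corrections (coming from the cubic Taylor error of $V$ and from the asymmetry of $V''(\theta_1)$ vs.~$V''(\theta_2)$). This should reproduce the spectral gap~\eqref{eq:EtilE sep} and the curvature bound~\eqref{eq:E'' big}; note that the separation Lemma~\ref{lem:sep} is symmetry-free and applies directly.

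The inductive propagation is the most delicate step. At level $n\ge 2$ one must pair singular intervals $I^i_n, I^j_n$ and conclude $\tilde m(c^i_n, c^j_n)^2 \les \delta_n$ using the data from level $n-1$. In the even case, the conjugation $H_{I^i_n}(\theta) = H_{I^j_n}(\theta + (c^i_n - c^j_n)\omega)$, together with its reflected variant, gave two exact relations between Rellich functions on distinct intervals that could then be compared. Without evenness, only the direct translation relation survives and the ``reflected'' variant must be replaced by an approximate intertwining obtained by conjugating by a lattice reflection and absorbing the asymmetric part of $V$ into a perturbation whose operator norm is under control. The main technical obstacle, and presumably the core innovation of Forman--VandenBoom, is a robust framework for tracking the effective Rellich data at each scale, so that the arithmetic of $(c^i_n-c^j_n)\omega$ modulo the residual shift $\theta_2 - \theta_1$ remains controlled uniformly through the induction. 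Given this, the final Borel--Cantelli step goes through as in the proof of Theorem~\ref{thm:FSW}, removing a $\theta$-null set on which the nested family of singular intervals fails to be essentially unique, and producing exponentially decaying eigenfunctions via the Poisson formula~\eqref{eq:Poisson} applied to surrounding regular annuli.
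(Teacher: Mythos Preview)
Your proposal has a genuine structural gap at its very first modification, and this gap propagates through the entire argument. You define the replacement mobility function
\[
\tilde m(k,\ell) = \min\big(\|(k-\ell)\omega\|,\; \|(k-\ell)\omega - (\theta_2-\theta_1)\|,\; \|(k-\ell)\omega + (\theta_2-\theta_1)\|\big)
\]
with $\theta_1,\theta_2$ the \emph{critical points} of $V$. But the intervals $J_1,J_2$ in $V^{-1}([E_*-\delta_0,E_*+\delta_0])$ are not centered near the critical points; they are centered at the two solutions $\phi_1(E_*),\phi_2(E_*)$ of $V(\phi)=E_*$, which depend on $E_*$. The resonance between sites in different wells is detected by $\|(k-\ell)\omega - (\phi_2(E_*)-\phi_1(E_*))\|$, an $E_*$-dependent quantity. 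The paper says this explicitly: without symmetry, no such function $m$ can be defined independent of $E_*$. In the even case the miracle is that $\phi_1+\phi_2\equiv 0$ for all $E_*$, which is why $\|2\theta_*+(k+\ell)\omega\|$ carries no $E_*$-dependence; nothing of the sort survives for general $V$. Your ``translation-image augmentation by the integer closest to $(\theta_2-\theta_1)/\omega$'' therefore has no dynamical meaning, and the level-$0$ separation lemma fails as stated.

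The paper's actual approach abandons the $m$-function framework entirely. Instead of a single pair $(\theta_*,E_*)$, one partitions the energy axis into overlapping regions and builds, for each region, a collection $\mathcal E_n$ of Rellich functions of Dirichlet restrictions whose domains and translates cover $\tor$. Singular intervals at level $n+1$ are defined directly via $|\mathbf E_n(\theta_*+m\omega)-E_*|<\delta_n$ for the relevant $\mathbf E_n\in\mathcal E_n$, and the Borel--Cantelli step controls the bad $\theta$-set by bounding $|\mathcal E_n|$ rather than by arithmetic of an $m$-function. The decisive technical innovation, which your proposal does not anticipate, is the resolution of double resonances via \emph{interlacing auxiliary curves}: one constructs eigenvalues $\lambda,\tilde\lambda$ of the rank-one-projected operators $P_\psi^\perp H_{\Lambda_{n+1}}P_\psi^\perp$ and $P_{\tilde\psi}^\perp H_{\Lambda_{n+1}}P_{\tilde\psi}^\perp$, which by min-max sit between $\mathbf E_{n+1,\wedge}$ and $\mathbf E_{n+1,\vee}$ and, being simple-resonant, have large opposite-signed derivatives. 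This is what forces a uniform spectral gap $\inf\mathbf E_{n+1,\vee}-\sup\mathbf E_{n+1,\wedge}\gg\delta_{n+1}$. Your ``approximate local symmetry from the Morse normal form'' might conceivably handle the first step, but it does not iterate: at scale $n\ge 2$ the parent Rellich functions $\mathbf E_n$ are no longer close to any translate of $V$ and carry no approximate evenness, so the mechanism you describe for locating critical points and controlling curvature collapses.
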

This is precisely the result of Fr\"ohlich, Spencer, and Wittwer without the evenness assumption, and we will make frequent references to the proof of that result. See the previous section. 

As in the symmetric case, we can define singular sites $\mathcal S_0$ relative to $\theta_*,E_*$. However, the $m(k,\ell)$ function is no longer useful, as $\|2\theta_* + (k+\ell)\omega\|$ is no longer small if $T^k_\omega\theta_*$ and $T^\ell_\omega\theta_*$ fall into different connected components of $V^{-1}([E_*-\delta_0,E_*+\delta_0])$. Without symmetry, no such function $m$ can be defined to be  independent of $E_*$.

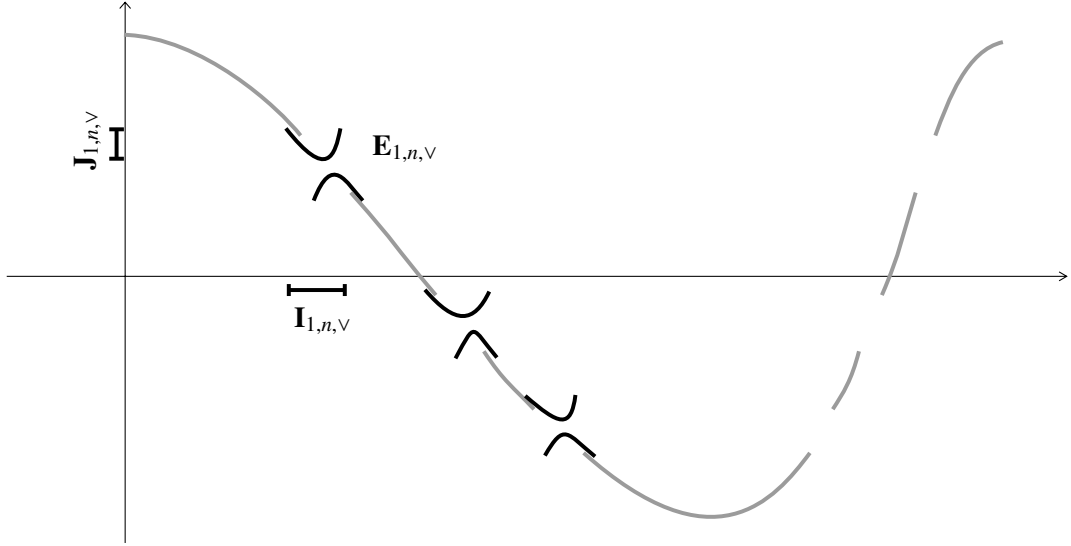
\begin{figure}[h]
	\caption{A cartoon output of the first inductive step: a collection $\mathcal{E}_1$ of Rellich functions of various Dirichlet restrictions of $H$, whose domains (and their relevant translates) cover the circle $\mathbb{T}$.  The curves in black come from double resonances, and the curves in gray are simple resonant.}
\begin{center}
	
	\begin{tikzpicture}[x=0.75pt,y=0.75pt,yscale=-0.45,xscale=0.55]
		
		\draw  (-21,307.94) -- (951,307.94)(87.6,0.5) -- (87.6,607) (944,302.94) -- (951,307.94) -- (944,312.94) (82.6,7.5) -- (87.6,0.5) -- (92.6,7.5)  ;
		\draw [line width=1.5]    (79.74,143.04) -- (79.74,176) ;
		\draw [shift={(79.74,176)}, rotate = 270] [color={rgb, 255:red, 0; green, 0; blue, 0 }  ][line width=1.5]    (0,6.71) -- (0,-6.71)   ;
		\draw [shift={(79.74,143.04)}, rotate = 270] [color={rgb, 255:red, 0; green, 0; blue, 0 }  ][line width=1.5]    (0,6.71) -- (0,-6.71)   ;
		\draw [line width=1.5]    (237.66,323.31) -- (289,323.31) ;
		\draw [shift={(289,323.31)}, rotate = 180] [color={rgb, 255:red, 0; green, 0; blue, 0 }  ][line width=1.5]    (0,6.71) -- (0,-6.71)   ;
		\draw [shift={(237.66,323.31)}, rotate = 180] [color={rgb, 255:red, 0; green, 0; blue, 0 }  ][line width=1.5]    (0,6.71) -- (0,-6.71)   ;
		\draw [color={rgb, 255:red, 0; green, 0; blue, 0 }  ,draw opacity=1 ][line width=1.5]    (235,142) .. controls (273,201) and (280,173) .. (285,142) ;
		\draw [color={rgb, 255:red, 0; green, 0; blue, 0 }  ,draw opacity=1 ][line width=1.5]    (260.5,223) .. controls (277.5,169) and (290.5,203) .. (305.5,223) ;
		\draw [color={rgb, 255:red, 155; green, 155; blue, 155 }  ,draw opacity=1 ][line width=1.5]    (294.5,214) .. controls (343,281) and (327.5,264) .. (372.5,329) ;
		\draw [color={rgb, 255:red, 155; green, 155; blue, 155 }  ,draw opacity=1 ][line width=1.5]    (86.5,37) .. controls (158.45,39.58) and (228.5,119) .. (248.5,150) ;
		\draw [color={rgb, 255:red, 0; green, 0; blue, 0 }  ,draw opacity=1 ][line width=1.5]    (362.5,323) .. controls (390.5,363) and (407.5,361) .. (421.5,325) ;
		\draw [color={rgb, 255:red, 0; green, 0; blue, 0 }  ,draw opacity=1 ][line width=1.5]    (390.5,400) .. controls (409.5,357) and (404.5,364) .. (428.5,399) ;
		\draw [color={rgb, 255:red, 155; green, 155; blue, 155 }  ,draw opacity=1 ][line width=1.5]    (416.5,392) .. controls (435.5,427) and (442.5,431) .. (462.5,457) ;
		\draw [color={rgb, 255:red, 0; green, 0; blue, 0 }  ,draw opacity=1 ][line width=1.5]    (454.5,442) .. controls (485.5,475) and (496.5,480) .. (500.5,441) ;
		\draw [color={rgb, 255:red, 0; green, 0; blue, 0 }  ,draw opacity=1 ][line width=1.5]    (472.5,509) .. controls (490.5,471) and (492.5,484) .. (518.5,510) ;
		\draw [color={rgb, 255:red, 155; green, 155; blue, 155 }  ,draw opacity=1 ][line width=1.5]    (508,506) .. controls (627.5,636) and (682.5,560) .. (715.5,506) ;
		\draw [color={rgb, 255:red, 155; green, 155; blue, 155 }  ,draw opacity=1 ][line width=1.5]    (736.5,457) .. controls (746.5,438) and (753.5,425) .. (760.5,392) ;
		\draw [color={rgb, 255:red, 155; green, 155; blue, 155 }  ,draw opacity=1 ][line width=1.5]    (781.5,329) .. controls (798.5,281) and (800.5,262) .. (812.5,214) ;
		\draw [color={rgb, 255:red, 155; green, 155; blue, 155 }  ,draw opacity=1 ][line width=1.5]    (830.5,150) .. controls (842.5,110) and (859.5,54) .. (892.46,45.22) ;
		
		\draw (38.4,191.6) node [anchor=north west][inner sep=0.75pt]  [rotate=-270.02]  {$\mathbf{J}_{1,n,\lor }$};
		\draw (240.32,340.36) node [anchor=north west][inner sep=0.75pt]    {$\mathbf{I}_{1,n,\lor }$};
		\draw (312,147.4) node [anchor=north west][inner sep=0.75pt]    {$\mathbf{E}_{1,n,\lor }$};

	\end{tikzpicture}
\end{center}

\label{f:RelCollection}
\end{figure}

Instead, we divide the energy axis into several overlapping intervals, and we construct a collection $\mathcal{E}_1$ of well-separated Rellich functions of certain Dirichlet restrictions of $H$ whose domains cover the circle $\mathbb{T}$ with the same structural properties as $\mathbf{E}_0$, cf. Figure \ref{f:RelCollection}. We choose an initial interval length $\ell_1^{(1)}$ and consider energy regions of size $\mathcal O((\ell_1^{(1)})^{-16})$.
Each energy region can be characterized as double-resonant, if it contains some $E_n$ which satisfies $E_n = V(\theta_n) = V(\theta_n+n\omega)$ for some $\theta\in\mathbb T$ and $|n| \leq \ell_1^{(1)}$, or simple-resonant if it does not. Each function $\mathbf E_1 \in \mathcal E_1$ is a Rellich function of $H_{\Lambda_1}$, where $\Lambda_1 \subset \mathbb Z$ is an interval of length $\ell_1^{(1)}$ if the energy region is simple-resonant, or $\ell_1^{(2)} \approx \left(\ell_1^{(1)}\right)^2$ if the energy region is double-resonant. 
The singular intervals are then characterized by 
\begin{equation*}
\mathcal S_1 = \{\Lambda_1 + m\,|\,m\in\mathbb Z,\,|\mathbf E_1(\theta_*+m\omega)-E_*| < \delta_1\}
\end{equation*} 
where $\mathbf E_1 \in \mathcal E_1$ is the Rellich function defined in the energy region containing $E_*$, and $\delta_1$ is defined as Fr\"ohlich, Spencer, and Wittwer define it.

Assuming the constructed Rellich functions satisfy a Morse condition, maintain two monotonicity intervals, and are well-separated from other Rellich functions on the same domain (i.e., we have an upper bound on $\|[P^\perp(H_{\Lambda_1}-\mathbf E_1)P^\perp]^{-1}\|$, as considered above), we can iterate this procedure inductively and conclude the proof as Fr\"ohlich, Spencer, and Wittwer do. 
While we cannot control the bad set of $\theta \in \mathbb T$ by the $m$ function as they do, we can bound it by controlling the number of Rellich functions we construct in $\mathcal E_n$ at each scale. 
Since the energy regions at scale $s$ are of size at least $\mathcal O(\delta_{s-2}^{3})$, 
each energy region at scale $s-1$ gives rise to at most $\mathcal O(\delta_{s-2}^{-3})$ Rellich functions at scale $s$; 
thus, we inductively bound $|\mathcal E_n| \leq \mathcal O(\delta_{n-2}^{-4})$.  The bad set of $\theta$ at scale $n$ for a specific $\mathbf E_n \in \mathcal E_n$ is bounded in measure by $\ell_{n+1}^2\delta_{n-1}^{1/4}$ by a calculus argument. Since $\delta_{n-2}^{-4}\ell_{n+1}^2\delta_{n-1}^{1/4}$ is still summable, we can apply Borel-Cantelli.

It remains to show that the Rellich functions in $\mathcal E_{n+1}$ inherit the structural properties of those in $\mathcal E_n$; 
namely, a Morse condition and a uniform separation estimate. 
By construction, simple resonant Rellich functions are well-separated from others, so they 
satisfy $\|\mathbf E_{n+1} - \mathbf E_n\|_{C^2} \ll \delta_{n}$ by the same arguments used above. 
In the double-resonant case, the Morse lower bound on the second derivative follows by a slight modification of the above argument to allow for $V$'s asymmetry. 
A new argument is required to separate the pair of double-resonant Rellich functions uniformly by a stable, quantifiable gap. 
We thus show 
\begin{lem} \label{l:DRRelFn}
In our setting, double resonances of a Rellich function $\mathbf{E}_n$ of $H_{\Lambda_n}$ resolve as a pair of uniformly locally separated Morse Rellich functions $\mathbf{E}_{n+1,\vee} > \mathbf{E}_{n+1,\wedge}$ of $H_{\Lambda_{n+1}}$ with at most one critical point, cf. Figure \ref{f:DRRelFn}. The size of the gap is larger than the next resonance scale:
\begin{equation*}
\inf \mathbf E_{n+1,\vee} - \sup \mathbf E_{n+1,\wedge} \gg \delta_{n+1}
\end{equation*}
\end{lem}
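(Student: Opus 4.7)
The plan is to follow the blueprint of the FSW double-resonance analysis at the end of Section~\ref{sec:FSW}, but to replace the reflection symmetry by a direct Feshbach--Schur reduction to a $2\times 2$ effective Hamiltonian. Let $\mathbf{E}_n$ be the Rellich function at issue, associated by the inductive hypothesis with a Morse single well of $H_{\Lambda_n}(\theta)$ whose localized eigenfunction $\psi_n(\theta)$ has $\ell^{2}$-mass concentrated near a lattice site $c_n$. ``Double resonance at scale $n$'' means that $\psi_n$ and a translate $\tau_{s_n}\psi_n$ interact over the larger box $\Lambda_{n+1}$ via the rare event $|\mathbf{E}_n(\theta_*)-\mathbf{E}_n(\theta_*+s_n\omega)|\lesssim\delta_n$.

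The first step is a Feshbach--Schur reduction on $\mathcal H_2(\theta):=\mathrm{span}(\psi_n(\theta),\tau_{s_n}\psi_n(\theta))\subset\ell^{2}(\Lambda_{n+1})$. Writing $P_2$ for the orthogonal projection onto $\mathcal H_2$ and $Q_2:=I-P_2$, the inductive Morse and separation hypotheses on $\mathbf{E}_n$ yield $\|(Q_2(H_{\Lambda_{n+1}}(\theta)-E)Q_2)^{-1}\|\lesssim\delta_n^{-1}$ for $E$ in the double-resonant window. The associated $2\times 2$ Hermitian effective operator $H_{\mathrm{eff}}(\theta,E)$ is then smooth in $\theta$, with diagonal entries
\[
a(\theta)=\mathbf{E}_n(\theta)+O(\eps^{2}\delta_n^{-1}),\qquad d(\theta)=\mathbf{E}_n(\theta+s_n\omega)+O(\eps^{2}\delta_n^{-1}),
\]
and off-diagonal entry $b(\theta)$; the two new Rellich branches arise self-consistently from $\det(H_{\mathrm{eff}}(\theta,E)-E)=0$.

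The second step is a tunneling lower bound on $b(\theta)$. A Green-function expansion analogous to the proof of Lemma~\ref{lem:ind step} across the barrier of length $s_n$ separating the two wells produces $|b(\theta)|\gtrsim\eps^{Cs_n}$; since $s_n\lesssim\ell_n^{2}$, this beats the next resonance scale $\delta_{n+1}$ comfortably, so the eigenvalues
\[
\mathbf{E}_{n+1,\vee/\wedge}(\theta)=\tfrac12(a(\theta)+d(\theta))\pm\tfrac12\sqrt{(a(\theta)-d(\theta))^{2}+4b(\theta)^{2}}
\]
of $H_{\mathrm{eff}}$ satisfy $\mathbf{E}_{n+1,\vee}-\mathbf{E}_{n+1,\wedge}\ge 2|b|\gg\delta_{n+1}$, which is the required gap. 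The single-critical-point and Morse conclusions are read off this radical: outside an $O(|b|/|a'-d'|)$-neighborhood of the crossing point $\theta_r$ defined by $a(\theta_r)=d(\theta_r)$, the derivative $\mathbf{E}_{n+1,\vee/\wedge}'$ inherits the sign of $a'\pm d'$ and does not vanish, while near $\theta_r$ the standard avoided-crossing geometry produces a unique minimum of $\mathbf{E}_{n+1,\vee}$ and a unique maximum of $\mathbf{E}_{n+1,\wedge}$, at which $|\mathbf{E}_{n+1,\vee/\wedge}''|\gtrsim|a'(\theta_r)-d'(\theta_r)|^{2}/|b|$, in analogy with~\eqref{eq:E'' big}.

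The main obstacle is locating $\theta_r$ and controlling the velocity gap $|a'(\theta_r)-d'(\theta_r)|$ without symmetry. In FSW the reflection $\theta\mapsto 2\theta_s-\theta$ pinned the critical point at $\theta_s$ and delivered $|a'(\theta_s)-d'(\theta_s)|=2|\partial_\theta E_0^{2i}(\theta_s)|\gtrsim\delta_0^{1/2}$ for free; in the asymmetric setting $\theta_r$ is only implicit, so this quantitative transversality of the two Rellich branches must be propagated through every previous scale as part of the inductive hypothesis, \emph{uniformly} over the double-resonant family and with constants that do not collapse below the self-consistency tolerance set by $\delta_{n+1}$. This uniform transversality is the real departure from the FSW argument and the crux of the lemma.
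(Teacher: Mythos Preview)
Your Feshbach--Schur/avoided-crossing strategy is a reasonable alternative to what the paper does, but it is not the paper's argument, and your version has a soft spot. The paper does \emph{not} reduce to a $2\times2$ effective Hamiltonian or invoke a tunneling lower bound on an off-diagonal element. Instead it interlaces two auxiliary curves between $\mathbf E_{n+1,\vee}$ and $\mathbf E_{n+1,\wedge}$: for each of the two resonant eigenvectors $\psi,\tilde\psi$ of $H_{\Lambda_n}$ one forms the rank-one-projected operator $P^{\perp}_{\psi}H_{\Lambda_{n+1}}P^{\perp}_{\psi}$ (resp.\ $P^{\perp}_{\tilde\psi}H_{\Lambda_{n+1}}P^{\perp}_{\tilde\psi}$), whose relevant eigenvalue $\tilde\lambda$ (resp.\ $\lambda$) is trapped between $\mathbf E_{n+1,\wedge}$ and $\mathbf E_{n+1,\vee}$ by the min--max principle. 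Since one resonance has been projected away, the simple-resonance perturbation theory applies and gives $\|\tilde\lambda-\tilde{\mathbf E}_n\|_{C^1}\ll\delta_n$, so $\lambda,\tilde\lambda$ inherit the large, opposite-signed first derivatives of the parent curves. Two transversal curves sandwiched between $\mathbf E_{n+1,\vee}$ and $\mathbf E_{n+1,\wedge}$, together with the pointwise separation from Lemma~\ref{lem:sep}, force the uniform gap. This sidesteps any need to lower-bound a hopping matrix element.

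Your step ``a Green-function expansion analogous to Lemma~\ref{lem:ind step} produces $|b(\theta)|\gtrsim\eps^{Cs_n}$'' is the weak point: resolvent expansions of that type deliver \emph{upper} bounds (decay), not lower bounds, and in principle $b$ could suffer cancellations. In one dimension the Wronskian argument behind Lemma~\ref{lem:sep} does yield a pointwise gap $\mathbf E_{n+1,\vee}-\mathbf E_{n+1,\wedge}\gtrsim\eps^{C|\Lambda_0|}$, from which one can back out a lower bound on $|b|$ at the crossing; but then you are essentially importing the paper's pointwise separation anyway. Also note that your displayed formula for $\mathbf E_{n+1,\vee/\wedge}$ ignores the $E$-dependence of $H_{\mathrm{eff}}$ that you yourself flagged (``self-consistently''); this is repairable by a contraction argument but should not be silently dropped. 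Your final paragraph correctly identifies transversality of the parent branches as the crux; the paper secures it by keeping $\lambda,\tilde\lambda$ $C^1$-close to $\mathbf E_n,\tilde{\mathbf E}_n$, whose large opposite-signed slopes come from the inductive Morse/two-monotonicity structure rather than from any symmetry.
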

This gap ensures that any Rellich function $\mathbf{E}_n$ can resonate only with itself at future scales, which ultimately enables our induction.
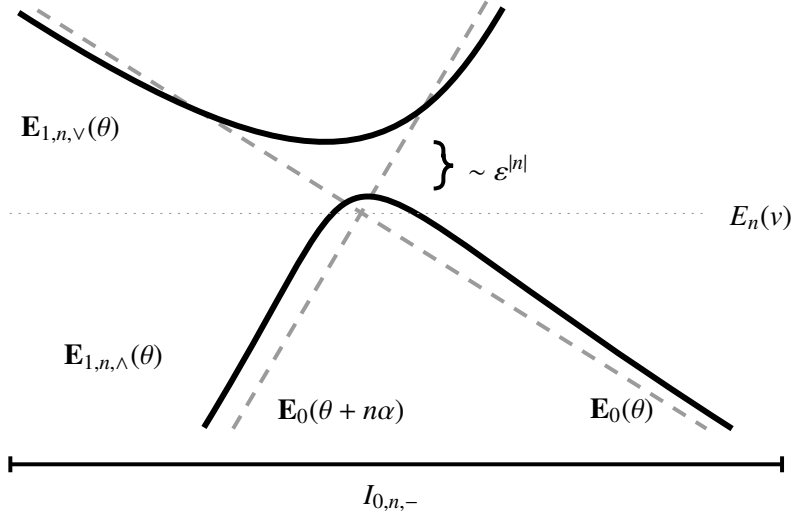
\begin{figure}[h]
	\caption{The resolution of a double resonance of $\mathbf{E}_0 = V$ into a pair of uniformly locally well-separated Rellich curves of a Dirichlet restriction $H_{\Lambda_1}$ of $H$. The curves $\mathbf E_0(\theta)$ and $\mathbf E_0(\theta+n\alpha)$ need not interlace the Rellich curves $\mathbf E_1$, but the auxiliary curves $\lambda,\tilde\lambda$ (not pictured) must.}
	\begin{center}
	\begin{tikzpicture}[x=0.75pt,y=0.75pt,yscale=-0.6,xscale=0.7]
		
		\draw [color={rgb, 255:red, 155; green, 155; blue, 155 }  ,draw opacity=1 ][line width=1.5]  [dash pattern={on 5.63pt off 4.5pt}]  (78.5,21) -- (560.5,373) ;
		\draw [color={rgb, 255:red, 155; green, 155; blue, 155 }  ,draw opacity=1 ][line width=1.5]  [dash pattern={on 5.63pt off 4.5pt}]  (219.5,373) -- (402.5,13) ;
		\draw [color={rgb, 255:red, 0; green, 0; blue, 0 }  ,draw opacity=1 ][line width=2.25]    (65,23) .. controls (283,186) and (348,150) .. (414,19) ;
		\draw [line width=2.25]    (198.5,372) .. controls (342,98) and (251.5,128) .. (579,373) ;
		\draw  [line width=1.5]  (364,171) .. controls (368.67,171) and (371,168.67) .. (371,164) -- (371,161.78) .. controls (371,155.11) and (373.33,151.78) .. (378,151.78) .. controls (373.33,151.78) and (371,148.45) .. (371,141.78)(371,144.78) -- (371,139) .. controls (371,134.33) and (368.67,132) .. (364,132) ;
		\draw [color={rgb, 255:red, 155; green, 155; blue, 155 }  ,draw opacity=1 ] [dash pattern={on 0.84pt off 2.51pt}]  (58.5,192) -- (558.5,192) ;
		\draw [line width=1.5]    (59,403) -- (615,403) ;
		\draw [shift={(615,403)}, rotate = 180] [color={rgb, 255:red, 0; green, 0; blue, 0 }  ][line width=1.5]    (0,6.71) -- (0,-6.71)   ;
		\draw [shift={(59,403)}, rotate = 180] [color={rgb, 255:red, 0; green, 0; blue, 0 }  ][line width=1.5]    (0,6.71) -- (0,-6.71)   ;
		
		\draw (387.08,140.63) node [anchor=north west][inner sep=0.75pt]  [rotate=-359.58,xslant=0.02]  {$\sim \varepsilon ^{|n|}$};
		\draw (250,346.4) node [anchor=north west][inner sep=0.75pt]    {$\mathbf{E}_0( \theta +n\alpha )$};
		\draw (65,108.4) node [anchor=north west][inner sep=0.75pt]    {$\mathbf{E}_{1,n,\lor }( \theta )$};
		\draw (475,345.4) node [anchor=north west][inner sep=0.75pt]    {$\mathbf{E}_0( \theta )$};
		\draw (96,300.4) node [anchor=north west][inner sep=0.75pt]    {$\mathbf{E} _{1,n,\land }( \theta )$};
		\draw (575,183.4) node [anchor=north west][inner sep=0.75pt]    {$E_{n}( v)$};
		\draw (311,418.4) node [anchor=north west][inner sep=0.75pt]    {$I_{0,n,-}$};

	\end{tikzpicture}
	\end{center}

\label{f:DRRelFn}
\end{figure}

To prove Lemma \ref{l:DRRelFn}, we 
interlace two auxiliary curves between the double-resonant Rellich pair. 
Specifically, let $\mathbf E_n(\theta),\mathbf{\tilde E}_n(\theta)$ be the two resonant Rellich functions with corresponding eigenvectors $\psi(\theta),\tilde\psi(\theta)$.
 By the Min-Max Principle, there must be an eigenvalue $\tilde\lambda$ of $P^\perp_\psi H_{\Lambda_{n+1}}P^\perp_\psi$ satisfying 
\begin{equation*} 
\mathbf E_{n+1,\wedge}(\theta) \leq \tilde\lambda(\theta) \leq \mathbf E_{n+1,\vee}(\theta)
\end{equation*}
Moreover, since we have projected away from one resonance, the arguments from the simple-resonance case can be used to show that $\|\tilde\lambda-\mathbf{\tilde E}_n\|_{C^1} \ll \delta_n$. As a consequence of the Morse condition, $|\partial_\theta\mathbf{\tilde E}_n| \gg \delta_n$, so $|\partial_\theta\tilde\lambda|$ is similarly bounded below. 
By repeating this process to construct an eigenvalue $\lambda$ of $P^\perp_{\tilde\psi}H_{\Lambda_{n+1}}P^\perp_{\tilde\psi}$ with $\|\lambda - \mathbf E_n\|_{C^1} \ll \delta_n$, we construct two curves, with large opposite-signed first derivatives, which separate $\mathbf E_{n+1,\vee}$ and $\mathbf E_{n+1,\wedge}$. Combining this with the pointwise separation bound gives a uniform separation bound, proving Lemma \ref{l:DRRelFn} and allowing the inductive argument to proceed.

No version of this proof currently exists for more than two critical points. In higher dimensions, which can mean both a higher-dimensional lattice Laplacian, as well as potentials defined on $\tor^d$ with $d\ge2$, it is even more daunting to implement   this perturbative proof strategy. This is why we will impose a much more rigid assumption on the potential function, namely analyticity, for the remainder of these lectures. Smooth potentials are a largely uncharted territory, especially in higher dimensions. 

\section{Subharmonic functions in the plane}

This section\footnote{Based on notes written and typed by Adam Black during a graduate class by the author at Yale.} establishes some standard facts about harmonic and subharmonic functions in the plane. In the subsequent development of the theory of quasi-periodic localization for analytic potentials, we will make heavy use of such results as Riesz' representation of subharmonic functions, and the Cartan estimate.  A reader familiar with this material can move on to the following section. 

\subsection{Motivation and definition}
Let $ \Omega\subset \bbC $ be a domain (open and connected). Let $\mathcal{H}(\Omega)$ denote the holomorphic functions on $\Omega $. What sort of function is $ \log \abs{f(z)}$ for $ f\in\mathcal{H}(\Omega) $ with $ f\not\equiv 0 $? Recall that for $ f\in \mathcal{H}(\Omega) $ if $ f\neq 0$ in $ \Omega $ simply connected then there exists $g\in\mathcal{H}(\Omega) $, unique up to an additive constant in $2\pi i\Z$,  such that $ f=e^g $. Indeed,  if $ f=e^g $ then $ f'=g'e^f $ so that $ g'=\frac{f'}{f} $. Then for any $ z_0\in \Omega $, set $ g(z)=g(z_0)+\int_{z_0}^z\frac{f'(w)}{f(w)}\,dw $, where this integral is well-defined because the integrand is holomorphic and $ \Omega $ is simply connected. 
The upshot of this is that for non-vanishing $ f $, $ \log\abs{f}=\log e^{\Re g}=\Re g$ so that $ \log \abs{f} $ is harmonic. Notice that this is still true if $ \Omega $ is not simply connected because being harmonic is a local property and we can always find the existence of such a $ g $ in a disc around any point. Now, if $ f(z_0)=0 $, then we may write $ f(z)=(z-z_0)^n\tilde{f}(z) $ where $ \tilde{f}(z) $ does not vanish in some neighborhood of $ z_0 $. In this neighborhood, we have 
\begin{equation}\nn
\log\abs{f(z)}=n\log\abs{z-z_0}+\log\abs{\tilde{f}(z)}
\end{equation}
which we can make sense of in the entire neighborhood by declaring $ \log\abs{z-z_0}=-\infty $ at $ z=z_0 $. Indeed, this function is continuous as map into $ \bbR\cup \{-\infty\} $ relative to   the natural topology. More generally, if $ K\subset\subset \Omega $ (that is, compactly contained) then we let $ \{\zeta_j\}_{j=1}^N $ be the zeroes of $ f $ in $ K $ counted with multiplicity so that $ f(z)=\prod_{j=1}^N(z-\zeta_j)F(z) $ where $ F $ is holomorphic on some $ \Omega' \supset K $ and $ F\neq 0 $ in $ \Omega' $. Then $ \log \abs{f(z)}=\sum_{j=1}^N\log\abs{z-\zeta_j}+\log\abs{F(z)} $. 
From this we infer what type of function $ \log\abs{f} $ is, namely it is harmonic away from the zeroes of $ f $, and $ -\infty $ there, so the value of the function should be \emph{lower} than its average on a small disc. This motivates the following definition, which applies to   all dimensions. However, throughout we limit ourselves to the plane.  
\begin{defn}
	A function $ u $  is {\em subharmonic} on $ \Omega\subset \bbR^2 $, denoted $ u\in\mathcal{SH}(\Omega)$, if 
\begin{itemize}
	\item $ u:\Omega\rightarrow [-\infty,\infty)$ is upper semi-continuous (usc)
	\item $ u $ satisfies the subharmonic mean value property (smvp): 
	\begin{align*}
		u(x_0)\leq\fint_{\partial \bbD(x_0,r)}u(y)\,dy
	\end{align*}
	for any disk $ \bbD(z_0,r)$ such that $ \overline{\bbD(z_0,r)}\subset \Omega $.
\end{itemize}
\end{defn}
One should think of subharmonic functions as lying below harmonic ones, see Corollary~\ref{co:sh} below.  Hence,  in one dimension, subharmonic functions are convex as they lie below lines, which are the one-dimensional harmonic functions. 
The integral in the above definition is well defined (although it may be $ -\infty $) because of the following lemma. 

\begin{lem}
	Let $ f:K\rightarrow [-\infty,\infty) $ be usc with $ K $ compact. Then $f$ attains its maximum.
	\end{lem}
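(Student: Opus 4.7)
The plan is to use the two standard ingredients of upper semi-continuity, namely that preimages of $(-\infty,c)$ are open, together with the characterization in terms of $\limsup$. First I would show that $f$ is bounded above on $K$. For each $n\in\mathbb{N}$, the set $U_n:=\{x\in K\,:\,f(x)<n\}$ is open by upper semi-continuity, since $f^{-1}([-\infty,n))$ is open in $\Omega$. Because $f$ takes values in $[-\infty,\infty)$, we have $K=\bigcup_{n\ge1}U_n$. Compactness of $K$ yields a finite subcover, hence some $N$ with $f\le N$ on $K$. In particular $M:=\sup_K f\in[-\infty,N]$ is finite or $-\infty$; if $M=-\infty$ there is nothing to prove (any point attains the maximum $-\infty$), so assume $M\in\mathbb{R}$.

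Next I would show the supremum is attained by a standard compactness/sequence argument. Choose $x_n\in K$ with $f(x_n)\to M$. By compactness of $K$, pass to a subsequence $x_{n_k}\to x_*\in K$. The equivalent formulation of upper semi-continuity at $x_*$ reads
\[
\limsup_{k\to\infty} f(x_{n_k})\le f(x_*),
\]
and since $f(x_{n_k})\to M$ the left-hand side equals $M$, giving $M\le f(x_*)$. The reverse inequality $f(x_*)\le M$ is immediate from the definition of $M$, so $f(x_*)=M$.

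The argument is entirely routine; the only point that requires care is the equivalence between the ``$\{f<c\}$ open'' and the ``$\limsup$'' formulations of upper semi-continuity, both of which I used above. There is no real obstacle—the lemma is just the upper-semi-continuous analogue of the classical extreme value theorem, and both uses of compactness (finite subcover for boundedness, sequential compactness for attainment) are standard.
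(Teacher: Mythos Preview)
Your proof is correct and follows essentially the same approach as the paper: pick a maximizing sequence, extract a convergent subsequence by compactness, and conclude via the $\limsup$ characterization of upper semi-continuity. The only difference is that you add an explicit open-cover argument for boundedness above, which the paper omits (it simply sets $M=\sup_K f$ and proceeds directly to the sequence argument).
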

\begin{proof}
	Let $ M:=\sup_{x\in K}f(x) $. Let $ f(x_i)\rightarrow M $ as $ i\rightarrow\infty $. By compactness, pass to a subsequence if necessary so that $ x_i\rightarrow x $. Then $ M=\limsup_{i\rightarrow\infty} f(x_i)\leq f(x)\leq M $.
\end{proof}

\subsection{Basic properties}
In this section we prove some basic properties of subharmonic functions. Readers familiar with the properties of harmonic functions may find these proofs rather familiar.
\begin{prop} 
	If $ u\in\mathcal{SH}(\Omega) $ then $ u(z_0)\leq \fint_{\bbD(z_0,r)}u(z)\,dz $ for all $ \overline{\bbD(z_0,r)}\subset \Omega $.
	\end{prop}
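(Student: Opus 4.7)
The plan is to upgrade the defining circle-average inequality to the disk-average inequality by integrating in polar coordinates. Concretely, for $\overline{\mathbb{D}(z_0,r)}\subset\Omega$, I would write
\[
\int_{\mathbb{D}(z_0,r)} u(z)\,dA(z) \;=\; \int_0^r \Big(\int_0^{2\pi} u(z_0+\rho e^{i\theta})\,d\theta\Big)\rho\,d\rho \;=\; 2\pi \int_0^r \rho\Big(\fint_{\partial\mathbb{D}(z_0,\rho)} u\,d\sigma\Big)d\rho,
\]
and then apply the smvp on each circle $\partial\mathbb{D}(z_0,\rho)$ with $0<\rho\le r$, each of which is admissible since $\overline{\mathbb{D}(z_0,\rho)}\subset\overline{\mathbb{D}(z_0,r)}\subset\Omega$. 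This yields the pointwise bound $\fint_{\partial\mathbb{D}(z_0,\rho)} u\,d\sigma \ge u(z_0)$ for every such $\rho$, and integrating against $\rho\,d\rho$ on $[0,r]$ gives
\[
\int_{\mathbb{D}(z_0,r)} u\,dA \;\ge\; 2\pi u(z_0)\int_0^r \rho\,d\rho \;=\; \pi r^2\, u(z_0),
\]
which is the desired inequality after dividing by $\pi r^2$.

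Before the polar decomposition, I would take care of the measurability and integrability issues, since $u$ is only usc and may attain $-\infty$. Upper semi-continuity on the compact set $\overline{\mathbb{D}(z_0,r)}$ implies that $u$ is Borel measurable and bounded above by the lemma proved just above. In particular, $u^+$ is bounded on $\overline{\mathbb{D}(z_0,r)}$, so the integrals $\int u\,dA$ and $\int u\,d\sigma$ are always well-defined in $[-\infty,+\infty)$, and Fubini applies to justify writing the area integral as the iterated polar integral (for instance by applying Tonelli to $C - u$ where $C$ is an upper bound of $u$ on the closed disk).

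The main (mild) obstacle is purely a bookkeeping one: if $u(z_0)=-\infty$ the desired inequality is vacuous, whereas if $u(z_0)>-\infty$ the above chain of inequalities also shows $\int_{\mathbb{D}(z_0,r)} u\,dA > -\infty$, so $u$ is in fact locally integrable at $z_0$. I do not anticipate any other subtleties: the argument is a one-line reduction once the polar representation and Fubini have been set up correctly, and it is precisely the planar analogue of the passage from spherical to solid mean value properties for harmonic functions.
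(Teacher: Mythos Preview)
Your proof is correct and follows essentially the same approach as the paper: integrate the circle-average inequality $u(z_0)\,|\partial\bbD(z_0,s)|\le \int_{\partial\bbD(z_0,s)}u$ in $s$ from $0$ to $r$ (i.e., pass to polar coordinates) to obtain the solid-average inequality. Your additional remarks on measurability, Tonelli, and the $u(z_0)=-\infty$ case are sound and simply make explicit what the paper leaves implicit.
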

\begin{proof}
For all $ 0<s\leq r $ we have that
\begin{align*}
	u(z_0)\, |\partial \bbD(z_0,s)|\leq\int_{\partial \bbD(z_0,s)}u(z)\,dz
\end{align*}
so that the result follows immediately by integrating both sides from $ 0 $ to $ r $ with respect to $ s $.
\end{proof}

\begin{cor}\label{co:aeeq}
	Let $ u,v\in\mathcal{SH}(\Omega) $ such that $ u(z)=v(z) $ for almost every $ z $. Then $ u\equiv v $.
	\end{cor}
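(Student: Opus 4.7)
The plan is to recover each of $u(z_0)$ and $v(z_0)$ as the limit as $r\to 0^+$ of the solid-disk average $\fint_{\bbD(z_0,r)} u$, respectively $\fint_{\bbD(z_0,r)}v$. Since $u=v$ almost everywhere, these two averages agree for every admissible $z_0$ and $r$, so the two limits coincide and $u(z_0)=v(z_0)$ at every point $z_0\in\Omega$.

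First I would establish the one-sided bound
\[
\limsup_{r\to 0^+}\fint_{\bbD(z_0,r)} u(z)\,dz \le u(z_0)
\]
purely from upper semi-continuity. If $u(z_0)>-\infty$, then for any $\eps>0$ there exists $\delta>0$ such that $u(z)<u(z_0)+\eps$ for all $|z-z_0|<\delta$, which integrates to $\fint_{\bbD(z_0,r)}u\le u(z_0)+\eps$ for $r<\delta$. If $u(z_0)=-\infty$, the same argument with $\eps$ replaced by an arbitrary $M\in\R$ shows the average tends to $-\infty$. The point here is that the averages are well-defined (possibly $-\infty$), because $u$, being usc on the compact disk $\overline{\bbD(z_0,r)}\subset\Omega$, is bounded above there.

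Combining this with the already-proved solid sub-mean value inequality $u(z_0)\le \fint_{\bbD(z_0,r)}u$, I obtain
\[
u(z_0)\le \liminf_{r\to 0^+}\fint_{\bbD(z_0,r)} u \le \limsup_{r\to 0^+}\fint_{\bbD(z_0,r)} u \le u(z_0),
\]
so in fact $u(z_0)=\lim_{r\to 0^+}\fint_{\bbD(z_0,r)}u(z)\,dz$, and identically $v(z_0)=\lim_{r\to 0^+}\fint_{\bbD(z_0,r)}v(z)\,dz$.

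Finally, for any fixed $z_0\in\Omega$ and any $r>0$ with $\overline{\bbD(z_0,r)}\subset\Omega$, the hypothesis $u=v$ Lebesgue-a.e.\ on $\Omega$ gives $\fint_{\bbD(z_0,r)}u=\fint_{\bbD(z_0,r)}v$ (both sides being equal in $[-\infty,\infty)$). Taking $r\to 0^+$ and applying the equality of each side with its pointwise limit yields $u(z_0)=v(z_0)$. Since $z_0\in\Omega$ was arbitrary, $u\equiv v$. The only subtle step to be careful with is the case $u(z_0)=-\infty$, which is handled by the usc definition allowing arbitrarily negative $M$ in place of $u(z_0)+\eps$; no other obstacle arises.
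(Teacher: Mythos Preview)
Your proof is correct and uses the same ingredients as the paper's proof: the solid sub-mean value inequality together with upper semi-continuity. The only organizational difference is that you first establish the identity $u(z_0)=\lim_{r\to 0^+}\fint_{\bbD(z_0,r)}u$ for any subharmonic $u$ and then invoke $u=v$ a.e., whereas the paper compares $u(z_0)$ directly to $\fint_{\bbD(z_0,r)}v$, bounds this by $\max_{\overline{\bbD(z_0,r)}}v$, and applies usc to the resulting sequence; both arguments are essentially equivalent.
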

\begin{proof}
	By the smvp and the fact that $ u$ and $ v $  are equal almost everywhere, we see that for every $ z_0 $ for any $ r>0 $ such that $ \overline{\bbD(z_0,r)}\subset \Omega $ 
\begin{align*}
	u(z_0)\leq \fint_{\bbD(z_0,r)}v(z)\,dz
\end{align*}
Let $ r_i\rightarrow 0 $ and let $ v(z) $ attain its maximum on $ \bbD(z_0,r_i) $ at $ z_i $ so that $z_i\rightarrow z_0 $. Thus for all $ i$
\begin{align*}
	u(z_0)\leq\fint_{\bbD(z_0,r_i)}v(z)\,dz\leq v(z_i)
\end{align*}
so taking limsups we see that
\begin{align*}
	u(z_0)\leq v(z_0)
\end{align*}
by usc. By symmetry, we have also that $ v(z_0)\leq u(z_0)$, so we are done.
\end{proof}

\begin{lem}
	Suppose $ u\in C^2(\Omega) $. Then $ u\in \mathcal{SH} $ iff $ \Delta u(z)\geq 0 $ for all $ z\in\Omega $.
	\end{lem}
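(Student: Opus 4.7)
The plan is to reduce both directions to a single identity: for $u \in C^2(\Omega)$ and $\overline{\mathbb{D}(z_0,r)} \subset \Omega$, the circular average
\[
\phi(r) := \frac{1}{2\pi}\int_0^{2\pi} u(z_0 + re^{i\theta})\,d\theta
\]
satisfies $\phi(0^+) = u(z_0)$ (by continuity) and, by differentiating under the integral, Green's theorem, and polar coordinates,
\[
\phi'(r) = \frac{1}{2\pi r}\int_{\partial\mathbb{D}(z_0,r)} \partial_\nu u\,ds = \frac{1}{2\pi r}\int_{\mathbb{D}(z_0,r)} \Delta u(z)\,dA(z).
\]
Integrating from $0$ to $r$ yields the key formula
\[
\phi(r) - u(z_0) = \frac{1}{2\pi}\int_0^r \frac{1}{s}\int_{\mathbb{D}(z_0,s)} \Delta u\,dA\,ds,
\]
which makes the equivalence of subharmonicity with $\Delta u \geq 0$ transparent.

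For the backward direction ($\Delta u \geq 0 \Rightarrow u \in \mathcal{SH}$), the right-hand side of the displayed formula is nonnegative for every admissible $r$, which is exactly the smvp; upper semi-continuity is automatic from $u \in C^2$.

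For the forward direction ($u \in \mathcal{SH} \Rightarrow \Delta u \geq 0$), I would argue by contradiction: if $\Delta u(z_0) < 0$ at some $z_0 \in \Omega$, then by continuity there exists $r_0 > 0$ with $\overline{\mathbb{D}(z_0,r_0)} \subset \Omega$ on which $\Delta u < 0$ strictly. Then the identity above forces $\phi(r) < u(z_0)$ for all $0 < r \leq r_0$, contradicting the smvp $u(z_0) \leq \phi(r)$.

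The main technical step I expect to require a bit of care is justifying the passage $\phi'(r) = \tfrac{1}{2\pi r}\int_{\mathbb{D}(z_0,r)} \Delta u\,dA$: one differentiates under the integral (legitimate since $u \in C^2$ and the integrand depends smoothly on $r$), rewrites the angular derivative $\partial_r u$ on $\partial\mathbb{D}(z_0,r)$ as a normal derivative, and then invokes the divergence theorem. This is standard, but it is the only place where the $C^2$ hypothesis is actually used; everything else is a mean-value argument. No genuine obstacle, so the proof should be short and essentially self-contained.
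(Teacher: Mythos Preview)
Your proof is correct and follows essentially the same approach as the paper: both compute the radial derivative of the circular average via the divergence theorem to obtain $\phi'(r) = \frac{1}{2\pi r}\int_{\mathbb{D}(z_0,r)} \Delta u\,dA$, then use the sign of $\Delta u$ for the backward direction and a contradiction argument at a point where $\Delta u(z_0)<0$ for the forward direction. The only cosmetic difference is that you integrate the derivative identity explicitly, whereas the paper argues directly from monotonicity of $\phi$.
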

\begin{proof}
Define
\begin{align*}
	(Mu)_{x_0}(r):=\fint_{\partial \bbD(x_0,r)}u(y)\,\sigma(dy)=\fint_{\abs{w}=1}u(x_0+rw)\,\sigma(dw) 
\end{align*}
where $ \sigma $ is the surface measure on the circle. We compute
\begin{align*}
	\partial_r (Mu)_{x_0}(r)=\fint_{\abs{w}=1}\grad u(x_0+rw)\cdot w\,\sigma(dw)=\frac{1}{\abs{\partial \bbD(0,1)}r}\int_{\partial \bbD(x_0,r)}\grad u(y)\cdot \vec{n}\,\sigma(dw)
\end{align*}
which by the divergence theorem is equal to
\begin{align*}
	\frac{1}{\abs{\partial \bbD(0,1)}r }\int_{\bbD(x_0,r)}\Delta u(y)\,dy
\end{align*}
Thus, we see that if $ \Delta u\geq 0 $ then $ (Mu)_{x_0}(r) $ is non-decreasing with $ r $, and as its limit as $ r\rightarrow 0 $ is $ u(x_0) $, one direction follows. For the other direction, note that if $ \Delta u(x_0)< 0 $ then there exists some disk $\bbD(x_0,r)$ on which $ \Delta u(x)<0 $. The above computation then shows that $ (Mu)_{x_0} $ is decreasing for small enough $ r $, which contradicts the smvp.
\end{proof}

\begin{prop} 
	The function $ f(z)=\log\abs{z} $ is subharmonic on $ \R^2 $.
	\end{prop}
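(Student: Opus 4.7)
The plan is to verify directly the two defining properties of subharmonicity: upper semicontinuity and the submean value property. Away from the origin the function $f(z)=\log|z|$ is real analytic and locally equal to $\Re g$ for a holomorphic branch of $g=\log z$, so it is harmonic there; in particular the SMVP holds with equality on any closed disk $\overline{\bbD(z_0,r)}$ that does not contain $0$. This handles everything except disks that contain the origin and the single point $z_0=0$.

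For upper semicontinuity I would simply note that $f$ is continuous on $\R^2\setminus\{0\}$ as a map into $\R$, and that $\lim_{z\to 0}\log|z|=-\infty = f(0)$ (here $f$ is viewed as taking values in $[-\infty,\infty)$ with the natural topology). Hence $\limsup_{z\to z_0}f(z)\le f(z_0)$ at every $z_0\in\R^2$, which is exactly usc.

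For the SMVP in the remaining cases: if $z_0=0$, then $f(z_0)=-\infty$ is $\le$ anything, so the inequality is vacuous. If $z_0\ne 0$ but $|z_0|<r$, one computes the explicit average
\EQ{\nn
\fint_{\partial\bbD(z_0,r)}\log|z|\,\sigma(dz) = \log r,
}
which I would derive by writing $\log|z| = \Re\log z$ on a simply connected domain not containing $0$ and then using the standard identity (or equivalently, by expanding $\log|z-z_0+z_0|$ and using $\fint_{\partial\bbD(0,r)}\log|w+z_0|\,\sigma(dw)=\log r$ whenever $|z_0|<r$, a consequence of Jensen's formula applied to the holomorphic function $w\mapsto w+z_0$ on $\bbD(0,r)$). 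Since $|z_0|<r$ gives $\log|z_0|<\log r$, the SMVP holds strictly in this case.

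The only step that is not entirely routine is the evaluation of the circle average when the disk contains the origin; the cleanest route is to invoke Jensen's formula, but if one wishes to stay self-contained one can instead expand $\log|w+z_0|=\Re\log(w+z_0)$ for $|z_0|<|w|=r$ in the absolutely convergent series $\log w + \sum_{k\ge 1}\tfrac{(-1)^{k+1}}{k}(z_0/w)^k$ and integrate term by term against $d\sigma(w)$ on $|w|=r$, where every term with $k\ge1$ integrates to $0$ and the leading term contributes $\log r$. With these three cases combined, $f\in\mathcal{SH}(\R^2)$.
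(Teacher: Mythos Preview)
Your argument is correct, modulo one minor boundary case: when $z_0\ne0$ and $|z_0|=r$ exactly, the origin lies on $\partial\bbD(z_0,r)$, so neither your harmonicity argument (which needs $0\notin\overline{\bbD(z_0,r)}$) nor your Jensen computation (stated for $|z_0|<r$) directly applies. This is easy to patch---Jensen's formula extends to $|z_0|=r$ by continuity of the circle average in $r$, and then $\log|z_0|=\log r$ gives equality in the SMVP---but you should say so.

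The paper proceeds quite differently. Rather than splitting into cases, it approximates $\log|z|$ by the smooth functions $f_n(z)=\tfrac12\log(|z|^2+1/n)$, checks $\Delta f_n\ge0$ by a direct computation in polar coordinates (so each $f_n$ is subharmonic by the $C^2$ criterion), and then passes to the limit in the SMVP via monotone convergence. Your route is more elementary and exploits the explicit structure of $\log|z|$ (harmonicity off the origin, Jensen's identity at the origin); the paper's route avoids any circle-average computation and instead illustrates a general technique---smoothing and taking limits---that recurs later in the section (e.g.\ in the proof of the Riesz representation). Both are valid; the paper's choice is less about this particular function and more about foreshadowing the mollification lemma.
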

\begin{proof}
	Let $ f_n=\frac{1}{2}\log(\abs{z}^2+1/n)$. Then it is easy to compute in polar coordinates that $ \Delta f_n=\frac{2(1/n)}{(r^2+1/n)^2}\geq 0 $ so that because $ f_n$ is $ C^2(\R^2) $, it is subharmonic. On $ \partial \bbD(z_0,r) $, the sequence $ \{f_n\} $ is bounded above by some $ M $ so that $M-f_n  $ is a positive monotone sequence of integrable functions. By applying the monotone convergence theorem to this sequence we see that
	\begin{align*}
		\lim_{n\rightarrow\infty}\fint_{\partial \bbD
		(z_0,r)}f_n(z)\,dz=\fint_{\partial \bbD(z_0,r)}f(z)\,dz
	\end{align*}
from which the result follows.
\end{proof}

\begin{lem}
The maximum or sum of finitely many subharmonic functions is subharmonic.
\end{lem}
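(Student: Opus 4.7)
The plan is to verify both required properties (upper semi-continuity and the subharmonic mean value inequality) separately for $\max(u_1,\ldots,u_n)$ and for $u_1+\cdots+u_n$ when $u_1,\ldots,u_n\in\mathcal{SH}(\Omega)$. By induction, it suffices to treat the case $n=2$, so I will focus on two subharmonic functions $u,v$.

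First I would handle upper semi-continuity. For the maximum, I would use the sublevel-set characterization: a function $f:\Omega\to[-\infty,\infty)$ is usc iff $\{f<c\}$ is open for every $c\in\R$. Since
\[
\{\max(u,v)<c\}=\{u<c\}\cap\{v<c\},
\]
and each factor is open by usc of $u$ and $v$, the intersection is open, so $\max(u,v)$ is usc. For the sum, I would argue via $\limsup$: for any $x_0\in\Omega$, $\limsup_{z\to x_0}(u+v)(z)\le\limsup_{z\to x_0}u(z)+\limsup_{z\to x_0}v(z)\le u(x_0)+v(x_0)$, with the natural convention $a+(-\infty)=-\infty$. (One needs a brief remark that $u$ and $v$ are locally bounded above since usc functions attain their maxima on compact sets, so there is no indeterminate form $+\infty-\infty$.)

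Next I would verify the smvp. Fix $z_0\in\Omega$ and $r>0$ with $\overline{\bbD(z_0,r)}\subset\Omega$. By local boundedness from above, each of the circle integrals $\fint_{\partial\bbD(z_0,r)}u\,d\sigma$ and $\fint_{\partial\bbD(z_0,r)}v\,d\sigma$ is well-defined as an element of $[-\infty,\infty)$. For the sum, the smvp follows immediately from linearity of the integral:
\[
(u+v)(z_0)=u(z_0)+v(z_0)\le\fint_{\partial\bbD(z_0,r)}u\,d\sigma+\fint_{\partial\bbD(z_0,r)}v\,d\sigma=\fint_{\partial\bbD(z_0,r)}(u+v)\,d\sigma.
\]
For the maximum, choose $j\in\{1,2\}$ with $\max(u,v)(z_0)=u_j(z_0)$; then by smvp of $u_j$,
\[
\max(u,v)(z_0)=u_j(z_0)\le\fint_{\partial\bbD(z_0,r)}u_j\,d\sigma\le\fint_{\partial\bbD(z_0,r)}\max(u,v)\,d\sigma,
\]
using pointwise monotonicity of the integral in the last step.

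I do not anticipate any real obstacle here; the only subtlety worth highlighting is bookkeeping for the possibility $u(z)=-\infty$ or $v(z)=-\infty$, which is handled by the conventions above together with the fact that usc functions are bounded above on compact sets, ensuring all integrals live in $[-\infty,\infty)$ rather than being genuinely indeterminate. The general case of finitely many summands or maxima then follows by iterating the binary case.
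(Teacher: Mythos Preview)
Your proof is correct and takes essentially the same approach as the paper, which simply states ``Follows directly from the definition.'' You have spelled out precisely the verification the paper leaves to the reader, checking upper semi-continuity and the sub-mean-value property separately for the maximum and the sum.
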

\begin{proof}
Follows directly from the definition. 
\end{proof}

\begin{lem}[Maximum principle]
	Let $ u\in \mathcal{SH}(\Omega) $ with $ \Omega $ connected and suppose there exists $ z_0\in\Omega $ such that $ u(z)\leq M:=u(z_0) $ for all $ z\in\Omega $. Then $ u $ is constant.
	\end{lem}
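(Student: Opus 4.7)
The plan is to use a clopen argument: I will show the ``max level set'' $A := \{z \in \Omega : u(z) = M\}$ is nonempty, open, and closed in $\Omega$, so that $A = \Omega$ by connectedness. Nonemptiness is immediate since $z_0 \in A$. Closedness comes for free from the upper semicontinuity of $u$: because $u \leq M$ pointwise, I can rewrite $A = \{z \in \Omega : u(z) \geq M\}$, and superlevel sets of a usc function are closed.

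The heart of the argument is to show $A$ is open, and this is where subharmonicity enters. Given $z_1 \in A$, I will choose $r > 0$ with $\overline{\bbD(z_1, r)} \subset \Omega$ and apply the disk submean-value inequality established at the start of this subsection: $M = u(z_1) \leq \fint_{\bbD(z_1, r)} u(z)\, dz$. But $u \leq M$ on $\Omega$, so the average is at most $M$, forcing equality. Then $\int_{\bbD(z_1, r)} (M - u(z))\, dz = 0$ with a nonnegative integrand yields $u = M$ almost everywhere on $\bbD(z_1, r)$.

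The one subtlety, and the main (mild) obstacle, is that the mean-value relation only gives almost-everywhere equality, while I need pointwise equality to conclude $\bbD(z_1, r) \subset A$. This is resolved by Corollary~\ref{co:aeeq}: two subharmonic functions agreeing almost everywhere agree everywhere. Applying this to $u$ and the constant function $M$ (both subharmonic on $\bbD(z_1, r)$) gives $u \equiv M$ throughout the disk, so $\bbD(z_1, r) \subset A$ and $A$ is open. Combining the three properties of $A$ with the connectedness of $\Omega$ finishes the proof.
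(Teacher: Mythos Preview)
Your proof is correct and follows essentially the same clopen argument as the paper: both show the level set $\{u=M\}$ is nonempty, closed (by usc), and open (via the submean-value property), then invoke connectedness. The only minor difference is that the paper uses the circle mean $\fint_{\partial\bbD(z,r)}u$ and varies $r$, while you use the solid-disk mean and then invoke Corollary~\ref{co:aeeq} to upgrade from a.e.\ to everywhere; your version is slightly more explicit about that last step.
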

\begin{proof}
	Consider $ S=\{z\in\Omega\mid u(z)=M\} $. This set is closed because $ u $ is usc. Furthermore it is open because if $ f(z)=M $,  then  $ f(z)\leq \fint_{\partial \bbD(z,r)}f(w)\,dw $ implies that $ f(w)=M $ for all $ w\in\partial \bbD(z,r) $.
\end{proof}

The following result explains the terminology {\em subharmonic}. 

\begin{cor}\label{co:sh}
	Let $ u\in \mathcal{SH}(\Omega) $. If $ v $ is harmonic on $ \overline{\Omega'}\subset \Omega $ for $ \Omega' $ bounded and $ v\geq u $ on $ \partial\Omega' $  
	then $ v\geq u $ in $ \Omega' $.
	\end{cor}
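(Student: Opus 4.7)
The approach is to consider the difference $w := u - v$ and reduce to the maximum principle already established. First I would note that since $v$ is harmonic on $\overline{\Omega'}$, both $v$ and $-v$ satisfy the mean value identity, hence both are subharmonic; combined with $u \in \mathcal{SH}(\Omega)$ and the lemma that finite sums of subharmonic functions are subharmonic, this gives $w \in \mathcal{SH}(\Omega')$. Moreover $w$ is upper semi-continuous on the compact set $\overline{\Omega'} \subset \Omega$ (since $u$ is usc and $v$ is continuous), so $w$ attains its supremum $M := \sup_{\overline{\Omega'}} w$. The hypothesis $v \geq u$ on $\partial \Omega'$ says $w \leq 0$ on $\partial \Omega'$.

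Next I would argue by contradiction: suppose $M > 0$. Then $M$ cannot be attained on $\partial \Omega'$, so it is attained at some $z_0 \in \Omega'$. Let $U$ be the connected component of the open set $\Omega'$ containing $z_0$. Since $U$ is open and connected, the maximum principle (applied on $U$, where $w$ is still subharmonic) yields $w \equiv M$ on $U$.

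The key topological step is to verify that $\partial U \subset \partial \Omega'$: any $\zeta \in \partial U$ is a limit of points of $U \subset \Omega'$, hence $\zeta \in \overline{\Omega'}$; if additionally $\zeta \in \Omega'$ then $\zeta$ lies in some component $U'$, but openness of $U'$ forces $U \cap U' \neq \emptyset$ (using a sequence $z_n \in U$ converging to $\zeta$), so $U = U'$ by maximality of components, contradicting $\zeta \notin U$. Thus $\zeta \in \partial \Omega'$, so $w(\zeta) \leq 0$. On the other hand, taking any $z_n \in U$ with $z_n \to \zeta$, upper semi-continuity gives
\[
w(\zeta) \geq \limsup_{n\to\infty} w(z_n) = M > 0,
\]
the desired contradiction. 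Hence $M \leq 0$, i.e., $u \leq v$ throughout $\Omega'$.

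I expect the main obstacle to be the mismatch between the maximum principle (which demands a connected domain) and the fact that $\Omega'$ need not be connected; the remedy is the restriction to a component $U$ together with the boundary identification $\partial U \subset \partial \Omega'$ outlined above. Everything else — subharmonicity of $w$, existence of the maximum via compactness and upper semi-continuity, and the usc-based limit inequality — is routine given the tools already developed in this section.
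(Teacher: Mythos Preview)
Your proof is correct and follows the same route as the paper: form $w=u-v$, note it is subharmonic, and invoke the maximum principle to rule out an interior maximum with positive value. The paper compresses this into a single sentence, while you supply the extra care of passing to a connected component of $\Omega'$ and verifying $\partial U\subset\partial\Omega'$ to push the contradiction to the boundary via upper semicontinuity; this is a legitimate elaboration rather than a different argument.
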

\begin{proof}
The function $ u-v $ is subharmonic so that if $ u-v>0 $  in $ \Omega' $ then it would have a maximum in this region, violating the above.
\end{proof}

\subsection{Review of harmonic functions}
In the next section we will need some basic facts about harmonic functions, which we now briefly recall. They can be found in many places, such as~\cite{John}.  
For $ \Omega\subset\R^2$ a bounded region with smooth boundary, say, we would like to solve the boundary value problem
\begin{align*}
	\begin{cases}
		\Delta u(z)=f&z\in \Omega\\
		u(z)=g&z\in\partial \Omega
\end{cases}
\end{align*}
Recall Green's identity for $ u,v\in C^2(\Omega) $:
\begin{align}
\label{eq:Green}
	\int_\Omega u(\zeta)\Delta v(\zeta)\,d\zeta=\int_\Omega \Delta u(\zeta)v(\zeta)\,dz\zeta +\int_{\partial\Omega}\big( u\frac{\partial v}{\partial n} -\frac{\partial u}{\partial n}v\big) \,d\sigma
\end{align}
If $ v=G(z,\zeta) $ is such that (in the sense of distributions) $ \Delta_z G(z,\zeta)=\delta_\zeta(z) $ and $ G(z,\zeta)=0 $ for $ z\in \partial \Omega $ then 
\begin{align}\label{eq:Grep}
	u(z)=\int_{\Omega }G(z,\zeta)f(\zeta)\,m(d\zeta)+\int_{\partial \Omega}\frac{\partial G}{\partial n}(z,\zeta)g(\zeta)\,d\sigma
\end{align}
with $m$ Lebesgue measure in the plane and $\sigma$ surface measure on the boundary. 
Such a {\em Green  function}  $ G(z,\zeta) $ exists for any bounded domain $\Omega$ for which $\partial\Omega$ satisfies an exterior cone condition. 
This is a standard application of Perron's method, see~\cite{John} (this method applies to any dimension). 
For the case of a disk $\bbD(0,R)\subset\bbC $, there is the explicit formula given by the logarithm of the absolute value of the conformal automorphism of the disk: 
\begin{align}\label{eq:Gdisk}
G(z,\zeta)=\frac{1}{2\pi}\log\abs{z-\zeta}+\frac{1}{2\pi}\log\abs{\frac{R}{R^2-z\overline{\zeta}}}
\end{align}
In particular, by \eqref{eq:Green} a harmonic function on $ \Omega $ which is $C^2(\bar{\Omega})$ with boundary values   $g$ is given by
\begin{align}\label{eq:Pform}
	u(z)=\int_{\partial \Omega}\frac{\partial G}{\partial n}(z,\zeta)g(\zeta)\,\sigma(d\zeta)
\end{align}
This is  {\em Poisson's formula} and $ P_\zeta(z)=\frac{\partial G}{\partial n}(z,\zeta) $ is the Poisson kernel of $\Omega$. 
If $g\in C(\partial\Omega)$, then~\eqref{eq:Pform} defines a harmonic function in $\Omega$ which is the unique solution of the boundary value problem (uniqueness by the maximum principle). 
For the disc of radius $ r $ in the plane we have 
\begin{align*}
	 P_\zeta(z) =\frac{1}{2\pi}\frac{r^2-\abs{z}^2}{r\abs{z-\zeta}^2}
\end{align*}
and there is an analogous expression in higher dimensions. 
This implies Harnack's  inequality, which controls the value of a positive harmonic function on a disc by its value at the center. 

\begin{prop} 
	Let $ u $ be positive harmonic function on the disk $ \overline{\bbD(z_0,R)} \subset \C$. Then for $ \abs{z-z_0}<r $ 
\begin{align*}
	\frac{R-r}{R+r}u(z_0)\leq u(z)\leq \frac{R+r}{R-r}u(z_0)
\end{align*}
\end{prop}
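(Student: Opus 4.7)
The plan is to derive both bounds directly from the Poisson integral representation combined with the mean value property, exploiting the positivity of $u$ to move the two-sided bound on the Poisson kernel outside the integral.

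First I would write, using Poisson's formula \eqref{eq:Pform} on the disk $\bbD(z_0,R)$,
\[
u(z) = \int_{\partial \bbD(z_0,R)} P_\zeta(z)\, u(\zeta)\, \sigma(d\zeta),
\]
where, by the explicit formula recalled in the text,
\[
P_\zeta(z) = \frac{1}{2\pi}\,\frac{R^2 - |z-z_0|^2}{R\,|z-\zeta|^2} = \frac{(R-r)(R+r)}{2\pi R\,|z-\zeta|^2}
\]
for $|z-z_0|=r<R$ and $|\zeta-z_0|=R$. The key elementary estimate is the two-sided bound on the denominator via the triangle inequality:
\[
R - r \;\le\; |z-\zeta| \;\le\; R+r,
\]
which yields the pointwise (in $\zeta$) bound
\[
\frac{1}{2\pi R}\cdot\frac{R-r}{R+r} \;\le\; P_\zeta(z) \;\le\; \frac{1}{2\pi R}\cdot\frac{R+r}{R-r}.
\]

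Next I would use the hypothesis $u \ge 0$ on $\partial\bbD(z_0,R)$ (by continuity of $u$ on the closed disk, which follows from harmonicity in the interior) to insert these inequalities into the Poisson integral without changing the direction of the inequality. Combined with the mean value identity
\[
u(z_0) = \frac{1}{2\pi R}\int_{\partial\bbD(z_0,R)} u(\zeta)\, \sigma(d\zeta),
\]
this gives simultaneously
\[
u(z) \le \frac{R+r}{R-r}\cdot\frac{1}{2\pi R}\int_{\partial\bbD(z_0,R)} u(\zeta)\,\sigma(d\zeta) = \frac{R+r}{R-r}\, u(z_0),
\]
and the analogous lower bound $u(z) \ge \frac{R-r}{R+r}\, u(z_0)$.

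There is no real obstacle here; the only mild subtlety is justifying that $u$ is continuous up to the boundary so that Poisson's formula applies as stated. Since the proposition hypothesizes $u$ harmonic on the closed disk $\overline{\bbD(z_0,R)}$, this is immediate; otherwise, one would first apply the argument on a slightly smaller concentric disk $\bbD(z_0,R')$ with $r<R'<R$ (on which $u$ is automatically smooth up to the boundary) and then let $R'\nearrow R$ using continuity of both sides of the resulting inequality in~$R'$.
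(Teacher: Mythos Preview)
Your proof is correct and follows exactly the approach the paper sketches: bound the Poisson kernel via the triangle inequality and then invoke the mean value property. You have simply filled in the details the paper leaves to the reader.
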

\begin{proof}
Simply bound the Poisson kernel and then apply the mean value property.
\end{proof}

Finally, we recall the following compactness property of families of harmonic  functions  (the analogue of normal families in complex analysis). It is valid in all dimensions but we state it only in the plane. 
\begin{thm} 
\label{thm:normalF}
	A sequence of harmonic functions on $\Omega\subset\C$ that is uniformly bounded on each compact subset of $\Omega$ has a subsequence which converges to some harmonic $ u $ uniformly on each compact subset.
\end{thm}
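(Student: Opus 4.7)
The plan is to mimic the classical Montel-type argument used for normal families of holomorphic functions, replacing the Cauchy estimates on derivatives by their harmonic analogue coming from the Poisson representation. The three ingredients are interior gradient estimates, a diagonal extraction via Arzelà--Ascoli, and finally verification that the limit is harmonic via the mean value property.

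First, I would establish local equicontinuity. Given $K\subset\subset\Omega$, choose $r>0$ with $K_r:=\{z\in\C : \dist(z,K)\le 2r\}\subset\subset\Omega$, and set $M:=\sup_n \sup_{K_r}|u_n|<\infty$ by hypothesis. For $z_0\in K$, the Poisson representation \eqref{eq:Pform} for the disk $\bbD(z_0,r)$ expresses $u_n(z)$ on that disk as an integral of its boundary values against the Poisson kernel. Differentiating under the integral sign (the kernel is smooth in the interior variable $z$) and evaluating at $z_0$ gives a bound $|\nabla u_n(z_0)|\le C(r)\,M$ with a constant depending only on $r$. Consequently $\{u_n\}$ is uniformly Lipschitz on $K$, hence equicontinuous.

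Next, I would exhaust $\Omega$ by an increasing sequence of compact sets $K_1\subset K_2\subset\cdots$ with $K_j\subset\mathrm{int}(K_{j+1})$ and $\bigcup_j K_j=\Omega$. On each $K_j$, the sequence $\{u_n\}$ is uniformly bounded (by assumption) and equicontinuous (by the previous step), so the Arzelà--Ascoli theorem yields a subsequence converging uniformly on $K_j$. A standard diagonal extraction then produces a single subsequence, still denoted $\{u_{n_k}\}$, converging uniformly on every $K_j$, and hence uniformly on every compact subset of $\Omega$, to some continuous limit $u:\Omega\to\R$.

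Finally, I would verify harmonicity of $u$. For any closed disk $\overline{\bbD(z_0,\rho)}\subset\Omega$, each $u_{n_k}$ satisfies the mean value equality
\[
u_{n_k}(z_0)=\fint_{\partial\bbD(z_0,\rho)}u_{n_k}(\zeta)\,\sigma(d\zeta).
\]
Uniform convergence on $\overline{\bbD(z_0,\rho)}$ allows passage to the limit on both sides, giving the mean value property for $u$. Since $u$ is continuous and satisfies the mean value property on every admissible disk, it is harmonic (this is the classical converse to the mean value property, provable by comparing $u$ with the Poisson extension of its boundary values on a small disk and invoking the maximum principle as in Corollary~\ref{co:sh}). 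No step here is a serious obstacle; the only point requiring care is making the derivative bound from the Poisson kernel quantitative enough to yield equicontinuity uniformly in $n$, which follows immediately from the explicit form of $P_\zeta(z)$ on the disk.
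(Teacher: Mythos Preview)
Your proposal is correct and follows essentially the same approach as the paper: derivative estimates via the Poisson representation to obtain equicontinuity, Arzel\`a--Ascoli plus a diagonal argument over a compact exhaustion, and harmonicity of the limit via the mean value property. The paper's proof is more terse but structurally identical.
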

\begin{proof}
If $ u(z) $ is harmonic on $ \bbD(a,r) $ then taking derivatives of~\eqref{eq:Pform} shows that  $ \abs{D^\alpha u(a)}\leq \frac{C_\alpha \oP{u}_{L^\infty}}{r^\alpha} $ for some universal constant $ C_\alpha $. Thus, any uniformly bounded sequence of harmonic functions is in fact equicontinuous. We can then take a convergent subsequence on any compact subset by Arzela-Ascoli at which point a diagonal argument with increasing compact sets finds the desired $ u $. By the mean value property  $ u $ is harmonic.
\end{proof}

\subsection{Riesz representation of subharmonic functions in $\bbC$}
As noted earlier,  any subharmonic function of the form $ \log\abs{f} $ for $ f\in\mathcal{H}(\Omega) $ admits the representation for any $\Omega'\Subset \Omega$ (compact containment): 
$$ \log\abs{f}=\sum_{j=1}^N\log\abs{z-\zeta_j}+h(z) $$ with $ h $ harmonic in $\Omega'$ and $\zeta_j\in\Omega'$. We can think of this expression as $ \int_\Omega\log\abs{z-\zeta}\,\mu(d\zeta)+h(z) $ where $ \mu = \sum_{j=1}^N\delta_{\zeta_j} $. Note that $h$ is bounded on any $\Omega''\Subset\Omega'$ but not necessarily on $\Omega'$. 
This section develops an analogous representation for all subharmonic functions, known as Riesz representation. The difference is that we can allow any positive finite measure~$\mu$. We begin with some basic properties of logarithmic potentials of such measures. 
\begin{prop} 
	Let $ \Omega $ be a bounded domain and $ \mu\in\mathcal{M}^+(\Omega) $, that is, a positive finite Borel measure on $ \Omega $. Then with $ u(z):=\int_{\Omega}\log\abs{z-\zeta}\,\mu(d\zeta)$
\begin{itemize}
\item $ u\in\mathcal{SH}(\Omega)$
\item $ u> -\infty $ (Lebesgue) almost everywhere
\item $ u $ is bounded above 
\end{itemize}
\end{prop}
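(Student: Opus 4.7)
The plan is to verify the three bulleted properties separately, beginning with the two elementary ones and then treating subharmonicity by a Fubini/monotone approximation argument.

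\emph{Boundedness above and almost-everywhere finiteness.} Since $\Omega$ is bounded, write $D:=\diam(\Omega)$ so that $\log|z-\zeta|\le\log D$ for all $z,\zeta\in\Omega$. Integrating against the finite positive measure $\mu$ immediately gives $u(z)\le \mu(\Omega)\log D$ for every $z\in\Omega$, which is the third bullet. For the second bullet I would apply Fubini's theorem to $(z,\zeta)\mapsto \log|z-\zeta|$ (upper-bounded, hence the integrals behave well even though it may equal $-\infty$) on any compact $K\subset\Omega$:
\[
\int_K u(z)\,m(dz) \;=\; \int_\Omega \Big(\int_K \log|z-\zeta|\,m(dz)\Big)\,\mu(d\zeta).
\]
The inner integral is uniformly bounded in $\zeta$ (a translate of the locally integrable function $\log|\cdot|$ on a compact set), so $u\in L^1_{\mathrm{loc}}(\Omega)$, which forces $u>-\infty$ Lebesgue-almost everywhere.

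\emph{Upper semicontinuity.} I would approximate $\log|z-\zeta|$ from above by the decreasing sequence of bounded continuous functions $\varphi_n(z,\zeta):=\max(\log|z-\zeta|,-n)$. Define $u_n(z):=\int_\Omega \varphi_n(z,\zeta)\,\mu(d\zeta)$. Each $u_n$ is continuous on $\Omega$ by dominated convergence (the integrands are bounded by $|\log D|+n$), and $u_n\searrow u$ pointwise by monotone convergence from above (using the upper bound $\log D$ to apply MCT to $\log D-\varphi_n$). A decreasing limit of continuous functions is upper semicontinuous, so $u$ is usc.

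\emph{Sub-mean value property.} Fix $\overline{\bbD(z_0,r)}\subset\Omega$. I apply Fubini, justified again by the uniform upper bound $\log D$, to interchange the integrals:
\[
\fint_{\partial\bbD(z_0,r)} u(z)\,\sigma(dz) \;=\; \int_\Omega\Big(\fint_{\partial\bbD(z_0,r)}\log|z-\zeta|\,\sigma(dz)\Big)\mu(d\zeta).
\]
For each fixed $\zeta$, the function $z\mapsto\log|z-\zeta|$ is subharmonic on $\bbC$ (shown earlier in this section), so its circular mean over $\partial\bbD(z_0,r)$ is at least $\log|z_0-\zeta|$. Integrating this pointwise inequality against the positive measure $\mu$ yields
\[
\fint_{\partial\bbD(z_0,r)} u(z)\,\sigma(dz) \;\ge\; \int_\Omega\log|z_0-\zeta|\,\mu(d\zeta) \;=\; u(z_0),
\]
which is the smvp. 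Together with upper semicontinuity this gives $u\in\calS\calH(\Omega)$.

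The main point requiring care is the Fubini step: one must confirm that the integrand $\log|z-\zeta|$, though possibly $-\infty$ on a set, has a uniform upper bound $\log D$ on $\Omega\times\Omega$, so that applying Fubini to $\log D-\log|z-\zeta|\ge 0$ (a nonnegative measurable function) is unconditional by Tonelli. Everything else is a direct application of the subharmonicity of $\log|\cdot|$ and basic measure theory.
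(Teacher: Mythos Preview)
Your proof is correct and follows essentially the same approach as the paper: boundedness above via $\log|z-\zeta|\le\log(\diam\Omega)$, local integrability via Fubini/Tonelli to get $u>-\infty$ a.e., and the smvp via Fubini together with the subharmonicity of $\log|\cdot|$. The only difference is in the upper semicontinuity step: the paper applies reverse Fatou directly to $\limsup_{j}\int_\Omega\log|z_j-\zeta|\,\mu(d\zeta)$, whereas you obtain usc by writing $u$ as a decreasing pointwise limit of the continuous truncations $u_n(z)=\int_\Omega\max(\log|z-\zeta|,-n)\,\mu(d\zeta)$; both are standard and equally short.
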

\begin{proof}
	Note that for $ z,\zeta \in\Omega $, $ \log\abs{z-\zeta}\leq \log(\text{diam} \Omega) $ so that $ u(z)\leq \log(\text{diam} \Omega)\mu(\Omega) $, which shows that $ u $ is bounded above.\par
Consider $ \bbD\Subset  \Omega$ a disc of radius $ R $. Then with $ m  $  the Lebesgue measure in $\R^2$, 
\begin{align*}
	\int_{\bbD} u(z)\,m(dz)=\int_\Omega\int_{\bbD}\log\abs{z-\zeta}\,m(dz)\,\mu(d\zeta)
\end{align*}
by Fubini-Tonelli because the integrands are bounded from above. In fact, $ \log\abs{z-\zeta} $ is Lebesgue integrable on $ \bbD$: $$ \int_{\bbD} \log\abs{z-\zeta}\,m(dz)\geq \int_{\bbD(0,R)}\log\abs{z}\,m(dz)=2\pi \int_0^Rr\log r\cdot \,dr>-\infty  $$ 
which also shows that the total integral is $ >-\infty $. Since this holds for any disc, we have shown that $ u>-\infty $ a.e.\ in $\Omega$. 
To see that $ u $  is usc, observe that if $ z_j\rightarrow z $  then by (the reverse) Fatou's lemma 
\begin{align*}
	\limsup_{j\rightarrow\infty}u(z_j)=\limsup_{j\rightarrow\infty}\int_\Omega\log\abs{z_j-\zeta}\,\mu(d\zeta)\leq\int_\Omega\limsup_{j\rightarrow\infty}\log\abs{z_j-\zeta}\,\mu(d\zeta)=u(z)
\end{align*}
where the use of Fatou's lemma is justified due to the uniform upper bound on $\log|z-\zeta_j|$. 
Finally, note that for $ \bbD $ a disc centered at $ z_0$
\begin{align*}
	\fint_{\partial \bbD}u(z)\,dz =\int_\Omega \fint_{\partial \bbD}\log\abs{z-\zeta}\,dz\,\mu(d\zeta)
	\geq \int_\Omega \log\abs{z_0-\zeta}\,d\mu(\zeta)=u(z_0)
\end{align*}
which shows that $ u(z) $ satisfies the smvp because $ \log\abs{z} $ does.
\end{proof}

\begin{remark} 
We cannot hope for any better than usc from this construction. For instance, consider $ \mu=\sum_{n=1}^\infty 2^{-n}\delta_{2^{-n}} $ so that $ u(z)=\sum_{i=1}^\infty 2^{-n}\log\abs{z-2^{-n}} $. Then   $ u(0)=2\log 2 $  but $ u(2^{-n})=-\infty $ for all $ n $.
\end{remark}
We will also require the following smooth approximation result. 
\begin{lem}\label{lem:moll sh}
	Let $ u\in\mathcal{SH}(\Omega) $ where $ \Omega $ is a bounded domain. Then there exists a sequence $ u_n\in\mathcal{SH}(\Omega_{1/n})\cap C^\infty(\Omega_{1/n}) $  where $ \Omega_{1/n}:=\{z\in \Omega\mid \text{dist}(z,\partial\Omega)>1/n\}  $ such that $ u_n\rightarrow u $ pointwise and monotone decreasing (in $ \Omega_{1/n_0} $ for $ n>n_0 $).
	\end{lem}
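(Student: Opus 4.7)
The plan is to mollify $u$ with a radially symmetric smooth kernel. Fix a non-negative radial $\chi\in C_c^\infty(\R^2)$ supported in $\{|w|\le 1\}$ with $\int\chi\,dm=1$, and write $\chi(w)=\tilde\chi(|w|)$. Setting $\chi_\eps(w):=\eps^{-2}\chi(w/\eps)$, I would define
\[
u_n(z):=(u*\chi_{1/n})(z)=\int_{\R^2} u(z-w)\,\chi_{1/n}(w)\,m(dw),\qquad z\in\Omega_{1/n}.
\]
To see that this is well-defined I would first note that $u$ is bounded above on any compact subset of $\Omega$ (usc on a compact set), and then argue as in the proof of the previous proposition that the smvp, iterated on shrinking disks, forces $u\in L^1_{\rm loc}(\Omega)$ whenever $u\not\equiv-\infty$ on a component. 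Smoothness $u_n\in C^\infty(\Omega_{1/n})$ is then standard differentiation under the integral.

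Next I would verify $u_n\in\mathcal{SH}(\Omega_{1/n})$ by applying Fubini--Tonelli to the smvp: for $\overline{\bbD(z_0,r)}\subset\Omega_{1/n}$,
\[
\fint_{\partial\bbD(z_0,r)}u_n\,d\sigma=\int\chi_{1/n}(w)\Bigl(\fint_{\partial\bbD(z_0,r)}u(z-w)\,\sigma(dz)\Bigr)m(dw)\ge\int\chi_{1/n}(w)\,u(z_0-w)\,m(dw)=u_n(z_0),
\]
where the positivity of $\chi_{1/n}$ together with the uniform upper bound on $u$ on the relevant compact set justifies the interchange. Upper semi-continuity of $u_n$ is automatic since $u_n$ is continuous.

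For the monotonicity and pointwise limit, passing to polar coordinates in the convolution yields
\[
u_n(z)=2\pi\int_0^1\tilde\chi(s)\,(Mu)_z(s/n)\,s\,ds,\qquad (Mu)_z(\rho):=\fint_{\partial\bbD(z,\rho)}u\,d\sigma,
\]
and the essential auxiliary lemma I would prove is that $\rho\mapsto(Mu)_z(\rho)$ is non-decreasing in $\rho$ for every subharmonic $u$ (not only $C^2$ ones, for which it was observed above). The argument is approximation from above: pick continuous $g_k\searrow u|_{\partial\bbD(z,\rho_2)}$ (possible since $u$ is usc and bounded above), let $h_k$ be its Poisson extension to $\bbD(z,\rho_2)$; Corollary~\ref{co:sh} gives $u\le h_k$ on $\bbD(z,\rho_2)$, so $(Mu)_z(\rho_1)\le (Mh_k)_z(\rho_1)=h_k(z)=(Mg_k)_z(\rho_2)$, and monotone convergence in $k$ produces $(Mu)_z(\rho_1)\le (Mu)_z(\rho_2)$. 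With this in hand, because $s/(n+1)<s/n$ and $\tilde\chi\ge0$, the integral representation gives $u_{n+1}(z)\le u_n(z)$ at once.

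For the pointwise limit I would combine the smvp bound $u(z)\le(Mu)_z(\rho)$ with reverse Fatou (using the local upper bound on $u$), which yields $\limsup_{\rho\to 0^+}(Mu)_z(\rho)\le u(z)$; hence $(Mu)_z(\rho)\to u(z)$ as $\rho\to 0^+$. Bounded convergence in the $s$-integral then gives $u_n(z)\to u(z)$. The main obstacle is the monotonicity of spherical means for merely usc subharmonic $u$; after that the smoothness, subharmonicity, monotone decrease, and convergence are all routine consequences of Fubini and of the smvp combined with upper semi-continuity.
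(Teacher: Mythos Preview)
Your proof is correct and follows essentially the same route as the paper: mollify with a radial bump, verify subharmonicity by Fubini, and deduce monotone decrease and pointwise convergence from the monotonicity of the spherical means $\rho\mapsto(Mu)_z(\rho)$. The only difference is in how that monotonicity is obtained---you argue via approximation from above and Poisson extensions, whereas the paper observes that $\zeta\mapsto\int_0^1 u(z-\zeta e(\theta))\,d\theta$ is a \emph{radial} subharmonic function of $\zeta$ and hence non-decreasing in $|\zeta|$ by the maximum principle; both arguments are valid.
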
 
\begin{proof}
	We accomplish this via mollification, so let $ \varphi\in C^\infty(\bbR^2) $ be a radial function satisfying $ \varphi(x)\geq 0 $, $ \varphi(x)=0 $ for $ \abs{x}\geq 1 $ and $ \int_{\bbR^2}\varphi(x)\,dx=1 $. Define also $ \varphi_n=n^{2} \varphi(nx) $. We claim that $ u_n(z)=(u*\varphi_n)(z) $ satisfies the desired properties. It is clearly smooth and well-defined on $ \Omega_{1/n} $. The smvp for $ u_n $ follows from  Fubini's theorem and $ \varphi_n\ge0 $. 
	To see that $ u_n $ is decreasing, write
\begin{align*}
	u_n(z)=n^{2}\int_{\bbR^2} u(z-w)\varphi(nw)\,dw=2\pi \int_0^\infty\int_0^{1}u(z-\frac{r}{n}e(\theta))\,d\theta\, r\varphi(r)\,dr \ge u(z)
\end{align*}
with $ e(\theta)=e^{2\pi i \theta} $, the final inequality implied by the smvp. First,  $ v(\zeta):=\int_0^1u(z-\zeta e(\theta))\,d\theta $ is subharmonic since it is easily seen to be usc,  and the smvp follows by Fubini (note that $u$ remains subharmonic after a rotation and translation). Second, it is radial and thus an increasing (but not necessarily in the strict sense) function of~$|\zeta|$ by the maximum principle. 
Finally,  $u_n(z)\leq \max_{\abs{z-w}\leq 1/n} u(w) $ for $ z\in\Omega_{1/n} $ so that by usc $ u_n(z)\rightarrow u(z) $ as $ n\rightarrow \infty $. 
\end{proof}

We are now ready to prove Riesz's  representation theorem for subharmonic functions.

\begin{thm}\label{thm:Riesz}
	Let $ u\in\mathcal{SH}(\Omega) $ where $ \Omega $ is some neighborhood of $ \overline{\bbD(0,4)} $. Suppose that $ u\leq M $ on $ \overline{\bbD(0,4)} $ and $ u(0)\geq m>-\infty $. Then there exists $ \mu\in\mathcal{M}^+(\bbD(0,3)) $ and $ h $ harmonic in $ \bbD(0,3) $ such that for all $z\in{\bbD(0,3)} $
	\begin{align*}
		u(z)=\int_{\bbD(0,3)}\log\abs{z-\zeta}\,\mu(d\zeta)+h(z)
	\end{align*}
	Furthermore, there exists $ C_0>0 $ universal such that  $ \|h-M\|_{L^\infty(\bbD(0,2))} \le C_0(M-m)$ and $ \mu(\bbD(0,3))\leq C_0(M-m) $. In fact, for any $\delta\in (0,1)$ there exists $C_0(\delta) $ so that $ \|h-M\|_{L^\infty(\bbD(0,3-\delta))} \le C_0(\delta)(M-m)$.  
	\end{thm}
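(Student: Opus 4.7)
The plan is to reduce to the smooth subharmonic case via the mollifications from Lemma~\ref{lem:moll sh}, decompose each approximant using the Green function of an intermediate disk, and then pass to the limit. Let $u_n\in C^\infty\cap\mathcal{SH}$ decrease pointwise to $u$, and fix $R=7/2$: for $n$ large, $u_n$ is smooth on $\overline{\bbD(0,R)}$ with $u_n\le M$ there and $u_n(0)\ge u(0)\ge m$. Set $\mu_n:=\tfrac{1}{2\pi}\Delta u_n\,dm$, a non-negative Borel measure on $\bbD(0,R)$. Splitting the Green function \eqref{eq:Gdisk} as $G_R(z,\zeta)=\tfrac{1}{2\pi}\log|z-\zeta|+g_R(z,\zeta)$ with $g_R$ harmonic in $z$, the representation \eqref{eq:Grep} yields the pointwise identity
$$u_n(z)=\int\log|z-\zeta|\,d\mu_n(\zeta)+H_n(z),\qquad H_n:=P_R u_n+2\pi\int g_R(\cdot,\zeta)\,d\mu_n,$$
with $H_n$ harmonic on $\bbD(0,R)$.

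Evaluating at $z=0$ gives the Poisson--Jensen identity
$$\int_{\bbD(0,R)}\log(R/|\zeta|)\,d\mu_n(\zeta)=\fint_{\partial\bbD(0,R)}u_n\,d\sigma - u_n(0)\le M-m,$$
which delivers the uniform mass bound $\mu_n(\bbD(0,3))\le (M-m)/\log(R/3)\lesssim M-m$. For $H_n$, Harnack applied to the non-negative harmonic function $M-P_R u_n$ (non-negativity from $u_n\le M$ on $\partial\bbD(0,R)$; bound at $0$ from $P_R u_n(0)\ge u_n(0)\ge m$ by subharmonicity) gives $\|M-P_R u_n\|_{L^\infty(\bbD(0,2))}\lesssim M-m$. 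The remaining piece $2\pi\int g_R(\cdot,\zeta)\,d\mu_n$ is $O(M-m)$ on $\bbD(0,2)$ from $\|g_R\|_\infty=O(1)$ together with a bound on $\mu_n(\bbD(0,R))$ obtained by running the Jensen estimate on a disk $\bbD(0,R')$ with $R<R'<4$ (where $u_n\le M$ still holds). Combining yields $\|H_n-M\|_{L^\infty(\bbD(0,2))}\lesssim M-m$ uniformly in $n$.

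The passage to the limit goes through the distributional identity: for $\phi\in C_c^\infty(\bbD(0,3))$ we have $\int u_n\Delta\phi\,dm=2\pi\int\phi\,d\mu_n$; dominated convergence on the left (valid since $u_n\in[u,M]$ and $u\in L^1_{loc}$ from the earlier log-potential calculation) identifies $\mu:=\tfrac{1}{2\pi}\Delta u$ as a non-negative finite Borel measure on $\bbD(0,3)$ with $\mu(\bbD(0,3))\lesssim M-m$ by lower semicontinuity. Define $v(z):=\int_{\bbD(0,3)}\log|z-\zeta|\,d\mu(\zeta)$ (subharmonic, with $\tfrac{1}{2\pi}\Delta v=\mu$ on $\bbD(0,3)$ distributionally). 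Then $u-v$ is distributionally harmonic on $\bbD(0,3)$, hence by Weyl's lemma agrees a.e.\ with a harmonic $h$; Corollary~\ref{co:aeeq} applied to the subharmonic functions $u$ and $v+h$ upgrades this to equality everywhere on $\bbD(0,3)$. For the bound on $h$, rewrite the smooth identity restricted to $\bbD(0,3)$ as $u_n(z)=\int_{\bbD(0,3)}\log|z-\zeta|\,d\mu_n(\zeta)+\tilde H_n(z)$, where $\tilde H_n$ absorbs the harmonic-in-$z$ contribution of $\mu_n|_{\bbD(0,R)\setminus\bbD(0,3)}$; combining the bound on $H_n$ with the mass bound in the annulus gives $\|\tilde H_n-M\|_{L^\infty(\bbD(0,2))}\lesssim M-m$ uniformly, and Theorem~\ref{thm:normalF} extracts a locally uniform harmonic limit that must coincide with $h$ by uniqueness of the decomposition. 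For the $\delta$-dependent variant, repeat the argument with $R$ replaced by any radius in $(3,4-\delta/3)$.

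The main obstacle is bookkeeping the three auxiliary radii $3<R<R'<4$ so that the quantitative $\lesssim M-m$ estimate survives the limit. Specifically, bounding the $g_R$-piece of $H_n$ requires $\mu_n(\bbD(0,R))$ controlled, which in turn comes from a Jensen bound on the strictly larger disk $\bbD(0,R')$; and the harmonic contribution from $\mu_n|_{\bbD(0,R)\setminus\bbD(0,3)}$ to $\tilde H_n$ can only be controlled uniformly after this latter mass bound is in place. Handling the resulting algebra with constants depending only on $\delta$ (in the version of the estimate on $\bbD(0,3-\delta)$) is the most delicate part.
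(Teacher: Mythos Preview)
Your strategy—mollify, decompose via the Green function, pass to the limit—is the same as the paper's, and most of your estimates are correct. There is, however, a genuine gap in the limit step.

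The claim that the locally uniform limit $\tilde H$ of $\tilde H_n$ ``must coincide with $h$ by uniqueness of the decomposition'' is false. Your $\mu$ is $\tfrac{1}{2\pi}\Delta u$ on the \emph{open} disk $\bbD(0,3)$, but the measures $\mu_n|_{\bbD(0,3)}$ can leak mass to $\partial\bbD(0,3)$ in the weak-$*$ limit on $\overline{\bbD(0,3)}$: if $\tilde\mu$ denotes that limit then $\tilde\mu=\mu+\nu$ with $\nu\ge0$ supported on the boundary circle, and $v_n\to v+\int_{\partial\bbD(0,3)}\log|\cdot-\zeta|\,d\nu$ in $L^1_{\mathrm{loc}}$, so in fact $\tilde H=h-\int_{\partial\bbD(0,3)}\log|\cdot-\zeta|\,d\nu$. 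For a concrete instance take $u(z)=\log|z-3|$: then $\mu=0$, $h=u$, yet the mollifications satisfy $\mu_n(\bbD(0,3))\to\tfrac12$ and $\tilde H=\tfrac12\log|\cdot-3|\ne h$. The fix is cheap---on $\bbD(0,2)$ the discrepancy is bounded by $\log5\cdot\nu(\partial\bbD(0,3))\lesssim M-m$ via your Jensen estimate on $\bbD(0,R)$, so the bound on $h$ survives with a worse constant---but it is not ``uniqueness.'' The paper sidesteps the issue by taking the weak-$*$ limit on $\overline{\bbD(0,3)}$ from the outset and then explicitly splitting off the harmonic piece $\int_{\partial\bbD(0,3)}\log|\cdot-\zeta|\,d\mu$ at the very end.

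Two smaller points. First, your three auxiliary radii $3<R<R'<4$ are unnecessary: the paper uses the Green function of $\bbD(0,4)$ directly (legitimate since $\Omega\supsetneq\overline{\bbD(0,4)}$ makes $u_n$ smooth on $\overline{\bbD(0,4)}$ for large $n$), so a single Jensen identity at $0$ bounds $\mu_n(\bbD(0,r))$ for every $r<4$, and $h\le M$ drops out from sign considerations on the explicit pieces of~\eqref{eq:h form}. Second, your $\delta$-variant remark is off: one keeps $R$ fixed and simply estimates on $\bbD(0,3-\delta)$; the $\delta$-dependence enters only through $|\log|z-\zeta||\lesssim\log(1/\delta)$ for $|z|\le3-\delta$, $|\zeta|\ge3$ (or, in the paper's organization, through the Harnack ratio $(6-\delta)/\delta$ on $\bbD(0,3)$).
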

\begin{proof}
	We first reduce to the smooth case. To this end, suppose that the claim holds for all $ v\in \mathcal{SH}(\overline{\bbD(0,4)}\cap C^\infty(\overline{\bbD(0,4)})$. Choose any $ u\in\mathcal{SH}(\overline{\bbD(0,4)}) $ and let $ u_n\rightarrow u$ in $  \bbD(0,4) $ be as in Lemma~\ref{lem:moll sh}. We then have with some decreasing $M_n\to M$
	\[
	u_n\le M_n \text{\ on\ } \overline{\bbD(0,4)},\quad u_n(0)\ge m
	\]
	By validity of the theorem in the smooth case we may write
	\EQ{\label{eq:RRS}
		u_n(z)=\int_{\bbD(0,3)}\log\abs{z-\zeta}\,\mu_n(d\zeta) +h_n(z)
	}
	and because $ u_n$ is monotone decreasing and uniformly bounded above on any compact set, for any $ \varphi\in C(\overline{\bbD(0,3)}) $ we have that $$ \Span{u_n,\varphi}\rightarrow \Span{u,\varphi}=\int_{\overline{\bbD(0,3)}} u(x+iy) \varphi(x+iy) \, dxdy $$ by the monotone convergence theorem. By assumption, the above measures are uniformly bounded, so by Banach-Alaoglu we may take a weak-* limit in $C(\overline{\bbD(0,3)})^*$, thus $ \mu_n\rightarrow\mu $ in the weak-* sense where $\mu$ is a finite Borel measure on $\overline{\bbD(0,3)}$ which satisfies 
	\[
	\mu(\overline{\bbD(0,3)})\le C_0(M-m)
	\]
	 Since,  with $m(dz)$ being Lebesgue measure in the plane, 
	\[
	 \psi(\zeta):=\int_{\overline{\bbD(0,3)}}\log\abs{z-\zeta}\varphi(z)\,m(dz) 
	 \]  
	 is a continuous function of $ \zeta\in\R^2 $, we conclude that
	 \[
	\lim_{n\to\infty} \int_{\overline{\bbD(0,3)}} \psi(\zeta)\, \mu_n(d\zeta)  = \int_{\overline{\bbD(0,3)}} \psi(\zeta)\, \mu(d\zeta)
	 \]
	 which implies that
	 \EQ{\label{eq:logpot conv}
	  \Span{\int_{\overline{\bbD(0,3)}}\log\abs{z-\zeta}\,\mu_n(d\zeta),\varphi}\rightarrow \Span{\int_{\overline{\bbD(0,3)}}\log\abs{z-\zeta}\,\mu(d\zeta),\varphi}.
	  }
	  By the theorem in the smooth case,  $$ \limsup_{n\to\infty}\;\oP{h_n-M}_{L^\infty(\bbD(0,3-\delta))}\le C_0(\delta)(M-m) $$ so by Theorem~\ref{thm:normalF}  there exists some $ h $ harmonic in $ \bbD(0,3) $ such that a subsequence of $ \{h_n\} $ converges to $ h $ uniformly on all compact subsets of $\bbD(0,3)$. 
	 Thus, for any $\varphi \in C(\bbD(0,3))$ of compact support, $\langle h_n,\varphi\rangle \to \langle h,\varphi\rangle$ along this sequence. In combination with~\eqref{eq:RRS}, \eqref{eq:logpot conv} we conclude that 
	 \[
\langle u,\varphi \rangle = \Span{\int_{\overline{\bbD(0,3)}}\log\abs{z-\zeta}\,\mu(d\zeta)+h(z),\varphi}
	 \]
	 Thus 
	  $$ u(z)=\int_{\overline{\bbD(0,3)}}\log\abs{z-\zeta}\,\mu(d\zeta)+h(z)  \text{\ \ almost everywhere in\ }\bbD(0,3),$$ 
	  which in turn implies equality everywhere by Corollary~\ref{co:aeeq}. Finally, to obtain the desired form we write 
	  $$ \int_{\overline{\bbD(0,3)}}\log\abs{z-\zeta}\,\mu(d\zeta)=\int_{\partial \bbD(0,3)}\log\abs{z-\zeta}\,\mu(d\zeta)+\int_{\bbD(0,3)}\log\abs{z-\zeta}\,\mu(d\zeta) $$ and notice that $ \int_{\partial \bbD{(0,3)}}\log{|z-\zeta|}\,\mu(d\zeta) $ is harmonic in $ \bbD(0,3) $. Thus,
\begin{align*}
	u(z)=\int_{\bbD(0,3)}\log\abs{z-\zeta}\,\mu(d\zeta)+h_0(z)
\end{align*}
where $ h_0(z)=\int_{\partial\bbD(0,3)}\log{|z-\zeta|}\,\mu(d\zeta)+h(z) $ is harmonic in $ \bbD(0,3) $. This harmonic function $h_0$ satisfies similar $L^\infty$ bounds as before, albeit with different constants. 

It remains to prove the theorem for smooth subharmonic functions on $\overline{\bbD(0,4)}$. 
	 In view of~\eqref{eq:Grep} 
\EQ{\label{eq:uG}
	u(z)=\int_{\bbD(0,4) }G(z,\zeta)\Delta u(\zeta)\,m(d\zeta)+\int_{\partial \bbD(0,4)}\frac{\partial G}{\partial n}(z,\zeta)u(\zeta)\,\sigma(d\zeta)
}
so that by using the particular form $ G(z,\zeta) $ in \eqref{eq:Gdisk}, and defining $ \mu(dz):=\frac{1}{2\pi}\Delta u(z)\,dz$ we rewrite the above as
\begin{align*}
	u(z)= \int_{\bbD(0,3)}\log\abs{z-\zeta}\,\mu(d\zeta)+\int_{\bbD(0,3)}\log\frac{4}{\abs{16-z\overline{\zeta}}}\,\mu(d\zeta)+\int_{\bbD(0,4)\setminus \bbD(0,3) }G(z,\zeta)\Delta u(\zeta)\,d\zeta+h_0(z)
\end{align*}
where $ h_0(z):=\int_{\partial \bbD(0,4)}\frac{\partial G}{\partial n}(z,\zeta)u(\zeta)\,\sigma(d\zeta) $ is the harmonic extension of $ u $ to $ \bbD(0,4) $, see~\eqref{eq:Pform}. The second term is harmonic for $ z\in \bbD(0,3) $ because $ 16 -z\overline{\zeta} \ne 0$ and the third term because $ \zeta\in \bbD(0,4)\setminus \bbD(0,3) $ and thus
\EQ{\label{eq:h form}
	h(z)=\int_{\bbD(0,3)}\log\frac{4}{\abs{16-z\overline{\zeta}}}\,\mu(d\zeta)+\int_{\bbD(0,4)\setminus \bbD(0,3) }G(z,\zeta)\Delta u(\zeta)\,d\zeta+h_0(z)
}
is harmonic in $\bbD(0,3)$. We have therefore obtained the desired form for $ u $, we only have left to show the stated bounds. 
To bound $ \mu(\bbD(0,3)) $, use \eqref{eq:uG} to see that
\EQ{\nn
	u(0) & =\int_{\bbD(0,4)}G(0,\zeta)\,\mu(d\zeta)+h_0(0)=\int_{\bbD(0,4)}\log\frac{\abs{\zeta}}{4}\,\mu(d\zeta)+h_0(0) \\
	\log\frac{4}{r}\, \mu(\bbD(0,r)) &\leq \int_{\bbD(0,r)}\log\frac{4}{\abs{\zeta}}\,\mu(d\zeta)=h_0(0)-u(0)\leq M-m
}
where we have used that $ u(0)=m $ and the fact that $ u\leq M $ on $ \partial \bbD(0,4) $ implies that $h_0\le M$. Setting $ r=3 $, we see that $ \mu(\bbD(0,3))\leq C(M-m) $ as desired.
For $ z\in \bbD(0,3) $, the first term in \eqref{eq:h form} is negative by inspection, the second negative since $G<0$,  and the third is bounded above by $ M $ as before. Therefore,  $ h(z)\leq M $. For the reverse bound, Harnack's inequality on $ \abs{z}\leq 3-\delta $ yields
\begin{align*}
	M-h(z)\leq \frac{3+r}{3-r}(M-h(0))\leq \frac{6-\delta}{\delta}(M-h(0))
\end{align*}
and
\begin{align*}
	h(0)=u(0)-\int_{\bbD(0,3)}\log\abs{\zeta}\,\mu(d\zeta)\geq m-\int_{\bbD(0,3)\setminus \bbD(0,1)}\log\abs{\zeta}\,\mu(d\zeta)\geq m-C(M-m)
\end{align*}
so putting these together implies that 
\begin{align*}
M-C_\delta(M-m)\leq h(z)
\end{align*}
for all $ \abs{z}\leq 3-\delta $.
\end{proof}

In fact, by essentially the same proof one can obtain the following more general Riesz representation. Note that one can move the point $z_0$ to $0$ by an automorphism of the disk, which retains the property of being subharmonic. 

\begin{thm}\label{thm:Riesz*}
	Let $ u\in\mathcal{SH}(\overline{\bbD(0,R_1)}) $ and suppose that $ u\leq M $ on $ \overline{\bbD(0,R_1)} $ and $ u(z_0)\geq m>-\infty $ where $|z_0|<R_1$. Let $R_1>R_2>R_3>0$. There exists $ \mu\in\mathcal{M}^+(\bbD(0,R_2)) $ and $ h $ harmonic in $ \bbD(0,R_2) $ such that for all $z\in{\bbD(0,R_2)} $
	\begin{align*}
		u(z)=\int_{\bbD(0,R_2)}\log\abs{z-\zeta}\,\mu(d\zeta)+h(z)
	\end{align*}
	Furthermore, there exist $ C_0=C_0(z_0,R_1,R_2)>0 $ and $ C_1=C_1(z_0,R_1,R_2,R_3)>0 $ universal such that $ \mu(\bbD(0,R_2))\leq C_0(M-m) $ and 
 $ \|h-M\|_{L^\infty(\bbD(0,R_3))} \le C_1(M-m)$. 
  		\end{thm}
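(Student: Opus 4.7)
The plan is to follow the proof of Theorem~\ref{thm:Riesz} essentially verbatim, with the numerical radii $4,3,2$ and the reference point $0$ replaced by $R_1 > R_2 > R_3 > 0$ and an arbitrary $z_0$ with $|z_0|<R_1$. The only genuinely new ingredient is a handful of uniform estimates on the Dirichlet Green's function of $\bbD(0,R_1)$,
\[
G_{R_1}(z,\zeta)=\frac{1}{2\pi}\log\frac{R_1|z-\zeta|}{|R_1^2-z\bar\zeta|},
\]
which must be extracted at the off-center point $z_0$ and on the (possibly off-center) subdiscs $\bbD(0,R_2)$, $\bbD(0,R_3)$. First I would reduce to the smooth case using Lemma~\ref{lem:moll sh}: a pointwise-decreasing sequence $u_n\in\calS\mathcal H(\bbD(0,R_1-1/n))\cap C^\infty$ yields, in the smooth case applied on the shrunken disc, triples $(\mu_n,h_n)$ with uniform bounds; Banach-Alaoglu on $C(\overline{\bbD(0,R_2)})^*$ produces a weak-$*$ limit $\mu$, Theorem~\ref{thm:normalF} a uniform-on-compacts limit $h$, and Corollary~\ref{co:aeeq} upgrades a.e.\ equality to pointwise identity.

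For the smooth case, I would apply Green's formula on $\bbD(0,R_1)$ and set $\mu := \frac{1}{2\pi}\Delta u \cdot\mathbbm{1}_{\bbD(0,R_2)}\,m$, splitting the Green integral into a contribution from $\bbD(0,R_2)$ (the logarithmic potential plus the harmonic reflection piece $\int \log\frac{R_1}{|R_1^2-z\bar\zeta|}\,d\mu$) and from the annulus $\bbD(0,R_1)\setminus \bbD(0,R_2)$ (harmonic in $\bbD(0,R_2)$ because the singularity of $G_{R_1}$ is avoided), plus $h_0$, the Poisson extension of $u\!\restriction_{\partial \bbD(0,R_1)}$. The total-mass bound follows from the identity $|R_1^2-z\bar\zeta|^2 - R_1^2|z-\zeta|^2 = (R_1^2-|z|^2)(R_1^2-|\zeta|^2)$: for $|z_0|<R_1$ and $\zeta\in\bbD(0,R_2)$ this furnishes a positive lower bound $-2\pi G_{R_1}(z_0,\zeta)\ge c_0(z_0,R_1,R_2)>0$. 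Evaluating the Green identity at $z_0$ and using $u(z_0)\ge m$, $h_0(z_0)\le M$ gives
\[
c_0\,\mu(\bbD(0,R_2)) \le \int_{\bbD(0,R_1)} \bigl(-2\pi G_{R_1}(z_0,\zeta)\bigr) d\mu_{\mathrm{tot}}(\zeta) = h_0(z_0)-u(z_0) \le M-m,
\]
hence $\mu(\bbD(0,R_2))\le C_0(M-m)$.

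For the $L^\infty$ bound of $h-M$ on $\bbD(0,R_3)$, I would bound each of the three summands of $h$. The reflection integrand $\log(R_1/|R_1^2-z\bar\zeta|)$ is bounded in modulus on $\bbD(0,R_3)\times \bbD(0,R_2)$ by a constant depending only on $R_1,R_2,R_3$, and the annular Green integrand $2\pi G_{R_1}(z,\zeta)$ is non-positive and bounded in modulus on $\bbD(0,R_3)\times(\bbD(0,R_1)\setminus \bbD(0,R_2))$ using $|z-\zeta|\ge R_2-R_3$. To control the annular mass of $\mu_{\mathrm{tot}}$ I would repeat the measure argument at an intermediate radius $R_*\in(R_2,R_1)$, obtaining $\mu_{\mathrm{tot}}(\bbD(0,R_*))\le C(M-m)$; the tail near $\partial\bbD(0,R_1)$ makes a small annular-Green contribution since $G_{R_1}\to 0$ there. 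Combining these with $h_0\le M$ yields $h(z)\le M+C(M-m)$; the matching lower bound $M-h(z)\le C_1(M-m)$ then follows from Harnack's inequality applied to the non-negative harmonic function $M+C(M-m)-h$ on $\bbD(0,R_2)\supset\bbD(0,R_3)$, together with a lower estimate on $h(z_0)$ obtained by undoing the decomposition and using $u(z_0)\ge m$ and $\mu(\bbD(0,R_2))\le C_0(M-m)$.

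The main obstacle is cosmetic rather than conceptual: the reflection term need not be pointwise non-positive for general $R_1>R_2$ (in the specific case $R_1=4$, $R_2=3$ one has $R_1/(R_1^2-R_2^2)=4/7<1$, giving the clean $h\le M$ of Theorem~\ref{thm:Riesz}), so in the general case it must be absorbed into the additive $C(M-m)$ rather than discarded. A related bookkeeping point is that the Green's function degenerates at $\partial\bbD(0,R_1)$, so the positive lower bound on $|G_{R_1}(z_0,\cdot)|$ used for the mass bound does not hold on the full annulus; this is sidestepped by working at an intermediate radius $R_*\in(R_2,R_1)$ as above.
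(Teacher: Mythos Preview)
Your approach is essentially what the paper has in mind: it states only that the argument is ``by essentially the same proof'' as Theorem~\ref{thm:Riesz}, adding the single remark that ``one can move the point $z_0$ to $0$ by an automorphism of the disk, which retains the property of being subharmonic.'' So there is no substantive divergence---you reproduce the Green-function argument with general radii rather than invoking a M\"obius map first.

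There is one genuine oversight, and it is exactly the point the paper's automorphism remark is meant to finesse. Your Harnack step for the lower bound uses ``a lower estimate on $h(z_0)$ obtained by undoing the decomposition,'' but the Riesz identity $u=\int_{\bbD(0,R_2)}\log|\cdot-\zeta|\,d\mu+h$ holds only on $\bbD(0,R_2)$; when $R_2\le|z_0|<R_1$ the expression $u(z_0)-\int_{\bbD(0,R_2)}\log|z_0-\zeta|\,d\mu(\zeta)$ is \emph{not} the value of the harmonic function $h$, and you then have no point inside $\bbD(0,R_2)$ at which $h$ is controlled from below. The automorphism sends $z_0$ to the center, after which this issue disappears. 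Alternatively, your own intermediate-radius idea fixes it without conformal maps: first carry out the entire argument on $\bbD(0,R_*)$ with $\max(R_2,|z_0|)<R_*<R_1$, obtaining $\tilde h$ harmonic on $\bbD(0,R_*)$ with $\|\tilde h-M\|_{L^\infty(\bbD(0,R_2))}\le C(M-m)$ via Harnack at $z_0\in\bbD(0,R_*)$; then $h=\tilde h+\int_{R_2\le|\zeta|<R_*}\log|\cdot-\zeta|\,d\tilde\mu$ on $\bbD(0,R_2)$, and the correction is uniformly bounded on $\bbD(0,R_3)$ since $|z-\zeta|\ge R_2-R_3$ there.

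A minor simplification: your discussion of the ``tail near $\partial\bbD(0,R_1)$'' is unnecessary for the upper bound on $h$. The annular Green term is $\le 0$ everywhere in $\bbD(0,R_2)$ (because $G_{R_1}\le 0$), so it can simply be dropped, yielding $h\le M + C(R_1,R_2)(M-m)$ on all of $\bbD(0,R_2)$ directly from the reflection bound and $h_0\le M$. The lower bound then comes from Harnack alone; no two-sided control of the annular piece is needed.
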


See  Theorem~2.2 in~\cite{HLS} for  explicit constants. 

\subsection{Cartan's lower bound}
 Next, we prove Cartan's theorem which controls large negative values of logarithmic potentials. Levin's book~\cite{Levin} has much more on this topic, see page~76. 

\begin{thm}\label{thm:Cartan}
Let $\mu$ be a finite positive measure in $\bbC$ and consider the logarithmic potential
\[
u(z) = \int_{\R^2} \log|z-\zeta|\, \mu(d\zeta)
\]
For any $H\in(0,1)$ there exist disks $\{\bbD(z_j,r_j)\}_{j=1}^J$, for $1\le J\le\infty$ with $\sum_{j=1}^J	r_j\le 5H$ and 
\EQ{\label{eq:lower}
u(z) \ge - \|\mu\| \log(e/H)\qquad \forall\; z\in\C\setminus \bigcup_{j=1}^J \bbD(z_j,r_r)
}
\end{thm}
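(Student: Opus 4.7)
The plan is a Vitali covering argument applied to the ``bad set''
\begin{equation*}
B := \{z \in \bbC \:|\: u(z) < -\|\mu\|\log(e/H)\},
\end{equation*}
with centers and radii determined by a local mass-concentration property. I would first show that every $z \in B$ lies at the center of a small disk whose $\mu$-mass overshoots the uniform density $\|\mu\|/H$, and then use disjointification plus a $5$-dilation to obtain the required cover.

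To extract the concentration scale, fix $z \in B$. Discarding the nonnegative contribution from $\{|z-\zeta|\ge 1\}$ gives
\begin{equation*}
\int_{|z-\zeta|<1} \log\frac{1}{|z-\zeta|}\,\mu(d\zeta) > \|\mu\|\log(e/H).
\end{equation*}
Writing $-\log|z-\zeta| = \int_{|z-\zeta|}^{1} r^{-1}\,dr$ on $\{|z-\zeta|<1\}$ and applying Fubini--Tonelli converts the left side into $\int_0^1 r^{-1}\mu(\bbD(z,r))\,dr$. I claim there exists $r_z \in (0,H)$ with $\mu(\bbD(z,r_z)) > \|\mu\|r_z/H$. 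If not, then combined with the trivial bound $\mu(\bbD(z,r)) \le \|\mu\|$ on $[H,1]$ one would compute
\begin{equation*}
\int_0^1 \frac{\mu(\bbD(z,r))}{r}\,dr \le \int_0^H \frac{\|\mu\|}{H}\,dr + \int_H^1 \frac{\|\mu\|}{r}\,dr = \|\mu\|(1 + \log(1/H)) = \|\mu\|\log(e/H),
\end{equation*}
contradicting the strict inequality above.

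Since $r_z < H < 1$ uniformly in $z \in B$, Vitali's covering lemma extracts a countable disjoint subfamily $\{\bbD(z_j, r_{z_j})\}_{j=1}^{J}$ of $\{\bbD(z,r_z)\}_{z \in B}$ whose $5$-fold dilates $\bbD(z_j, 5r_{z_j})$ cover $\bigcup_{z \in B}\bbD(z,r_z) \supseteq B$. Setting $r_j := 5 r_{z_j}$, disjointness and the concentration inequality yield
\begin{equation*}
\sum_{j} r_{z_j} < \frac{H}{\|\mu\|}\sum_{j} \mu(\bbD(z_j,r_{z_j})) \le \frac{H}{\|\mu\|}\|\mu\| = H,
\end{equation*}
so $\sum_j r_j \le 5H$ and every $z$ outside these disks satisfies~\eqref{eq:lower}. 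There is no serious obstacle here: once the layer-cake identity pins down the correct scale $r_z$, the argument is essentially routine. The only delicate bookkeeping is the splitting of $(0,1)$ at $r = H$, which produces exactly the constant $e$ in the bound, and the uniform upper bound $r_z < H < 1$, which is precisely what allows Vitali's lemma to apply.
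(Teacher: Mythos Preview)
Your proof is correct and follows essentially the same approach as the paper: both use the layer-cake identity $-\log|z-\zeta|=\int_{|z-\zeta|}^{1}r^{-1}\,dr$ to show that any point violating the lower bound must exhibit mass concentration $\mu(\bbD(z,r_z))>\|\mu\|r_z/H$ at some scale $r_z<H$, and then apply Vitali's covering lemma to these disks. The only cosmetic difference is that the paper defines ``good'' points by the mass condition $\mu(\bbD(z,r))\le \|\mu\|r/H$ for all $r$ and then verifies the lower bound on good points, whereas you take the contrapositive and start directly from the set where the bound fails; the computations and constants are identical.
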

\begin{proof}
Let $z\in\C$ be a good point if $n(z,r):=\mu(\bbD(z,r))\le pr$ for all $r>0$. Here $p$ depends on $H$ and will be determined. For every bad $z$ there exists $r(z)>0$ with $n(z,r(z))>r(z)p$. Note that $r(z)\le \|\mu\|/p$. 
By Vitali's covering lemma there exist bad points $z_j$  so that $\{ \bbD(z_j,r(z_j))\}_j$ are pairwise disjoint and 
\[
\calB:=\{z\in\C\:|\: z\text{\ \ is a bad point\ \ } \}\subset \bigcup \bbD(z_j,r_j) \text{\ \ with\ }r_j:=5 r(z_j).
\]
In particular, $\sum_j r_j\le 5\|\mu\|/p$ whence we need to set $p=\|\mu\|/H$. If $z\in\C\setminus \bigcup \bbD(z_j,r_j)$, then $z$ is good and we obtain by integrating by parts
\EQ{\nn 
u(z) &\ge \int_0^1 \log r\, d(n(z,r))  = -\int_0^1 \frac{n(z,r)}{r} \, dr \ge -\int_0^H p\, dr + \|\mu\| \log H \\
&=  - pH+\|\mu\|\log H = \|\mu\|\log(H/e)
}
as claimed. 
\end{proof}

We call $\|\mu\|$ the Riesz mass of $u$. 
We leave it to the reader to check that Theorem~\ref{thm:Cartan} with the same proof generalizes as follows.  

\begin{thm}\label{thm:Cartan*}
Under the same assumptions as in the previous theorem, suppose $0<\delta\le1$. Then 
for any $H\in(0,1)$ there exist disks $\{\bbD(z_j,r_j)\}_{j=1}^J$, for $1\le J\le\infty$ with $\sum_{j=1}^J	r^\delta_j\le 5^\delta H$ and 
\EQ{\label{eq:lower*}
u(z) \ge - \frac{1}{\delta}\|\mu\| \log(e/H)\qquad \forall\; z\in\C\setminus \bigcup_{j=1}^J \bbD(z_j,r_r)
}
\end{thm}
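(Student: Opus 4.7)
The plan is to mimic the proof of Theorem~\ref{thm:Cartan} verbatim, replacing the linear growth condition $n(z,r) \le pr$ with the fractional growth condition $n(z,r) \le pr^\delta$, and then tracking the changes this produces in the Vitali bookkeeping and in the integration-by-parts step. All the structure of the original argument survives.

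First, call $z\in\C$ a \emph{good point} if $n(z,r) := \mu(\bbD(z,r)) \le pr^\delta$ for every $r>0$, with $p := \|\mu\|/H$. For any bad $z$ there is an $r(z)>0$ with $n(z,r(z)) > p\,r(z)^\delta$, and since $n(z,r)\le\|\mu\|$ one gets $r(z) \le (\|\mu\|/p)^{1/\delta} = H^{1/\delta} \le 1$. By Vitali's covering lemma, the bad set is covered by disks $\bbD(z_j, r_j)$ with $r_j := 5\,r(z_j)$ and with the disks $\bbD(z_j, r(z_j))$ pairwise disjoint. Then
\[
\sum_j r_j^\delta = 5^\delta \sum_j r(z_j)^\delta \;<\; 5^\delta \sum_j \frac{n(z_j, r(z_j))}{p} \;\le\; 5^\delta\, \frac{\|\mu\|}{p} = 5^\delta H,
\]
using disjointness in the middle inequality.

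Next, for $z$ outside $\bigcup_j \bbD(z_j, r_j)$, the point $z$ is good, so $n(z,r) \le \min(\|\mu\|,\, pr^\delta)$ for all $r>0$. Following the same bound as before, $u(z) \ge \int_0^1 \log r\, dn(z,r)$ (since the integrand on $r\ge 1$ is nonnegative), and integration by parts gives
\[
u(z) \;\ge\; -\int_0^1 \frac{n(z,r)}{r}\, dr,
\]
where the boundary term at $0$ vanishes because $n(z,r) \le pr^\delta$ forces $\log r \cdot n(z,r) \to 0$ as $r\to 0$. The key change from the $\delta=1$ case is to split this integral at $r = H^{1/\delta}$ rather than at $H$:
\[
\int_0^{H^{1/\delta}} \frac{n(z,r)}{r}\, dr \;\le\; \int_0^{H^{1/\delta}} p\, r^{\delta-1}\, dr \;=\; \frac{p H}{\delta} \;=\; \frac{\|\mu\|}{\delta},
\]
\[
\int_{H^{1/\delta}}^1 \frac{n(z,r)}{r}\, dr \;\le\; \|\mu\| \int_{H^{1/\delta}}^1 \frac{dr}{r} \;=\; -\frac{1}{\delta}\|\mu\|\log H.
\]
Adding these, $u(z) \ge -\frac{1}{\delta}\|\mu\|\,(1 - \log H) = -\frac{1}{\delta}\|\mu\| \log(e/H)$, which is precisely \eqref{eq:lower*}.

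I do not foresee any genuine obstacle: the whole scheme reduces to choosing the correct splitting scale $H^{1/\delta}$ so that the two pieces of the logarithmic integral balance, and observing that the Vitali argument only needs the convexity-free inequality $r(z)^\delta < n(z,r(z))/p$. The reader's check in Theorem~\ref{thm:Cartan*} is thus essentially just to verify that $\delta \in (0,1]$ plays no role beyond ensuring $H^{1/\delta}\le 1$ (so that the split point lies in $(0,1]$) and providing the factor $1/\delta$ from the antiderivative of $r^{\delta-1}$.
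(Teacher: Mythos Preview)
Your proof is correct and follows exactly the approach the paper intends: it explicitly says ``with the same proof,'' and your argument is precisely that---replace the linear growth condition by $n(z,r)\le pr^\delta$, run Vitali in the $\delta$-th powers, and split the integral at $H^{1/\delta}$ instead of $H$. There is nothing to add.
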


We chose $0<\delta\le1$ here instead of $0<\delta\le 2$ since the range $1<\delta\le2$ is weaker than Theorem~\ref{thm:Cartan}. 
As an immediate corollary we conclude that   $\dim(\{z\in\bbC\:|\: u(z)=-\infty\})=0$ in the sense of Hausdorff dimension, for any logarithmic potential of a finite positive measure. By Theorem~\ref{thm:Riesz}, 
this same property  therefore holds locally on $\Omega$ for any subharmonic function on $\Omega$ which is not constant $-\infty$.  For our applications, Cartan's theorem, i.e., Theorem~\ref{thm:Cartan},  will suffice. The following serves to illustrate this result.
\begin{itemize}
\item Consider the logarithm of a polynomial of degree $N$ with roots $\zeta_j\in\C$. Thus, $P(z)=\prod_{j=1}^N (z-\zeta_j)$ and 
\[
u(z) = \log\big|\prod_{j=1}^N (z-\zeta_j)\big| =\int\log|z-\zeta|\,\mu(d\zeta),\qquad \mu = \sum_{j=1}^N \delta_{\zeta_j}
\]
Given $0<H<1$, there exist disks $\bbD(z_j,r_j)$, $1\le j\le J$,  with $\sum_j r_j\le 5H$ and $|P(z)|\ge (H/e)^N$ for all $z\in\bbC\setminus \bigcup \bbD(z_j,r_j)$. By the maximum principle, each disk contains a zero of $P$. Thus, $J\le N$.  The bound on the Riesz mass in Theorem~\ref{thm:Riesz} is nothing other than Jensen's formula counting the roots of analytic functions, see~\cite[page 10]{Levin}. 
\item
 If $\zeta_j=0$ for all $j$, then $|P(z)|=|z|^N\ge H^N$ if $|z|\ge H$. This shows that Cartan's theorem is optimal up to multiplicative constants on~$H$.
 \item On the other hand, suppose $\zeta_j=e(j/N)$ for $1\le j\le N$ where $e(\theta)=e^{2\pi i\theta}$. Then 
$
P(z) = z^N-1
$ 
and we can take the Cartan disks centered at $\zeta_j$ of radius $\rho=1/N$. Then for any $z$ with $z=\zeta_j+\rho e(\theta)$ we have 
\EQ{\label{eq:kreis teil}
|P(z)|=|z^N-1| &= |(\zeta_j+\rho e(\theta))^N -1| \ge  \rho N - \sum_{\ell=2}^N \binom{N}{\ell} \rho^\ell \ge 1 - \sum_{\ell=2}^N \frac{(N\rho)^\ell}{\ell!} = 3-e
}
It follows from the maximum (minimum) principle for analytic functions that $|P(z)|\ge 3-e$ for all $z\in \bbC\setminus \bigcup_{j=1}^N\bbD(\zeta_j,1/N)$. Therefore Cartan's estimate is woefully imprecise in this example. Indeed, for the polynomial $P$ with roots at the $N^{th}$ roots of unity, $u(z)=\log|P(z)|$  behaves in Theorem~\ref{thm:Cartan}  like a subharmonic function with Riesz mass~$1$, at least for $H=1/N$. 
\end{itemize}

In applications of Cartan's theorem to quasi-periodic localization, the distribution of the zeros plays  a decisive role and it is therefore essential to improve on the Cartan bound. In other words, we are in a situation much closer to the roots-of-unity example where Cartan falls far short from the true estimate. Nevertheless, combining Cartan's bound with the dynamics, one can still obtain a nontrivial statement as we shall see in  the following section.  

To conclude this section, we prove Riesz's representation theorem on general from the one for discs which we proved above. We will do this by connection points by chains of disks, which uses Cartan. 

\begin{cor}
\label{cor:Riesz}
Let $\Omega\subset\bbC$ be a bounded domain, $u$ subharmonic on $\Omega$ with $\sup_\Omega u\le M$. Let $K\subset\Omega$ be compact and suppose $\sup_K u\ge m>-\infty$. 
For any $\Omega_2\Subset\Omega_1\Subset\Omega$,  there exist a positive measure $\mu$ on $\Omega_1$ and a harmonic function $h$ on $\Omega_1$ such that
\EQ{\label{eq:RR gen}
u(z) &= \int_{\Omega_1}  \log |z-\zeta|\, \mu(d\zeta) + h(z) \qquad \forall\; z\in \Omega_1\\
\mu(\Omega_1) &\le C_1(\Omega,K,\Omega_1) (M-m) \\
\|h-M\|_{L^\infty(\Omega_2)} &\le C_2(\Omega,K,\Omega_1,\Omega_2) (M-m)
}
\end{cor}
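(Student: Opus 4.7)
The plan is to obtain the Riesz decomposition canonically from the distributional identity $\mu := (2\pi)^{-1}\Delta u$, which already defines a nonnegative Borel measure on $\Omega$ because $u$ is subharmonic, and then to focus all of the work on the quantitative bounds. Given the global $\mu$, the function $h(z) := u(z) - \int_{\Omega_1}\log|z-\zeta|\,\mu(d\zeta)$ satisfies $\Delta h = 0$ on $\Omega_1$ in the distributional sense and hence is harmonic there, furnishing the decomposition. What remains is to bound $\mu(\Omega_1)$ and $\|h-M\|_{L^\infty(\Omega_2)}$ linearly in $M-m$. I would accomplish this by covering $\Omega_1$ with a finite chain of disks on each of which Theorem~\ref{thm:Riesz*} applies, and by chaining local pointwise lower bounds on $u$ along this chain using Cartan's estimate (Theorem~\ref{thm:Cartan}).

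More concretely, I would fix $r>0$ with $5r<\min(\dist(\Omega_1,\partial\Omega),\dist(\Omega_2,\partial\Omega_1))$ and pick centers $z_0,z_1,\ldots,z_N\in\Omega$ with $4B_j\subset\Omega$ for $B_j:=\bbD(z_j,r)$, such that $z_0\in K$ with $u(z_0)\ge m$ (attained since $u$ is upper semicontinuous on the compact $K$), such that $\Omega_1\subset\bigcup_j B_j$, and such that $|z_{j+1}-z_j|<r/2$ for each $j$. The integer $N$ depends only on the geometry $(\Omega,K,\Omega_1)$. The single-step lemma I would prove reads: if $u(\tilde z)\ge \tilde m$ for some $\tilde z\in B_j$, then Theorem~\ref{thm:Riesz*} applied on $\overline{4B_j}$ with $R_1=4r,R_2=3r,R_3=2r$ and base point $\tilde z$ produces a local Riesz measure with $\mu(3B_j)\le C(M-\tilde m)$ and a local harmonic part $h_j$ with $\|h_j-M\|_{L^\infty(2B_j)}\le C(M-\tilde m)$. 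Choosing $H\in(0,1)$ with $5H<r/2$ and applying Cartan's theorem to $\int_{3B_j}\log|z-\zeta|\,\mu(d\zeta)$, the exceptional set $\calE$ is a union of disks whose radii sum to $<r/2$, hence of Lebesgue area $<\pi r^2/4$, strictly less than the area $\pi r^2$ of $B_{j+1}\subset 2B_j$. Picking any $\tilde z'\in B_{j+1}\setminus\calE$ yields $u(\tilde z')\ge M-C'(M-\tilde m)$, where $C'=C'(r)$ depends only on the geometry.

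Iterating this lemma along the chain starting from $(z_0,m)$ produces lower bounds $m_j\ge M-(C')^j(M-m)$, hence $M-m_j\le (C')^N(M-m)$ for all $j$. Summing the local Riesz-mass bounds gives $\mu(\Omega_1)\le\sum_j\mu(B_j)\le\sum_j C(M-m_j)\le C_1(M-m)$ with $C_1=C_1(\Omega,K,\Omega_1)$. For the $L^\infty$ bound on $h$, on each $B_j$ whose doubling $2B_j\subset\Omega_1$ (which holds for the subcollection of disks covering $\Omega_2$, by the choice of $r$) I would use
\[
h(z)-h_j(z)=\int_{\Omega_1\setminus 3B_j}\log|z-\zeta|\,\mu(d\zeta),
\]
a harmonic function on $3B_j\cap\Omega_1$, bounded on $B_j$ in absolute value by a constant multiple of $\mu(\Omega_1)\le C_1(M-m)$ since $|\log|z-\zeta||$ is uniformly controlled for $z\in B_j$ and $\zeta\in\Omega_1\setminus 3B_j$. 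Combined with $\|h_j-M\|_{L^\infty(2B_j)}\le C(M-m)$ this gives $\|h-M\|_{L^\infty(\Omega_2)}\le C_2(M-m)$ after taking the maximum over the finite collection of relevant~$j$.

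The main technical obstacle I foresee is the exponential blow-up $(C')^N$ of the lower-bound loss through the chain, which inflates the constants $C_1,C_2$. Since $N$ is entirely determined by $(\Omega,K,\Omega_1)$, however, the resulting constants remain finite geometric quantities independent of $u$, so the linear dependence on $M-m$ asserted by the corollary is preserved. A refined argument using Harnack chains in place of Cartan would yield sharper numerical constants, but the Cartan-based approach aligns naturally with the machinery developed in this section and is robust to the possibility that $u$ takes the value $-\infty$ on large subsets of $\Omega_1$.
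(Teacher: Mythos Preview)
Your proposal is correct and follows essentially the same strategy as the paper: both proofs identify $\mu=(2\pi)^{-1}\Delta u$ globally, then cover $\Omega_1$ by a finite chain of disks rooted at a point of $K$ where $u\ge m$, apply the disk version of Riesz (Theorem~\ref{thm:Riesz*}) on each, and use Cartan to propagate a lower bound on $u$ from one disk to the next so that the Riesz mass on each disk is controlled by a geometric multiple of $M-m$. The only noteworthy difference is in the endgame for the bound on $h$: the paper finds a good circle via Cartan on which the logarithmic potential is bounded below and then invokes Harnack, whereas you compare $h$ directly to the local harmonic part $h_j$ via the far-field integral $\int_{\Omega_1\setminus 3B_j}\log|z-\zeta|\,\mu(d\zeta)$, which is uniformly bounded once $\mu(\Omega_1)$ is; your route here is slightly more direct.
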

\begin{proof}
By Lemma~\ref{lem:moll sh} we can assume that $u$ is smooth, although this is strictly speaking not necessary. The measure $\mu(dz) =\frac{1}{2\pi}\Delta u\, dxdy$ is unique and therefore $h$ harmonic on $\Omega_1$ if it satisfies~\eqref{eq:RR gen}.  
Let $\sup_K u = u(z_0)$, $z_0\in K$. 
By compactness, there exists $\delta>0$ and $N$ finite so that for any $z\in \Omega_1$ we can find  disks $\bbD(z_j,\delta)\subset\Omega$, $0\le j\le N$, with $z_N=z$, and $z_j\in \bbD(z_{j-1}, \delta/2)$ for all $j\ge1$.  Moreover, we may assume that $\Omega_2\subset \bigcup_{z\in\Omega_1} \bbD(z,\delta/2)$ and by compactness this can be chosen as a finite union. By Riesz's representation as in Theorem~\ref{thm:Riesz*} we have
\EQ{\label{eq:Dz0}
u(z) &= \int_{\bbD(z_0,\delta/2)} \log|z-\zeta|\,\mu(d\zeta)  + h_0(z) \qquad \forall\; z\in \bbD(z_0,\delta/2) \\
\mu(\bbD(z_0,\delta/2)) &\le C_0(\delta) (M-m), \quad \|h_0-M\|_{L^\infty(\bbD(z_0,\delta/4))}\le C_0(\delta) (M-m)
}
Next, apply Theorem~\ref{thm:Cartan} to the logarithmic potential in~\eqref{eq:Dz0} with $H=\delta/100$. Hence, there exists $w_1\in \bbD(z_0,\delta/4)\subset \bbD(z_1,3\delta/4)$ with 
\EQ{\label{eq:w1 good}
u(w_1)\ge m - C_1(\delta)(M-m)
}
while $u\le M$ on $\bbD(z_1,\delta)$. We now apply Riesz's representation as in Theorem~\ref{thm:Riesz*} on this disk, followed by Cartan to find a good point $w_2\in \bbD(z_2,3\delta/4)$ for which and analogue of~\eqref{eq:w1 good} holds. We may repeat this procedure to finitely many times to cover all of $\Omega_1$ by such disks leading to the stated upper bound on the measure $\mu(\Omega_1)$. For the estimate on the harmonic function $h$ defined by~\eqref{eq:RR gen}, pick any $z_*\in\Omega_2$. Then with $\eps_0:=\dist(\partial\Omega_1, \Omega_2)$ we have $\bbD(z_*,\eps_0)\subset \Omega_1$. On the one hand, for all $z\in\Omega_1$, 
\[
h(z) \ge u(z) -\log(\diam(\Omega_1))\mu(\Omega_1)\ge u(z) - C(M-m) 
\]
with the same type of constant as before. By the previous Cartan estimate and chaining argument, we can find $z_{**}\in \bbD(z_*,\eps_0/4)$ which satisfies a bound~\eqref{eq:w1 good} with a purely geometric constant. Hence 
\EQ{\label{eq:hlow}
h(z_{**}) \ge  m - C(M-m)
}
On the other hand, again by Theorem~\ref{thm:Cartan} we may find $\eps_1\in (3\eps_0/4, \eps_0)$ so that for all $|z-z_*|=\eps_1$ one has 
\[
\int_{\Omega_1}  \log |z-\zeta|\, \mu(d\zeta) \ge -C(M-m) 
\]
whence 
\EQ{\label{eq:high}
h(z) \le M + C(M-m) \qquad\forall\; |z-z_*|=\eps_1
}
By Harnack's inequality, \eqref{eq:hlow} and \eqref{eq:high} imply that $h$ satisfies the desired bound on $\bbD(z_*,\eps_0/2)$ and hence everywhere on~$\Omega_2$. 
\end{proof}

Alternatively, one can rely the proof strategy of Theorem~\ref{thm:Riesz}, and use the Green function on general subdomains of $\Omega$ with sufficiently regular boundary. But this seems technically more  involved, at least to the author.

\section{The Bourgain-Goldstein theorem}
\label{sec:BG} 

In this section we will sketch a proof of the main theorem in~\cite{BouG}. Similar to Theorem~\ref{thm:FSW} it addresses Anderson localization for the operators
\EQ{
(H_{x,\omega}\psi)_n &= \psi_{n-1} + \psi_{n+1} +  V(T_\omega^n x)\psi_n
\label{eq:1}
} 
on
$\ell^2(\bbZ)$, where $T_\omega:\tor\to\tor$ is the  rotation $x\mapsto x+\omega\mod 1$ and $V:\tor\to\R$ is analytic. 

\begin{thm}\label{thm:BG}
Suppose the Lyapunov exponents $L(E,\omega)$ associated with \eqref{eq:1} satisfy $\inf_{E,\omega} L(E,\omega)>0$. Then for almost every $\omega\in\tor$, the operator $H_{0,\omega}$ exhibits pure point spectrum with exponentially decaying eigenfunctions. Moreover, for almost every $\omega\in\tor$, the operator $H_{x,\omega}$ exhibits Anderson localization 
for almost every~$x\in\tor$.
\end{thm}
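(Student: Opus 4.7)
The plan is to reduce localization to exponential decay of the generalized eigenfunctions provided by Theorem~\ref{thm:Ber}, following the same overall template as in the proof of Theorem~\ref{thm:AL}. For any $x$ and $\omega$, that theorem gives a polynomially bounded $\psi$ satisfying $H_{x,\omega}\psi=E\psi$ for spectrally a.e.~$E$, and we must show that a.s.~in $x$ every such $\psi$ must decay exponentially. The mechanism is again the Poisson formula~\eqref{eq:Poisson}: if the Green function $G_{[a,b]}(x,\omega;E)$ of the Dirichlet restriction to some interval around the origin enjoys exponential off-diagonal decay while $|\psi|$ grows at most polynomially, then sending $|a|,|b|\to\infty$ forces $\psi(0)=0$, which is incompatible with nontriviality unless $\psi$ itself decays at that scale. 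All effort is concentrated on producing, for a.e.\ $(\omega,x)$ and every scale $N$, an interval surrounding $0$ on which the Green function decays exponentially, uniformly in~$E$.

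The analytic tool replacing Wegner's estimate is the \emph{large deviation theorem} (LDT) for transfer matrices. Writing $M_N(x,\omega;E)$ for the $N$-step transfer matrix, analyticity of $V$ allows the extension of $x\mapsto \frac1N\log\|M_N(x,\omega;E)\|$ to a subharmonic function on a strip $|\Im x|<\rho$, bounded above by $O(1)$ and with mean converging to the Lyapunov exponent $L(E,\omega)>0$. Applying the Riesz representation from Theorem~\ref{thm:Riesz}, combined with Cartan's estimate (Theorem~\ref{thm:Cartan}) to control the Riesz mass via the relevant John--Nirenberg/BMO machinery, yields the LDT
\[
\mathrm{mes}\bigl\{x\in\tor \,:\, |N^{-1}\log\|M_N(x,\omega;E)\|-L_N(\omega,E)|>\sigma\bigr\}<e^{-N^{\tau}}
\]
valid at every $E$ and every $\omega$ of a suitable Diophantine type. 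Via Cramer's rule, the entries of the Green function are ratios of minors of $M_{b-a+1}$ over $\det(H_{[a,b]}-E)$; combining the LDT with the avalanche principle for $SL(2,\R)$ products (Goldstein--Schlag) propagates these estimates and produces exponential off-diagonal decay $|G_{[a,b]}(x,\omega;E)(m,n)|\lesssim e^{-\gamma|m-n|}$ on any interval where the denominator is not too small. Thus one recovers the analogue of Definition~\ref{def:Reg} for regular intervals.

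The main obstacle is the \emph{uniform-in-energy} control needed for double-resonance elimination. At fixed $E$ the LDT directly bounds the $x$-measure of the bad set by $e^{-N^\tau}$, but the proof of localization demands an $x$-bound on events involving some $E$. The resolution, and the technical heart of \cite{BouG}, is to approximate transfer matrix entries and determinants to accuracy $e^{-N^{100}}$ by polynomials of degree $\les e^{CN}$ in $(x,E)$ (possible by analyticity plus a slab estimate), so that the set of $(x,E)$ for which two disjoint scale-$N$ intervals around $0$ are simultaneously $(\gamma,E)$-singular is contained in a semialgebraic set of controlled complexity. The Yomdin--Gromov bounds on semialgebraic sets then give, for the $x$-section of such a set, a measure bound of the form $e^{-N^{\tau'}}$, uniformly over~$E$. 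This quasi-periodic semialgebraic elimination plays the role that squaring $p_{k-1}$ played in Proposition~\ref{prop:NR reg}.

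Once the uniform-in-$E$ small measure of double-resonance events is established, Borel--Cantelli removes a null set $\mathcal{B}_\omega\subset\tor$ off of which, at every sufficiently large scale $N$, at most one scale-$N/2$ sub-interval of $[-N,N]$ is $E$-resonant; the surrounding regular intervals then yield the exponential Green function decay needed in the Poisson formula, exactly as in the proof of Theorem~\ref{thm:AL}. This proves localization for a.e.~$x$ at fixed Diophantine~$\omega$. An outer Fubini-and-Borel--Cantelli step removes a null set of $\omega$'s (invoking that the Diophantine condition on $\omega$ is of full measure and that the implied constants above depend only on the Diophantine type), which settles the a.s.\ statement for $H_{x,\omega}$ and, after a short separate argument using that $0$ is not distinguished by the ergodic action, the statement for $H_{0,\omega}$ as well. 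The hard part throughout is the semialgebraic reduction; every other step is a natural adaptation of the FS machinery from Section~\ref{sec:AL FS} in which subharmonicity substitutes for independence.
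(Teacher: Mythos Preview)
Your outline has the right ingredients (Shnol, LDT via subharmonicity, Cramer's rule, semi-algebraic complexity, Poisson formula), but the logical architecture is inverted in a way that creates a genuine gap. You claim that after the semi-algebraic reduction one obtains, \emph{for each fixed Diophantine $\omega$}, a measure bound $e^{-N^{\tau'}}$ on the set of $x$ exhibiting a double resonance uniformly in~$E$, and that Borel--Cantelli then removes a null set of~$x$. This intermediate statement---localization for a.e.~$x$ at every fixed Diophantine $\omega$---is precisely what is \emph{not} known; the paper says so explicitly just after the theorem statement. Semi-algebraic (Tarski--Seidenberg or Yomdin--Gromov) bounds control the \emph{complexity} (number of connected components) of the bad set and its projections, not its measure; so your sentence ``the Yomdin--Gromov bounds \ldots\ give, for the $x$-section, a measure bound $e^{-N^{\tau'}}$'' does not follow. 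For fixed $\omega$, the two singular intervals are deterministically related by a shift in~$x$, and there is no ``squaring of probabilities'' available; the analogy with Proposition~\ref{prop:NR reg} breaks down exactly here.

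What the paper actually does is the reverse: it fixes $x=0$ and removes a null set of~$\omega$. The double-resonance event becomes $(\omega,\ell\omega)\in\calS_n$ for some $\ell$ in a prescribed range, where $\calS_n\subset\tor^2$ has small $(\omega,x)$-measure by Fubini and whose horizontal slices have $O(n^s)$ components by the semi-algebraic bound. The decisive tool you are missing is the \emph{lemma on steep lines} (Lemma~\ref{lem:steep lines}), which combines these two pieces of information to bound $|\{\omega:(\omega,\ell\omega)\in\calS_n\text{ for some }\ell\in[N,2N]\}|$. Borel--Cantelli in~$\omega$ then yields a full-measure set of good~$\omega$ for which $H_{0,\omega}$ is localized; the a.e.~$x$ statement is a consequence by Fubini, not the starting point. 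Your ``outer Fubini step in~$\omega$'' is in fact where the real elimination happens, and it cannot be relegated to a formality.
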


The final statement of the theorem follows simply by Fubini and the fact that one may replace $0$ in $H_{0,\omega}$ with any other $x\in\tor$. See~\cite{BouG,Bou} for versions of this theorem with $V$ analytic on higher-dimensional tori. This section is only meant to serve as a motivation for higher-dimensional techniques involving $\Delta_{\bbZ^d}$ with $d\ge2$, and less as a review of~\cite{BouG} itself. We will often drop $\omega$ from the notation and write $H_x$ or $H(x)$. 

No explicit Diophantine condition arises here in contrast to Theorem~\ref{thm:FSW}. In fact, it is not known if Theorem~\ref{thm:BG} holds for all Diophantine~$\omega$. For $V(x)=\cos(2\pi x)$, Jitomirskaya proved~\cite{Jit} that this is indeed the case. 
Although Diophantine conditions play a decisive role in the proof of Theorem~\ref{thm:BG}, one does remove a measure $0$ set of  ``bad"  $\omega$   in addition to a measure $0$ set of non-Diophantine $\omega$.  The smallness condition on $\eps$ in Section~\ref{sec:FSW} is replaced by positive Lyapunov exponents, a non-perturbative condition. No assumption on the number of monotonicity intervals of $V$ is made, nor do we impose an explicit nondegeneracy condition. Note, however, that the most degenerate case $V=\const$ cannot arise by positive Lyapunov exponents. By analyticity, $V$ therefore cannot be infinitely degenerate anywhere. No analogue of Theorem~\ref{thm:BG} is known if $V$ is merely smooth, nor is it clear what the results might be for smooth~$V$. 

We quickly review some elementary background on Lyapunov exponents. 
Consider~\eqref{eq:1} 
 $T:X\to X$ with an ergodic transformation on a probability space
$(X,\nu)$, and $V$ is a real-valued measurable function.  Define 
\EQ{
\label{eq:Ldef}
 L(E) &= \lim_{n\to\infty} \frac{1}{n} \int_{X}
\log\|M_n(x,E)\|\,\nu(dx) = \inf_{n\ge1} \frac{1}{n} \int_{X}
\log\|M_n(x,E)\|\,\nu(dx)
 }
 where $M_n$ are the transfer matrices
\EQ{\label{eq:Mn}
M_n(x,E) = \prod_{k=n}^1 \left[ \begin{matrix}  E-V(T^k x) & -1 \\
1 & 0 \end{matrix}\right]
}
of \eqref{eq:1}, i.e., the column vectors of $M_n$ are a
fundamental system of the equation $H_x \psi = E\psi$.
The limit in~\eqref{eq:Ldef} exists as stated due to fact that $a_n:=\int_{X}
\log\|M_n(x,E)\|\,\nu(dx)
$ is a subadditive sequence, and it is known that $\lim_{n\to\infty} \frac{1}{n} a_n = \inf_{n\ge1} \frac{1}{n} a_n$ exists for such sequences. Since $M_n\in SL(2,\bbR)$ we have $\| M_n\|\ge1$ and thus $L(E)\ge 0$. It is an important  and often difficult question to decide whether $L(E)>0$ for~\eqref{eq:1}, see~\cite{herman,HLS}  for an example of this. But this circle of problems will not concern us here.  
It was shown by F\"urstenberg
and Kesten~\cite{FurKes}, later generalized in Kingman's subadditive ergodic theorem,  that
\EQ{\label{eq:FK}
\lim_{n\to\infty} \frac{1}{n} \log\|M_n(x,E)\| = L(E)
}
for a.e.~$x\in X$. This does use ergodicity of $T$, whereas~\eqref{eq:Ldef} does not. See~Viana's book~\cite{Viana} for all this. 

The Thouless formula, see~\cite{CraigS}, 
\begin{equation}
\label{eq:thou}
 L(E) = \int_\R \log|E-E'|\, N(dE') \qquad \forall\; E\in\C
\end{equation}
relates the Lyapunov exponent to the density of states. Here $N$ is the integrated density of states (IDS), i.e., 
  the limiting 
distribution of the eigenvalues of~\eqref{eq:1} restricted to 
intervals $\Lambda=[-N,N]$  in the limit $N\to\infty$. In other words, there exists a deterministic nondecreasing function $N$ so that
for a.e.~$x\in X$ one has 
\[
|\Lambda|^{-1} |\{ j\in [1,|\Lambda|] \:|\: E^{(\Lambda)}_j(x)< t\}| \to N(t),
\]
where $E^{(\Lambda)}_j(x)$ are the eigenvalues of $H^\Lambda_{x}$, the restriction of \eqref{eq:1} to $\Lambda$ with Dirichlet boundary conditions. The existence of this limit holds in great generality, see~\cite{FigP}. The Lyapunov exponent is a subharmonic function on~$\C$, and harmonic on $\C\setminus\R$. The Thouless formula identifies the IDS $N$ as the Riesz measure of $L(E)$, and also shows that $L$ and $\frac{dN}{dE}$ are related to each other by the Hilbert transform. For far-reaching considerations involving these concepts see for example Avila's global work on phase transitions~\cite{Avila}.

\subsection{Large deviation theorems}

We now present a key ingredient in the proof of Theorem~\ref{thm:BG}, namely the large deviation estimates (LDTs), see also~\cite{GolS} where they are essential in the study of the regularity of the IDS.  
For the operators \eqref{eq:1} defined in terms of rotations of $\tor$, define 
\[ L_n(E) = \frac{1}{n} \int_{\tor}
\log\|M_n(x,E)\|\,dx.\]
The following LDT can be viewed as a quantitative form of~\eqref{eq:FK}. 

\begin{defn}
By Diophantine, we will now mean any irrational $\omega$ so that $\|n\omega\|\ge b\, n^{-a}$ for all $n\ge1$.   
\end{defn}

It is easy to see that for every $a>1$ a.e.~$\omega$ satisfies such a condition for some $b=b(\omega)$. 

\begin{prop} \label{prop:LDT}
For Diophantine $\omega$ there exist 
$0<\sigma,\tau<1$ depending on $V, a$ so that for all $E\in [-E_0,E_0]$, 
\EQ{
\label{eq:LDT1}
|\{x\in \tor \:| \:    | \log\|M_n(x,E)\| - nL_n(E)|
> n^{1-\sigma}\} | \le \exp(-n^\tau).
}
for all sufficiently large $n\ge n_0(V,a,b,E_0)$. 
\end{prop}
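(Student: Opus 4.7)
The plan is to exploit the subharmonicity of $u_n(z,E) := \frac{1}{n}\log\|M_n(z,E)\|$ in the complex variable $z$, inherited from the analyticity of $V$ on a strip $\{|\Im z| < \rho_0\}$. First I would verify that $u_n$ extends subharmonically to such a strip, and is uniformly bounded above: $u_n(z,E) \le C_1(V,E_0)$ for $|\Im z| \le \rho_0/2$, $|E| \le E_0$, $n\ge 1$. Combined with $L_n(E) \ge 0$ (since $\|M_n\| \ge 1$), Corollary~\ref{cor:Riesz} yields a Riesz representation
\[
u_n(z) = \int_{\Omega_1} \log|z-\zeta|\,\mu_n(d\zeta) + h_n(z), \quad \|\mu_n\| \le C, \quad \|h_n\|_{L^\infty(\Omega_2)} \le C,
\]
on nested subdomains $\Omega_2 \Subset \Omega_1$ of the strip, with constants uniform in $n$ and $E$.

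The second step is almost-invariance under $T_\omega$. The cocycle identity $A(T^{n+1}x)M_n(x) = M_n(Tx)A(Tx)$ with $A\in SL(2,\R)$ and $\|A(y)\|\le C(V,E_0)$ gives $|u_n(x+\omega,E) - u_n(x,E)| \le C/n$ uniformly in $x\in\tor$. Averaging over $N$ rotations, set
\[
v_N(x) := \frac{1}{N}\sum_{j=0}^{N-1} u_n(x + j\omega, E),
\]
so that $\|v_N - u_n\|_{L^\infty(\tor)} \le CN/n$, while $v_N$ remains subharmonic on the strip with the same uniform upper bound. The third ingredient is a Fourier coefficient bound. Using the Riesz representation above together with the standard computation of Fourier coefficients of the periodized logarithm on a horizontal line inside the strip, one obtains $|\widehat{u_n}(k)| \le C/|k|$ for all $k \ne 0$, uniformly in $n$ and $E$. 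Writing
\[
v_N(x) - L_n(E) = \sum_{k\ne 0} \widehat{u_n}(k)\, K_N(k\omega)\, e(kx), \qquad |K_N(\theta)| \le \min\big(1,(N\|\theta\|)^{-1}\big),
\]
splitting at $|k| = K_0 := N^{1/(2a)}$ and using the Diophantine bound $\|k\omega\| \ge b|k|^{-a}$, one obtains $\|v_N - L_n(E)\|_{L^2(\tor)} \le C N^{-\gamma}$ for some $\gamma = \gamma(a) > 0$.

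The last step is to upgrade this $L^2$ bound to the desired exponential measure estimate. This is the main obstacle. Since $v_N - L_n(E)$ is subharmonic on the strip and uniformly bounded above, applying Cartan's Theorem~\ref{thm:Cartan} to the Riesz representation of $v_N$ already controls the lower tail: the set $\{v_N - L_n(E) < -t\}$ is contained in a union of disks whose total radius decays like $\exp(-ct)$, so its Lebesgue measure is at most $\exp(-c't)$. For the upper tail one combines the $L^\infty$ ceiling $v_N \le C_1$ with the small $L^2$ norm via a John--Nirenberg-type argument (exploiting that the deviation of a subharmonic function from its mean is in BMO with norm controlled by the uniform upper bound), which yields exponential concentration once the $L^2$ deficit is small. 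Finally, choosing $N \approx n^{1-\kappa}$ for small $\kappa>0$ and returning from $v_N$ to $u_n$ at the cost $CN/n = Cn^{-\kappa}$ gives deviations of size $n^{-\sigma}$ off an exceptional set of measure $\le \exp(-n^\tau)$, with $\sigma,\tau>0$ depending only on $a$, $b$ and $V$. The delicate point throughout is the quantitative Cartan/John--Nirenberg mechanism on the subharmonic side, which converts an average $L^2$ control into a pointwise-almost-everywhere exponential one.
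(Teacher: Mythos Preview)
Your setup---subharmonic extension, Riesz representation with $O(1)$ bounds after normalizing by $n$, almost invariance, and the Fourier decay $|\widehat{u_n}(k)|\lesssim |k|^{-1}$---is all correct and matches the ingredients the paper assembles. The route you take diverges from both of the paper's proofs at the last step, and your description of that step has a real gap.

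The Cartan claim for the lower tail does not give what you assert. Applying Theorem~\ref{thm:Cartan} to the full Riesz measure of $v_N$ (mass $O(1)$) yields $v_N(x)\ge -C\log(e/H)$ off disks of total radius $5H$; taking $H=e^{-n^\tau}$ gives only $v_N\ge -Cn^\tau$ there. This says nothing about $\{v_N-L_n(E)<-n^{-\sigma}\}$: Cartan knows nothing about $L_n(E)$ and produces a lower bound that is large and negative, not within $n^{-\sigma}$ of $L_n$. Your ``BMO controlled by the uniform upper bound'' has the same defect: it gives $\|v_N\|_{\BMO}=O(1)$, and John--Nirenberg then only yields $|\{|v_N-L_n|>t\}|\le e^{-ct}$, again useless at $t=n^{-\sigma}$. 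What is actually needed is to convert the \emph{smallness} of $\|v_N-L_n\|_{L^1}$ into smallness of a BMO-type norm; this requires a further scale decomposition and Cartan applied to a \emph{localized} small piece of the Riesz measure (this is the content of, e.g., Bourgain's book, Lemma~4.8 and Corollary~4.10). You rightly flag this as the delicate point, but the mechanism you sketch is not the one that works.

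The paper sidesteps this in two different ways. In its first proof it never passes through $L^2$: by pigeonhole one finds a disk $\bbD_0$ of radius $n^{-2\delta}$ carrying Riesz mass $\lesssim n^{-2\delta}$ (normalized), applies Cartan only to that small local potential to get oscillation $\lesssim n^{-\delta}$ on $\bbD_0$ off an $e^{-n^\delta}$-small set, and then uses the Diophantine condition plus almost invariance to transport every $x\in\tor$ into $\bbD_0$ at cost $\lesssim n^{4\delta}/n$. In its second (Fourier) proof the cutoff is taken \emph{exponentially} large, $K=e^{n^\tau}$, so the high-frequency tail already has $L^2$ norm $\lesssim n e^{-n^\tau/2}$ and a single application of Chebyshev gives the measure bound directly; the low frequencies are controlled pointwise by the Diophantine sum. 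Your polynomial cutoff $K_0=N^{1/(2a)}$ yields only polynomial $L^2$ decay, which is precisely what forces you into the harder upgrade lemma that you did not actually establish.
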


To motivate~\eqref{eq:LDT1}, consider the following scalar, or commutative,  model: 
\EQ{\label{eq:model}
u(x) = \sum_{k=1}^q \log|e(x)-e(k\omega)| 
}
where $\omega=\frac{p}{q}$ and $e(x) = e^{2\pi ix}$. Then
$u(x)=\log|e(xq)-1|$ and $\int_\tor u(x)\, dx=0$  so that for
$\lambda<0$
\EQ{
\label{eq:model ldt} 
|\{x\in\tor\::\: u(e(x))< \lambda\}|
= |\{x\in\tor\::\: |e(x)-1|< e^\lambda\}| 
}
which is of size $e^\lambda$. In this model case, $u(x+1/q)=u(x)$.
Returning to $u(x)=\log\|M_n(x,E)\|$, this exact invariance needs
to be replaced by the almost invariance
\begin{equation}\label{eq:ainv}
\sup_{x\in\tor} |u(x)-u(x+k\omega)|\le Ck \text{\ \ for any\ \ }k\ge1.
\end{equation}
The logarithm in our model case~\eqref{eq:model} is a reasonable choice because of
Riesz's representation theorem for subharmonic functions  applied to the function $u(z)=
\log\|M_n(z,E)\|$ which is subharmonic on a neighborhood of $[0,1]$ in~$\C$ by analyticity of~$V$. 
The subharmonicity can be seen by writing 
\[
u(z) = \sup_{\|\vec v\|=\|\vec w\|=1}\log |\langle M_n(z,E)\vec v,\vec w\rangle|
\]
First, $\log |\langle M_n(z,E)\vec v,\vec w\rangle|$ is subharmonic by analyticity of $\langle M_n(z,E)\vec v,\vec w\rangle$. Second, the sub-mean value property (smvp) survives under suprema, and so $u$ satisfies the smvp. Finally, the function $u(z)$ is clearly continuous. 

\begin{proof}[Proof of Proposition~\ref{prop:LDT} by Riesz and Cartan]
Fix a  rectangle $R$ which compactly contains $[0,1]$. 
By Riesz representation as stated in  Theorem~\ref{thm:Riesz}, there exists a positive measure $\mu$ on $R$ and a harmonic function on~$R$ such that 
\EQ{\label{eq:riesz u}
 u(z) &=\log\|M_n(z,E)\| = \int_R \log|z-\zeta|\, \mu(d\zeta) + h(z) 
 }
Since $\|M_n(z)\|\le e^{Cn}$, $0\le u(z)\les n$ on $R$ (with a constant that depends on $V$, $R$ and $E_0$)  and thus
$\|\mu\|\les n$  as well as $\|h\|_{L^\infty(R')}\les n$, where $[0,1]\Subset R'\Subset R$ is a slightly smaller
rectangle. Fix a small
$\delta>0$ and take $n$ large. Then there is a disk
$\bbD_0=\bbD(x_0,n^{-2\delta})$, $x_0\in [0,1]$ with the property that $\mu(\bbD_0)\les
n^{1-2\delta}$. Write
\[ 
\int_R \log|z-\zeta|\, \mu(d\zeta)  = u_1(z)+u_2(z) = \int_{\bbD_0} \log|z-\zeta|\, \mu(d\zeta) +
\int_{\bbC\setminus \bbD_0} \log|z-\zeta|\, \mu(d\zeta)
\]
Set $\bbD_1=\bbD(x_0,n^{-3\delta})$. Then
\[ |u_2(z)-u_2(z')|\les n^{1-\delta} \quad \forall z,z'\in \bbD_1\]
since
\[
|u_2(z)-u_2(z')| =\left| \int_{\bbC\setminus \bbD_0} \log\Big| 1 + \frac{z'-z}{z-\zeta}\Big |\, \mu(d\zeta)\right| \les \frac{n^{-3\delta}}{n^{-2\delta}}\mu(\bbC)\les n^{1-\delta}
\] 
Cartan's theorem applied to $u_1(z)$ yields disks
$\{\bbD(z_j,r_j)\}_j$ so that $\sum_j r_j \les \exp(-2n^\delta)$ and with the property that 
\[ u_1(z) \gtrsim -n^{1-\delta} \qquad \forall z\in\bbC\setminus
\bigcup_j \bbD(z_j,r_j) \] 
From  $u_1\le0$ on $\bbD_1$ and 
$|h(z)-h(z')|\les n|z-z'|$ on $R'$, it follows that
\EQ{
\label{eq:smalldevia} 
|u(z)-u(z')| &\les n^{1-\delta} \quad \forall z,z'\in \bbD_1\setminus
\bigcup_j \bbD(z_j,r_j)
}
From the Diophantine property with $1<a<\frac43$, say, for any $x,x'\in\tor$
there are positive integers $k,k'\les n^{4\delta}$ such that
\[ x+k\omega, x'+k'\omega \in \bbD_1 \quad \mod \bbZ\]
An elementary way of seeing this is to use Dirichlet's approximation principle, viz.~for any $Q>1$ there exists a reduced fraction $\frac{p}{q}$ so that $|\omega-p/q|\le (qQ)^{-1}$ and $1\le q<Q$. Then use the Diophantine property to bound $q$ from below in terms of~$Q$. 
In order to avoid the Cartan disks $\bigcup_j \bbD(z_j,r_j)$ we need
to remove a set $\cB\subset \tor$ of measure $\les
\exp(-n^{\delta})$. For this step is is important that Cartan controls the sum of the radii, i.e.,  $\sum_j r_j \les \exp(-2n^\delta)$
since then the disks remove at most measure $\les \exp(-2n^\delta)$ from the real line. 
Then from the almost
invariance~\eqref{eq:ainv}, for any $x,x'\in\tor\setminus\cB$,
\[ |u(x)-u(x')| \les n^{4\delta}+n^{1-\delta} \les n^{1-\delta}\]
This implies \eqref{eq:LDT1} with $\sigma=\tau=\delta$. 
\end{proof}

This proof generalizes to other types of dynamics such as higher-dimensional shifts 
$Tx=x+\omega \mod\bbZ^d$, on $\tor^d$ with $d\ge2$. 

\begin{defn}\label{def:cardef} 
Let $0<H<1$. For any subset $\bad\subset\Compl$ we define $\bad\in\Car_1(H)$ if $\bad\subset\bigcup_j \bbD(z_j,r_j)$ with
\EQ{\label{sumradii} \sum_j r_j\le C_0\,H.}
If $d$ is a positive integer greater than one and $\bad\subset\Compl^d$, then we define recursively  $\bad\in\Car_d(H)$ if there exists  $\bad_0\in\Car_{d-1}(H)$ so that
\[ \bad=\{(z_1,z_2,\ldots,z_d)\:|\:(z_2,\ldots,z_d)\in\bad_0\mbox{\ \ or\ }z_1\in\bad(z_2,\ldots,z_d)\mbox{\ \ with\ }\bad(z_2,\ldots,z_d)\in\Car_1(H)\}.\]
We refer to the sets in $\Car_d(H)$ for any $d$ and $H$ summarily as Cartan sets.
\end{defn}

The following theorem from \cite{GolS} furnishes they key property allowing one to extend  the previous proof of~\eqref{eq:LDT1} to higher-dimensional shifts.
We state the case $d=2$, with $d>2$ being similar (see also~\cite{Sch}).
\begin{figure}[ht]
\tikzset{every picture/.style={line width=0.75pt}} 

\begin{tikzpicture}[x=0.65pt,y=0.6pt,yscale=-1,xscale=1]

\draw  [color={rgb, 255:red, 208; green, 2; blue, 27 }  ,draw opacity=1 ][fill={rgb, 255:red, 208; green, 2; blue, 27 }  ,fill opacity=1 ][line width=1.5]  (71,261) .. controls (71,249.95) and (86.67,241) .. (106,241) .. controls (125.33,241) and (141,249.95) .. (141,261) .. controls (141,272.05) and (125.33,281) .. (106,281) .. controls (86.67,281) and (71,272.05) .. (71,261) -- cycle ;
\draw  [color={rgb, 255:red, 208; green, 2; blue, 27 }  ,draw opacity=1 ][fill={rgb, 255:red, 208; green, 2; blue, 27 }  ,fill opacity=1 ][line width=1.5]  (106,237.72) .. controls (106,228.49) and (118.87,221) .. (134.76,221) .. controls (150.64,221) and (163.51,228.49) .. (163.51,237.72) .. controls (163.51,246.96) and (150.64,254.45) .. (134.76,254.45) .. controls (118.87,254.45) and (106,246.96) .. (106,237.72) -- cycle ;
\draw  [color={rgb, 255:red, 208; green, 2; blue, 27 }  ,draw opacity=1 ][fill={rgb, 255:red, 208; green, 2; blue, 27 }  ,fill opacity=1 ][line width=1.5]  (240.51,279.22) .. controls (240.51,270.51) and (253.16,263.45) .. (268.76,263.45) .. controls (284.36,263.45) and (297,270.51) .. (297,279.22) .. controls (297,287.94) and (284.36,295) .. (268.76,295) .. controls (253.16,295) and (240.51,287.94) .. (240.51,279.22) -- cycle ;
\draw  [color={rgb, 255:red, 208; green, 2; blue, 27 }  ,draw opacity=1 ][fill={rgb, 255:red, 208; green, 2; blue, 27 }  ,fill opacity=1 ][line width=1.5]  (169.51,277.22) .. controls (169.51,265.2) and (187.08,255.45) .. (208.76,255.45) .. controls (230.43,255.45) and (248,265.2) .. (248,277.22) .. controls (248,289.25) and (230.43,299) .. (208.76,299) .. controls (187.08,299) and (169.51,289.25) .. (169.51,277.22) -- cycle ;
\draw  [color={rgb, 255:red, 208; green, 2; blue, 27 }  ,draw opacity=1 ][fill={rgb, 255:red, 208; green, 2; blue, 27 }  ,fill opacity=1 ][line width=1.5]  (184,201.22) .. controls (184,186.44) and (205.61,174.45) .. (232.26,174.45) .. controls (258.91,174.45) and (280.51,186.44) .. (280.51,201.22) .. controls (280.51,216.01) and (258.91,228) .. (232.26,228) .. controls (205.61,228) and (184,216.01) .. (184,201.22) -- cycle ;
\draw  [color={rgb, 255:red, 208; green, 2; blue, 27 }  ,draw opacity=1 ][fill={rgb, 255:red, 208; green, 2; blue, 27 }  ,fill opacity=1 ][line width=1.5]  (285,227.72) .. controls (285,222.05) and (293.85,217.45) .. (304.76,217.45) .. controls (315.67,217.45) and (324.51,222.05) .. (324.51,227.72) .. controls (324.51,233.4) and (315.67,238) .. (304.76,238) .. controls (293.85,238) and (285,233.4) .. (285,227.72) -- cycle ;
\draw  [color={rgb, 255:red, 208; green, 2; blue, 27 }  ,draw opacity=1 ][fill={rgb, 255:red, 208; green, 2; blue, 27 }  ,fill opacity=1 ][line width=1.5]  (249.51,234.72) .. controls (249.51,227.14) and (261.26,221) .. (275.76,221) .. controls (290.25,221) and (302,227.14) .. (302,234.72) .. controls (302,242.3) and (290.25,248.45) .. (275.76,248.45) .. controls (261.26,248.45) and (249.51,242.3) .. (249.51,234.72) -- cycle ;
\draw  [color={rgb, 255:red, 208; green, 2; blue, 27 }  ,draw opacity=1 ][fill={rgb, 255:red, 208; green, 2; blue, 27 }  ,fill opacity=1 ][line width=1.5]  (307.51,258.72) .. controls (307.51,249.18) and (321.5,241.45) .. (338.76,241.45) .. controls (356.01,241.45) and (370,249.18) .. (370,258.72) .. controls (370,268.27) and (356.01,276) .. (338.76,276) .. controls (321.5,276) and (307.51,268.27) .. (307.51,258.72) -- cycle ;
\draw  [color={rgb, 255:red, 208; green, 2; blue, 27 }  ,draw opacity=1 ][fill={rgb, 255:red, 208; green, 2; blue, 27 }  ,fill opacity=1 ][line width=1.5]  (258,166) .. controls (258,154.95) and (273.67,146) .. (293,146) .. controls (312.33,146) and (328,154.95) .. (328,166) .. controls (328,177.05) and (312.33,186) .. (293,186) .. controls (273.67,186) and (258,177.05) .. (258,166) -- cycle ;
\draw  [color={rgb, 255:red, 208; green, 2; blue, 27 }  ,draw opacity=1 ][fill={rgb, 255:red, 208; green, 2; blue, 27 }  ,fill opacity=1 ][line width=1.5]  (109,178.72) .. controls (109,171.14) and (121.87,165) .. (137.76,165) .. controls (153.64,165) and (166.51,171.14) .. (166.51,178.72) .. controls (166.51,186.3) and (153.64,192.45) .. (137.76,192.45) .. controls (121.87,192.45) and (109,186.3) .. (109,178.72) -- cycle ;
\draw [line width=1.5]    (175.51,2.45) -- (176.51,256.45) ;
\draw  [color={rgb, 255:red, 208; green, 2; blue, 27 }  ,draw opacity=1 ][fill={rgb, 255:red, 208; green, 2; blue, 27 }  ,fill opacity=1 ][line width=1.5]  (162,46.95) .. controls (162,33.42) and (168.61,22.45) .. (176.76,22.45) .. controls (184.91,22.45) and (191.51,33.42) .. (191.51,46.95) .. controls (191.51,60.48) and (184.91,71.45) .. (176.76,71.45) .. controls (168.61,71.45) and (162,60.48) .. (162,46.95) -- cycle ;
\draw  [color={rgb, 255:red, 208; green, 2; blue, 27 }  ,draw opacity=1 ][fill={rgb, 255:red, 208; green, 2; blue, 27 }  ,fill opacity=1 ][line width=1.5]  (166.76,109.45) .. controls (166.76,98.4) and (170.9,89.45) .. (176.01,89.45) .. controls (181.13,89.45) and (185.27,98.4) .. (185.27,109.45) .. controls (185.27,120.5) and (181.13,129.45) .. (176.01,129.45) .. controls (170.9,129.45) and (166.76,120.5) .. (166.76,109.45) -- cycle ;
\draw    (77.51,312.45) .. controls (117.11,282.75) and (127.79,320.24) .. (166.81,291.88) ;
\draw [shift={(168,291)}, rotate = 503.13] [color={rgb, 255:red, 0; green, 0; blue, 0 }  ][line width=0.75]    (10.93,-3.29) .. controls (6.95,-1.4) and (3.31,-0.3) .. (0,0) .. controls (3.31,0.3) and (6.95,1.4) .. (10.93,3.29)   ;
\draw    (259.51,118.45) .. controls (247.95,126.89) and (239.99,130.05) .. (233.31,131.45) .. controls (218.82,134.48) and (210.27,129.24) .. (183.74,151.42) ;
\draw [shift={(182.51,152.45)}, rotate = 319.64] [color={rgb, 255:red, 0; green, 0; blue, 0 }  ][line width=0.75]    (10.93,-3.29) .. controls (6.95,-1.4) and (3.31,-0.3) .. (0,0) .. controls (3.31,0.3) and (6.95,1.4) .. (10.93,3.29)   ;
\draw  [color={rgb, 255:red, 208; green, 2; blue, 27 }  ,draw opacity=1 ][fill={rgb, 255:red, 208; green, 2; blue, 27 }  ,fill opacity=1 ][line width=1.5]  (169.76,210.45) .. controls (169.76,202.72) and (172.61,196.45) .. (176.14,196.45) .. controls (179.66,196.45) and (182.51,202.72) .. (182.51,210.45) .. controls (182.51,218.18) and (179.66,224.45) .. (176.14,224.45) .. controls (172.61,224.45) and (169.76,218.18) .. (169.76,210.45) -- cycle ;
\draw  [color={rgb, 255:red, 208; green, 2; blue, 27 }  ,draw opacity=1 ][fill={rgb, 255:red, 208; green, 2; blue, 27 }  ,fill opacity=1 ][line width=1.5]  (170.76,166.45) .. controls (170.76,160.37) and (173.16,155.45) .. (176.14,155.45) .. controls (179.11,155.45) and (181.51,160.37) .. (181.51,166.45) .. controls (181.51,172.52) and (179.11,177.45) .. (176.14,177.45) .. controls (173.16,177.45) and (170.76,172.52) .. (170.76,166.45) -- cycle ;
\draw [line width=1.5]    (95.51,-0.55) -- (95.51,235.45) ;
\draw  [color={rgb, 255:red, 208; green, 2; blue, 27 }  ,draw opacity=1 ][fill={rgb, 255:red, 208; green, 2; blue, 27 }  ,fill opacity=1 ][line width=1.5]  (86.76,44.45) .. controls (86.76,33.4) and (90.9,24.45) .. (96.01,24.45) .. controls (101.13,24.45) and (105.27,33.4) .. (105.27,44.45) .. controls (105.27,55.5) and (101.13,64.45) .. (96.01,64.45) .. controls (90.9,64.45) and (86.76,55.5) .. (86.76,44.45) -- cycle ;
\draw  [color={rgb, 255:red, 208; green, 2; blue, 27 }  ,draw opacity=1 ][fill={rgb, 255:red, 208; green, 2; blue, 27 }  ,fill opacity=1 ][line width=1.5]  (85.26,134.95) .. controls (85.26,122.52) and (89.79,112.45) .. (95.39,112.45) .. controls (100.98,112.45) and (105.51,122.52) .. (105.51,134.95) .. controls (105.51,147.38) and (100.98,157.45) .. (95.39,157.45) .. controls (89.79,157.45) and (85.26,147.38) .. (85.26,134.95) -- cycle ;
\draw  [color={rgb, 255:red, 208; green, 2; blue, 27 }  ,draw opacity=1 ][fill={rgb, 255:red, 208; green, 2; blue, 27 }  ,fill opacity=1 ][line width=1.5]  (91.26,195.95) .. controls (91.26,187.94) and (93.33,181.45) .. (95.89,181.45) .. controls (98.44,181.45) and (100.51,187.94) .. (100.51,195.95) .. controls (100.51,203.96) and (98.44,210.45) .. (95.89,210.45) .. controls (93.33,210.45) and (91.26,203.96) .. (91.26,195.95) -- cycle ;

\draw (115,323) node    {remove\ disks\ in\ $z_{1}$};
\draw (396,99) node    {if\ $z_{1}$ \ is\ good,\ then\ we\ remove\ $z_2$-disks\ over\ that\ fiber at $z_1$};

\end{tikzpicture}
\caption{Cartan-$2$ sets in $\bbC^2$}\label{fig:Cartan}
\end{figure}
\begin{thm}
\label{thm:Cartan2} 
Suppose $u$ is  continuous  on $\bbD(0,2)\times \bbD(0,2)\subset\Compl^2$ with $|u|\le 1$. Assume further that
\[\left\{  \begin{array}{ccl} z_1\mapsto u(z_1,z_2) & \text{is subharmonic for each} & z_2\in \bbD(0,2)\\
                              z_2\mapsto u(z_1,z_2) & \text{is subharmonic for each} & z_1\in \bbD(0,2).
           \end{array}
  \right.
\]
Fix some $\gamma\in(0,1/2)$. Given $r\in(0,1)$ there exists a polydisk $\Pi=\bbD(x_1,r^{1-\gamma})\times \bbD(x_2,r)\subset \bbD(0,1)\times \bbD(0,1)$ with $x_1,x_2\in[-1,1]$ and a set $\bad\in \Car_2(H)$ so that
\EQ{  |u(z_1,z_2)-u(z_1',z_2')| &<C_\gamma\,r^{1-2\gamma}\log\frac{1}{r}\mbox{\ \ for all\ \ }(z_1,z_2),(z_1',z_2')\in\Pi\setminus\bad. \\
    H &=\exp\Bigl(-r^{-\gamma}\Bigr).\label{Heta}
}
\end{thm}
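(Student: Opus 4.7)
The plan is to iterate the Riesz+Cartan analysis of Proposition~\ref{prop:LDT} once in each complex variable, using that separate subharmonicity lets one apply Theorem~\ref{thm:Riesz*} slicewise and Theorem~\ref{thm:Cartan} fiberwise. The $\Car_2(H)$ structure of Definition~\ref{def:cardef}---a $\Car_1(H)$ base set in $z_2$ together with a $z_2$-dependent $\Car_1(H)$ fiber in $z_1$---is precisely what an iterated Cartan removal produces.

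First I would carry out the inner step in $z_1$. For every $z_2\in\bbD(0,3/2)$, Theorem~\ref{thm:Riesz*} gives a Riesz decomposition
\[ u(z_1,z_2)=\int_{\bbD(0,7/4)}\log|z_1-\zeta|\,\mu_{z_2}(d\zeta)+h(z_1,z_2),\qquad\|\mu_{z_2}\|\le C,\ \|h(\cdot,z_2)\|_\infty\le C. \]
A Fubini pigeonhole over $\sim r^{-(1-\gamma)}$ disks of radius $r^{1-\gamma}$ centered on $[-1,1]$ selects $x_1\in[-1,1]$ with slab-averaged local mass $\int_{\bbD(0,1)}\mu_{z_2}(\bbD(x_1,r^{1-\gamma}))\,m(dz_2)\lesssim r^{1-\gamma}$. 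Splitting the potential into inner (supported in $\bbD(x_1,r^{1-\gamma})$) and outer contributions, the outer part is Lipschitz in $z_1$ and the harmonic part $h$ has $z_1$-derivative $\lesssim 1$, so together they contribute $\lesssim r^{1-\gamma}$ of $z_1$-variation on $\bbD(x_1,r^{1-\gamma})$. For the inner part, Cartan's theorem (Theorem~\ref{thm:Cartan}) with parameter $H=\exp(-r^{-\gamma})$ yields an exceptional set $\cB(z_2)\in\Car_1(H)$ outside which
\[ \Big|\int_{\bbD(x_1,r^{1-\gamma})}\log|z_1-\zeta|\,\mu_{z_2}(d\zeta)\Big|\lesssim \mu_{z_2}(\bbD(x_1,r^{1-\gamma}))\cdot r^{-\gamma}\log(1/r), \]
which is $\lesssim r^{1-2\gamma}\log(1/r)$ whenever the local mass is $\lesssim r^{1-\gamma}$.

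Second, I would carry out the outer step in $z_2$ at the coarser scale $r$. It has two jobs: (i) upgrade the ``most $z_2$" local-mass bound into one valid off a genuine $\Car_1(H)$ base set $\cB_0\subset\bbC$ in the $z_2$-variable; (ii) control the $z_2$-variation of $u$ itself on a disk $\bbD(x_2,r)$ for some $x_2\in[-1,1]$ by a second Riesz--Cartan application to $z_2\mapsto u(z_1,z_2)$ (with the same $H$), which yields a bound $\lesssim r\cdot r^{-\gamma}\log(1/r)=r^{1-\gamma}\log(1/r)\le r^{1-2\gamma}\log(1/r)$. Concatenating the $z_1$- and $z_2$-variations gives the claimed oscillation estimate, and the exceptional set, namely the union of $\cB_0$ (in the $z_2$-slot) with the fibers $\cB(z_2)$ (in the $z_1$-slot) for $z_2\notin\cB_0$, lies in $\Car_2(H)$ by construction.

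The main obstacle I anticipate is exactly part (i) of the outer step: the Chebyshev/pigeonhole argument only yields a set of $z_2$ of small Lebesgue measure carrying the local-mass bound, whereas $\Car_2$ requires the bad $z_2$ to lie in $\Car_1(H)$. The resolution is to work not with $z_2\mapsto\mu_{z_2}(\bbD(x_1,r^{1-\gamma}))$ directly---which need not be subharmonic in $z_2$---but with a subharmonic majorant obtained from the $z_1$-averaged function
\[ \bar u(z_2):=\frac{1}{|\bbD(x_1,r^{1-\gamma})|}\int_{\bbD(x_1,r^{1-\gamma})}u(z_1,z_2)\,m(dz_1), \]
which is subharmonic in $z_2$ since averaging preserves the submean-value property, and whose Riesz mass controls, through a Green's identity on $\bbD(x_1,r^{1-\gamma})$, the fluctuation of $z_2\mapsto\mu_{z_2}(\bbD(x_1,r^{1-\gamma}))$. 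Applying Riesz--Cartan to $\bar u$ then produces the required $\Car_1(H)$ bad set $\cB_0$ in $z_2$ and closes the iteration; without this observation the two-variable argument degenerates to a product $\Car_1\times\Car_1$ exceptional set, which is strictly weaker than $\Car_2$.
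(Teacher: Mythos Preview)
The paper does not prove Theorem~\ref{thm:Cartan2}; it quotes the result from \cite{GolS} and uses it as a black box (the sentence after the statement merely says it ``replaces \eqref{eq:smalldevia} in the previous proof'' and then returns to a separate Fourier-series argument for Proposition~\ref{prop:LDT}). So there is no in-paper proof to compare against.

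Your iteration scheme---Riesz$+$Cartan in $z_1$ fiberwise, then in $z_2$, with the exceptional sets assembled into the nested $\Car_2$ structure of Definition~\ref{def:cardef}---is the correct architecture and is essentially the one in \cite{GolS}. You have also correctly isolated the real obstacle: the pigeonhole over $x_1$ only gives $\int m(z_2)\,dz_2\lesssim r^{1-\gamma}$ with $m(z_2):=\mu_{z_2}(\bbD(x_1,r^{1-\gamma}))$, and one needs this pointwise in $z_2$ off a genuine $\Car_1(H)$ set, not merely off a small-measure set.

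The proposed fix via $\bar u$ is in the right spirit but has a gap as written. The $z_2$-Riesz measure of $\bar u$ is the $z_1$-average of the \emph{$z_2$-Riesz measures} of $u(z_1,\cdot)$; this is a different object from the local \emph{$z_1$-mass} $m(z_2)$, and a Green's identity on $\bbD(x_1,r^{1-\gamma})$ does not link them. What one does obtain is the pointwise relation
\[
\bar u(z_2)=(1-\gamma)(\log r)\,m(z_2)+O(1)
\]
(up to an annular contribution), but then the trivial bound $|\bar u|\le1$ only yields $m(z_2)\lesssim 1/|\log r|$, short by a full power of $r$ of the $r^{1-\gamma}$ needed to make the inner Cartan step output $r^{1-2\gamma}$. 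Closing this requires an additional mechanism---roughly, a second pigeonhole in $x_2$ that simultaneously localizes the $z_2$-Riesz mass of $\bar u$ on $\bbD(x_2,r)$ and secures a reference point where $\bar u$ is near its (small) average, so that a Cartan lower bound on $\bar u$ then forces $m(z_2)\lesssim r^{1-\gamma}$ off a $\Car_1(H)$ set. Your sketch asserts this step but does not supply it; the plan is on track, but this is precisely where the quantitative work lies.
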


This theorem  replaces \eqref{eq:smalldevia} in the
previous proof. For the sake of completeness, we now also sketch a proof by Fourier series as in~\cite{Bou, BouG}. 

\begin{proof}[Proof of Proposition~\ref{prop:LDT} by  Fourier series]
For this technique, it is more convenient to view $u(z)$ as a subharmonic function on an annulus around $|z|=1$. 
This is based on viewing the periodic analytic potential $V(x)$ as an analytic  function of $z=e(x)=e^{2\pi ix}$ instead and then
extending analytically to the annulus $\calA:=\{ z\in\bbC\:|\: 1-\delta<|z|<1+\delta\}$ for some $0<\delta<1$. Thus, write $V(x)=W(e(x))$ with $W$ analytic on that annulus. 
Accordingly, $u(x)=w(e(x))$, and 
 the Riesz representation takes the form
\EQ{\label{eq:riesz u2}
 w(z) &= \int_{\calA} \log|z-\zeta|\, \mu(d\zeta) + h(z) \quad\forall\; z\in\calA
 }
with $\mu$ a positive measure on $\calA$ with $\mu(\calA)\les n$  and $\|h\|_{L^\infty(\calA')}\les n$ for a slightly thinner annulus $\calA'$. Note that $u\ge0$ on $|z|=1$. 
In particular, 
\[
u(x) =  \int_{\calA} \log |e(x)-\zeta|\, \mu(d\zeta) + h(e(x))  \quad\forall \; x\in\tor
\]
Next, we claim  that 
\EQ{\label{eq:FT fzeta}
f_\zeta(x) := \log |e(x)-\zeta|\text{\ \ satisfies\ \ } \sup_{\zeta\in\bbC} |\widehat{f_\zeta}(k)|\le C|k|^{-1}\quad\forall\; k\ne0
}
with an absolute constant. 
First, it suffices to prove this $|\zeta|\le1$ by pulling out $\log|\zeta|$ otherwise. By translation in $x$ we may further assume that $0\le \zeta\le1$.  One checks that
\[
\partial_x \log|e(x)-1| = \pi \cot(\pi x), \; \partial_x \log|e(x)-r| = \pi \frac{2r\sin(2\pi x)}{1+r^2-2r\cos(2\pi x)} 
\] 
the latter for $0\le r<1$. These are, respectively, the kernel of the Hilbert transform on $\tor$ and the conjugate Poisson kernel. Both have uniformly bounded Fourier coefficients, uniformly in~$0\le r\le1$, whence our claim~\eqref{eq:FT fzeta}. 

We conclude that $|\hat{u}(k)|\le Cn|k|^{-1}$ for all $k\ne0$ by integrating over the Riesz mass. For the harmonic function we simply use that $|\partial_x h(e(x))|\les n$ and the decay of the Fourier coefficients follows. 
By the almost invariance property~\eqref{eq:ainv}, 
\[ u(x) -\la u\ra = \frac{1}{k} \sum_{j=1}^k u(x+j\omega)-\la u\ra + O(k) =
 \sum_{\nu\ne0} \hat{u}(\nu) e(x\nu) \frac{1}{k}\sum_{j=1}^k e(j\nu\omega) + O(k)
\]
Then one has that
\[ \Bigl| \frac{1}{k}\sum_{j=1}^k e(j\nu\omega) \Bigr| \les
\min(1,k^{-1}\|\nu\omega\|^{-1})
\]
for all $\nu\ge1$. Also, it follows from \eqref{eq:riesz u} that
$|\hat{u}(\nu)|\les n|\nu|^{-1}$ which in turn implies that
\[
 |u(x)-\la u\ra| \les \frac{1}{k}\sum_{j=1}^k \Bigl|\sum_{|\nu|> K}
 \hat{u}(\nu)e(\nu (x+k\omega))\Bigr| +
\sum_{0<|\nu|\le K} n|\nu|^{-1}
\min(1,k^{-1}\|\nu\omega\|^{-1})
\]
On the one hand, by Plancherel and the decay of the Fourier coefficients, 
\[ \Bigl\| \frac{1}{k}\sum_{j=1}^k \Bigl|\sum_{|\nu|> K}
 \hat{u}(\nu)e(\nu (x+j\omega))\Bigr|\;\Bigr\|_{L^2_x} \le \Big\| \sum_{|\nu|> K}
 \hat{u}(\nu)e(\nu x) \Big\|_2 \les  n\,K^{-1/2}
\]
On the other hand, setting $K=e^{n^\tau}$ it follows from the Diophantine condition (with $a=2$ for simplicity) that
\EQ{\label{eq:sumK}
\sum_{0<|\nu|\le K} n|\nu|^{-1} \min(1,k^{-1}\|\nu\omega\|^{-1})
\les nk^{-\frac12} \log K \les n^{1+\tau} k^{-\frac12}
}
Choosing $\tau>0$ small and $k=n^{\frac12}$, say,
yields~\eqref{eq:LDT1}. To prove~\eqref{eq:sumK}, partition $0<|\nu|\le K$ into sets corresponding to the size of~$\| \nu\omega\|$. First $\|\nu\omega\|\le k^{-1}$ and then $\|\nu\omega\|\in k^{-1}(2^{j-1},2^{j}]$ for $j\ge1$ and $k^{-1} 2^j <1$.  
The Diophantine condition implies that the recurrences into these sets cannot be more frequent than specific arithmetic conditions, which the reader can easily check. The $\log K$ term results from summing the harmonic series over a finite arithmetic progression. 
\end{proof}

\begin{remark}
\label{rem:Dioph}
Write the  Diophantine condition in the form  $\|k\omega\|\ge h(k)$ for all $k\ge1$. Later we will need to exploit the fact that the previous proofs require this 
condition only in the range  $1\le k\le n$. 
\end{remark}

For applications related to the study of fine properties of the IDS
it turns out to be important to obtain sharp versions of~\eqref{eq:LDT1}.
The commutative model example suggests that the optimal relation is $0\le 1-\sigma=\tau\le 1$. Here $\sigma=1-\tau=0$ corresponds to the largest possible deviations and smallest measures. The previous two proofs do not easily yield such a statement, but it was proved in~\cite{GolS} by a more involved argument. The book~\cite{Bou} contains an elegant Fourier series proof, see Theorem~5.1 on page~25. Both these references require stronger Diophantine conditions. 

There is a close connection between the Wegner estimate in Section~\ref{sec:AL FS} and the LDT from above. We refer to reader to~\cite[Lemma 5.5]{GolS3} for the precise formulation of a Wegner estimate derived via LDT for the quasi-periodic model~\eqref{eq:1}. 

\subsection{LDT and regular Green functions} 

As in Section~\ref{sec:AL FS} and~\ref{sec:FSW} the key to proving localization in Theorem~\ref{thm:BG} is to exclude arbitrarily long chains of resonances (absence of infinite tunneling). In fact, one shows that one cannot have double resonances on sufficiently long scales, in exact analogy with the localization results we proved above. The LDT theorems from above enter into this analysis through the Green function associated with~\eqref{eq:1} on finite intervals. In fact, from Cramer's rule for any $\Lambda = [a,b]\in\Z$, and $a\le j\le k\le b$, 
\EQ{\label{eq:GE}
(H_\Lambda(x)-E)^{-1}(j,k) = \frac{\det (H_{[a,j-1]}(x)-E) \det(H_{[k,b]}(x)-E)}{\det(H_\Lambda(x)-E)}
}
for fixed $x,\omega$, the latter Diophantine. We denote $f_n(x,E)=\det(H_{[1,n]}(x)-E)$. In explicit form, the matrix is 
\EQ{ \label{eq:fn}
H_{[1,n]}(x)-E = \left[
\begin{array}{ccccccccc}
                         v_1(x)-E & 1 & 0 & 0 & . & . & . & . &  0    \\
                        1 &  v_{2}(x)-E & 1 & 0 & 0 & . & . & . &  0 \\
                        0 & 1 &  v_{3}(x)-E & 1 & 0 & 0 &  . & . & 0 \\
                        . & . & . & . & . & . & . & . & . \\
                        . & . & . & . & . & . & . & . & . \\
                        . & . & . & . & . & . & . & . & . \\
                        . & . & . & . & . & . & . & . & 1 \\
                        0 & 0 & . & . & . & . & . &  1 &  v_n(x)-E
\end{array} \right]
}
with $v_j(x)=V(T^j  x)$ and $T=T_\omega$. Thus, \eqref{eq:GE} implies that 
\EQ{
\label{eq:GE2}
G_{[1,n]}(x,E)(j,k)=(H_{[1,n]}(x)-E)^{-1}(j,k) = \frac{f_{j-1}(x,E) f_{n-k}(T^k x,E)}{f_n(x,E)},\quad 1\le j\le k\le n
}
with the convention $f_{0}=1$. The transfer matrices defined in \eqref{eq:Mn} satisfy for all $n\ge 1$	
\EQ{\label{eq:Mnfn}
M_n(x,E) &= \left[\begin{array}{cc} (-1)^n f_n(x,E) & (-1)^n f_{n-1}(Tx,E) \\
                                 (-1)^{n-1} f_{n-1}(x,E) &  (-1)^{n-1} f_{n-2}(Tx,E) \\
                                     \end{array} \right]
}
where we   set $f_{-1}=0$.  The following uniform upper bound from~\cite[Proposition 4.3]{GolS2} improves on the LDT. As expected, as a subharmonic function $\log\|M_n(x,E)\|$ can only have large deviations towards values which are much smaller than $nL_n(E)$ but cannot exhibit deviations in the opposite direction. The following inequality requires positive Lyapunov exponents and relies on some machinery which we have not discussed here, such as the avalanche principle from~\cite{GolS}. Moreover, \cite{GolS2} imposes a Diophantine condition of the form
\[
\|n\omega\|\ge \frac{b}{n(\log n)^2}, \quad n\ge 2
\]
which holds for some $b>0$ for a.e.~$\omega$. Of course one needs $V$ analytic since the following lemma heavily relies on subharmonic functions and the LDT from above. 

\begin{lem}
\label{lem:UUB}
 Assume $L(E) \ge\gamma > 0$ for all $E\in I$, some interval. For all  $n \ge 1$ one has 
 \[
\sup_{x\in\tor}\log \|M_n(x,E)\| \le n L_n(E)+C(\log n)^B,
\]
for some absolute constant $B$ and $C=C(V,\gamma,b,I)$. 
\end{lem}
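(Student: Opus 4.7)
The plan is to combine the Large Deviation Theorem (Proposition~\ref{prop:LDT}), the Avalanche Principle of Goldstein-Schlag, and a subharmonic mean-value argument to pass from a typical bound to a uniform one. Throughout, view $u_n(z,E) := \log\|M_n(z,E)\|$ as a subharmonic function of $z$ in a complex neighborhood of $\tor$ (using analyticity of $V$), with a trivial global upper bound $u_n \les n$.

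First I would establish a sharper LDT by an induction on scales. The raw LDT gives $|u_n(x) - nL_n(E)| < n^{1-\sigma}$ off a set of measure $\le e^{-n^\tau}$, but the error $n^{1-\sigma}$ is far too weak. To sharpen it, choose a small scale $m \simeq (\log n)^A$ and write $n=km$. Decompose $M_n(x,E) = A_{k}A_{k-1}\cdots A_1$ with $A_j = M_m(T^{(j-1)m}x,E)$. Since $L(E)\ge\gamma>0$, the LDT at scale $m$ shows that for most $x$ the factors $A_j$ satisfy $\log\|A_j\| \ge m\gamma/2$ and $\log\|A_{j+1}A_j\| - \log\|A_{j+1}\|-\log\|A_j\|$ is not too negative. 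The Avalanche Principle then yields
\EQ{\nn
\Big|\log\|M_n(x,E)\| + \sum_{j=2}^{k-1}\log\|A_j\| - \sum_{j=1}^{k-1}\log\|A_{j+1}A_j\|\Big| \les k\, e^{-c\gamma m}.
}
Applying LDT to $\log\|M_m\|$ and $\log\|M_{2m}\|$ termwise and using $|L_m - L_{2m}|\les 1/m$ (a consequence of subadditivity), one obtains
\EQ{\label{eq:sharpldt}
|u_n(x) - nL_n(E)| \le (\log n)^B
}
for all $x$ outside an exceptional set $\bad_n\subset\tor$ with $|\bad_n|\le \exp(-(\log n)^{B'})$, some $B'>1$. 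This is the standard bootstrap argument from~\cite{GolS,GolS2}; the Diophantine condition is used only at scales $\le n$, as in Remark~\ref{rem:Dioph}.

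Next I would upgrade~\eqref{eq:sharpldt} to a bound valid for \emph{every} $x\in\tor$. This is where subharmonicity enters decisively. Fix $x_0\in\tor$ and consider the disk $\bbD(x_0,\rho)$ in the complex plane, for some small $\rho>0$ independent of $n$, lying in the strip of analyticity of $V$. Since $u_n$ is subharmonic on this disk with $u_n\le C(V)n$ throughout, the sub-mean-value property gives
\EQ{\nn
u_n(x_0) \le \fint_{\bbD(x_0,\rho)} u_n(z)\, m(dz).
}
Split the disk into its intersection with a tubular neighborhood of $\tor\setminus \bad_n$ (where~\eqref{eq:sharpldt} holds after a Lipschitz/analyticity propagation in the imaginary direction, since $|\partial_y u_n|\les n$ on the strip implies the real-line estimate extends with controlled loss to a thin complex collar) and the remaining piece of measure $\les |\bad_n|\cdot\rho + \rho^2\cdot\text{(complex boundary set)}$. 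The good region contributes at most $nL_n(E) + (\log n)^B$, while the bad region contributes at most $Cn\cdot |\bad_n|^{1/2} \ll 1$ by our choice of $B'$. Combining these two bounds yields $u_n(x_0) \le nL_n(E) + C(\log n)^B$ uniformly in $x_0$.

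The main obstacle is the second step: to apply the subharmonic averaging cleanly, one needs the bound~\eqref{eq:sharpldt} not just on a subset of $\tor$ but on a set of positive two-dimensional measure inside $\bbD(x_0,\rho)$. This is achieved by noting that $u_n$ has bounded derivative $|\partial_z u_n|\les n$ on the strip (from Cauchy estimates applied to matrix entries of $M_n$), so the inequality $u_n(x) \le nL_n+(\log n)^B$ on the real line propagates into a thin complex neighborhood of width $\simeq (\log n)^B/n$ with only an $O((\log n)^B)$ loss. Choosing the Cartan decomposition from the proof of Proposition~\ref{prop:LDT} to lie outside such thin tubes (using that Cartan disks have total radius bounded by $|\bad_n|$) makes this propagation possible, and the rest of the estimate proceeds as sketched. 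This final merging of the complex-analytic regularity with the real-line LDT is the delicate technical point; once done, the lemma follows.
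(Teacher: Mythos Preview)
Your overall architecture matches what the paper indicates (it does not prove this lemma but cites \cite[Proposition~4.3]{GolS2} and names the avalanche principle as the key ingredient). Step~1 --- bootstrapping the LDT via the avalanche principle at scale $m\simeq(\log n)^A$ to obtain the sharp estimate $|u_n(x)-nL_n|\le(\log n)^B$ off a set $\bad_n$ of measure $\le\exp(-(\log n)^{B'})$ --- is exactly the mechanism in \cite{GolS,GolS2}, and your sketch of it is sound.

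Step~2, however, does not close. You average $u_n$ over a disk $\bbD(x_0,\rho)$ with $\rho$ \emph{fixed}, and you want the ``bad'' portion to have measure $\les|\bad_n|\cdot\rho$. But your own propagation argument only pushes the real-line bound into a collar of width $\simeq(\log n)^B/n$, so the good region has $2$-dimensional measure $\simeq\rho\cdot(\log n)^B/n$, and the complement in the disk has measure $\simeq\pi\rho^2$, not $|\bad_n|\cdot\rho$. On that large complement you only know $u_n\le Cn$, and the averaging gives back the trivial bound. You cannot have both ``collar width small enough that the propagation loss is $O((\log n)^B)$'' and ``collar fills the disk so the remainder has measure $\les|\bad_n|\rho$'' when $\rho$ is independent of~$n$. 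Separately, the pointwise bound $|\partial_y u_n|\les n$ is not available: the logarithmic potential in the Riesz decomposition has unbounded gradient near the support of the Riesz mass, so Lipschitz propagation off the real axis is not justified as stated.

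The correct replacement for Step~2 is the subharmonic splitting lemma in \cite{GolS2} (their Lemma~4.1): if $u$ is subharmonic on a strip about $\tor$ with $\sup u\le M$ and one writes $u|_\tor=u_0+u_1$ with $\|u_0\|_{L^\infty(\tor)}\le\eps_0$, $\|u_1\|_{L^1(\tor)}\le\eps_1$, then $\sup_\tor u\le C_\rho(\eps_0+\sqrt{M\eps_1})$. Applying this to $u_n-nL_n$ with $u_0=(u_n-nL_n)\one_{\tor\setminus\bad_n}$ and $u_1=(u_n-nL_n)\one_{\bad_n}$ gives $\eps_0=(\log n)^B$, $\eps_1\le Cn|\bad_n|$, $M\le Cn$, hence $\sup_\tor(u_n-nL_n)\les(\log n)^B+n\,|\bad_n|^{1/2}\les(\log n)^B$ once $B'>2$. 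That lemma is itself proved by a sub-mean-value argument, but one that correctly balances the disk radius against the $L^1$ norm and the Riesz mass rather than attempting a pointwise Lipschitz extension.
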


In view of 	\eqref{eq:Mnfn} and the Thouless formula~\eqref{eq:thou}  it is natural to ask if each entry of $M_n$, i.e., the determinants $f_n$ satisfy an LDT individually. This was proven to hold in~\cite[Section2]{GolS2}. 

\begin{prop}
\label{prop:LDT fn}
There exists $\sigma>0$ so that for large $n$
\[
|\{x\in\tor\:|\: \log|f_n(x,E)|< nL_n(E)-n^\sigma\}|\le e^{-n^\sigma}
\]
\end{prop}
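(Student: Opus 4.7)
The plan is to transport the proof template of Proposition~\ref{prop:LDT} for $\log\|M_n(\cdot,E)\|$ to $u_n(x):=\log|f_n(x,E)|$, using Lemma~\ref{lem:UUB} as the source of the sharp pointwise upper bound. First I would observe that, since $V$ is analytic, $f_n(z,E)$ extends analytically in $z$ to a complex strip $\calA$ around $\tor$, whence $u_n$ is subharmonic on $\calA$. Combining \eqref{eq:Mnfn} with Lemma~\ref{lem:UUB} yields the one-sided bound
\[
\sup_{x\in\tor} u_n(x) \;\le\; \sup_{x\in\tor}\log\|M_n(x,E)\| \;\le\; nL_n(E)+C(\log n)^B,
\]
while the trivial estimate $|f_n(z,E)|\le (\|V\|_{L^\infty(\calA)}+|E|+2)^n$ gives $u_n(z)\lesssim n$ uniformly on $\calA$.

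The hard step, and the main obstacle, is to show that the mean is close to $nL_n(E)$:
\begin{equation}\label{eq:mean-fn}
\langle u_n\rangle\;:=\;\int_\tor u_n(x)\,dx \;\ge\; nL_n(E)-n^{1-\sigma'}
\end{equation}
for some $\sigma'>0$. The upper bound $\langle u_n\rangle\le nL_n(E)$ is immediate from $|f_n|\le\|M_n\|$. For the lower bound one cannot compare $u_n$ to $\log\|M_n\|$ pointwise, since $f_n$ is only one of four entries of $M_n$; however, because $M_n\in SL(2,\R)$, on the set where $\log\|M_n(x,E)\|\ge nL_n(E)-n^{1-\sigma}$ (which has measure $\ge 1-e^{-n^\tau}$ by Proposition~\ref{prop:LDT}), the four entries $f_n(x,E),\,f_{n-1}(x,E),\,f_{n-1}(Tx,E),\,f_{n-2}(Tx,E)$ displayed in \eqref{eq:Mnfn} include at least one of magnitude $\ge\|M_n(x,E)\|/2$. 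Using positivity of the Lyapunov exponent together with an avalanche-principle style argument to compare $\log\|M_n\|$ with $\log\|M_{n-1}\|+\log\|M_1\|$ for most $x$, and invoking $T$-invariance of Lebesgue measure to identify $\int\log|f_{n-j}(T^k x,E)|\,dx$ with $\int\log|f_{n-j}(x,E)|\,dx$, one propagates the distinguishes-an-entry statement to the specific entry $f_n$, obtaining $u_n(x)\ge nL_n(E)-n^{1-\sigma''}$ off a set of measure $\le e^{-n^{\tau''}}$. Integrating against the sharp upper bound then yields~\eqref{eq:mean-fn}. It is this step of isolating the $(1,1)$-entry via Lyapunov geometry that carries the real burden of the proof.

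With \eqref{eq:mean-fn} in hand, the remainder follows the template of Proposition~\ref{prop:LDT}. Apply Riesz representation (Theorem~\ref{thm:Riesz*}) on $\calA$ to write $u_n(z)=\int\log|z-\zeta|\,\mu(d\zeta)+h(z)$ with total mass $\|\mu\|\lesssim n$ (extracted from the $O(n)$ gap between the crude complex upper bound $Cn$ and the real mean $\langle u_n\rangle\gtrsim nL_n(E)$) and $\|h\|_{L^\infty}\lesssim n$ on a slightly thinner strip. This forces Fourier coefficient decay $|\hat u_n(k)|\lesssim n/|k|$ exactly as in the second proof of Proposition~\ref{prop:LDT}. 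Fejer-averaging $u_n$ along the orbit $x+j\omega$ for $1\le j\le k=\lfloor n^{1/2}\rfloor$, invoking the Diophantine condition as in~\eqref{eq:sumK}, shows that $u_n$ is $L^2$-close to $\langle u_n\rangle$ outside a set of measure $\le e^{-n^\tau}$. Combined with~\eqref{eq:mean-fn} one obtains
\[
|\{x\in\tor:u_n(x)<nL_n(E)-n^\sigma\}|\;\le\; e^{-n^\sigma}
\]
for some $\sigma>0$, as required.
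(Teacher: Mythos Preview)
There is a genuine gap in your final step. The averaging template of Proposition~\ref{prop:LDT} rests on the almost-invariance \eqref{eq:ainv}, namely $|u(x)-u(T^kx)|\le Ck$, which for $u=\log\|M_n\|$ follows from $M_n(Tx)=A(T^{n+1}x)\,M_n(x)\,A(Tx)^{-1}$ with $A$ the one-step transfer matrix. This fails for $u_n=\log|f_n|$: conjugation by $A$ mixes all four matrix entries, so $f_n(Tx)$ is a nontrivial linear combination of $f_n(x),f_{n-1}(Tx),f_{n-1}(x),f_{n-2}(Tx)$, and $|u_n(x)-u_n(Tx)|$ admits no uniform bound---indeed $f_n(x)$ can vanish while $f_n(Tx)$ is of order $e^{nL}$. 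Without almost-invariance you cannot equate $u_n(x)$ with its orbit average $\tfrac1k\sum_j u_n(x+j\omega)$ up to $O(k)$, and the Fourier decay $|\hat u_n(k)|\lesssim n/|k|$ alone yields only $\|u_n-\langle u_n\rangle\|_{L^2}\lesssim n$, which via Chebyshev gives nothing close to an exponential sublevel bound.

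Your argument for the mean lower bound $\langle u_n\rangle\ge nL_n(E)-n^{1-\sigma'}$ is also under-specified: knowing that \emph{some} entry of $M_n$ is comparable to $\|M_n\|$ does not isolate the $(1,1)$ entry, and $T$-invariance of Lebesgue measure equates only the means of the functions $\log|f_{n-j}(T^k\cdot)|$, not their large-deviation sets. The paper supplies no proof here but cites \cite[Section~2]{GolS2}. The argument there uses the avalanche principle in earnest---breaking $M_n$ into many short blocks and controlling $\langle M_n e_1,e_1\rangle$ through the angles between expanding and contracting directions of consecutive blocks---to obtain both the mean identity $\langle u_n\rangle=nL_n(E)+O(1)$ and (in the sharper form of Proposition~\ref{prop:bmo} below) a $\BMO$ bound on $u_n$ of size $(\log n)^A$, from which the LDT follows by John--Nirenberg rather than by orbit-averaging.
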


A stronger statement is possible if we assume positive Lyapunov exponents. See~\cite[Lemma 5.1]{GolS3}. 

\begin{prop}
\label{prop:bmo}
Assume $L(E) \ge\gamma > 0$ for all $E\in I$. 
For some constants $A$ and $C$ 
depending on $\omega$, $V$,  and $\gamma$, every $n\ge1$ satisfies
\EQ{\label{eq:bmo}
 \Big| \int_0^1 \log |\det (H_{[1,n]} (x) - E) |\, dx
- n \, L_n (E) \Big|  & \le C \\
 \|  \log | \det (H_{[1,n]} (x) - E) |\, \|_\BMO & \le
C(\log n)^A.
}
Thus,  
\EQ{
\label{eq:det LDT}
 | \{  x\in \tor \: |\:
|\log | \det (H_{[1,n]} (x) - E) | - n\, L_n (E) |
>  H \} | &
\le C\exp \Big( - \frac{H}{(\log n )^A} \Big)
}
for any $H>(\log n)^A$. If $V$ is a trigonometric polynomial, then  the set on the left-hand side is covered by  $2\deg(V)n$ many intervals
each not exceeding in length the measure bound of~\eqref{eq:det LDT}. 
\end{prop}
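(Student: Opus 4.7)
The plan is to deduce all three assertions from a single Riesz representation of $u_n(x) := \log|\det(H_{[1,n]}(x) - E)| = \log|f_n(x,E)|$ viewed as a subharmonic function on a complex annulus $\calA$ around the unit circle. The pivotal quantitative input is a polylogarithmic bound on the Riesz mass of $u_n$; once available, the $\BMO$ estimate is immediate, the mean estimate follows by integrating the representation, and the exponential LDT comes from John--Nirenberg.

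\medskip

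I would first establish the mean estimate $\int_\tor u_n\,dx = nL_n(E) + O(1)$. The upper bound is immediate from $|f_n(x,E)| \le \|M_n(x,E)\|$, a direct consequence of~\eqref{eq:Mnfn}. For the matching lower bound I would use the same identity together with the $T$-invariance of Lebesgue measure to estimate
\[
\log\|M_n\| \le \log 4 + \log\bigl(|f_n|+|f_{n-1}|+|f_{n-1}\circ T|+|f_{n-2}\circ T|\bigr),
\]
combine this with Proposition~\ref{prop:LDT fn} applied to each of $f_n, f_{n-1}, f_{n-2}$ to reduce to a good set where all four logarithms differ from $nL_n$ by at most $n^\sigma$, and bound the contribution of the complementary set using the uniform upper bound from Lemma~\ref{lem:UUB} together with the integrability of $\log|f_n|$ near its simple zeros (each contributing $O(\delta\log\delta)$ from a neighborhood of length~$\delta$, which is negligible against the super-polynomial measure decay).

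\medskip

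Next I would extend $u_n$ to $\calA$ via the analytic extension $V(x) = W(e(x))$ used in the Fourier-series proof of Proposition~\ref{prop:LDT}, observe that the extension remains subharmonic, and invoke Corollary~\ref{cor:Riesz} to write
\[
u_n(z) = \int_{\calA} \log|z-\zeta|\,\mu_n(d\zeta) + h_n(z)
\]
on a slightly smaller annulus, with $\|\mu_n\| \le C_0\,(\sup_{\calA} u_n - \sup_K u_n)$ for a compactly contained $K \subset \calA$. Lemma~\ref{lem:UUB} gives $\sup_{\calA} u_n \le nL_n + C(\log n)^B$, while Proposition~\ref{prop:LDT fn} furnishes a point of $K$ with $u_n \ge nL_n - n^\sigma$. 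These bounds alone yield only $\|\mu_n\| \les n^\sigma$. The main obstacle is to refine the pointwise lower bound at some point of $K$ to $nL_n - C(\log n)^A$, so that $\|\mu_n\| \les (\log n)^A$. This refinement is the heart of the analysis in~\cite{GolS2} and requires the avalanche principle of~\cite{GolS} together with the sharper LDT for $\log\|M_n\|$ made available by the positive Lyapunov exponent hypothesis $L(E) \ge \gamma > 0$; this is precisely where the $(\log n)^A$ factor in the final estimate is generated.

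\medskip

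With this Riesz mass bound in hand, the $\BMO$ estimate follows: since $\log|z-\zeta|$ has $\BMO$ norm bounded uniformly in $\zeta$, the logarithmic potential has $\BMO$ norm $\les \|\mu_n\| \les (\log n)^A$, and the harmonic part $h_n$ is Lipschitz on any compact subset of $\calA$ with constant controlled by $\|\mu_n\|$, contributing boundedly after restriction to $\tor$. Combined with the mean bound from Step~1, John--Nirenberg applied to $u_n - nL_n(E)$ yields the third assertion, the $O(1)$ discrepancy between the integral mean and $nL_n(E)$ being absorbed into the constants. Finally, for trigonometric polynomial $V$ of degree $d$, each $v_j(x) = V(x+j\omega)$ has Fourier support in $|k|\le d$, so expanding the determinant in~\eqref{eq:fn} shows that $f_n(\cdot,E)$ is itself a trigonometric polynomial of degree at most $dn$, hence has at most $2dn$ zeros on $\tor$; near each zero the Weierstrass factorization together with Lemma~\ref{lem:UUB} forces the sublevel set $\{u_n < nL_n - H\}$ to be an interval whose length does not exceed the measure bound of Step~3, proving the covering assertion.
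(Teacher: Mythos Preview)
The paper does not give a self-contained proof of this proposition; it cites \cite[Lemma 5.1]{GolS3} and only remarks that~\eqref{eq:det LDT} follows from the $\BMO$ bound via John--Nirenberg, and that the covering statement follows from $z^{dn}f_n$ being a polynomial of degree~$2dn$ in $z=e(x)$. Your outline is consistent with those remarks and with the actual argument in the cited references: Riesz representation with a polylogarithmic mass bound (via the avalanche principle), then $\BMO$, then John--Nirenberg, then the polynomial structure.

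There is, however, a genuine gap in your Step~1. The argument you sketch for the lower bound on $\int_\tor\log|f_n|$ only produces $\int_\tor\log|f_n|\ge nL_n-O(n^\sigma)$, not $nL_n-O(1)$: on your good set Proposition~\ref{prop:LDT fn} gives $\log|f_n|\ge nL_n-n^\sigma$, and that $n^\sigma$ loss survives integration (the bad-set contribution is indeed negligible, but that is not where the loss sits). The sharp $O(1)$ mean estimate is not elementary and is itself part of the avalanche-principle analysis in \cite{GolS2,GolS3}; it does not follow from Proposition~\ref{prop:LDT fn} alone. So your Step~1 should be folded into Step~2 rather than presented as a separate preliminary obtainable from the crude LDT. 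If you only had $|\langle u_n\rangle-nL_n|\le n^\sigma$, the John--Nirenberg conclusion would degrade to the range $H>n^\sigma$, which is exactly what Proposition~\ref{prop:LDT fn} already gives and not what the proposition asserts.

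A minor point on the covering: the count of $2dn$ intervals comes from bounding the number of boundary points of the sublevel set, i.e., solutions of $f_n(x,E)=\pm c$, each equation having at most $2dn$ roots since $f_n(\cdot,E)$ is a real trigonometric polynomial of degree $dn$; it is not the zero count per se. And the length bound on each interval is immediate from the total measure estimate~\eqref{eq:det LDT}---no Weierstrass factorization or Lemma~\ref{lem:UUB} is needed there.
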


The final statement follows from the fact that $z^{dn}\det (H_{[1,n]} (x) - E)$ with $z=e(x)$,  is a polynomial of degree $2dn$. 
The estimate \eqref{eq:det LDT} follows from the $\BMO$ bound~\eqref{eq:bmo}  by means of the classical John-Nirenberg inequality.  The large deviation estimate for the determinants $f_n(x,E)$ do not appear in the original proof of Theorem~\ref{thm:BG}, and they were established later in~\cite{GolS2}. However, they help to   streamline some of the technical aspects of~\cite{BouG}. For example,
in view of~\eqref{eq:GE2}, \eqref{eq:Mnfn} and Lemma~\ref{lem:UUB}, the Green function satisfies for large $n$ (and of course for positive Lyapunov exponents)
\EQ{\label{eq:GEbd}
|G_{[1,n]}(x,E)(j,k)| &\le \exp\big( (j-1)L_{j-1}(E) + (n-k) L_{n-k}(E) -n L_n(E)  + (\log n)^{2A} \big) \text{\ \ provided}\\
|f_n(x,E)| &\ge  nL_n(E)- (\log n)^{2A}
}
which therefore holds up to a set of measure $\les \exp(-(\log n)^A)$ (assuming $A\ge B$). It was proved in \cite{GolS} that $L_n(E)-L(E)\le Cn^{-1}$ whence it follows from~\eqref{eq:GEbd} that
\EQ{\label{eq:GEbd*}
|G_{[1,n]}(x,E)(j,k)| &\le \exp\big(-|j-k|L(E)  + (\log n)^{2A} \big) 
}
up to a set of measure $\les \exp(-(\log n)^A)$. By the preceding this set can be made $\les \exp(-n^\sigma)$ with $0<\sigma<1$ if we settle for the weaker Green function bound
\EQ{\label{eq:GEbd**}
|G_{[1,n]}(x,E)(j,k)| &\le \exp\big(-|j-k|L(E)  + n^\sigma(\log n)^{A} \big)  \quad\forall\; j,k\in [1,n]
}
and large $n$. This is precisely the notion of regular Green functions from Section~\ref{sec:AL FS}. Since 
\[
\dist(\spec(H_{[1,n]}(x)), E) = \| G_{[1,n]}(x,E)\|^{-1}
\]
the connection with a Wegner-type estimate is also immediately apparent. For example, from \eqref{eq:GEbd**} one concludes the following statement. We assume throughout that 
\EQ{\label{eq:E range}
|E|\le 2+\|V\|_\infty
} since this range contains $\spec(H_x)$. 

\begin{cor}
\label{cor:qp Wegner}
Under the same assumptions as Proposition~\ref{prop:bmo} one has 
\EQ{\label{eq:Weg qp}
 | \{  x\in \tor \: |\: \dist(\spec(H_{[1,n]}(x)), E)<\exp\big(-n^{1/3}\big)
 \} | &
\le e^{-n^{\frac14}}
}
for large $n$. In addition, the set on the left-hand side is contained in $O(n)$ many intervals assuming $V$ is a trigonometric polynomial. 
\end{cor}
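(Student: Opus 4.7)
The plan is to convert the Wegner bound into an operator norm bound on the Green function via the standard identity
\[
\dist(\spec(H_{[1,n]}(x)), E) = \|G_{[1,n]}(x,E)\|^{-1},
\]
so that \eqref{eq:Weg qp} is equivalent to showing $\|G_{[1,n]}(x,E)\| \le \exp(n^{1/3})$ outside a set of measure at most $e^{-n^{1/4}}$. The heavy lifting has already been done in \eqref{eq:GEbd**}: for a suitable choice of $\sigma\in(0,1)$, off a set of $x$'s of measure $\lesssim \exp(-n^\sigma)$ one has the pointwise decay
\[
|G_{[1,n]}(x,E)(j,k)| \le \exp\bigl(-|j-k|L(E) + n^\sigma(\log n)^A\bigr) \quad \forall\; j,k\in[1,n].
\]

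First, I would pass from the pointwise bound on the Green function entries to a bound on the operator norm. Since $L(E)\ge \gamma>0$, summing a geometric series gives
\[
\|G_{[1,n]}(x,E)\| \le \sqrt{\sum_{j,k}|G_{[1,n]}(x,E)(j,k)|^2} \lesssim \exp\bigl(n^\sigma(\log n)^A\bigr),
\]
with the constant absorbing the factor $n$ from the number of matrix entries. Second, I would choose $\sigma$ strictly between $1/4$ and $1/3$ (any $\sigma=3/10$ will do). Then $n^\sigma(\log n)^A \ll n^{1/3}$ for large $n$, so that $\|G_{[1,n]}(x,E)\| \le \exp(n^{1/3})$ outside an exceptional set of measure at most $\exp(-n^\sigma)\le e^{-n^{1/4}}$. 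Inverting the norm bound yields \eqref{eq:Weg qp}.

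For the second assertion, I would trace the exceptional set back to its source. The derivation of \eqref{eq:GEbd} from \eqref{eq:GE2} and \eqref{eq:Mnfn} shows that the only way an $x$ can fall outside the set on which \eqref{eq:GEbd**} holds is that its denominator $f_n(x,E)=\det(H_{[1,n]}(x)-E)$ is too small, specifically that
\[
\log|f_n(x,E)| < nL_n(E) - n^\sigma(\log n)^A.
\]
But when $V$ is a trigonometric polynomial of degree $d$, the function $z^{dn}f_n(x,E)$ (with $z=e(x)$) is a polynomial in $z$ of degree $2dn$, and the last sentence of Proposition~\ref{prop:bmo} therefore provides the required covering of this sub-level set of $\log|f_n|$ by at most $2dn=O(n)$ intervals. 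The main technical obstacle lies not in the present corollary—which is essentially a repackaging of \eqref{eq:GEbd**}—but in the earlier passage from the raw LDT (Proposition~\ref{prop:LDT fn}) to the uniform upper bound of Lemma~\ref{lem:UUB} and the pointwise Green function estimate \eqref{eq:GEbd**}, both of which rely crucially on the avalanche principle and the positivity of the Lyapunov exponent throughout the energy window.
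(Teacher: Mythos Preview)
Your proof is correct and follows exactly the approach the paper indicates: the paper simply states that the corollary follows from \eqref{eq:GEbd**}, and you have filled in precisely those details---passing from the entrywise Green function bound to the operator norm via Hilbert--Schmidt, choosing $\sigma\in(1/4,1/3)$, and tracing the exceptional set back to the sublevel set of $\log|f_n(\cdot,E)|$ so that the last sentence of Proposition~\ref{prop:bmo} gives the $O(n)$-interval structure.
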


\subsection{Eliminating double resonances} 

We will assume for convenience that $V$ is a trigonometric polynomial. 
As in the proof of localization in Sections~\ref{sec:AL FS} and~\ref{sec:FSW} we begin from a generalized (nonzero) eigenfunction $H(x)=E\psi(x)$ which by Theorem~\ref{thm:Ber} grows at
most linearly: $|\psi(n)|\le C(1+|n|)$. We claim that for any $n$ sufficiently large there exists a window $\Lambda_0=[-m,m]$ with $n\le m\le  n^3$ such that $H_{\Lambda_0}(0)$ is resonant with~$E$. Quantitatively, we claim  
\EQ{\label{eq:Lambda0 res}
\dist(\spec(H_{[-m,m]}(0)), E)\le e^{-m^{1/4}}
}
Indeed, denote  the set in Proposition~\ref{prop:LDT fn} by $\calB_n$. It consists of $O(n)$ intervals of length $e^{-n^\sigma}$. Therefore, by the Diophantine condition the set 
\EQ{\nn 
\{ n\le m\le  n^2 \:|\:   m\omega\in \calB_n(E)\cup(-\calB_n(E))\} 
}
has cardinality $O(n)$. Pick an $m\in [n,n^2]$ which is not in this set. Then by \eqref{eq:GEbd**}
\[
|G_{[m-n,m+n]}(0,E)(m+1,m\pm n)|+|G_{[-m-n,-m+n]}(0,E)(-m-1,-m\pm n)|\le \exp(-\gamma n/2),\quad 
\]
whence by \eqref{eq:Poisson} for large $n$
\[
\sqrt{\psi(m+1)^2+\psi(-m-1)^2} \le 2C\exp(-\gamma n/2)(1+n^2) \le \exp(-\gamma n/3)
\]
Combined with $(H_{[-m,m]}(0))-E) \psi = \psi(m+1)\delta_{m+1}+  \psi(-m-1)\delta_{-(m+1)}$ this estimate implies that
\[
\| (H_{[-m,m]}(0)-E) \psi \| \le \exp(-\gamma n/3)\le e^{-m^{1/4}}
\]
which is what we claimed in~\eqref{eq:Lambda0 res}. Let us denote by $\Dio_n(b)$ the Diophantine condition
\EQ{\label{eq:Dionb}
\| k\omega\|\ge \frac{b}{k(1+\log k)^2} \quad\forall \; 1\le k\le n
}
and $\Dio(b)=\bigcap_{n=1}^\infty \Dio_n(b)$. 
Then under this condition we have the following stronger LDT for the determinants, see \cite[Corollary 2.15]{GSV}: 

\begin{lem}
\label{lem:det ldt stable}
Assume $\omega\in \Dio_n(b)$ and positive Lyapunov exponents as above. 
For any $E_0$ in the range~\eqref{eq:E range}, 
\EQ{\label{eq:det stable ldt}
|\{ x\in\tor\:|\: \log|f_n(x,\omega,E)| < nL_n(E,\omega) - n^{\frac12} \text{\ for some\ }|E-E_0|\le e^{-n} \}|\le  e^{-n^{\frac13}}
}
if $n\ge n_0(V,b,\gamma)$ is large. 
\end{lem}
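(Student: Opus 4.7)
The plan is to combine the pointwise LDT for determinants (Proposition~\ref{prop:LDT fn}) with a stability argument in $E$ that exploits the polynomial structure of $f_n(x,\cdot)=\det(H_{[1,n]}(x)-E)$, a monic polynomial of degree $n$ in $E$ whose roots are the eigenvalues $E_j(x)$ of $H_{[1,n]}(x)$. The idea is that for ``generic'' $x$ all roots are sufficiently far from $E_0$, and on the disk $\bbD(E_0,e^{-n})$ this forces $\log|f_n(x,\cdot)|$ to vary by at most $O(n^{1/2})$.

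First I would sharpen the pointwise LDT at the single energy $E_0$ by invoking the BMO bound~\eqref{eq:bmo} and John--Nirenberg as in~\eqref{eq:det LDT}: off a set of $x\in\tor$ of measure at most $e^{-n^{1/3}}$,
\[
\log|f_n(x,E_0)|\ge nL_n(E_0)-n^{1/3}(\log n)^A.
\]
Next I would establish a Wegner-type separation estimate: off a further set of $x$ of measure $\le e^{-n^{1/3}}$, the spectrum of $H_{[1,n]}(x)$ keeps its distance at least $r_n:=10\,n^{1/2}e^{-n}$ from $E_0$. Granted this, for any $x$ in the complement of both bad sets and any $E\in\bbD(E_0,e^{-n})$, each root satisfies
\[
\Big|\log\frac{|E-E_j(x)|}{|E_0-E_j(x)|}\Big|\le \frac{2\,e^{-n}}{r_n}\le \frac{1}{5n^{1/2}},
\]
so summing over the $n$ roots gives $\big|\log|f_n(x,E)|-\log|f_n(x,E_0)|\big|\le n^{1/2}/5$. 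Combining this with the LDT bound at $E_0$ and the elementary continuity estimate $|L_n(E)-L_n(E_0)|=o(1)$ on the disk (which follows from the Thouless formula~\eqref{eq:thou} together with the essentially bounded density of the finite-volume IDS, since the window $e^{-n}$ is far below any scale on which $N_n$ can concentrate) yields the required uniform bound
\[
\log|f_n(x,E)|\ge nL_n(E)-n^{1/2}\qquad\forall\;E\in\bbD(E_0,e^{-n}).
\]

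The main obstacle is the Wegner-type separation at the sharp scale $r_n\approx e^{-n+o(n)}$. The naive derivation from the scalar LDT (pigeonholing the product formula $|f_n(x,E_0)|=\prod_j|E_0-E_j(x)|$ against the a priori bound $|E_0-E_j(x)|\le 2+2\|V\|_\infty$) loses an $O(n)$ multiplicative factor in the logarithm and only yields $\min_j|E_0-E_j(x)|\ge e^{-cn}$ with $c=\log(2+2\|V\|_\infty)-L_n>0$, which is insufficient whenever $\|V\|_\infty$ is not small. The correct remedy is to apply the refined Wegner estimate for analytic quasi-periodic models from~\cite[Lemma~5.5]{GolS3}: it uses subharmonicity in $E$ of $\log|f_n|$ together with the polynomial-in-$z=e(x)$ structure (the source of the stratification into $O(n)$ intervals in the last sentence of Proposition~\ref{prop:bmo}) to produce an essentially linear-in-$\varepsilon$ Wegner bound, strong enough to control the sublevel set with window $r_n=10\,n^{1/2}e^{-n}$ at probability $1-e^{-n^{1/3}}$, and thereby close the argument.
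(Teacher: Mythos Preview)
Your root-separation strategy is sound and, once fixed, is more explicit than the paper's own proof, which is merely a citation to \cite[Corollary~2.15]{GSV} combined with Lemma~\ref{lem:UUB}. You also correctly observe that the naive pigeonhole on $\prod_j|E_0-E_j(x)|$ against the crude bound $|E_0-E_j(x)|\le 2+2\|V\|_\infty$ is insufficient in general.

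However, the obstacle dissolves once you replace that crude bound by Lemma~\ref{lem:UUB}, which is precisely the role the paper assigns to it. For $x$ already in your LDT good set at deviation $K=n^{1/3}(\log n)^A$, apply Lemma~\ref{lem:UUB} to the \emph{numerator} determinants in~\eqref{eq:GE2}:
\[
|G_{[1,n]}(x,E_0)(j,k)|\le \exp\bigl((j{-}1)L_{j-1}+(n{-}k)L_{n-k}+C(\log n)^B-nL_n+K\bigr)\le e^{-(k-j+1)L+K+C'},
\]
using $|mL_m-mL|\le C$ from~\cite{GolS}. Summing gives $\|G\|_{HS}\le C''n^{1/2}e^{K}$ and hence
\[
\dist\bigl(\spec(H_{[1,n]}(x)),E_0\bigr)\ge c\,n^{-1/2}e^{-K}\gg 10\,n^{1/2}e^{-n}=r_n.
\]
So the separation you need is \emph{automatic} for LDT-good $x$: no appeal to \cite[Lemma~5.5]{GolS3} or any ``linear-in-$\varepsilon$'' Wegner bound is required, and no such linear bound is available for quasi-periodic operators in general. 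Your naive pigeonhole failed only because $(2+2\|V\|_\infty)^{n-1}$ overshoots the true size $\prod_{j\ne j^*}|E_0-E_j(x)|\approx e^{(n-1)L}$ by exactly the margin that Lemma~\ref{lem:UUB} eliminates. A minor point: your estimate $n|L_n(E)-L_n(E_0)|=o(n^{1/2})$ is correct but better justified via $|L_n-L|\le C/n$ and the H\"older continuity of $L$ from~\cite{GolS} than via any ``bounded density of the finite-volume IDS,'' which is not a well-defined object here.
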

\begin{proof}
For fixed $E_0$ we already stated this LDT for the determinant in Proposition~\ref{prop:bmo}. The stability in $E$ over the exponentially small interval $[E_0-e^{-n}, E_0+e^{-n}]$ is precisely what  \cite[Corollary 2.15]{GSV} provides. The statement in loc.\ cit.\ is slightly weaker, but replacing the upper bound of~\cite[Corollary 2.14]{GSV} with the stronger one of Lemma~\ref{lem:UUB} implies~\eqref{eq:det stable ldt}. 
\end{proof}

In view of this lemma, and \eqref{eq:Lambda0 res} we now introduce the following set which will allow us to eliminate double resonances: for any $b>0$
\EQ{\label{eq:Sn def}
\calS_n(b) &:= \big \{(\omega,x)\in \Dio_n(b)\times\tor \:|\: \exists E\in\R\text{\ with\ } \dist(\spec(H_{[-n,n]}(0,\omega)),E)\le e^{-n^{1/4}}, \text{\ and} \\
& \qquad \log|f_m(x,\omega,E)|\le mL_m(E,\omega) - m^{1/2}\text{\ for some\ } m\in [n^{1/4}/2, n^{1/4}] \big\} 
}
If $\dist(\spec(H_{[-n,n]}(0,\omega)),E)\le e^{-n^{1/4}}$, then $|E-E_{j,n}(\omega)|\le   e^{-n^{1/4}}$ for some eigenvalue $E_{j,n}(\omega)$ of~$H_{[-n,n]}(0,\omega)$. Applying Lemma~\ref{lem:det ldt stable} with $E_0=E_{j,n}(\omega)$ and summing over $1\le j\le 2n+1$ one concludes by Fubini that 
\EQ{\label{eq:Sn bd}
|\calS_n(b)|\le 3 n^{\frac54} e^{-n^{1/12}}.
} 
The set of bad $\omega$, which we will need to exclude in order to prevent double resonances,  is 
\EQ{\label{eq:calBn}
\calB_n(b):=\{ \omega \in \tor \:|\: (\omega, \ell \omega)\in \calS_n(b) \text{\ for some\ }\pm \ell \in [n^{2s},2n^{2s}]\}
}
Here $s\ge 2$ is an absolute constant, which we will specify later.  
The following {\em lemma on steep lines} from~\cite{BouG} guarantees that $\calB_n(b)$ has very small measure. This hinges not only on the small measure estimate of~\eqref{eq:Sn bd}, which by itself is insufficient,  but also on the structure of the set~$\calS_n(b)$. Specifically, the fact that the horizontal slices
\EQ{
\label{eq:cut set}
(\calS_n(b))_x:=\{\omega\in\tor\:|\: (\omega,x)\in\calS_n(b)\}
}
are contained in no more than $O(n^{s})$ many intervals of very small measure.  
\begin{figure}[ht]
\tikzset{every picture/.style={line width=0.75pt}} 

\begin{tikzpicture}[x=0.55pt,y=0.55pt,yscale=-1,xscale=1]

\draw   (100,50.95) -- (509.51,50.95) -- (509.51,460.46) -- (100,460.46) -- cycle ;
\draw [color={rgb, 255:red, 208; green, 2; blue, 27 }  ,draw opacity=1 ][line width=1.5]    (141.51,51.95) -- (100,460.46) ;
\draw [color={rgb, 255:red, 208; green, 2; blue, 27 }  ,draw opacity=1 ][line width=1.5]    (181.51,51.95) -- (140,460.46) ;
\draw [color={rgb, 255:red, 208; green, 2; blue, 27 }  ,draw opacity=1 ][line width=1.5]    (341.51,50.95) -- (300,459.46) ;
\draw [color={rgb, 255:red, 208; green, 2; blue, 27 }  ,draw opacity=1 ][line width=1.5]    (300.51,51.95) -- (259,460.46) ;
\draw [color={rgb, 255:red, 208; green, 2; blue, 27 }  ,draw opacity=1 ][line width=1.5]    (221.51,50.95) -- (180,459.46) ;
\draw [color={rgb, 255:red, 208; green, 2; blue, 27 }  ,draw opacity=1 ][line width=1.5]    (259.51,51.47) -- (218,459.98) ;
\draw [color={rgb, 255:red, 208; green, 2; blue, 27 }  ,draw opacity=1 ][line width=1.5]    (382.51,50.95) -- (341,459.46) ;
\draw [color={rgb, 255:red, 208; green, 2; blue, 27 }  ,draw opacity=1 ][line width=1.5]    (423.51,50.95) -- (382,459.46) ;
\draw [color={rgb, 255:red, 208; green, 2; blue, 27 }  ,draw opacity=1 ][line width=1.5]    (465.51,51.47) -- (424,459.98) ;
\draw [color={rgb, 255:red, 208; green, 2; blue, 27 }  ,draw opacity=1 ][line width=1.5]    (509.51,50.95) -- (468,459.46) ;
\draw    (53,269.95) .. controls (92.6,240.25) and (112.6,298.76) .. (151.81,270.82) ;
\draw [shift={(153,269.95)}, rotate = 503.13] [color={rgb, 255:red, 0; green, 0; blue, 0 }  ][line width=0.75]    (10.93,-3.29) .. controls (6.95,-1.4) and (3.31,-0.3) .. (0,0) .. controls (3.31,0.3) and (6.95,1.4) .. (10.93,3.29)   ;
\draw  [fill={rgb, 255:red, 74; green, 144; blue, 226 }  ,fill opacity=1 ] (180.07,295.21) -- (288.07,57.8) -- (291.21,59.23) -- (183.22,296.64) -- cycle ;
\draw  [fill={rgb, 255:red, 74; green, 144; blue, 226 }  ,fill opacity=1 ] (262.73,321.71) .. controls (245.61,306) and (269.06,252.57) .. (315.12,202.38) .. controls (361.17,152.19) and (412.39,124.24) .. (429.51,139.95) .. controls (409.16,131.2) and (361.78,158.89) .. (318.78,205.75) .. controls (275.79,252.6) and (252.26,302.18) .. (262.73,321.71) -- cycle ;
\draw  [fill={rgb, 255:red, 74; green, 144; blue, 226 }  ,fill opacity=1 ] (187.83,93.47) .. controls (188.25,93.07) and (188.92,93.09) .. (189.32,93.52) -- (409.04,327.24) .. controls (409.44,327.67) and (409.42,328.34) .. (408.99,328.74) -- (406.68,330.91) .. controls (406.25,331.31) and (405.58,331.29) .. (405.18,330.87) -- (185.47,97.14) .. controls (185.07,96.72) and (185.09,96.05) .. (185.52,95.65) -- cycle ;
\draw    (62,166.95) .. controls (107.06,147.15) and (180.54,163.62) .. (222.26,146.48) ;
\draw [shift={(223.51,145.95)}, rotate = 516.54] [color={rgb, 255:red, 0; green, 0; blue, 0 }  ][line width=0.75]    (10.93,-3.29) .. controls (6.95,-1.4) and (3.31,-0.3) .. (0,0) .. controls (3.31,0.3) and (6.95,1.4) .. (10.93,3.29)   ;

\draw (92,466.35) node [anchor=north west][inner sep=0.75pt]    {$0$};
\draw (511.51,463.86) node [anchor=north west][inner sep=0.75pt]    {$1$};
\draw (14,275.35) node [anchor=north west][inner sep=0.75pt]    {$( \omega ,\ell \omega )$};
\draw (41,162.35) node [anchor=north west][inner sep=0.75pt]  [font=\large]  {$\mathcal{S}$};

\end{tikzpicture}
\caption{The lemma on steep lines}\label{fig:steep}
\end{figure}
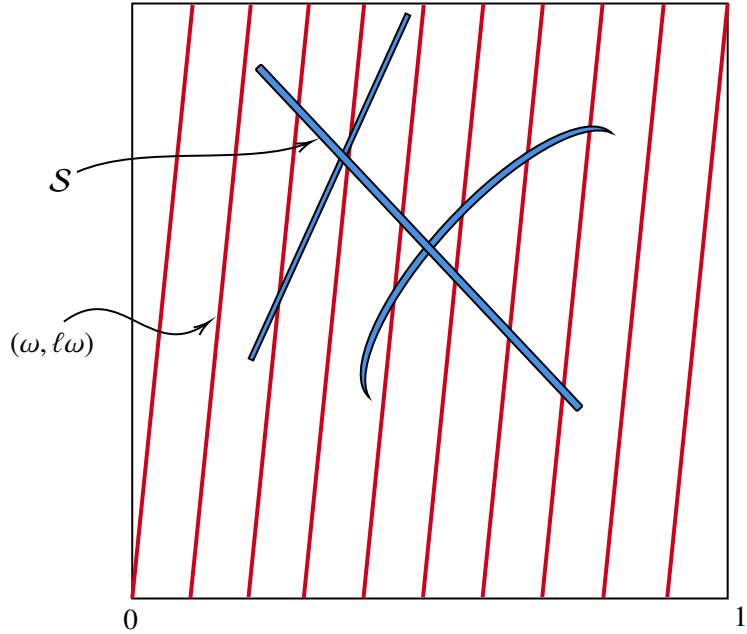
\begin{lem}
\label{lem:steep lines}
Suppose the Borel set $\calS\subset\tor^2$ has the property that for every $x\in\tor$ the horizontal slice $\calS_x$, viewed as a subset of $[0,1]$,  consists of no more than $M$ intervals. Then 
\EQ{
\label{eq:steep lines}
|\{ \omega\in\tor\:|\: (\omega,\ell\omega)\in \calS\! \mod\Z^2\text{\ for some\ }\ell \in [N,2N] \} |\le \frac{M}{N} +8N^{\frac52} |\calS|^{\frac12}
}
\end{lem}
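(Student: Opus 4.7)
My plan is to write $B = \bigcup_{\ell \in \Z \cap [N,2N]} A_\ell$ with $A_\ell = \{\omega \in \tor : (\omega, \ell\omega \bmod 1) \in \calS\}$ and combine a slicewise estimate with a combinatorial/$L^2$ improvement. The first step is the per-slope bound $|A_\ell| \le |\calS| + M/\ell$, obtained by parametrizing $\omega = (y+j)/\ell$ with $y \in [0,1)$ and $j \in \{0,\ldots,\ell-1\}$: the condition $\omega \in A_\ell$ translates into $(y+j)/\ell \in \calS_y$, and since $\calS_y$ is a disjoint union of at most $M$ intervals $I_i(y)$ and a lattice of spacing $1/\ell$ meets an interval of length $L$ in at most $\ell L + 1$ points, integrating in $y$ gives $\ell|A_\ell| \le \int_0^1 (\ell|\calS_y| + M)\, dy$.

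The naive union bound $|B| \le \sum_\ell |A_\ell| \le (N+1)|\calS| + M\log 2$ is too weak, exceeding the claim by essentially a factor of $N$ in the $M$-dependent part. To improve, I would decompose each slice $\calS_y = \calS_y^{\mathrm{nar}} \sqcup \calS_y^{\mathrm{wid}}$ according to whether the constituent intervals $I_i(y)$ have width $<1/N$ or $\ge 1/N$, and handle the two parts separately, producing the two terms on the right-hand side of \eqref{eq:steep lines}.

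For the narrow part, the crucial point is that all $\omega$'s of the form $(y+j)/\ell$ satisfying $\omega \in I_i(y)$, across every $\ell \in [N,2N]$ and every $j$, lie inside the interval $I_i(y)$ itself, a set of $\omega$-measure $<1/N$. Thus the contribution of a single narrow interval to $|B^{\mathrm{nar}}|$ is bounded by $1/N$, and summing over the at most $M$ narrow intervals per slice and integrating in $y$ yields $|B^{\mathrm{nar}}| \le M/N$.

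For the wide part, I would use an $L^2$/incidence argument. Introduce $g(\omega) = \sum_\ell \chi_{A_\ell^{\mathrm{wid}}}(\omega)$, so that $B^{\mathrm{wid}} = \{g \ge 1\}$; since each wide interval has length $\ge 1/N$, there are at most $N|\calS|$ such intervals in total, and the per-slope estimate gives $\int g \lesssim N|\calS|$. Bounding $\int g^2 = \sum_{\ell_1,\ell_2}|A_{\ell_1}^{\mathrm{wid}} \cap A_{\ell_2}^{\mathrm{wid}}|$ by exploiting that curves of distinct integer slopes in $[N,2N]$ intersect pairwise in only $|\ell_1 - \ell_2|$ points on $\tor^2$, and combining Cauchy--Schwarz $(\int g)^2 \le |B^{\mathrm{wid}}| \int g^2$ with a dyadic decomposition of the level sets of $g$, should then produce $|B^{\mathrm{wid}}| \le 8N^{5/2}|\calS|^{1/2}$. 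The hard part will be the off-diagonal bound on $|A_{\ell_1}^{\mathrm{wid}} \cap A_{\ell_2}^{\mathrm{wid}}|$: rewriting this as the measure of $\omega$'s satisfying $(\omega, \ell_1\omega) \in \calS$ and $(\omega, \ell_1\omega + h\omega) \in \calS$ with $h = \ell_2 - \ell_1$ naively destroys the slice structure under the $\omega$-dependent shift, so preserving it requires a careful geometric argument that essentially uses the wide-interval hypothesis to replace the shifted set by something whose horizontal slices still consist of a controlled number of intervals.
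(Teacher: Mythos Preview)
Your narrow argument has a genuine gap. The observation that, for fixed $y$, the $\omega$'s of the form $(y+j)/\ell$ lying in narrow intervals of $\calS_y$ all lie inside $\calS_y^{\mathrm{nar}}$, a set of measure $<M/N$, is correct but vacuous: for fixed $y$ those $\omega$'s form a finite (hence null) set, and $|B^{\mathrm{nar}}|$ is not obtained by integrating any per-slice quantity bounded by $M/N$ over~$y$. In fact, when every interval of every slice is narrow one has $|\calS|\le M/N$, so your claimed bound $|B^{\mathrm{nar}}|\le M/N$ would be strictly stronger than the lemma (which only gives $M/N+8N^{5/2}(M/N)^{1/2}$); that should be a warning sign.

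The missing idea is an arithmetic restriction on the second coordinate. The paper's decomposition is not narrow/wide but good/bad: $x$ is \emph{good} if both $|\calS_x|\le\gamma$ and $\|jx\|>4N^2\gamma$ for all $1\le j\le N$, with $\gamma=(4N^3)^{-1/2}|\calS|^{1/2}$. The Diophantine part is exactly what forces at most one pair $(\ell,k)$ with $(x+k)/\ell\in I_\alpha(x)$: if two such points lay in an interval of length $\le\gamma$, then $|x(\ell-\ell')+(k\ell'-k'\ell)|\le 4N^2\gamma$, whence $\|jx\|\le 4N^2\gamma$ for some $1\le j\le N$. This ``one hit per interval'' is what collapses the sum over $(\ell,k)$ and yields $M/N$. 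The bad $x$'s have measure $\lesssim N^{3/2}|\calS|^{1/2}$, and a trivial union bound over $\ell\in[N,2N]$ then produces $8N^{5/2}|\calS|^{1/2}$. Narrowness of intervals alone, without the Diophantine condition on~$x$, cannot control how many $(\ell,k)$ hit a given interval. As a side remark, the wide part is trivial: since $\ell|I_i(y)|\ge 1$ for wide intervals, one has $|A_\ell^{\mathrm{wid}}|\le 2|\calS|$, hence $|B^{\mathrm{wid}}|\le 4N|\calS|\le 8N^{5/2}|\calS|^{1/2}$ directly, so the $L^2$ scheme is unnecessary.
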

\begin{proof}
By Fubini, for each $\gamma>0$, 
\[
|\{x\in\tor\:|\: |\calS_x|>\gamma\}|\le  |\calS|\gamma^{-1}
\]
We define the set of good $x\in\tor$ as
 \EQ{\label{eq:x good}
 \calG &:=\{ x\in\tor\:|\: |\calS_x|\le \gamma  \text{\ and for all \ }j\in [1,N]\text{\ one has\ }  \| x j\| > 4N^2\gamma \}
 }
 with $\|\cdot\|$ the norm of~$\tor$. Then 
 \[
 |\tor\setminus\calG|\le   |\calS|\gamma^{-1} + 4N^3 \gamma 
  \]
  We optimize here by setting $\gamma = (4N^3)^{-\frac12}|\calS|^{\frac12}$ whence
  \EQ{\label{eq:Gc est}
 |\tor\setminus\calG|\le   4 N^{\frac32}  |\calS|^{\frac12}
  }
Correspondingly, 
\[
\calS = \calS_*\cup \calS_{**} := (\calS\cap \calG) \cup  (\calS\cap (\tor\setminus\calG))
\]
We eliminate $\calS_{**}$ as follows:
\EQ{
\label{eq:steep lines**}
& |\{ \omega\in\tor\:|\: (\omega,\ell\omega)\in \calS_{**}\! \mod\Z^2\text{\ for some\ }\ell \in [N,2N] \} | \\
&\le \sum_{\ell=N}^{2N}  |\{\omega\in\tor\:|\: \ell\omega \in \tor\setminus\calG\}| \le (N+1) |\tor\setminus\calG|\le 8 N^{\frac52}  |\calS|^{\frac12}
}
On the other hand, where $\{x\}=x-\lfloor x\rfloor$ for $x>0$ denotes the fractional part, 
\EQ{
\label{eq:steep lines*}
& |\{ \omega\in\tor\:|\: (\omega,\ell\omega)\in \calS_{*}\! \mod\Z^2\text{\ for some\ }\ell \in [N,2N] \} | \\
&\le \sum_{\ell=N}^{2N} \int_0^1 \one_{\calS_{*}} (\omega,\{ \ell\omega\}) \, d\omega  = 
\sum_{\ell=N}^{2N} \frac{1}{\ell} \sum_{k=0}^{\ell-1} \int_0^1 \one_{\calS_{*}} ((x+k)/\ell,x ) \, dx \\
&\le  \sum_{\alpha=1}^M \frac{1}{N }  \int_\calG  \sum_{\ell=N}^{2N}  \sum_{k=0}^{\ell-1} \one_{I_\alpha(x)} ((x+k)/\ell) \, dx 
}
Here, for $x\in\calG$,  $(\calS_{*})_x= \calS_x=\bigcup_{\alpha=1}^M  I_\alpha(x)$ with $I_\alpha(x)$ intervals of length $|I_\alpha(x)|\le \gamma$, possibly empty. 
We claim that for all  $x\in\calG$ one has
\EQ{\label{eq:1 Claim}
\sum_{\ell=N}^{2N}  \sum_{k=0}^{\ell-1} \one_{I_\alpha(x)} ((x+k)/\ell) \le 1
}
Indeed, suppose $\ell\ne \ell'$ both in $[N,2N]$ and  
$
\frac{x+k}{\ell} ,  \frac{x+k'}{\ell'} \in I_\alpha(x) 
$.  Then 
\[
\Big|\frac{x+k}{\ell}  - \frac{x+k'}{\ell'} \Big| \le |I_\alpha(x)| 
\]
whence $|x(\ell-\ell') + k\ell'-k'\ell|\le \ell\ell'|I_\alpha(x)|$ and thus $\| j x\|\le 4N^2\gamma$ for some $1\le j\le N$. But this is excluded by $x$ being in the good set. 
So it follows that $\ell=\ell'$, which implies that for $k\ne k'$ 
\[
\gamma \ge |I_\alpha(x)| \ge \Big|\frac{x+k}{\ell}  - \frac{x+k'}{\ell} \Big| \ge \frac{1}{\ell}\ge \frac{1}{N}
\]
which contradicts that  $N^2\gamma^2=  (4N)^{-1}|\calS|  <1$. So Claim~\eqref{eq:1 Claim} is correct, and the entire contribution to~\eqref{eq:steep lines*} is at most $M/N$. 
\end{proof}
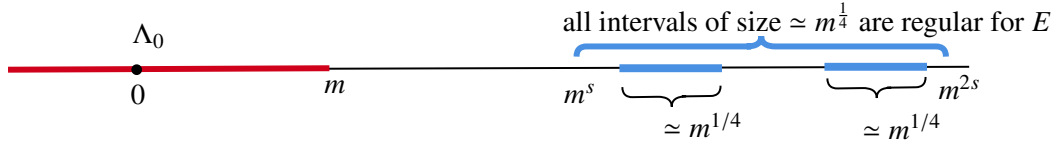
\begin{figure}[ht]
\tikzset{every picture/.style={line width=0.75pt}} 

\begin{tikzpicture}[x=0.55pt,y=0.55pt,yscale=-1,xscale=1]

\draw    (-0.49,251.17) -- (660.51,249.17) ;
\draw [color={rgb, 255:red, 208; green, 2; blue, 27 }  ,draw opacity=1 ][line width=2.25]    (-0.49,251.17) -- (220.51,250.17) ;
\draw [color={rgb, 255:red, 74; green, 144; blue, 226 }  ,draw opacity=1 ][line width=3]    (420,250.17) -- (490.51,250.17) ;
\draw  [color={rgb, 255:red, 74; green, 144; blue, 226 }  ,draw opacity=1 ][line width=2.25]  (644.51,242.17) .. controls (644.51,237.5) and (642.18,235.17) .. (637.51,235.17) -- (527.51,235.17) .. controls (520.84,235.17) and (517.51,232.84) .. (517.51,228.17) .. controls (517.51,232.84) and (514.18,235.17) .. (507.51,235.17)(510.51,235.17) -- (397.51,235.17) .. controls (392.84,235.17) and (390.51,237.5) .. (390.51,242.17) ;
\draw [color={rgb, 255:red, 74; green, 144; blue, 226 }  ,draw opacity=1 ][line width=3]    (561,249.17) -- (631.51,249.17) ;
\draw  [fill={rgb, 255:red, 0; green, 0; blue, 0 }  ,fill opacity=1 ] (85,251.17) .. controls (85,249.51) and (86.34,248.17) .. (88,248.17) .. controls (89.66,248.17) and (91,249.51) .. (91,251.17) .. controls (91,252.83) and (89.66,254.17) .. (88,254.17) .. controls (86.34,254.17) and (85,252.83) .. (85,251.17) -- cycle ;
\draw   (422,263.17) .. controls (422.07,267.84) and (424.43,270.14) .. (429.1,270.07) -- (445.86,269.82) .. controls (452.53,269.72) and (455.89,272) .. (455.96,276.67) .. controls (455.89,272) and (459.19,269.62) .. (465.86,269.52)(462.86,269.56) -- (482.62,269.27) .. controls (487.29,269.2) and (489.58,266.84) .. (489.51,262.17) ;
\draw   (563,257.17) .. controls (563.07,261.84) and (565.43,264.14) .. (570.1,264.07) -- (586.86,263.82) .. controls (593.53,263.72) and (596.89,266) .. (596.96,270.67) .. controls (596.89,266) and (600.19,263.62) .. (606.86,263.52)(603.86,263.56) -- (623.62,263.27) .. controls (628.29,263.2) and (630.58,260.84) .. (630.51,256.17) ;

\draw (83,217.57) node [anchor=north west][inner sep=0.75pt]    {$\Lambda _{0}$};
\draw (379,202.57) node [anchor=north west][inner sep=0.75pt]    {all\ intervals\ of\ size\ $\simeq m^{\frac14}$\ are\ regular\ for\ $E$};
\draw (82,259.57) node [anchor=north west][inner sep=0.75pt]    {$0$};
\draw (380,258.57) node [anchor=north west][inner sep=0.75pt]    {$m^{s}$};
\draw (637.51,252.57) node [anchor=north west][inner sep=0.75pt]    {$m^{2s}$};
\draw (215,256.57) node [anchor=north west][inner sep=0.75pt]    {$m$};
\draw (448,278.57) node [anchor=north west][inner sep=0.75pt]    {$\simeq m^{1/4}$};
\draw (585,279.57) node [anchor=north west][inner sep=0.75pt]    {$\simeq m^{1/4}$};

\end{tikzpicture}
\caption{Absence of double resonances}\label{fig:2res BG}
\end{figure}
To obtain the {\em complexity bound} \eqref{eq:cut set}, we use semi-algebraic methods. 
A closed set $\calS\subset \R^N$ is called {\em semi-algebraic} if there are polynomials $P_j\in \R[X_1,\ldots,X_N]$, $1\le j\le s$  of degrees bounded by~$d$ so that 
\[
 \calS = \bigcup_{k} \bigcap_{j\in \calF_k} \{P_j \, \sigma_{kj}\, 0\}
\]
with $\sigma_{kj}\in\{\leq,\geq,0\}$ and $\calF_k\subset\{1,2,\ldots,s\}$. The degree of $\calS$ is bounded by $sd$ and is in fact the infimum of $sd$ over all such representations. 

One might expect to get away with more elementary arguments based on zero counts alone. Note, however,   that $E$ is projected out of in the set $\calS_n(b)$ which makes it necessary to perform quantifier elimination. In fact, we will need to use a quantitative Seidenberg-Tarski theorem to control the complexity parameter~$M$ in Lemma~\ref{lem:steep lines}. This fundamental result states that  any projection of $\calS$ onto a subspace of $\R^N$ is again semi-algebraic and the degree can only grow at a power rate (depending on $N$). 
See \cite{BPR1} and~\cite{BPR2}. 

These semi-algebraic techniques are available here since $V$ is a trigonometric polynomial although by approximation and truncation, $V$ analytic can also be handled in~\cite{BouG}. Heuristically speaking, the semi-algebraic quantitative complexity bounds  replace the explicitly imposed complexity in Theorem~\ref{thm:FSW} where exactly two monotonicity intervals of $V$ are assumed. 

We claim that $\calS_n(b)$ is contained in  
\begin{align}
\tilde \calS_n(b) &:= \Pi_{\R^2}\big \{(\omega,x,E))\in \Dio_n(b)\times\tor\times\R \:|\: \log|f_{2n+1}(n\omega,\omega,E)|\le (2n+1)L_{2n+1}(E,\omega) - n^{1/4}/2,  \nn \\
& \qquad \text{\ and\ \ }  \log|f_m(x,\omega,E)|\le mL_m(E,\omega) - m^{1/2}\text{\ for some\ } m\in [n^{1/4}/2, n^{1/4}] \big\}  \label{eq:Sn def*}
\end{align}
where $\Pi_{\R^2}$ projects on to $(\omega,x)$ and moreover, that $\tilde \calS_n(b)$ has essentially the same measure bound as $\calS_n(b)$. And conversely, 
\EQ{\nn 
\tilde \calS_n(b)  &\subset \big \{(\omega,x)\in \Dio_n(b)\times\tor \:|\: \exists E\in\R\text{\ with\ } \dist(\spec(H_{[-n,n]}(0,\omega)),E)\le e^{-n^{1/4}/4}, \text{\ and} \\
& \qquad \log|f_m(x,\omega,E)|\le mL_m(E,\omega) - m^{1/2}\text{\ for some\ } m\in [n^{1/4}/2, n^{1/4}] \big\} 
}
These relations follow from noting that 
\[
\dist(\spec(H_{[-n,n]}(0,\omega)),E) = \| (H_{[-n,n]}(0,\omega))-E)^{-1}\| \text{\ \ and\ \ } \|A\|\le \|A\|_{HS}\le \sqrt{d}\, \|A\|
\]
for any $d\times d$ matrix $A$, and using the relation \eqref{eq:GEbd**}. In particular, we obtain essentially the same estimates on their two-dimensional measure.  
The sets $\tilde \calS_n(b) := \Pi_{\R^2} \calD_n(b)$ with 
\EQ{\label{eq:Dn}
 \calD_n(b)  &=  \{(\omega,x,E))\in \Dio_n(b)\times\tor\times\R \:|\:\: |f_{2n+1}(n\omega,\omega,E)|\le \exp\big( (2n+1)L_{2n+1}(E,\omega) - n^{1/4}/2\big)\}   \\
& \cap \bigcup_{ m\in [n^{1/4}/2, n^{1/4}]}  \{ (\omega,x,E))\in \Dio_n(b)\times\tor\times\R \:|\:\: |f_m(x,\omega,E)|\le \exp\big(mL_m(E,\omega) - m^{1/2} \big)  \big\} 
}
are already quite close to our sought after polynomial description. However, a polynomial expression for the Lyapunov exponents in finite volume needs to be found. Note that
while we may pass to their infinite volume versions  due to the~\cite{GolS} rate of convergence estimate $L_m(E,\omega)-L(E,\omega)\le Cm^{-1}$, it would be counter productive to do so at this point. Rather, we will use that uniformly in $x$, 
\[
n L_n(E,\omega) = \frac{1}{n^2}\sum_{j=1}^{n^2} \log \|M_n(x+j\omega,\omega,E)\| + O((\log n)^A)
\]
This follows from $\|M_n\|\ge1$, \eqref{eq:Mnfn}, and the same arguments which we used in the proof of \eqref{eq:Lambda0 res}.  
Therefore, we can replace 
\[
 |f_m(x,\omega,E)|\le \exp(mL_m(E,\omega) - m^{1/2})
\]
with 
\[
 |f_m(x,\omega,E)|^{2m^2}\le e^{-m^{\frac52}} \prod_{j=1}^{m^2} \|M_m(j\omega,\omega,E)\|_{HS}^2
\]
This is a polynomial inequality in all variables of degree $O(m^4)=O(n)$. The set on the first line of $\calD_n$ is described by a polynomial inequality of degree $O(n^4)$. 
Since there are $\les n^{\frac14}$ polynomials involved in the description of the semi-algebraic set $\calD_n(b)$ above, it is of degree $\les n^{5}$.  Projecting out $E$, we conclude that $\tilde \calS_n(b)$ has degree $O(n^s)$ for some finite $s$ as claimed. Finally, each horizontal slice consists of at most 
$O(n^s)$ many connected components, i.e., intervals. 

\begin{proof}[Proof of Theorem~\ref{thm:BG}]
The set of admissible $\omega$ for the theorem is 
\[
\Omega := \Dio\setminus \bigcup_{j=1}^\infty \limsup_{n\to\infty} \calB_n(1/j),\quad \Dio:=\bigcup_{j=1}^\infty \Dio(1/j)
\]
where $\calB_n(b)$ is defined in \eqref{eq:calBn}. By Lemma~\ref{lem:steep lines}, 
\[
 |\calB_n(1/j)|\le C(j) (n^{-s}+n^{C} e^{-n^{1/12}}),\quad \sum_{n=1}^\infty |\calB_n(1/j)|<\infty
\]
whence by Borel-Cantelli 
$
|\limsup_{n\to\infty} \calB_n(1/j)| =0
$. Since $\Dio$ has full measure in $\tor$, so does~$\Omega$. Now freeze some $\omega\in\Omega$. Note in particular that $\omega\in \Dio(b)$ for some $b>0$ whence the LDT results all hold. 
Given a generalized eigenfunction $H(x)\psi=E\psi$ by Theorem~\ref{thm:Ber}, we showed that for all sufficiently large~$n$,
 \eqref{eq:Lambda0 res} holds for some $n\le m\le n^3$.  By definition of $\calB_n(b)$, see Figure~\ref{fig:2res BG}, we conclude that all Green functions $G_{\Lambda}(0,\omega,E)$ with $\Lambda\subset [m^s,2m^s]$ and $|\Lambda|\simeq m^{\frac14}$ satisfy
\[
\| G_{\Lambda}(0,\omega,E)\|\le e^{|\Lambda|^{\frac12}}, \quad |G_{\Lambda}(0,\omega,E)(x,y)|\le e^{-\gamma|x-y|+|\Lambda|^{\frac12}} \qquad\forall\; x,y\in\Lambda
\]
Using the resolvent identity iteratively as in Lemma~\ref{lem:ind step}, albeit with all subintervals being regular for $E$, we conclude that the Green function on the large window is also regular for~$E$:
\[
\| G_{\pm [m^s,2m^{s}]}(0,\omega,E)\|\le e^{m^{\frac18}}, \quad |G_{\pm [m^s,2m^{s}]}(0,\omega,E)(x,y)|\le e^{-\gamma|x-y|+m^{\frac18}} \qquad\forall\; x,y\in \pm [m^s,2m^{s}]
\]
from which the exponential decay of $\psi$ immediately follows. 
\end{proof}


\begin{figure}[ht]
\centering
\includegraphics[width=130mm,scale=1.8]{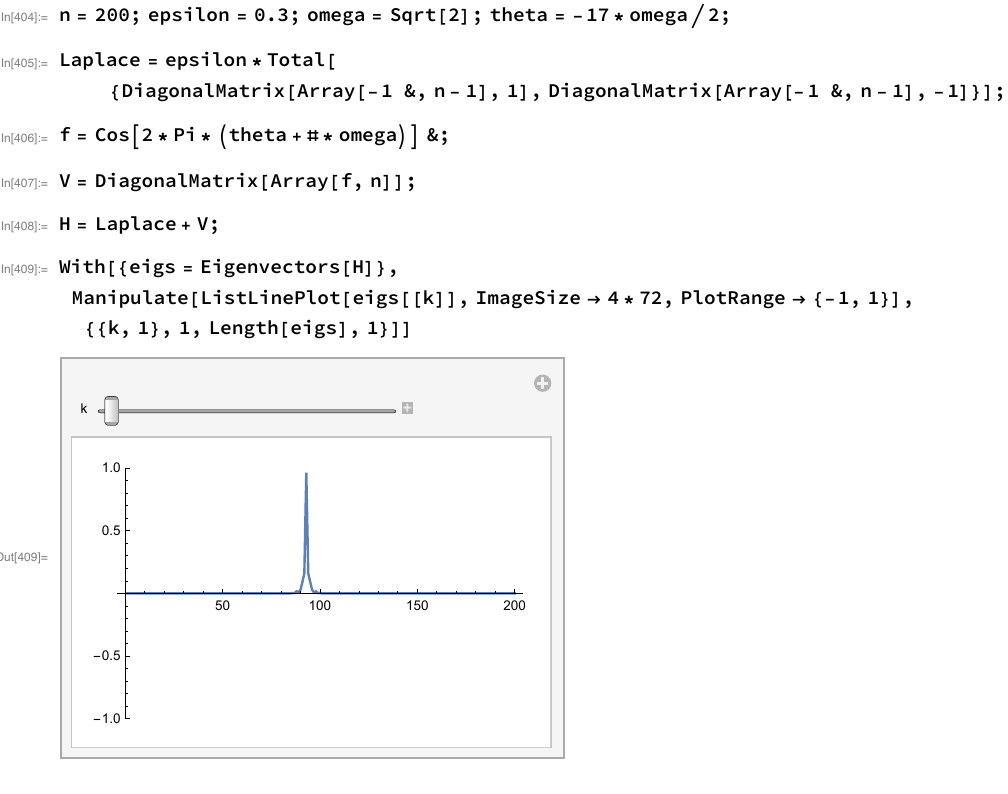}
\caption{\textsc{Mathematica} code for Figures~\eqref{fig:Eigf1}, \eqref{fig:Eigf2}}
\end{figure}


\begin{thebibliography}{m}

\bibitem[AizMol]{AiMo} Aizenman, M.,  Molchanov, S. {\em 
Localization at large disorder and at extreme energies: an elementary derivation.}  
Comm.\ Math.\ Phys.~157 (1993), no.~2, 245--278.

\bibitem[AizWAr]{AW} Aizenman, M., Warzel, S. {\em 
Random operators.
Disorder effects on quantum spectra and dynamics.} Graduate Studies in Mathematics, 168. American Mathematical Society, Providence, RI, 2015.


\bibitem[Avi]{Avila} Avila, A. {\em Global theory of one-frequency Schr\"odinger operators. }
Acta Math.~215 (2015), no.~1, 1--54. 

\bibitem[BasPolRoy1]{BPR1} 
    Basu, S.,  Pollack, R.,  Roy, M. {\em 
      On the combinatorial and algebraic complexity of quantifier
              elimination}, Journal of the ACM,
    43 (1996), no.~6, 
1002--1045. 

\bibitem[BasPolRoy2]{BPR2} 
    Basu, S.,  Pollack, R.,  Roy, M. {\em 
     Algorithms in real algebraic geometry},
    Springer-Verlag, Berlin, 2003. 
 
		


\bibitem[Ber]{Ber}  Berezanskii, J.    {\em Expansions in Eigenfunctions of Selfadjoint Operators}, Translations of Mathematical Monographs, Vol.~17, Amer.\ Math.\ Soc., Providence, R.I., 1968.

\bibitem[BouLac]{BouLa} Bougerol, P.,  Lacroix, J. {\em 
Products of random matrices with applications to Schr\"odinger operators.}
Progress in Probability and Statistics, 8. Birkh\"auser Boston, Inc., Boston, MA, 1985. 

\bibitem[Bou1]{Bou} Bourgain, J. {\em Green's function estimates for lattice Schr\"odinger operators and applications.}  Annals of Mathematics Studies, 158. Princeton University Press, Princeton, NJ, 2005. 

\bibitem[Bou2]{Bou2} Bourgain, J.  {\em Anderson localization for quasi-periodic lattice Schr\"odinger operators on $\bbZ^d$, $d$ arbitrary.} Geom.\ Funct.\ Anal.~17 (2007), no.~3, 682--706.

\bibitem[BouGol]{BouG} Bourgain, J.; Goldstein, M. {\em On nonperturbative localization with quasi-periodic potential.} Ann.\ of Math.~(2) 152 (2000), no.~3, 835--879.

\bibitem[BouGolSch1]{BouGS} Bourgain, J.,  Goldstein, M., Schlag, W. {\em Anderson localization for Schr\"odinger operators on $\bbZ$ with potentials given by the skew-shift.}
 Comm.\ Math.\ Phys.~220 (2001), no.~3, 583--621.

\bibitem[BouGolSch2]{BouGS2} Bourgain, J.,  Goldstein, M., Schlag, W. {\em  Anderson localization for Schr\"odinger operators on $\bbZ^2$ with quasi-periodic potential.}
 Acta Math.\ 188 (2002), no.~1, 41--86. 
 
 \bibitem[BDFGVWZ]{B7} Bucaj, V.,  Damanik, D., Fillman, J., Gerbuz, V., VandenBoom, T., Wang, F., Zhang, Z. {\em 
Localization for the one-dimensional Anderson model via positivity and large deviations for the Lyapunov exponent.} 
Trans.\ Amer.\ Math.\ Soc.~372 (2019), no.~5, 3619--3667.
 
\bibitem[CarKleMar]{CKM} Carmona, R.,  Klein, A.,  Martinelli, F. {\em 
Anderson localization for Bernoulli and other singular potentials.}
Comm.\ Math.\ Phys.~108 (1987), no.~1, 41--66.
 
\bibitem[CarLac]{CarLac} Carmona, R.,  Lacroix, J. {\em 
Spectral theory of random Schr\"odinger operators.}
Probability and its Applications. Birkh\"auser Boston, Inc., Boston, MA, 1990.
 
 \bibitem[CraSim]{CraigS}  Craig, W.,  Simon, B. {\em 
Subharmonicity of the Lyaponov index. }
Duke Math.\ J.~50 (1983), no.~2, 551--560. 

\bibitem[DamFil1]{DF1} Damanik, D., 
Fillman, J.  {\em One-Dimensional Ergodic
Schr\"odinger Operators
I. General Theory},  Graduate Studies in Mathematics, AMS, to appear. 

\bibitem[DamFil2]{DF2} Damanik, D., 
Fillman, J.  {\em One-Dimensional Ergodic
Schr\"odinger Operators
II. Special Cases},  Graduate Studies in Mathematics, AMS, to appear. 

\bibitem[DinSma]{DSm} Ding, J.,  Smart, C. {\em 
Localization near the edge for the Anderson Bernoulli model on the two dimensional lattice.}  
Invent.\ Math.~219 (2020), no.~2, 467--506. 

\bibitem[Far]{Far} Faris, W. {\em Perturbations and non-normalizable Eigenvectors}, Helvetica Physica Acta, 44 (1971), 930--936. 

\bibitem[FigPas]{FigP}   Figotin, A., Pastur, L. {\em Spectra of Random and Almost-Periodic Operators}, Springer, 1992. 

\bibitem[ForVan]{FV} Forman, Y., VandenBoom, T. {\em Localization and Cantor spectrum for quasiperiodic discrete Schr\"odinger operators with asymmetric, smooth, cosine-like sampling functions}, preprint 2021. 




\bibitem[FroSpe]{FS} Fr\"ohlich, J.,  Spencer, T. {\em  Absence of diffusion in the Anderson tight binding model for large disorder or low energy.} Comm.\ Math.\ Phys.~88 (1983), 
no.~2, 151--184. 

\bibitem[FroSpeWit]{FSW} Fr\"ohlich, J.,  Spencer, T.,  Wittwer, P. {\em 
Localization for a class of one-dimensional quasi-periodic Schr\"odinger operators.} 
Comm.\ Math.\ Phys.~132 (1990), no.~1, 5--25.


\bibitem[FurKes]{FurKes} F\"urstenberg, H., Kesten, H.
{\em Products of random matrices.}
 Ann.\ Math.\ Statist 31 (1960), 457--469.
 
\bibitem[GerKle]{GerK}  Germinet, F.,  Klein, A. {\em 
Bootstrap multiscale analysis and localization in random media.} 
Comm.\ Math.\ Phys.~222 (2001), no.~2, 415--448. 

\bibitem[GolSch1]{GolS} Goldstein, M.,  Schlag, W. {\em  H\"older continuity of the integrated density of states for quasi-periodic Schr\"odinger equations and averages of shifts of subharmonic functions.} Ann.\ of Math.~(2) 154 (2001), no.~1, 155--203.

\bibitem[GolSch2]{GolS2} Goldstein, M., Schlag, W.
{\em Fine properties of the integrated density of states and a
quantitative separation property of the Dirichlet eigenvalues}, Geom.\ Funct.\ Anal.~18 (2008), no. ~3, 755--869. 

\bibitem[GolSch3]{GolS3} Goldstein, M., Schlag, W. {\em  On Schr\"odinger operators with dynamically defined potentials.} Mosc.\ Math.\ J.~5 (2005), no.~3, 577--612.

\bibitem[GolSchVod]{GSV} Goldstein, M., Schlag, W., Voda, M. 
{\em On localization and the spectrum of multi-frequency quasi-periodic operators}, \url{https://arxiv.org/abs/1610.00380}

\bibitem[GorKle]{GorK} Gorodetski, A., Kleptsyn, V. {\em Parametric F\"urstenberg theorem on random products of $SL(2,\R)$ matrices.} Adv.\ Math.\ 378 (2021), 107522. 

\bibitem[HanLemSch]{HLS} Han, R., Lemm, M., Schlag, W. {\em Effective multi-scale approach to the
Schr\"odinger cocycle over a skew shift base}, to appear in Ergodic Theory and Dyn.\ Systems, preprint 2018. 

\bibitem[Her]{herman} Herman, M. {\em Une m\'{e}thode pour minorer les exposants de Lyapounov et quelques exemples montrant le charact\`{e}re local d'un theoreme d'Arnold et de Moser sur le tore de dimension $2$.} Comment.\ Math.\ Helv.\ 58 (1983), no.~3, 453--502.


\bibitem[Hun]{Dirk}   Hundertmark, D. {\em  A short introduction to Anderson localization.} In: Analysis and stochastics of growth processes and interface models, 194--218, Oxford Univ.\ Press, Oxford, 2008. 

\bibitem[Jit]{Jit} Jitomirskaya, S.\ Ya.\ {\em Metal-insulator transition for the almost Mathieu operator.} Ann.\ of Math.~(2) 150 (1999), no.~3, 1159--1175.

\bibitem[JitMar]{JM} Jitomirskaya, S., Marx, C.\ A.  {\em 
Dynamics and spectral theory of quasi-periodic Schr\"odinger-type operators.}
Ergodic Theory Dynam.\ Systems 37 (2017), no.~8, 2353--2393.

\bibitem[JitSim]{JS}  Jitomirskaya. S.,  Simon, B.  {\em Operators with singular continuous spectrum. III. Almost periodic Schr\"odinger operators.} Comm.\ Math.\ Phys., Volume 165, Number 1 (1994), 201--205.

\bibitem[JitZhu]{JitZ} Jitomirskaya, S.,  Zhu, X. {\em 
Large deviations of the Lyapunov exponent and localization for the 1D Anderson model.} 
Comm.\ Math.\ Phys.~370 (2019), no.~1, 311--324.

\bibitem[Joh]{John} John, F. {\em Partial differential equations.} Reprint of the fourth edition. Applied Mathematical Sciences, 1. Springer-Verlag, New York, 1991.

\bibitem[Kir]{Kir} Kirsch, W. {\em Random Schr\"odinger operators a course}, In: Holden H., Jensen A. (eds) Schr\"odinger Operators. Lecture Notes in Physics, vol 345. Springer, Berlin, Heidelberg, 1989. 

\bibitem[Kuc]{Kuch} Kuchment, P. {\em An overview of periodic elliptic operators.} Bull.\ Amer.\ Math.\ Soc.~53 (2016), no.~3, 343--414.

\bibitem[Lev]{Levin} Levin, B.\ Ya. {\em Lectures on entire functions.}  Translations of Mathematical Monographs, 150. American Mathematical Society, Providence, RI, 1996

\bibitem[MagWin]{MW} Magnus, W.,  Winkler, S. {\em Hill's equation.}  Corrected reprint of the 1966 edition. Dover Publications, Inc., New York, 1979.

\bibitem[Sch]{Sch} Schlag, W. {\em On the Integrated Density of States for Schr\"odinger Operators on $\bbZ^2$ with
Quasi Periodic Potential}, Comm.\ Math.\ Phys.~223, 47--65 (2001).

\bibitem[Sim]{Sim} Simon, B.  {\em Schr\"odinger semigroups},  Bulletin AMS, Vol.~7, No.~3, 1982. 

\bibitem[SimWol]{STom} Simon, B.,  Wolff, T. {\em  Singular continuous spectrum under rank one perturbations and localization for random Hamiltonians. } Comm.\ Pure Appl.\
 Math.~39 (1986), no.~1, 75--90.

 \bibitem[Sto]{Stoll} Stollmann, P. {\em  
Wegner estimates and localization for continuum Anderson models with some singular distributions.} 
Archiv der Mathematik volume 75, 307--311, 2000 

\bibitem[Via]{Viana}  Viana, M. {\em Lectures on Lyapunov exponents.}  Cambridge Studies in Advanced Mathematics, 145. Cambridge University Press, Cambridge, 2014.

\bibitem[vDrKle]{vDK} von Dreifus, H.,  Klein, A. {\em  A new proof of localization in the Anderson tight binding model.} Comm.\ Math.\ Phys.\ 124 (1989), no.~2, 285--299. 

\bibitem[Weg]{Wegner} Wegner,  F. {\em The density of states for disordered systems}, Z.\ Phys.\ B44 (1981), 9--15. 

\end{thebibliography}
\end{document}